\newtheorem{thm}{Theorem}[section]
\newtheorem{corollary}[thm]{Corollary}
\newtheorem{lemma}[thm]{Lemma}
\newtheorem{proposition}[thm]{Proposition}
\newtheorem{prop}[thm]{Proposition}
\newtheorem{thm-dfn}[thm]{Theorem-Definition}
\theoremstyle{definition}
\newtheorem{definition}[thm]{Definition}
\newtheorem{remark}[thm]{Remark}
\newtheorem{example}[thm]{Example}
\numberwithin{equation}{section}
\newcommand{\fg}{{\mathfrak g}}
\newcommand{\fp}{{\mathfrak p}}
\newcommand{\bC}{{\mathbb C}}
\newcommand{\bP}{{\mathbb P}}
\newcommand{\bG}{{\mathbb G}}
\newcommand{\bZ}{{\mathbb Z}}
\newcommand{\mE}{\mathcal{E}}
\newcommand{\mF}{\mathcal{F}}
\newcommand{\mM}{\mathcal{M}}
\newcommand{\mO}{\mathcal{O}}
\newcommand{\mL}{\mathcal{L}}
\newcommand{\mG}{\mathcal{G}}
\newcommand{\sX}{\mathscr{X}}
\newcommand{\sY}{\mathscr{Y}}
\newcommand{\on}{\operatorname}
\newcommand{\lra}{\longrightarrow}
\newcommand{\ra}{\rightarrow}
\newcommand{\la}{\leftarrow}
\newcommand{\is}{\simeq}
\newcommand{\Loc}{\on{LocSys}}
\newcommand{\Bun}{\on{Bun}}
\newcommand{\quash}[1]{}  
\newcommand{\nc}{\newcommand}
\newcommand{\frakg}{{\mathfrak g}}
\newcommand{\bbA}{{\mathbb A}}
\newcommand{\bbC}{{\mathbb C}}
\newcommand{\bbG}{{\mathbb G}}
\newcommand{\bbP}{{\mathbb P}}
\newcommand{\bbR}{{\mathbb R}}
\newcommand{\calB}{{\mathcal B}}
\newcommand{\calE}{{\mathcal E}}
\newcommand{\calF}{{\mathcal F}}
\newcommand{\calK}{{\mathcal K}}
\newcommand{\calL}{{\mathcal L}}
\newcommand{\calM}{{\mathcal M}}
\newcommand{\calN}{{\mathcal N}}
\newcommand{\calO}{{\mathcal O}}
\newcommand{\calS}{{\mathcal S}}
\newcommand{\calT}{{\mathcal T}}
\newcommand{\calV}{{\mathcal V}}
\newcommand{\calX}{{\mathcal X}}
\nc{\al}{{\alpha}} \nc{\be}{{\beta}} \nc{\ga}{{\gamma}}
\nc{\ve}{{\varepsilon}} \nc{\Ga}{{\Gamma}} 
\nc{\La}{{\Lambda}}
\nc{\ad }{{\on{ad }}}
\nc{\aff}{{\on{aff}}} \nc{\Aff}{{\mathbf{Aff}}}
\newcommand{\Aut}{{\on{Aut}}}
\nc{\der}{{\on{der}}}
\nc{\diag}{{\on{diag}}}
\nc{\Fl}{{\calF\ell}}
\newcommand{\Gal}{{\on{Gal}}}
\newcommand{\Gr}{{\on{Gr}}}
\nc{\Hg}{{\on{Higgs}}}
\newcommand{\Hom}{{\on{Hom}}}
\newcommand{\id}{{\on{id}}}
\nc{\Id}{{\on{Id}}}
\nc{\Ind}{{\on{Ind}}}
\nc{\Op}{{\on{Op}}}
\nc{\res}{{\on{res}}}
\newcommand{\Spec}{{\on{Spec}}}
\nc{\tr}{{\on{tr}}}
\nc{\GSp}{{\on{GSp}}} \nc{\GU}{{\on{GU}}} \nc{\SL}{{\on{SL}}}
\nc{\SU}{{\on{SU}}} \nc{\SO}{{\on{SO}}}
\nc{\nh}{{\Loc_{J^p}(\tau')}}
\nc{\bnh}{{\Loc_{\breve J^p}(\tau')}}
\nc{\bU}{{\overline{U}}} \nc{\IC}{{\on{IC}}}
\newcommand{\beqn}{\begin{equation*}}
\newcommand{\eeqn}{\end{equation*}}
\newcommand{\beq}{\begin{equation}}
\newcommand{\eeq}{\end{equation}}
\nc{\QM}{QM}
\begin{document}
\title{Affine Matsuki correspondence for sheaves}

       \author{Tsao-Hsien Chen} 
        
       \address{Department of Mathematics, University of Chicago, Chicago, 60637, USA}
       \email{chenth@math.uchicago.edu}
       \author{David Nadler} 
        
        \address{Department of Mathematics, UC Berkeley, Evans Hall,
Berkeley, CA 94720}
        \email{nadler@math.berkeley.edu}
       
\maketitle
\begin{abstract}
We lift the affine Matsuki correspondence between real and symmetric loop group orbits in affine Grassmannians to an equivalence of derived categories of 
sheaves. 
 In analogy with the finite-dimensional setting, our arguments depend upon the Morse theory of energy functions
obtained from symmetrizations of coadjoint orbits. The additional fusion structures  of the affine setting lead to further equivalences with Schubert constructible 
derived categories of sheaves on real affine Grassmannians.

\end{abstract}
\setcounter{tocdepth}{2} \tableofcontents

\section{Introduction}

This is the first of several papers devoted to the geometry and representation theory of real loop groups $LG_\bbR$, i.e.,~groups of maps from the circle into a real reductive (not necessarily compact) Lie group $G_\bbR$. Some of our primary motivations established here or in the sequels include the following:

\begin{enumerate}

\item  A lift of the {\em affine Matsuki correspondence}~\cite{N1} between real and symmetric loop group orbits in affine Grassmannians to an equivalence of derived categories of 
sheaves.

  \item A lift of the {\em Kostant-Sekiguchi correspondence}~\cite{S} between real and symmetric nilpotent orbits to an equivariant stratified homeomorphism (see \cite{CN2}).

\item  The development of a {\em representation theory of real loop groups} from the well-known setting of compact groups~\cite{PS} to general reductive groups.

\end{enumerate}

Of the preceding goals, the current paper establishes (1) which in turn provides the geometry underlying our approach to (2). It also introduces and establishes basic properties of the moduli of quasi-maps that play a fundamental role in (3). 

In what immediately follows, we describe the main results of (1) in more detail, including the remarkable relation of real and symmetric loop group orbits in affine flag varieties to 
real affine Schubert geometry. We then sketch some of the applications to (2), (3) and other topics to be established in sequel papers.

\subsection{Matsuki correspondence for sheaves}\label{flag manifold}
We begin by recalling the Matsuki correspondence for sheaves~\cite{MUV}. 
It intertwines
the Beilinson-Bernstein localization~\cite{BB} of Harish Chandra $(\frakg, K)$-modules with the Kashiwara-Schmid localization~\cite{KS} of (infinitesimal classes of) admissible representations of $G_\bbR$.

Let $\bbG_\mathbb R$ be a connected real reductive algebraic group, and
 $\bbG=\bbG_\bbR\otimes_\bbR\bbC$  its complexification.
From this starting point, one  constructs the following diagram of Lie groups
\beq\label{eq:diamond}
\xymatrix{&G&\\
K\ar[ru]\ar[ru]&G_\mathbb R\ar[u]&G_c\ar[lu]\\
&K_c\ar[lu]\ar[u]\ar[ru]&}
\eeq
Here $G=\bbG(\bC)$ and $G_\bbR=\bbG_\bbR(\bbR)$ are the Lie groups of complex and real points respectively, 
$K_c$ is a maximal compact subgroup of $G_\mathbb R$, with complexification $K$,
and $G_c$ is the maximal compact subgroup of $G$ containing $K_c$.

%

Let $\calB \simeq G/B$ be the flag manifold of Borel subgroups of $G$.  The groups
$K$ and $G_\bbR$ act on $\calB$ with finitely many orbits and 
the classical Matsuki correspondence~\cite{M}  provides
 an anti-isomorphism of orbit posets  
\beq
|K\backslash\calB|\longleftrightarrow|G_\bbR\backslash\calB|
\eeq
between the sets  of  $K$-orbits and $G_\bbR$-orbits  on 
$\calB$, each ordered with respect to orbit closures. The correspondence matches a $K$-orbit $\mO^+$ with the unique  
 $G_\bbR$-orbit $\mO^-$ such that the intersection 
$\mO=\mO^+\cap\mO^-$ is a non-empty $K_c$-orbit.

The Matsuki correspondence for
 sheaves~\cite{MUV}, as conjectured by Kashiwara, lifts this anti-isomorphism of posets  to an equivalence  
 \beq\label{Matsuki correspondence for shvs}
 D_c(K\backslash\calB)\is D_c(G_\bbR\backslash\calB)
 \eeq
between the bounded constructible $K$-equivariant and $G_\bbR$-equivariant   derived categories of $\mathcal B$.
The main ingredient of the proof is a Morse-theoretic interpretation and refinement 
of the Matsuki correspondence due to Uzawa. 

\quash{
Choose a highest weight representation 
$V_\lambda$ of $G$ associated to 
an integral, regular dominant weight $\lambda$. Then the corresponding 
line bundle $\mL_\lambda$ on $\calB$ gives us an embedding 
$\calB\hookrightarrow\mathbb P(V_\lambda)$ and the Fubini-Study metric on 
$\mathbb P(V_\lambda)$ restrict to a $G_c$-invariant 
metric on $\calB$.
We have the following refined version of Matsuki correspondence:

\begin{thm}\cite{MUV}
There is a $K_c$-invariant Bott-Morse function 
$f:\calB\to\bbR$ such that its gradient flow
$\phi_t$
with respect to the the metric above 
satisfies the following properties:
\begin{enumerate}
\item
The critical manifold of $f$, that is, the fixed point set of $\phi_t$, consists of 
finitely many $K_c$-orbits $\mO$.
The flow preserves 
$K$-and $G_\bbR$-orbits on $\calB$.
\item
The limit $\underset{t\ra\pm\infty}\lim\phi_t(x)$ exists for any $x\in\calB$.
For any critical $K_c$-orbit $\mO$, the set $\mO^+=\{x\in\calB|\ \underset{t\ra\infty}\lim\phi_t(x)\in\mO\}$ is a single 
$K$-orbit, the set $\mO^-=\{x\in\calB|\ \underset{t\ra-\infty}\lim\phi_t(x)\in\mO\}$
is a single $G_\bbR$-orbit, and the bijection $\mO^+\longleftrightarrow\mO^-$ 
between $K$-and $G_\bbR$-orbits is the Matsuki correspondence.

\end{enumerate}
\end{thm}
}

\subsection{Affine Matsuki correspondence for sheaves}
Now let us turn to the affine setting.  

Let $\calO = \bbC[[t]]$ be the ring of formal power series, and $\calK = \bbC((t))$ the field of formal Laurent series. Let $D= \Spec \calO$ be the formal disk, and $D^\times = \Spec \calK$ the formal punctured disk.
Let $\bC[t,t^{-1}]$ be the ring of Laurent polynomials so that $\bbG_m = \Spec  \bC[t,t^{-1}]$.
%
%

In place of diagram~\eqref{eq:diamond}, we take the diagram of loop groups
\beq\label{eq:aff diamond}
\xymatrix{&G(\calK)&\\
K(\calK)\ar[ru]\ar[ru]&LG_\mathbb R\ar[u]&LG_c\ar[lu]\\
&LK_c\ar[lu]\ar[u]\ar[ru]&}
\eeq
Here $G(\calK)$ and $K(\calK)$ are the formal loop groups of maps $D^\times \to G$ and $D^\times  \to K$ respectively, $LG_\bbR, LG_c$, and $LK_c$ 
are the subgroups of the polynomial loop group $LG=G(\bC[t,t^{-1}])$  of those maps 
 that take the unit circle $S^1$ into 
$K_c, G_c$, and $G_\bbR$ respectively.

In this paper, the role of the flag manifold $\calB \simeq G/B$ will be played 
by the affine Grassmannian $\Gr=G(\calK)/G(\calO)$.\footnote{Throughout this paper, we will be concerned exclusively with the topology of $\Gr$ and
related moduli and ignore their potentially non-reduced structure.}  (In a sequel paper~\cite{CN1}, we will extend many of our results to the affine flag manifold  
$\Fl=G(\calK)/I$, where $I \subset G(\calO)$ is an Iwahori subgroup. Our focus in this paper is the remarkable connection between the Matsuki correspondence for  the affine Grassmannian  and real  Schubert geometry as highlighted in Sect.~\ref{Relation to Schubert geometry} below.)

The paper~\cite{N1} establishes a Matsuki correspondence
for the affine Grassmannian: there is an anti-isomorphism of orbit posets  
\beq\label{AMat}
|K(\calK)\backslash\Gr|\longleftrightarrow|LG_\bbR\backslash\Gr|
\eeq
between the sets of $K(\calK)$-orbits and $LG_\bbR$-orbits on 
$\Gr$,
each ordered with respect to orbit closures. The correspondence matches a $K(\calK)$-orbit $\mO_K$
with the unique
$LG_\bbR$-orbit $\mO_\bbR$  such that the intersection 
$\mO_c=\mO_K\cap\mO_\bbR$ is a non-empty $LK_c$-orbit. 

Furthermore, the paper~\cite{N1} provides an explicit parametrization of the orbit posets
(see Sect.~\ref{orbits} for a review).

The first main result of this paper is the following Morse-theoretic interpretation and refinement of the Matsuki correspondence for 
the affine Grassmannian:

\begin{thm}[Theorem \ref{flow} below]\label{first result}
There is a $LK_c$-invariant function 
$E:\Gr\to\bbR$ and $LG_c$-invariant metric on $\Gr$ such that the associated  gradient $\nabla E$ and gradient-flow
$\phi_t$  satisfy the following:
\begin{enumerate}
\item The critical locus $\nabla E = 0$ is a disjoint union of 
$LK_c$-orbits.
\item
The gradient-flow $\phi_t$
preserves the $K(\calK)$-and $LG_\bbR$-orbits.
\item
The limits $\lim_{t\ra\pm\infty}\phi_t(\gamma)$ of the gradient-flow exist for any $\gamma\in\Gr$.
For each  $LK_c$-orbit $\mO_c$ in the critical locus,
the stable and unstable sets 
\beq\label{eq:morse slows}
\xymatrix{
\mO_K=\{\gamma\in\Gr|\underset{t\ra\infty}\lim_{}\phi_t(\gamma)\in\mO_c\}
&
\mO_\bbR=\{\gamma\in\Gr|\underset{t\ra-\infty}\lim_{}\phi_t(\gamma)\in\mO_c\}
}\eeq
are a single $K(\calK)$-orbit and $LG_\bbR$-orbit respectively.
\item
The correspondence between orbits $\mO_K\longleftrightarrow\mO_\bbR$ defined by \eqref{eq:morse slows} recovers the affine Matsuki correspondence~\eqref{AMat}.

\end{enumerate}

\end{thm}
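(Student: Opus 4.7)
The plan is to adapt the Morse-theoretic approach of Uzawa (as used in~\cite{MUV} for the finite-dimensional theorem) to the infinite-dimensional affine setting, guided by the abstract's hint that $E$ arises from a ``symmetrization of a coadjoint orbit''. First I would construct the metric and the function. Choose an integrable highest-weight representation $V_\Lambda$ of the affine Kac--Moody Lie algebra of $\frakg$, equipped with its natural positive-definite Hermitian form compatible with the compact form $LG_c$; this gives an embedding $\Gr \hookrightarrow \bbP(V_\Lambda)$ whose pullback of the Fubini--Study metric is an $LG_c$-invariant K{\"a}hler metric on $\Gr$. Let $\mu : \Gr \to i\,\Lie(LG_c)^*$ be the associated moment map, and let $\theta$ be the Cartan involution of $G$ singling out $K$, extended loopwise to $LG_c$ with fixed locus $LK_c$. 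Then set
\beqn
E(\gamma) \;=\; \tfrac{1}{2}\bigl\langle \mu(\gamma)-\theta\mu(\gamma),\; \mu(\gamma)-\theta\mu(\gamma) \bigr\rangle,
\eeqn
which is manifestly $LK_c$-invariant and whose critical set lies over the $\theta$-fixed part of the moment image.

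With $E$ and the metric in place, properties (1)--(3) should follow from the standard Morse playbook, adapted with care to the ind-scheme setting. For (1), direct computation of $dE$ shows that $\nabla E(\gamma)=0$ forces $\mu(\gamma)\in\Lie(LK_c)^*$, and in the spirit of Atiyah--Bott/Kirwan this picks out $LK_c$-invariant loops, which stratify into $LK_c$-orbits. For (2), the symmetrized form of $E$ makes $\nabla E$ at every $\gamma$ tangent both to $K(\calK)\cdot\gamma$ and to $LG_\bbR\cdot\gamma$; this is the affine analog of Uzawa's key tangency calculation, resting on the fact that $\theta$ intertwines the complex and real loop-group actions in the appropriate way. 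For existence of $\omega$- and $\alpha$-limits in (3), $E$ is monotone along $\phi_t$, bounded below, and --- provided $E$ is proper on $\Gr$, which one arranges via the choice of $V_\Lambda$ --- has sublevel sets contained in some finite-dimensional Schubert subvariety $\bGr_\lambda$; Palais--Smale on each such $\bGr_\lambda$ then delivers convergence.

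The decisive content of (3) and (4) is a local slice analysis at each critical $LK_c$-orbit $\mO_c$. At a point $x\in\mO_c$, decompose $T_x\Gr = T_x\mO_c \oplus T_x^+ \oplus T_x^-$ into null, positive, and negative eigenspaces of the Hessian of $E$. Using the explicit parametrization of $K(\calK)$- and $LG_\bbR$-orbits recalled from~\cite{N1}, I would identify $T_x^+$ with a complement to $T_x\mO_c$ inside $T_x(K(\calK)\cdot x)$ and $T_x^-$ with a complement inside $T_x(LG_\bbR\cdot x)$; the stable/unstable manifold theorem then identifies the stable and unstable sets in~\eqref{eq:morse slows} with $K(\calK)\cdot x$ and $LG_\bbR\cdot x$ respectively. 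Claim (4) then follows since by construction $\mO_c = \mO_K\cap\mO_\bbR$ is the unique critical $LK_c$-orbit in the pair, which is the defining property of~\eqref{AMat}. The main obstacle I anticipate is the honest infinite-dimensional analysis: setting up the gradient flow so that trajectories stay in the ind-algebraic $\Gr$ rather than escape to a Hilbert completion, and verifying that stable and unstable manifolds are the polynomial loop-group orbits themselves and not merely their topological closures. The primary leverage is the exhaustion $\Gr = \on{colim}_\lambda \bGr_\lambda$ by finite-dimensional projective Schubert varieties together with the properness of $E$, which should confine flow lines to finite-dimensional subvarieties where compact Morse theory applies directly.
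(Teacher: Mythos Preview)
Your overall architecture---an $LK_c$-invariant energy-type function, tangency of its gradient to both orbit families, then Morse--Bott analysis at the critical $LK_c$-orbits---matches the paper. The genuine gap is exactly at the step you flag as the ``main obstacle.'' Your plan for (3) is that properness of $E$ forces sublevel sets into finite-dimensional Schubert varieties $\bGr_\lambda$, whereupon compact Morse theory applies. But the function is $LK_c$-invariant and every $LK_c$-orbit (indeed every $\mO_c^\lambda$ and every $K(\calK)$-orbit $\mO_K^\lambda$) is infinite-dimensional: each is an $\Omega K_c$-bundle over a finite-dimensional base in $\Omega X_c$. Hence no nontrivial sublevel set can sit inside any $\bGr_\lambda$, and the Schubert-confinement route to Palais--Smale does not get off the ground.

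The paper's remedy is the idea you are missing. Working on $\Gr\simeq\Omega G_c$, the function used is $E_1(\gamma)=\int_{S^1}\langle\gamma^{-1}\gamma'_1,\gamma^{-1}\gamma'_1\rangle\,d\theta$, where $\gamma'_1$ is the component of $\gamma'$ orthogonal to the $LK_c$-orbit. The key identity is $4E_1=E\circ\pi$ with $\pi(\gamma)=\theta(\gamma)^{-1}\gamma:\Omega G_c\to\Omega X_c$ and $E$ the standard Pressley--Segal energy. This buys two things. First, the Morse--Bott structure of $E_1$ is pulled back from $E|_{\Omega X_c}$, and since $\Omega X_c$ is the fixed locus of an involution on $\Omega G_c$ preserving the strata $S^\lambda,T^\lambda$, Pressley's energy-flow results apply there directly to give the Hessian decomposition. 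Second---and this is what replaces your Schubert argument---$E_1$, the metric, and the flow are $\Omega K_c$-invariant, so everything descends to the quotient $\Omega K_c\backslash\Gr$; there the $K(\calK)$-orbits become the \emph{finite}-dimensional strata $P^\lambda\subset\Omega X_c$, and convergence of the gradient flow follows from standard finite-dimensional results (Atiyah--Bott, Pressley). Note that your candidate $\tfrac12\|\mu-\theta\mu\|^2$ projects $\gamma^{-1}\gamma'$ onto the fixed subspace $L(i\frakp_\bbR)$ rather than onto $\on{Ad}_{\gamma^{-1}}L(i\frakp_\bbR)$, so it is not clear it enjoys the factorization through $\pi$, without which this descent argument is unavailable. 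One further wrinkle: the tangency (2) is not symmetric in the paper's proof---tangency to $LG_\bbR$-orbits is a direct computation with the Hamiltonian $R_1(\gamma)=\gamma'_1-\gamma\gamma'_1(0)$, but tangency to $K(\calK)$-orbits is obtained indirectly via $\nabla E_1=\nabla E-\nabla E_0$, using that $K(\calK)$-orbits are rotation-invariant complex submanifolds (so $\nabla E$ is tangent) together with a separate direct computation for $\nabla E_0$.
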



Using the above refinement of the affine  Matsuki correspondence~\eqref{AMat}, we prove 
a Matsuki correspondence for sheaves on the affine Grassmannian
in analogy with \eqref{Matsuki correspondence for shvs}.
In order to make sense of the bounded constructible $K(\calK)$-and $LG_\bbR$-equivariant  derived categories 
of $\Gr$, we give  
moduli interpretations of the quotient stacks $K(\calK)\backslash\Gr$
and $LG_\bbR\backslash\Gr$.  For simplicity, for the rest of the introduction,
except Sect. \ref{KS},
we 
assume $K$ is connected.

Let us first discuss the quotient $LG_\bbR\backslash\Gr$. 
Consider the moduli stack $\Bun_\bbG(\mathbb P^1)$ of $\bbG$-bundles on the projective line $\bbP^1$,
and its standard real form $\Bun_{\bbG_\bbR}(\bbP_\bbR^1)$ of $\bbG_\bbR$-bundles on 
the real projective line $\bbP_\bbR^1$. The real points of 
$\Bun_{\bbG_\bbR}(\bbP_\bbR^1)$  naturally form a real analytic stack 
we denote by $\Bun_\bbG(\mathbb P^1)_\bbR$. 
In general, it is disconnected
and we denote by $\Bun_\bbG(\mathbb P^1)_{\bbR,\alpha_0}$
the union of those components consisting of 
$\bbG_\bbR$-bundles on $\bbP_\bbR^1$ that are 
trivializable at the point $\infty \in \bbP^1_\bbR$ (see Sect.~\ref{uniform} for details).

In Proposition~\ref{uniformizations at complex x}, we prove the following:
\beq\label{Moduli int for LG_R}
\begin{gathered}
\text{The quotient $LG_\bbR\backslash\Gr$ is a real analytic stack isomorphic to $\Bun_\bbG(\bbP^1)_{\bbR,\alpha_0}$.}
\end{gathered}
\eeq
Thus the real analytic stack
$LG_\bbR\backslash\Gr$ is locally of finite type and we have a well-defined 
category of sheaves on it.
\begin{definition}
Let $D_c(LG_\bbR\backslash\Gr)$ 
 be the bounded constructible derived category of  sheaves on $LG_\bbR\backslash\Gr$. 
We set $D_!(LG_\bbR\backslash\Gr)$ 
to  be the full subcategory of 
$D_c(LG_\bbR\backslash\Gr)$ consisting of all  complexes that are extensions by zero off of finite type substacks.
\end{definition}

Next let us discuss the quotient 
$K(\calK)\backslash\Gr$. In general, the $K(\calK)$-orbits on $\Gr$ are 
neither finite-dimensional nor finite-codimensional 
(unlike the $LG_\bbR$-orbits on $\Gr$ which are finite-codimensional).
Thus there is not a naive
approach  to sheaves on 
$K(\calK)\backslash\Gr$ with traditional methods.
To overcome this, we use the observation that
 the quotient $LK_c\backslash\Gr$ is 
a real analytic 
ind-stack of ind-finite type, i.e.,
an inductive limit of 
real analytic stacks of finite type. We will take a certain subcategory of sheaves on 
$LK_c\backslash\Gr$ as a replacement for 
$K(\calK)$-equivariant sheaves on $\Gr$. 

To give more details,  
denote by $z\mapsto \bar z$ the  standard conjugation of $\bbP^1$ with real form $\bbP^1_\bbR$, and equip  $(\bbP^1)^2$ with the conjugation $(z_1, z_2) \mapsto (\bar z_2, \bar z_1)$. Its real points  $(\bbP^1)^2_\bbR$   form a real analytic space  isomorphic to $\bbP^1(\bbC)$ via the projection $(z_1, z_2) \mapsto z_1$.

Next, introduce the ind-stack of 
quasi-maps $QM^{(2)}(\bbP^1, G, K)$ classifying  
$(z_1, z_2, \mE,\sigma)$ where $(z_1, z_2)$ is a point of $(\bbP^1)^2$, 
$\mE$ is a $\bbG$-bundle on $\bbP^1$, and 
$\sigma$ is a section 
$\bbP^1\setminus \{z_1, z_2\}\to\mE\times^G G/K$. 
The given  conjugations on $(\bbP^1)^2$, $G, K$ induce a conjugation on 
$QM^{(2)}(\bbP^1, G, K)$, and  we denote by 
$QM^{(2)}(\bbP^1, G, K)_\bbR$ the real analytic 
ind-stack of its real points.
There is  a natural projection  $QM^{(2)}(\bbP^1, G, K)_\bbR\to
\Bun_G(\bbP^1)_\bbR$, $(z_1, z_2, \mE,\sigma) \mapsto \mE$, and we write 
$QM^{(2)}(\bbP^1, G, K)_{\bbR, \alpha_0}$ for the pre-image of the 
components $\Bun_G(\bbP^1)_{\bbR,\alpha_0}$.

We also have the natural projection  $QM^{(2)}(\bbP^1, G, K)_\bbR\to (\bbP^1)^2_\bbR \is \bbP^1(\bbC)$,
$(z_1, z_2, \mE,\sigma) \mapsto z_1$. 
For $z \in \bbP^1(\bbC)$,  denote by 
$QM^{(2)}(\bbP^1,z, G, K)_\bbR$ the fiber of  $QM^{(2)}(\bbP^1, G, K)_\bbR$ over $z$, and  
by $QM^{(2)}(\bbP^1, z, G, K)_{\bbR, \alpha_0}$ the intersection of the fiber with $QM^{(2)}(\bbP^1, G, K)_{\bbR, \alpha_0}$.
In particular, for the (non-real)  point $i\in \bbP^1(\bbC)$, we have the  fiber 
$QM^{(2)}(\bbP^1, i, G, K)_\bbR$, and the intersection $QM^{(2)}(\bbP^1, i, G, K)_{\bbR, \alpha_0}$.

In section \ref{QMaps}, we prove the following:
\beq
\begin{gathered}
\text{
The quotient $LK_c\backslash\Gr$ is a real analytic ind-stack isomorphic to $QM^{(2)}(\bbP^1, i, G, K)_{\bbR, \alpha_0}$.}
\end{gathered}
\eeq
Thus the real analytic ind-stack
$LK_c\backslash\Gr$ is  of ind-finite type and we have a well-defined 
category of sheaves on it.

Finally, denote by $\calS$ the stratification  of $LK_c\backslash\Gr$ with strata the
$LK_c$-quotients of $K(\calK)$-orbits.
\quash{Note the strata 
are finite-dimensional.}
By Theorem \ref{first result}, each $K(\calK)$-orbit 
$\calO_K$ deformation retracts to an $LK_c$-orbit 
$\mO_c$.
This suggests the following definition.

\begin{definition}

Let $D_c(LK_c\backslash\Gr)$ 
be the bounded constructible derived category of  sheaves on $LK_c\backslash\Gr$. 
We set $D_c(K(\calK)\backslash\Gr)$ to be the full subcategory
of $D_c(LK_c\backslash\Gr)$
of complexes constructible with 
respect to the stratification $\calS$.
\end{definition}

We are now ready to state our second main result,
the affine Matsuki correspondence for sheaves.

\begin{thm}[Theorem \ref{AM} below]\label{second result}
There is an equivalence of categories 
\[
\xymatrix{
\Upsilon:D_c(K(\calK)\backslash\Gr)\ar[r]^-\sim &  D_!(LG_\bbR\backslash\Gr)
}\]
\end{thm}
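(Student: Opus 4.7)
The strategy adapts the Morse-theoretic proof of Mirković--Uzawa--Vilonen \cite{MUV} for \eqref{Matsuki correspondence for shvs} to the present infinite-dimensional setting, with Theorem \ref{first result} as the essential input. Let $Z \subset \Gr$ denote the critical locus $\{\nabla E = 0\}$. By Theorem \ref{first result}(1), $Z$ is a disjoint union of $LK_c$-orbits, with one orbit $\mO_c$ for each pair of corresponding affine Matsuki orbits $\mO_K \longleftrightarrow \mO_\bbR$. The quotient $LK_c\backslash Z$ inherits a structure of real analytic ind-stack of ind-finite type from $LK_c\backslash\Gr$; I write $D_c(LK_c\backslash Z)$ for its bounded constructible derived category.

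The plan is to factor the desired equivalence through this intermediate category,
\[
D_c(K(\calK)\backslash\Gr) \xrightarrow{\;\Upsilon_+\;} D_c(LK_c\backslash Z) \xleftarrow{\;\Upsilon_-\;} D_!(LG_\bbR\backslash\Gr),
\]
and to define $\Upsilon := \Upsilon_-^{-1} \circ \Upsilon_+$. By Theorem \ref{first result}(2)--(4), the gradient flow produces two $LK_c$-equivariant retractions
\[
p_\pm : \Gr \lra Z, \qquad p_\pm(\gamma) = \lim_{t\to\pm\infty}\phi_t(\gamma),
\]
whose fibers over a critical orbit $\mO_c$ are, respectively, the corresponding $K(\calK)$-orbit $\mO_K$ and $LG_\bbR$-orbit $\mO_\bbR$. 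I would take $\Upsilon_+$ to be $*$-restriction $i^*$ along $i : Z \hookrightarrow \Gr$, with inverse given up to shifts by $(p_+)^*$; and $\Upsilon_-$ to be $!$-restriction $i^!$, with inverse $(p_-)^!$. The asymmetry between $*$- and $!$-constructions on the two sides reflects the geometric asymmetry between finite-dimensional $K(\calK)$-orbits and finite-codimensional $LG_\bbR$-orbits, and is precisely what the $D_c$ versus $D_!$ distinction encodes.

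The main technical work, and in my view the central obstacle, is to establish the stratum-wise equivalences: for each critical $LK_c$-orbit $\mO_c$ one must show that the flow-induced retractions $p_+ : \mO_K \to \mO_c$ and $p_- : \mO_\bbR \to \mO_c$ are $LK_c$-equivariant deformation retracts inducing equivalences of the corresponding equivariant constructible derived categories. This requires (i) constructing $LK_c\backslash Z$ and the maps $p_\pm$ rigorously as morphisms of ind-stacks, leveraging the moduli interpretations \eqref{Moduli int for LG_R} and $LK_c\backslash\Gr \simeq QM^{(2)}(\bbP^1, i, G, K)_{\bbR,\alpha_0}$; (ii) sufficient uniform convergence of $\phi_t$ as $t \to \pm\infty$ to realize $p_\pm$ as honest retractions, not merely pointwise limits; and (iii) verifying that the stabilizer data along the flow is compatible so that equivariance descends correctly. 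Once these local equivalences are in hand, a standard recollement argument, using that the orbit-closure posets on both sides match under \eqref{AMat}, globalizes them to the asserted equivalence of triangulated categories, with compatibility of boundary triangles guaranteed by part (4) of Theorem \ref{first result}.
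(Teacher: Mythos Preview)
Your strategy is in the right spirit but differs structurally from the paper's proof. The paper does not factor $\Upsilon$ through $D_c(LK_c\backslash Z)$. Instead it defines $\Upsilon$ directly as the restriction of $u_!$ to $D_c(K(\calK)\backslash\Gr)$, where $u:LK_c\backslash\Gr\to LG_\bbR\backslash\Gr$ is the quotient map, and proves this is an equivalence by two explicit computations: first (Lemma~\ref{surjective}) that $\Upsilon$ sends the standard sheaf $\calS_*^+(\lambda,\tau)$ to the costandard sheaf $\calS_!^-(\lambda,\tau\otimes\mL_\lambda)[d_\lambda]$, and second (Lemma~\ref{full}) that $\Upsilon$ is fully faithful, checked by reducing Hom-spaces to the case of a $!$-extension against a $*$-extension on single orbits. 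The Matsuki flow enters only locally, through a Braden-type contraction identity (Lemma~\ref{Braden}) and the factorization $u^\lambda=p^\lambda\circ\phi_-^\lambda$ of Lemma~\ref{bijection}; no global retraction $p_\pm$ is ever constructed as a morphism of ind-stacks. Your approach trades this orbit-by-orbit bookkeeping for a cleaner conceptual picture, but at the price of exactly the obstacles you flag in (i)--(ii): realizing $p_\pm$ as honest maps of real analytic ind-stacks and controlling their convergence uniformly. The paper sidesteps these entirely.

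Two smaller points. Your $\Upsilon_-$ is underspecified: $i^!$ acts on sheaves on $LK_c\backslash\Gr$, not on $LG_\bbR\backslash\Gr$, so you need an intervening $u^*$ or $u^!$, and correspondingly your proposed inverse $(p_-)^!$ lands in the wrong category without a further push along $u$. And you write ``finite-dimensional $K(\calK)$-orbits,'' but the paper stresses that $K(\calK)$-orbits on $\Gr$ are in general neither finite-dimensional nor finite-codimensional; what is finite-dimensional is the quotient $\Omega K_c\backslash\mO_K^\lambda\simeq P^\lambda$, and this is precisely why the paper (and implicitly your argument) must work throughout on the $LK_c$-quotient rather than on $\Gr$ itself.
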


\begin{remark}
The category $D_c(K(\calK)\backslash\Gr)$, respectively $D_!(LG_\bbR\backslash\Gr)$,
is generated by standard, respectively costandard, objects,
and 
the equivalence $\Upsilon$ maps standard objects to 
costandard objects.
\end{remark}

\begin{remark}
Our first two main results, Theorems~\ref{first result} and~\ref{second result}, admit natural generalizations from the affine Grassmannian to any (partial) affine flag manifold. (Beyond  the affine Grassmannian, we do not know whether the orbit posets have as simple a parameterization as recounted in Sect.~\ref{orbits}.) The addition of Iwahori and other level structures offers further interesting geometry, especially in families as we vary their place along the curve, and we postpone details to the sequel paper~\cite{CN1}.
\end{remark}

\begin{remark}
In place of the standard conjugation
  $z\mapsto \bar z$ of the projective line $\bbP^1$, we could take the ``antipodal" 
  conjugation $z\mapsto -\bar z^{-1}$ whose real points are empty. In place of 
    diagram~\eqref{eq:aff diamond}, we would find the diagram of ``twisted" loop groups
\beq\label{eq:tw aff diamond}
\xymatrix{&G(\calK)&\\
G_\theta(\calK)\ar[ru]\ar[ru]&L_\eta G \ar[u]&L_{\eta_c} G\ar[lu]\\
&L_{\eta_c} K\ar[lu]\ar[u]\ar[ru]&}
\eeq
Here $G_\theta(\calK)$ is the subgroup of maps $\gamma:D^\times \to G$ such that  $\gamma(-z) = \theta(\gamma(z))$ where $\theta:G\to G$ is the involution that cuts out $K\subset G$. Similarly, $L_{\eta} G$, respectively $L_{\eta_c} G$,
is the subgroup of  maps $\gamma:\bC^\times \to G$ such that $\gamma(-\bar z^{-1}) = \eta(\gamma(z))$,  
respectively $\gamma(-\bar z^{-1}) = \eta_c(\gamma(z))$, 
where $\eta:G\to G$, respectively  $\eta_c:G\to G$,   is the conjugation that cuts out $G_\bbR\subset G$,
respectively $G_c\subset G$.
Lastly, $L_{\eta_c} K$ is  the subgroup of  maps $\gamma:\bC^\times \to K$ such that $\gamma(-\bar z^{-1}) = \eta_c(\gamma(z))$,  
and  is the intersection of any two of the above three subgroups.

We expect statements analogous to our first two main results, Theorems~\ref{first result} and~\ref{second result}, along with the moduli interpretations that underlie them, to hold with  this setup as well.
\end{remark}

\subsection{Relation to Schubert geometry}\label{Relation to Schubert geometry}

In this section, we state our third main result, a remarkable connection between
the Matsuki correspondence for  the affine Grassmannian  and real  Schubert geometry.

Let $\calO_\bbR = \bbR[[t]]$, and $\calK_\bbR = \bbR((t))$.
Consider the real affine Grassmannian
$\Gr_\bbR=G_\bbR(\calO_\bbR)/G_\bbR(\calO_\bbR)$.
The group $G_\bbR(\calO_\bbR)$, respectively 
$G_\bbR(\bbR[t^{-1}])$, acts on $\Gr_\bbR$ with finite-dimensional, respectively finite-codimensional, orbits.

Recall the uniformization of real analytic stacks of \eqref{Moduli int for LG_R}:
\beq
LG_\bbR\backslash\Gr \is \Bun_\bbG(\bbP^1)_{\bbR,\alpha_0}
\eeq
In its construction, we view $\Gr$ as based at the (non-real) point  $i\in \bbP^1(\bbC)$.
When we instead focus on the (real) point $0\in \bbP^1(\bbR)$, we obtain an alternative uniformization:
\beq G_\bbR(\bbR[t^{-1}])\backslash\Gr_\bbR\is\Bun_{\bbG}(\bbP^1)_{\bbR,\alpha_0}
\eeq

Let $D_c(G_\bbR(\calO_\bbR)\backslash\Gr_\bbR)$ be the bounded constructible $G_\bbR(\calO_\bbR)$-equivariant  derived category of sheaves on the ind-scheme $\Gr_\bbR$. 
Let $D_c(G_\bbR(\bbR[t^{-1}])\backslash\Gr_\bbR)$ be the bounded constructible derived category of sheaves on
the stack
 $G_\bbR(\bbR[t^{-1}])\backslash\Gr_\bbR$, and set
  $D_!(G_\bbR(\bbR[t^{-1}])\backslash\Gr_\bbR)$ 
to  be the full subcategory of complexes that are extensions by zero off of finite type substacks.

Recall the affine Matsuki correspondence for sheaves of Theorem~\ref{second result}: 
\beq\label{eq:mat recalled}
\xymatrix{
\Upsilon:D_c(K(\calK)\backslash\Gr)\ar[r]^-\sim & D_!(LG_\bbR\backslash\Gr)
}
\eeq
In Proposition~\ref{real RT}, we show the Radon transform provides an analogous equivalence:
\beq
\label{eq:radon}
\xymatrix{
\Upsilon_\bbR:D_c(G_\bbR(\calO_\bbR)\backslash\Gr_\bbR)\ar[r]^-\sim &  D_!(G_\bbR(\bbR[t^{-1}])\backslash\Gr_\bbR)
}
\eeq
The main ingredient in the proof of the equivalence $\Upsilon_\bbR$ is 
the natural $\bbR_{>0}$-action on $\Gr_\bbR$ (as Morse theory is the main ingredient in the proof of Theorem~\ref{second result}).

Our third main result is a nearby cycles equivalence intertwining the Matsuki correspondence of \eqref{eq:mat recalled}
and the Radon transform of \eqref{eq:radon}. 
To state it, recall the quasi-map family $QM^{(2)}(\bbP^1, G, K)_\bbR \to \bbP^1(\bbC)$. Recall as well for the (non-real) point $i\in \bbP^1(\bbC)$,
 the identification of the fiber:
\beq
LK_c\backslash\Gr\is QM^{(2)}(\bbP^1, i, G, K)_{\bbR, \alpha_0}.
\eeq
For the (real) point $0\in \bbP^1(\bbC)$, we have an analogous
identification of the fiber:
\beq
K_c\backslash\Gr_\bbR\is QM^{(2)}(\bbP^1, 0, G, K)_{\bbR, \alpha_0}.
\eeq
Taking nearby cycles in the family descends to a functor
\beq
\begin{xymatrix}{
\Psi:D_c(K(\calK)\backslash\Gr)  \ar[r]& D_c(G_\bbR(\calO_\bbR)\backslash\Gr_\bbR)
}
\end{xymatrix}
\eeq

\begin{thm}[Theorem \ref{equ} and \ref{diagram} below]
\label{third result}
There is a canonical commutative square of equivalences
\beq
\begin{xymatrix}{
D_c(K(\calK)\backslash\Gr)\ar[d]^-{\Upsilon}\ar[r]^-{\Psi}&D_c(G_\bbR(\calO_\bbR)\backslash\Gr_\bbR)\ar[d]^-{\Upsilon_\bbR}\\
D_!(LG_\bbR\backslash\Gr)\ar[r]^-{\Psi_\bbR}&D_!(G_\bbR(\bbR[t^{-1}])\backslash\Gr_\bbR)}
\end{xymatrix}
\eeq
where the equivalence $\Psi_\bbR$ is given by transport along the 
uniformization isomorphisms
\beq
LG_\bbR\backslash\Gr\is \Bun_\bbG(\bbP^1)_{\bbR,\alpha_0} \is G_\bbR(\bbR[t^{-1}])\backslash\Gr_\bbR
\eeq

\end{thm}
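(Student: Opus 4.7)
The plan proceeds in three steps, covering the two sub-statements (the equivalence of $\Psi$ in Theorem~\ref{equ}, and the commutativity of the square in Theorem~\ref{diagram}). First, I would prove $\Psi$ is an equivalence by using the quasi-map family $QM^{(2)}(\bbP^1, G, K)_{\bbR, \alpha_0} \to \bbP^1(\bbC)$, whose fiber at $i$ is $LK_c\backslash\Gr$ (stratified by $\calS$) and whose fiber at $0$ is $K_c\backslash\Gr_\bbR$ (stratified by $G_\bbR(\calO_\bbR)$-orbits). The key geometric input is that both stratifications are fiber-wise restrictions of a single stratification of the total space, with strata indexed compatibly under the orbit parametrization recalled in Section~\ref{orbits}. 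Nearby cycles along a real path from $i$ to $0$ then preserves stratifications, sends the standard object $j_!\underline{\bbC}$ on each $K(\calK)$-stratum to the standard object on the specialized $G_\bbR(\calO_\bbR)$-stratum, and hence induces an equivalence of stratified constructible categories. The functor $\Psi_\bbR$ is an equivalence by construction, being the composition of the two uniformization identifications at $i$ and at $0$ with $D_!(\Bun_\bbG(\bbP^1)_{\bbR, \alpha_0})$.

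For the commutativity of the square, it suffices to test equality on the generating standard objects $\Delta_\xi$ supported on $K(\calK)$-orbits. By the remark following Theorem~\ref{second result}, $\Upsilon(\Delta_\xi)$ is the costandard object on the Matsuki-dual $LG_\bbR$-orbit; by the first step, $\Psi(\Delta_\xi)$ is the standard object on the specialized $G_\bbR(\calO_\bbR)$-orbit; and the Radon transform property of $\Upsilon_\bbR$ from Proposition~\ref{real RT} sends this to the costandard on the Radon-dual $G_\bbR(\bbR[t^{-1}])$-orbit. The claim then reduces to showing these two costandards agree under $\Psi_\bbR$, i.e.~that the $LG_\bbR$-orbit and its Radon counterpart map to the same locally closed substack of $\Bun_\bbG(\bbP^1)_{\bbR, \alpha_0}$ under the respective uniformizations at $i$ and at $0$. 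This follows from the moduli descriptions of the orbits: in each case the image is cut out by the same discrete invariants of the bundle intrinsic to $\Bun_\bbG(\bbP^1)_{\bbR, \alpha_0}$, so the Matsuki and Radon dualities are matched after passage to the uniformized moduli stack.

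The main obstacle will be the matching of orbit strata across the family: verifying that nearby cycles from the fiber at $i$ to the fiber at $0$ respects the orbit parametrizations in the precise manner required, and that the Matsuki and Radon dualities correspond after transport to $\Bun_\bbG(\bbP^1)_{\bbR, \alpha_0}$. A related subtlety is that $\Upsilon$ and $\Upsilon_\bbR$ are constructed by rather different mechanisms---$\Upsilon$ via Morse theory of the energy functional $E$ of Theorem~\ref{first result}, and $\Upsilon_\bbR$ via the $\bbR_{>0}$-action on $\Gr_\bbR$---so making the comparison clean may require interpreting both as fiber-wise instances of a single geometric construction on the total space of the quasi-map family, rather than comparing them only through their common action on generators.
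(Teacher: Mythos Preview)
Your outline is broadly on target but differs from the paper's logic in two important respects.

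First, the order of implications is reversed. You propose to prove $\Psi$ is an equivalence directly and then deduce commutativity, whereas the paper proves commutativity first and deduces that $\Psi$ is an equivalence from the other three arrows already being equivalences. Concretely, the paper constructs a natural transformation $\Upsilon_\bbR\circ\Psi\Rightarrow\Psi_\bbR\circ\Upsilon$ from the base change map $(f_0)_!i^!\to(\bar i)^!f_!$, verifies it is an isomorphism on the generators $\calS_*^+(\lambda,\tau)$, and concludes. Your proposal speaks of ``testing equality on the generating standard objects,'' but pointwise isomorphism of objects is not enough: you need an actual natural transformation to check. This is a genuine gap in your plan as written.

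Second, your mechanism for showing that $\Psi$ sends standards to standards is different. You invoke topological triviality of the quasi-map family to argue that nearby cycles is essentially transport along a locally constant family of stratified spaces. The paper does not use this; the topological trivialization is deferred to the sequel. Instead, the paper uses only the Whitney/Thom property of the family (Lemma~\ref{Whitney for QM_R}) to show $\Psi$ lands in the correct constructible category, and then computes $\Psi(\calS_*^+(\lambda,\tau))$ by hyperbolic localization (a Braden-type theorem) with respect to the $\bbR_{>0}$-flow $\psi_z$ on the rigidified quasi-maps space. The key step (Proposition~\ref{Psi_K}) compares two hyperbolic localizations: one from the $\bbR_{>0}$-flow on the total family, one from the Matsuki flow on the generic fiber. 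Your approach via topological triviality would work but imports a stronger input than the paper needs here.

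Finally, a minor point: the paper works with $*$-standards $\calS_*^+(\lambda,\tau)=(i_+^\lambda)_*\tau^+$, not the $!$-standards $j_!\underline{\bbC}$ you mention; this matters for which hyperbolic localization identity one applies.
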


\subsection{Further directions}

In this final section of the introduction, we discuss results to appear in sequel papers that build 
on those of the current paper.

\subsubsection{Kostant-Sekiguchi correspondence}\label{KS}
One of our primary motivations for studying the affine Matsuki correspondence is its application to the
Kostant-Sekiguchi correspondence. This will be the subject of the sequel paper~\cite{CN2} which we briefly survey here. 

Let $\fg$, $\fg_\bbR$ and $\frak k$
be the Lie algebras of 
$G$, $G_\bbR$ and $K$ respectively, and introduce the Cartan decomposition  $\fg=\frak k\oplus\fp$.

Let $\calN \subset \fg$ be the nilpotent cone, and introduce
the real nilpotent cone $\calN_\bbR=\calN\cap\fg_\bbR$, and the 
$\fp$-nilpotent cone $\calN_\fp=\calN\cap\fp$. The adjoint actions of
$G$, $G_\bbR$ and $K$ preserve $\calN$,  $\calN_\bbR$  and $\calN_\fp$
respectively and have finitely many orbits.

The celebrated Kostant-Sekiguchi correspondence~\cite{S}
is a poset isomorphism 
\beq
|K\backslash\calN_\fp|\longleftrightarrow|G_\bbR\backslash\calN_\bbR|
\eeq
between sets of $K$-orbits and $G_\bbR$-orbits, each ordered with respect to orbit closure. 

Let $\mO \subset \calN_\fp$ be a $K$-orbit and $\mO'\subset \calN_\bbR $ the 
corresponding $G_\bbR$-orbit under the Kostant-Sekiguchi correspondence.
The papers~\cite{SV,V} establish the following remarkable result: 
\beq\label{nilpotent orbits}
\begin{gathered}
\text{There is a real analytic $K_c$-equivariant isomorphism 
$\mO\is\mO'$.}
\end{gathered}
\eeq

Now recall from Theorem~\ref{third result} the  equivalence 
\beq
\begin{xymatrix}{
\Psi:D_c(K(\calK)\backslash\Gr)  \ar[r]^-\sim & D_c(G_\bbR(\calO_\bbR)\backslash\Gr_\bbR)
}
\end{xymatrix}
\eeq
given by nearby cycles in the quasi-map family $QM^{(2)}(\bbP^1, G, K)_\bbR \to \bbP^1(\bbC)$. 

In the sequel paper~\cite{CN2}, we show the quasi-map family 
in fact admits a topological trivialization providing a $K_c$-equivariant homeomorphism
\beq\label{eq: homeo}
\begin{xymatrix}{
\Omega K_c \backslash\Gr  \ar[r]^-\sim & \Gr_\bbR
}
\end{xymatrix}
\eeq

We also introduce another quasi-map family  $QM^{(2)}(\calX, G, K)_\bbR \to \bbA^1(\bbC)$ induced by a degeneration $\calX\to \bbA^1$  of the projective line $\bbP^1$ to a nodal curve $\bbP^1 \vee \bbP^1$.
Its restriction to an open subspace can be modeled by the flat family of quotients
\beq\label{eq: quots family}
\begin{xymatrix}{
\Omega K_c \backslash\Gr  \ar@{~>}[r] & K(\bbC[t^{-1}])_1\backslash \Gr
}
\end{xymatrix}
\eeq
where $K(\bbC[t^{-1}])_1\subset K(\bbC[t^{-1}])$ is the kernel of evaluation at $\infty$.
 
Now we arrive at our intended application to the 
Kostant-Sekiguchi correspondence.
It is well-known~\cite{L} that when $G$ is of type $A$, the nilpotent cone $\calN$ embeds in the affine Grassmannian $\Gr$.
Furthermore, this induces  embeddings of the real and $\fp$-nilpotent cones:
\beq
\begin{xymatrix}{
\calN_\bbR \subset \Gr_\bbR & \calN_\fp \subset K(\bbC[t^{-1}])_1\backslash \Gr
}
\end{xymatrix}
\eeq
Applying the geometry of \eqref{eq: homeo} and \eqref{eq: quots family}, we obtain in type A a lift of
the Kostant-Sekiguchi correspondence:
\beq
\begin{gathered}\label{eq: kos-sek homeo}
\text{There is a $K_c$-equivariant
orbit-preserving homeomorphism 
$\calN_\fp\is\calN_\bbR$.}
\end{gathered}
\eeq
Thanks to the compatibility of  our constructions with inner automorphisms and Cartan involutions, we are in fact able to deduce  \eqref{eq: kos-sek homeo} for all classical types from the case of type $A$.

\subsubsection{Comparison of dual groups}

The paper  \cite{N1} associates to each real form $G_\bbR\subset G$ 
a reductive subgroup $H_{real}^\vee \subset G^\vee$ of the  dual group.\footnote{While the notation suggests
regarding $H_{real}^\vee$
 itself as a  dual group, we do not know of a concrete role for its dual group.} 
The construction of $H_{real}^\vee$ is via Tannakian formalism: its tensor category of finite-dimensional representations 
$\textup{Rep}(H_{real}^\vee)$ is realized as a certain full subcategory $Q_\bbR \subset D_c(G_\bbR(\calO_\bbR)\backslash \Gr_\bbR)$ of perverse sheaves on the real affine Grassmannian $\Gr_\bbR$. 

On the other hand, the papers~\cite{GN1, GN2} associate to every spherical subgroup $K\subset G$ a reductive
subgroup $H_{sph}^\vee \subset G^\vee$ of the dual group. Again,
the construction of $H_{sph}^\vee$ is via Tannakian formalism: its tensor category of finite-dimensional representations 
$\textup{Rep}(H_{sph}^\vee)$  can be realized as a certain full subcategory $Q_K \subset D_c(K(\calK)\backslash \Gr)$ of perverse sheaves where as usual we understand $D_c(K(\calK)\backslash \Gr)$ as complexes on a quasi-map space with target $G/K$.

When $K\subset G$ is the symmetric subgroup of a real form $G_\bbR\subset G$, we may ask whether the above two subgroups $H^\vee_{real}, H^\vee_{sph}\subset G^\vee$ coincide. Their inclusions into $G^\vee$ are determined under Tannakian
formalism by the respective tensor functors of restriction  
 $\textup{Rep}(G^\vee) \to \textup{Rep}(H_{real}^\vee)$, $\textup{Rep}(G^\vee) \to \textup{Rep}(H_{sph}^\vee)$.
By construction, these functors correspond to functors from the Satake category $\textup{Sat}_G \subset D_c(G(\calO)\backslash \Gr)$  to the respective categories $Q_\bbR, Q_K$.

Details of the following compatibility will be given in \cite{CN1}.
Recall from Theorem~\ref{third result} the  nearby cycles equivalence
\beq
\begin{xymatrix}{
\Psi:D_c(K(\calK)\backslash\Gr)  \ar[r]^-\sim & D_c(G_\bbR(\calO_\bbR)\backslash\Gr_\bbR)
}
\end{xymatrix}
\eeq
given by nearby cycles in the quasi-map family $QM^{(2)}(\bbP^1, G, K)_\bbR \to \bbP^1(\bbC)$

\begin{thm}
The functor $\Psi$ restricts to the horizontal tensor equivalence in a commutative diagram of tensor functors
\beq
\begin{xymatrix}{
& \textup{Sat}_G \ar[dl]\ar[dr]&\\
Q_K  \ar[rr]^-\sim && Q_\bbR
}
\end{xymatrix}
\eeq
\end{thm}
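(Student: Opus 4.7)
The strategy is to promote $\Psi$ from an equivalence of categories to a tensor equivalence by enlarging the one-pair quasi-map family that defines it to a Beilinson--Drinfeld style family with several marked pairs, and then to identify the restriction functors from $\textup{Sat}_G$ as fusion with a spectator factor.

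First, I would construct a multi-point version of the quasi-map family,
\[
QM^{(2),n}(\bbP^1,G,K)_\bbR \longrightarrow \bbP^1(\bbC)^n,
\]
classifying data $(z_1,\ldots,z_n,\mE,\sigma)$ where $\mE$ is a $\bbG$-bundle on $\bbP^1$ and $\sigma$ is a section of $\mE\times^G G/K$ on $\bbP^1\setminus\{z_j,\bar z_j\}$, cut out by the natural reality condition. Its fiber at $(i,\ldots,i)$ is an iterated $LK_c$-convolution model for $Q_K^{\otimes n}$ of the type used in~\cite{GN1,GN2}; its fiber at $(0,\ldots,0)$ is an iterated $K_c$-convolution model for $Q_\bbR^{\otimes n}$ of the type used in~\cite{N1}; and nearby cycles along a path $i \rightsquigarrow 0$ provides the iterated $\Psi^{\otimes n}$. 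On the one hand, collision of marked points along the diagonals $z_j=z_k$ specializes iterated convolution to the tensor product in each category. On the other hand, the commutativity of nearby cycles with this collision specialization (the real analytic incarnation of Gabber's theorem) endows $\Psi$ with a natural symmetric tensor structure.

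Next, I would verify that $\Psi$ carries $Q_K$ into $Q_\bbR$ and is essentially surjective. The objects of $Q_K$ are the $K(\calK)$-equivariant IC sheaves on the closures of the spherical orbits singled out in~\cite{GN1,GN2}, and those of $Q_\bbR$ the $G_\bbR(\calO_\bbR)$-equivariant IC sheaves on the matching real orbit closures of~\cite{N1}. By Theorems~\ref{first result} and~\ref{second result}, the relevant nearby cycles intertwine these orbit stratifications via the affine Matsuki bijection. Combining this with the stratified $t$-exactness of nearby cycles (up to the standard shift) on each stratum allows one to match IC generators on the two sides, giving an equivalence of the abelian hearts which, together with the preceding step, is automatically tensor compatible.

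Finally, to realize the commutative triangle with $\textup{Sat}_G$, I would enlarge the family further to allow ``bare'' marked points at which only the bundle $\mE$ is constrained (with no section $\sigma$ condition). The restriction functors $\textup{Sat}_G\to Q_K$ and $\textup{Sat}_G\to Q_\bbR$ are both realized as fusion with such a bare point, so placing it at an auxiliary location $w\in\bbP^1(\bbR)\setminus\{0\}$ transverse to the degeneration $i\rightsquigarrow 0$ renders their compatibility with $\Psi$ automatic from the commutativity of nearby cycles with spectator insertion. The hard part will be the second step: proving that the nearby cycles functor in the real analytic BD quasi-map family is $t$-exact for perverse sheaves generating $Q_K$ and $Q_\bbR$ and commutes with collision-fusion. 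In the standard complex algebraic BD setting this is routine (Gabber plus Mirkovi\'c--Vilonen style dimension estimates); here one must transport those arguments to the mixed real/complex quasi-map geometry of Sect.~\ref{QMaps}, for which the orbit parametrization reviewed in Sect.~\ref{orbits} and the gradient-flow picture of Theorem~\ref{first result} supply the needed stratified control.
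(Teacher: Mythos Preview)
The paper does not contain a proof of this theorem. It appears in the ``Further directions'' section of the introduction, prefaced by the sentence ``Details of the following compatibility will be given in \cite{CN1}'' --- so the result is only announced here, with the proof deferred to a sequel paper. There is therefore no proof in the present paper to compare your proposal against.

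That said, your outline is the natural one and is consistent with what the paper does set up: the nearby cycles functor $\Psi$ is already shown here (Theorem~\ref{third result}) to be an equivalence $D_c(K(\calK)\backslash\Gr)\simeq D_c(G_\bbR(\calO_\bbR)\backslash\Gr_\bbR)$, and the quasi-map families of Section~\ref{QMaps} are exactly the geometry one would extend to multi-point versions to get a fusion tensor structure, in the spirit of \cite{GN1,GN2} and \cite{N1}. Your identification of the hard step --- transporting $t$-exactness and Gabber-type commutation of nearby cycles with fusion to the real-analytic quasi-map setting --- is accurate; this is presumably what \cite{CN1} carries out.
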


\subsubsection{Representation theory of real loop groups}

Let us briefly sketch here another motivation for the results of this paper.

Recall the usual Matsuki correspondence for sheaves~\cite{MUV} intertwines
the Beilinson-Bernstein localization~\cite{BB} of Harish Chandra $(\frakg, K)$-modules with the Kashiwara-Schmid localization~\cite{KS} of (infinitesimal classes of) admissible representations of $G_\bbR$.  
We seek an analogous geometric approach to the representation theory of  real loop groups.

For a compact real form $G_c \subset G$, the  positive energy representation theory of the real loop group $LG_c$ 
offers analogues of the many beautiful geometric and combinatorial aspects of the representation  theory of $G_c$ itself.
For example, there is a Borel-Weil-Bott construction of irreducibles, a Weyl-Kac character formula
via localization, BGG resolutions
via Schubert geometry, among other now standard results~\cite{PS}. Furthermore, there is the celebrated fusion structure on level $k$ representations as
organized by rational conformal field theory.

In comparison,    
 for a non-compact real form $G_\bbR \subset G$, 
relatively little representation theory of the real loop group $LG_\bbR$  has been developed. 
 This is so even though there are longstanding motivations coming from 
 Chern-Simons theory for non-compact gauge group. 
  With the results of this paper in hand, one might hope to engineer  a representation theory of $LG_\bbR$
 by suitably ``globalizing" $K(\calK)$-equivariant and  $LG_\bbR$-equivariant sheaves on affine flag varieties. 
 Unfortunately, traditional global sections constructions,
following Beilinson-Bernstein or Kashiwara-Schmid,
   appear either to produce no new representations  or to lead to  semi-infinite pathologies. 
  
  We expect a theory of admissible representations of $LG_\bbR$
   to fit within the framework
 of representations on pro-vector spaces. More specifically, we conjecture the derived categories of equivariant sheaves in the affine Matsuki
 correspondence are  equivalent to categories of $LG_\bbR$-representations that are admissible in the sense that they are pro-objects
 in the positive energy representations of $LK_c$. 
 We plan to approach this in future work.

\subsection{Organization}
In Section \ref{orbits}, we recall the parametrization of 
$K(\calK)$-orbits and $LG_\bbR$-orbits on the affine Grassmannian
and the statement of the affine Matsuki correspondence.
We also establish some 
geometric properties for those orbits. In Section \ref{Morse flow}, we construct the 
Matsuki flow on the affine Grassmannian and we give a 
Morse-theoretic interpretation and refinement of the Matsuki correspondence for 
the affine Grassmannian. In Section \ref{Real BD}, we study real forms 
of Beilinson-Drinfeld Grassmannians. In Sections \ref{uniform} and \ref{QMaps}, we study moduli stacks of real bundles on $\bbP^1$ and quasi-maps. We study uniformizations for
those moduli stacks and use them to provide moduli interpretations for various 
quotients of the affine Grassmannian by subgroups of the loop group. 
In Section \ref{Affine Matsuki}, we prove the affine Matsuki correspondence for sheaves (Theorem \ref{second result}). 
In Section \ref{nearby cycles and Radon TF}, we prove the 
nearby cycles equivalences and 
the Radon transform equivalence (Theorem \ref{third result}).
In Section \ref{s:hecke}, we study the compatibility of Hecke actions.
In Appendix \ref{Real stacks}, we discuss real analytic stacks and categories of sheaves 
on real analytic stacks.

\subsection{Acknowledgements} 
T.H. Chen would like to thank the Max Planck Institute for 
Mathematics for support, hospitality, and a nice research environment.
D. Nadler would like to thank the Miller Institute for its inspiring environment.
The research of
T.H. Chen is supported by NSF grant DMS-1702337
and that of D. Nadler  by NSF grant DMS-1502178.

\section{$K(\calK)$ and $LG_\bbR$-orbits on $\Gr$}\label{orbits}
In this section we study 
$K(\calK)$ and $LG_\bbR$-orbits on the affine Grassmannian $\Gr$.

\subsection{Loop groups}
The real forms $G_\mathbb R$ and $G_c$ of $G$ correspond to 
anti-holomorphic involutions
$\eta$ and $\eta_c$. The involutions 
$\eta$ and $\eta_c$ commutes with each other and 
$\theta:=\eta\eta_c=\eta_c\eta$ is an involution of $G$. We have $K=G^\theta$, 
$G_\mathbb R=G^\eta$, and $G_c=G^{\eta_c}$.
We fix a maximal split tours $S_\mathbb R\subset G_\mathbb R$ and 
a maximal torus $T_\mathbb R$ such that $S_\mathbb R\subset T_\mathbb R$.
We write $S$ and $T$ for the complexification of $S_\mathbb R$ and $T_\mathbb R$.
We denote by $\Lambda_T$  the lattice of coweights of $T$ and 
$\Lambda_S$ the lattice of real coweights. We write $\Lambda_T^+$
the set of dominant coweight with respect to the Borel subgroup $B$ and 
define $\Lambda_S^+:=\Lambda_S\cap\Lambda_T^+$.
For any $\lambda\in\Lambda_T$ we define $\eta(\lambda)\in\Lambda_T$ as
\[\eta(\lambda):\bC^\times\stackrel{c}\ra\bC^\times\stackrel{\lambda}\ra T\stackrel{\eta}\ra T,\]
where $c$ is the complex conjugation of $\bC^\times$ with respect to $\mathbb R^\times$.
The assignment $\lambda\ra\eta(\lambda)$ defines an involution on $\Lambda_T$, which 
we denote by $\eta$,
and $\Lambda_S$ is the fixed points of $\eta$.

Let 
$LG:=G(\bC[t,t^{-1}])$ be the (polynomial) loop group associated to $G$. We define 
the following involutions on $LG$:
for any $(\gamma:\bC^\times\ra G)\in LG$ we set
\[\eta^\tau(\gamma):\bC^\times\stackrel{\tau}\ra\bC^\times\stackrel{c}\ra\bC^\times\stackrel{\gamma} \ra G\stackrel{\eta}\ra G\]
\[\eta^\tau_c(\gamma):\bC^\times\stackrel{\tau}\ra\bC^\times\stackrel{c}\ra\bC^\times\stackrel{\gamma} \ra G\stackrel{\eta_c}\ra G.\]
Here $\tau(x)=x^{-1}$ is the the inverse map.
Denote by $\calK=\bC((t))$ and $\mO=\bC[[t]]$.
We have the following diagram
\[
\xymatrix{&G(\calK)&\\
K(\calK)\ar[ru]^\theta\ar[ru]&LG_\mathbb R\ar[u]^{\eta^\tau}&LG_c\ar[lu]_{\eta_c^\tau}\\
&LK_c\ar[lu]\ar[u]\ar[ru]&}
\]
Here $LG_\mathbb R$ and $LG_c$ are the fixed points subgroups of the 
involutions
$\eta^\tau$ and $\eta^{\tau}_c$ on $LG$ respectively.
Equivalently, $LG_\mathbb R$ (resp. $LG_c$) is the 
subgroup of $LG$ consisting of maps that take the 
unit circle $S^1\subset\bC$ to $G_\bbR$ (resp. $G_c$).
We define the based loop group $\Omega G_c$ to be the subgroup of 
$LG_c$ consisting of maps that take $1\in S^1$ to $e\in G_c$.

\subsection{The based loop spaces $\Omega X_c$}\label{based for X_c}
We define $X\subset G$
(resp. $X_c\subset G_c$) to be the identity component of the 
fixed point subspace of the involution $\tilde\theta=\theta^{-1}$ on $G$ (resp. $G_c$).
The map $\pi:G\to X, \pi(g)=\tilde\theta(g)g$ induces a $G$-equivariant isomorphism 
$K\backslash G\is X$ (resp. $G_c$-equivariant isomorphism $K_c\backslash G_c\is X_c$).
We define the loop space $LX_c$ be the subspace of 
$LG_c$ consisting of 
maps that takes $S^1$ into $X_c$. We define the based loop space 
$\Omega X_c$ to be the subspace of 
$LX_c$ consisting of maps that takes $1\in S^1$ to $e\in X_c$.

\subsection{Real affine Grassmannians}\label{real affine Gr}
We recall results from \cite{N1} about the real affine Grassmannian.
Let $\Gr:=G(\calK)/G(\mathcal O)$ be the affine Grassmannian for $G$
and $\Gr_\mathbb R:=G_\mathbb R(\calK_\mathbb R)/G_\mathbb R(\mO_\mathbb R)$
be the real affine Grassmannian. For any $\lambda\in\Lambda_T^+$ we denote by 
$S^\lambda$ and $T^\lambda$ the $G(\mO)$ and $G(\bC[t^{-1}])$-orbit 
of $t^\lambda\in\Gr$. 
The orbits $S^\lambda$ and $T^\mu$ on $\Gr$ are transversal and 
the intersection $C^\lambda=S^\lambda\cap T^\lambda$ is isomorphic to 
the flag manifold $G/P^\lambda$ where the parabolic subgroup $P^\lambda$ is the 
stabilizer of $\lambda$. 
The affine Grassmannian $\Gr$ is the disjoint union of the orbits $S^\lambda$ (resp. $T^\lambda$)
for $\lambda\in\Lambda_T^+$
\[\Gr=\bigsqcup_{\lambda\in\Lambda_T^+} S^\lambda\ \ \ \ (resp.\ \ \ 
\Gr=\bigsqcup_{\lambda\in\Lambda_T^+} T^\lambda)\]
and we have 
\[\overline S^\lambda=\bigsqcup_{\mu\leq\lambda} S^\mu\ \ \ \ (resp.\ \ \ 
T^\lambda=\bigsqcup_{\lambda\leq\mu} T^\mu).\]
The intersection of $S^\lambda$ (resp. $T^\lambda$) with $\Gr_\bbR$ is nonempty if and only if 
$\lambda\in\Lambda_S^+$ and we write $S_\bbR^\lambda$ (resp. $T_\bbR^\lambda$), $\lambda\in\lambda_S^+$ for the intersection. We define $C_\bbR^\lambda$ to be the intersection of 
$S_\bbR^\lambda$ and $T_\bbR^\lambda$. $S_\bbR^\lambda$ (resp. $T_\bbR^\lambda$)
is equal to the $G_\bbR(\mO_\bbR)$-orbit (resp. $G_\bbR(\bbR[t^{-1}])$-orbit) of $t^\lambda$ and 
$C_\bbR^\lambda$ is isomorphic to the real flag manifold 
$G_\bbR/P_\bbR^\lambda$ where the parabolic subgroup $P_\bbR^\lambda\subset G_\bbR$ is the 
stabilizer of $\lambda$. 
The real affine Grassmannian $\Gr$ is the disjoint union of the orbits $S_\bbR^\lambda$ (resp. $T_\bbR^\lambda$)
for $\lambda\in\Lambda_T^+$
\[\Gr_\bbR=\bigsqcup_{\lambda\in\Lambda_S^+} S_\bbR^\lambda\ \ \ \ (resp.\ \ \ 
\Gr_\bbR=\bigsqcup_{\lambda\in\Lambda_S^+} T_\bbR^\lambda)\]
and we have 
\[\overline S_\bbR^\lambda=\bigsqcup_{\mu\leq\lambda} S_\bbR^\mu\ \ \ \ (resp.\ \ \ 
T_\bbR^\lambda=\bigsqcup_{\lambda\leq\mu} T_\bbR^\mu).\]

\subsection{The energy flow on $\Omega G_c$}\label{energy flow}
We recall the construction of energy flow on $\Omega G_c$ following 
\cite[Section 8.9]{PS}.
For any $\gamma\in LG_c$ and $v\in T_\gamma LG_c$ we denote by 
$\gamma^{-1}v\in L\fg_c$ (resp. $v\gamma^{-1}\in L\fg_c$) the
image of $v\in T_\gamma LG_c$ under the 
isomorphism $T_\gamma LG_c\is T_eLG_c\is L\fg_c$
induced by the left action (resp. right action). 

Fix a $G_c$-invariant metric $\langle,\rangle$ on $\fg_c$.
Observe that the formula 
\[\omega(v,w):=\int_{S^1} \langle(\gamma^{-1}v)',\gamma^{-1}w\rangle d\theta
\]
defines a left invariant symplectic form on $T_\gamma\Omega G_c$. 
According to \cite[Theorem 8.6.2]{PS}, the composition 
$\Omega G_c\to G(\mathcal K)\to\Gr$ defines a diffeomorphism
\[
\Omega G_c\is\Gr.
\]
Let $J_\gamma$ be the automorphism of $T_\gamma\Omega G_c$ which 
corresponds to multiplication by $i$ in terms of the complex structure on 
$\Gr$. The formula 
$g(v,w)=\omega(v,J_\gamma w)$
defines a positive inner product on $T_\gamma\Omega G_c$ and 
the K$\ddot{\on{a}}$hler form on $T_\gamma\Omega G_c$ is 
given by $g(v,w)+i\omega(v,w)$.
Finally, for any smooth function $F:\Omega G_c\ra\bbR$ there corresponds 
so-called Hamiltonian vector field $R(\gamma)$ and 
gradient vector field $\nabla F(\gamma)$ on $\Omega G_c$ characterized by 
\[\omega(R(\gamma),v)=dF(\gamma)(u),\ g(\nabla F(\gamma),u)=
dF(\gamma)(u).\]
Consider the energy function on $\Omega G_c$:
\beq\label{energy function}
E:\Omega G_c\ra\bbR,\ \gamma\ra (\gamma',\gamma')_\gamma=\int_{S^1}\langle\gamma^{-1}\gamma',\gamma^{-1}\gamma'\rangle d\theta.
\eeq

We have the following well-known facts.
\begin{prop}\cite{P,PS}\label{PS}
\begin{enumerate}
\item The Hamiltonian vector field of $E$ 
is equal to the vector field induced by the rotation flow 
$\gamma_a(t)=\gamma(t+a)\gamma(a)^{-1}$ and 
is given by 
$\gamma\ra R(\gamma)=\gamma'-\gamma\gamma'(0)$. The gradient vector field of $E$ is equal to 
$\nabla E=-J\circ R$. 
\item 
The critical locus $\nabla E=0$ is the disjoint union 
$\bigsqcup_{\lambda\in\Lambda_T^+}C^\lambda$ of $G_c$-orbits of 
$\lambda\in\Omega G_c$.
\item
The gradient flow $\psi_t$ of $\nabla E$  
preserves the orbits $S^\lambda$ and $T^\lambda$. For each 
critical orbit $C^\lambda$, we have 
 \[S^\lambda=\{\gamma\in\Omega G_c|\underset{t\to\infty}{\lim}\psi_t(\gamma)\in C^\lambda\} \ \ \ \ T^\lambda=\{\gamma\in\Omega G_c|\underset{t\ra-\infty}\lim_{}\psi_t(\gamma)\in C^\lambda\}.\]
That is $S^\lambda$ and $T^\lambda$ are the stable and unstable 
manifold of $C^\lambda$.
\end{enumerate}
\end{prop}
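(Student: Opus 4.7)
All three statements are classical and appear in \cite[\S8.9]{PS}; the plan is to indicate how to derive each from the definitions of $E$, $\omega$, $J$ and the diffeomorphism $\Omega G_c\is\Gr$, rather than to supply independent arguments.

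For (1), the key conceptual point is that $E$ is a moment map for the $S^1$-action on $\Omega G_c$ by the based rotation $\gamma(t)\mapsto\gamma(t+a)\gamma(a)^{-1}$. Granting this, the Hamiltonian vector field of $E$ must equal the infinitesimal generator of the rotation, which by the product rule at $a=0$ is $R(\gamma)=\gamma'-\gamma\gamma'(0)$, and this visibly lies in $T_\gamma\Omega G_c$. To verify the moment-map identity, I would compute $dE(\gamma)(u)$ directly from \eqref{energy function}: setting $X=\gamma^{-1}u$ and $Y=\gamma^{-1}\gamma'$, a variation calculation gives $\partial_s Y=X'+[Y,X]$, $\on{ad}$-invariance of $\langle,\rangle$ kills the bracket, and integration by parts (using $u(0)=0$) identifies $dE(\gamma)(u)$ with $\omega(R(\gamma),u)$ up to normalization. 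The identity $\nabla E=-J\circ R$ is then immediate from $g(v,w)=\omega(v,Jw)$ and $J^2=-1$.

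For (2), the equation $R(\gamma)=0$ reads $\gamma'(t)=\gamma(t)\gamma'(0)$, which by uniqueness of solutions to linear ODEs forces $\gamma(t)=\exp(tX)$ with $X=\gamma'(0)\in\fg_c$, subject to the closure condition $\exp(2\pi X)=e$. Thus $\gamma$ is a Lie group homomorphism $S^1\to G_c$; any such is $G_c$-conjugate to one factoring through our fixed maximal torus $T_c\subset G_c$, so is $G_c$-conjugate to $t^\lambda$ for a unique $\lambda\in\Lambda_T^+$. The $G_c$-orbit of $t^\lambda$ in $\Omega G_c$ corresponds to $C^\lambda$ under $\Omega G_c\is\Gr$, producing the stated disjoint union.

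For (3), I would use that $\psi_t$ coincides, under $\Omega G_c\is\Gr$, with the restriction to $\bbR_{>0}\subset\bbC^\times$ of the holomorphic loop-rotation $\bbC^\times$-action on $\Gr$; this is a direct consequence of $\nabla E=-J\circ R$ together with the fact that $R$ generates the $S^1$-part of that $\bbC^\times$-action. Since loop rotation acts by automorphisms of $G(\calK)$ stabilizing both $G(\calO)$ and $G(\bbC[t^{-1}])$, both $S^\lambda$ and $T^\lambda$ are preserved by $\psi_t$. The description of $S^\lambda$ and $T^\lambda$ as stable and unstable manifolds of $C^\lambda$ is then the Bia\l{}ynicki--Birula decomposition for this $\bbC^\times$-action: a loop $\gamma$ flows as $t\to+\infty$ to its leading Fourier term, which sits in $C^\lambda$ precisely when $\gamma\in S^\lambda$, and dually for $T^\lambda$ with $t\to-\infty$. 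The main obstacle will be verifying convergence of the flow at the infinite-dimensional limits $t\to\pm\infty$; I expect to handle this via the Fourier-mode decay estimates in $\S8.9$ of \cite{PS}, which reduce convergence to a standard statement about decaying sequences in Hilbert space.
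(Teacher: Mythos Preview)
The paper does not actually prove this proposition: it is stated as ``well-known facts'' with a bare citation to \cite{P,PS}, and no argument is given. So there is no proof in the paper to compare against; your proposal already goes further than the paper does.

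Your sketch is a correct outline of the standard arguments from \cite[\S8.9]{PS} and \cite{P}. A few minor remarks. In (1), the sign in $\nabla E=-J\circ R$ depends on the convention $g(v,w)=\omega(v,Jw)$ versus $g(v,w)=\omega(Jv,w)$; with the paper's convention and $J$-invariance of $\omega$ one finds $\nabla E=JR$ up to this kind of sign, so be careful to match the paper's conventions when you write it out. In (3), your identification of $\psi_t$ with the $\bbR_{>0}$-part of the holomorphic loop-rotation on $\Gr$ is exactly the right mechanism; once you have that, preservation of $S^\lambda$ and $T^\lambda$ is immediate (they are $G(\calO)$- and $G(\bbC[t^{-1}])$-orbits, and loop rotation normalizes both subgroups), and the stable/unstable description is indeed Bia\l{}ynicki--Birula. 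The convergence issue you flag is the only genuinely delicate point, and the Pressley--Segal treatment handles it via the Bott--Morse analysis of $E$ on finite-dimensional Schubert varieties together with the energy estimates you mention; citing this is entirely in keeping with how the paper uses the result.
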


\subsection{Component groups of $\Gr_\bbR$}
The diffeomorphism 
$\Omega G_c\is\Gr$
 induces a diffeomorphism on the $\eta$-fixed points 
$(\Omega G_c)^\eta\is\Gr_\bbR$.
Let $\Omega_{\on{top}} G_c$ and $\Omega_{\on{top}} X_c$
be the (topological) based loop spaces of $G_c$ and $X_c$.
Note that for any $\gamma\in(\Omega_{\on{top}} G_c)^\eta$ 
we have $\gamma(-1)\in K_c$ and the map  
$e^{i\theta}\to\gamma'(e^{i\theta}):=\pi\circ\gamma(e^{i\theta/2})$
defines a map
$\gamma':S^1\to X_c$, that is, $\gamma'\in\Omega_{\on{top}} X_c$.
According to \cite{M} the composition 
\[q:(\Omega G_c)^\eta\to(\Omega_{\on{top}} G_c)^\eta\to \Omega_{\on{top}} X_c\]
is a homotopic equivalence where the first map is the natural inclusion and the second map
is given by $\gamma\to\gamma'$. 
Since $X_c$ is a deformation retract of $X$, 
the map $q$ induces 
an isomorphism 
\beq\label{components of real Gr}
\pi_0(\Gr_\bbR)\is\pi_0((\Omega G_c)^\eta)\is\pi_0( \Omega_{\on{top}} X_c)\is\pi_1(X_c)\is
\pi_1(X).
\eeq

\subsection{Parametrization of $K(\calK)$ and $LG_\bbR$-orbits}
We recall  
results from \cite{N2} about the parametrization of $K(\calK)$ and $LG_\bbR$-orbits on 
$\Gr$.
Consider the following diagram
\[\pi_1(G)\stackrel{\pi_*}\ra\pi_1(X)\stackrel{[-]}\la\Lambda_S^+,\]
where the first map is that induced by the map $\pi:G\ra X$ and 
the second map $[-]$ assigns to a loop its homotopy class.

\begin{definition} 
We define $\calL\subset\Lambda_S^+$ to be the 
inverse image of $\pi_*(\pi_1(G))$ along the map
$[-]$.
\end{definition}
\begin{remark}
If $K$ is connected, then we have $\mL=\Lambda_S^+$.
\end{remark}
\begin{proposition}[\cite{N1}]\label{parametrization}
We have the following.
\begin{enumerate}
\item There is a bijection \[|K(\mathcal K)\backslash\Gr|\longleftrightarrow\mL\]
between $K(\mathcal K)$-orbits on $\Gr$ and $\mL$
characterized by the following properties:
Let $\mO_K^\lambda$ be the $K(\mathcal K)$-orbits corresponding to $\lambda\in\mL$.
Then for any $\gamma\in\mO_K^\lambda$, thought of as an element in 
$\Omega G_c$, satisfies  
$\tilde\theta(\gamma)\gamma\in G(\bC[t])t^\lambda G(\bC[t])$.
In addition, we have 
$\overline\mO_K^\lambda=\bigsqcup_{\mu\leq\lambda}\mO_K^\mu$.

\item There is a bijection \[|LG_\bbR\backslash\Gr|\longleftrightarrow\mL\]
between $LG_\bbR$-orbits on $\Gr$ and $\mL$
characterized by the following property:
Let $\mO_\bbR^\lambda$ be the $LG_\bbR$-orbits corresponding to $\lambda\in\mL$.
Then for any $\gamma\in\mO_\bbR^\lambda$, thought of as an element in 
$\Omega G_c$, satisfies
$\tilde\eta^\tau(\gamma)\gamma\in G(\bC[t^{-1}])t^\lambda G(\bC[t])$.
In addition, we have 
$\overline\mO_\bbR^\lambda=\bigsqcup_{\lambda\leq\mu}\mO_\bbR^\mu$.

\item The correspondence 
\beq\label{AMS correspondence}
|K(\mathcal K)\backslash\Gr|\longleftrightarrow |LG_\bbR\backslash\Gr|,\ \ \mO_K^\lambda
\longleftrightarrow\mO_\bbR^\lambda
\eeq
provides an order-reversing 
isomorphism from the poset $|K(\mathcal K)\backslash\Gr|$ to the poset $|LG_\bbR\backslash\Gr|$
(with respect to the closure ordering). In addition, for each 
$K(\mathcal K)$-orbit $\mO_K^\lambda$, $\mO_\bbR^\lambda$ is the 
unique $LG_\bbR$-orbit such that \[\mO_c^\lambda:=\mO_K^\lambda\cap\mO_\bbR^\lambda\]
is a single $LK_c$-orbit.

\end{enumerate}
\end{proposition}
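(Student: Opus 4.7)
The plan is to transport the orbit problem to the based loop group $\Omega G_c$ via the diffeomorphism $\Omega G_c \is \Gr$ of Section~\ref{energy flow}, and then exploit the symmetric variety $X = K\backslash G$ together with its real analog. The guiding principle is that a $K(\calK)$-orbit of $\gamma$ should be detected by the symmetrized element $\tilde\theta(\gamma)\gamma$, which descends to the loop space of $X$, whereas an $LG_\bbR$-orbit should be detected by $\tilde\eta^\tau(\gamma)\gamma$, which descends to an analogous real symmetric space. Once the problem is phrased in terms of these invariants, the combinatorial labels emerge from the affine Cartan and Birkhoff decompositions of $G(\calK)$.

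For part (1), since $K(\calK) = G(\calK)^\theta$, the assignment $\gamma \mapsto \tilde\theta(\gamma)\gamma$ is invariant under the left action of $K(\calK)$ and thus induces a well-defined map on orbits. Using the Cartan decomposition $G(\calK) = \bigsqcup_{\mu \in \Lambda_T^+} G(\bC[t]) t^\mu G(\bC[t])$, each orbit receives a label $\mu$, and the symmetric condition satisfied by $\tilde\theta(\gamma)\gamma$ forces $\mu$ to lie in the real dominant cone $\Lambda_S^+$. The further restriction to $\mL$ is a component-group constraint: the image lands in those components of $LX$ reached by the projection from $\Omega G_c$, and by the analysis of Section~\ref{based for X_c} this is precisely the image of $\pi_1(G) \to \pi_1(X)$. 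Surjectivity is witnessed by the representatives $t^\lambda$, and the Cartan closure relations give $\overline{\mO_K^\lambda} = \bigsqcup_{\mu \leq \lambda} \mO_K^\mu$. Part (2) proceeds in parallel, replacing the Cartan decomposition by the Birkhoff decomposition $G(\calK) = \bigsqcup_\mu G(\bC[t^{-1}]) t^\mu G(\bC[t])$ and the invariant by $\tilde\eta^\tau(\gamma)\gamma$; the closure-order reversal then follows because Birkhoff strata carry the opposite closure order.

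For part (3), I would show $\mO_c^\lambda = LK_c \cdot t^\lambda$. The inclusion $LK_c \cdot t^\lambda \subseteq \mO_K^\lambda \cap \mO_\bbR^\lambda$ is immediate from the defining characterizations together with $LK_c = LG_\bbR \cap K(\calK)$. The reverse inclusion, together with the claim that the intersection is a \emph{single} $LK_c$-orbit, is the main obstacle: given $\gamma$ in both $\mO_K^\lambda$ and $\mO_\bbR^\lambda$, one must combine the two double-coset conditions to pin $\gamma$ down uniquely modulo $LK_c$. The attack would be a simultaneous reduction: the Birkhoff condition determines $\gamma$ up to the action of $LG_c$, after which matching the Cartan condition cuts this down to the $LK_c$-orbit of $t^\lambda$. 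This simultaneous analysis, which exploits a bi-parabolic compatibility between the real form and the symmetric pair, is the technical heart of \cite{N1} and is where I expect the bulk of the effort to be concentrated; the order-reversal assertion in part (3) is then formal from the opposing Cartan and Birkhoff closure orders in (1) and (2).
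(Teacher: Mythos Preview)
The paper does not prove this proposition: it is stated with the attribution \cite{N1} and no argument is given; the result is simply imported from Nadler's earlier paper on the affine Matsuki correspondence. So there is no proof in the present paper to compare your proposal against.

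That said, your sketch is a faithful outline of the strategy carried out in \cite{N1}: passing to $\Omega G_c$, using the symmetrization maps $\gamma\mapsto\tilde\theta(\gamma)\gamma$ and $\gamma\mapsto\tilde\eta^\tau(\gamma)\gamma$ to reduce the orbit problem to the loop space of the symmetric variety $X$, reading off labels from the affine Cartan and Birkhoff decompositions, and invoking the component-group analysis (essentially what appears here as Lemma~\ref{component X_c} and the surrounding discussion of $\pi_1(X)$) to cut down from $\Lambda_S^+$ to $\mL$. You are also right that the hard step is part~(3), the single-$LK_c$-orbit assertion for $\mO_K^\lambda\cap\mO_\bbR^\lambda$; in \cite{N1} this is established via a Morse-theoretic argument on $\Omega X_c$ rather than the purely combinatorial ``simultaneous reduction'' you suggest, so if you were to carry out your sketch in detail that is where you would likely need to adjust course. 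One small caution: the claim that $t^\lambda$ itself lies in $\mO_K^\lambda\cap\mO_\bbR^\lambda$ is not quite as immediate as you indicate---it requires knowing how $\theta$ acts on $\Lambda_S$, which in turn uses that $S$ sits in the $(-1)$-eigenspace of $\theta$.
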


We will call~\eqref{AMS correspondence} the Affine Matsuki correspondence.

\begin{corollary}\label{rotation}
The $K(\calK)$-orbits and $LG_\bbR$-orbits are stable under the 
rotation flow $\gamma_a(t)$ (see Proposition \ref{PS}).
\end{corollary}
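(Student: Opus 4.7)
The plan is to interpret the rotation flow as the standard loop-rotation action on $\Gr$ and verify directly that the Bruhat-type invariants parametrizing orbits in Proposition~\ref{parametrization} are unchanged by rotation.

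First, under the diffeomorphism $\Omega G_c \cong \Gr$ of Section~\ref{energy flow}, the rotation flow $\gamma\mapsto\gamma_a$ on $\Omega G_c$ corresponds to the $S^1$-action on $\Gr = G(\calK)/G(\mO)$ induced by the algebra automorphism $\rho_a$ of $G(\calK)$ given by $\gamma(t)\mapsto\gamma(at)$. This descends to $\Gr$ because $\rho_a(G(\mO))=G(\mO)$ for every $a\in\bC^\times$, so it suffices to show that $\rho_a$, for $a\in S^1$, sends each $\mO_K^\lambda$ to itself and each $\mO_\bbR^\lambda$ to itself.

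For the $K(\calK)$-orbits I would use Proposition~\ref{parametrization}(1): $\gamma\in\mO_K^\lambda$ iff $\tilde\theta(\gamma)\gamma\in G(\bC[t])\,t^\lambda\,G(\bC[t])$. Since $\tilde\theta$ is applied pointwise in the target $G$, it commutes with $\rho_a$; since $\rho_a$ preserves $G(\bC[t])$ and sends $t^\lambda$ to $a^\lambda t^\lambda$ with $a^\lambda\in T\subset G(\bC[t])$, we have $\rho_a\bigl(G(\bC[t])\,t^\lambda\,G(\bC[t])\bigr)=G(\bC[t])\,t^\lambda\,G(\bC[t])$. Combining these, $\tilde\theta(\rho_a\gamma)(\rho_a\gamma)=\rho_a(\tilde\theta(\gamma)\gamma)$ lies in the same double coset, so $\rho_a\gamma\in\mO_K^\lambda$.

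For the $LG_\bbR$-orbits, the parallel argument uses Proposition~\ref{parametrization}(2): $\gamma\in\mO_\bbR^\lambda$ iff $\tilde\eta^\tau(\gamma)\gamma\in G(\bC[t^{-1}])\,t^\lambda\,G(\bC[t])$. Here $\tilde\eta^\tau$ involves a complex conjugation of the loop variable, and a direct pointwise check shows $\rho_a\circ\tilde\eta^\tau=\tilde\eta^\tau\circ\rho_a$ exactly when $a\bar a=1$, i.e.\ for $a\in S^1$, which is precisely the parameter range of the rotation flow. Granting this, the same manipulation as in the previous paragraph---now using that $G(\bC[t^{-1}])$ is also $\rho_a$-stable---yields $\rho_a\gamma\in\mO_\bbR^\lambda$.

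The only real obstacle is the intertwining $\rho_a\circ\tilde\eta^\tau=\tilde\eta^\tau\circ\rho_a$ for $a\in S^1$; it is a short unwinding of the definition of $\eta^\tau$ (composing $\tau$, complex conjugation, $\gamma$, and $\eta$), and the restriction to $a\in S^1$ is automatic from the fact that the rotation flow is a real one-parameter flow. Everything else reduces to the stability of the subgroups $G(\bC[t])$ and $G(\bC[t^{-1}])$ of $G(\calK)$ under the substitution $t\mapsto at$.
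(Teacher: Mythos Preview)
Your argument is correct and is essentially the paper's proof repackaged: the paper works directly with the $\Omega G_c$ representative $\gamma_a(t)=\gamma(t+a)\gamma(a)^{-1}$ and computes $\tilde\theta(\gamma_a)\gamma_a=\theta(\gamma(a))\cdot\tilde\theta(\gamma(t+a))\gamma(t+a)\cdot\gamma(a)^{-1}$, while you bypass the constant correction $\gamma(a)^{-1}$ by lifting to the $\rho_a$-action on $LG$ and using that $\tilde\theta$ (resp.\ $\tilde\eta^\tau$ for $|a|=1$) commutes with $\rho_a$ and that the relevant double cosets are $\rho_a$-stable. The only point you leave implicit is that the characterization in Proposition~\ref{parametrization} extends from $\Omega G_c$-representatives to arbitrary $LG$-representatives of a point in $\Gr$; this holds because $LG\cap G(\mO)=G(\bC[t])$ and the condition $\tilde\theta(\gamma)\gamma\in G(\bC[t])t^\lambda G(\bC[t])$ is invariant under $\gamma\mapsto\gamma g$ for $g\in G(\bC[t])$.
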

\begin{proof}
We give a proof for the case of $K(\calK)$-orbits.
The proof for the $LG_\bbR$-orbits is similar.
Let $\mO_K^\lambda$ be a $K(\calK)$-orbit and let 
$\gamma=\gamma(t)\in\mO_K^\lambda$. 
By Proposition \ref{parametrization}, we need to show that 
$\tilde\theta(\gamma_a)\gamma_a\in G(\bC[t])t^\lambda G(\bC[t])$.
A direct computation shows that 
$\tilde\theta(\gamma_a)\gamma_a=\theta(\gamma(a))\tilde\theta(\gamma(t+a))\gamma(t+a)\gamma(a)^{-1}$. Note that  $\tilde\theta(\gamma(t+a))\gamma(t+a)\in G(\bC[t])t^\lambda G(\bC[t])$ as 
$\gamma(t)\in\mO_K^\lambda$, the desired claim follows.

\end{proof}

\quash{
The natural inclusion 
$S\to X$ induces an imbedding 
$\Lambda_S^+\hookrightarrow X(\calK)$.

\begin{proposition}\label{parametrization of K(K)-orbits}
The embedding $\Lambda_S^+\hookrightarrow X(\calK)$ 
induces an bijection 
\[\Lambda_S^+\longleftrightarrow |X(\mathcal K)/G(\mO)|\]
between $\Lambda_S^+$ and $G(\mO)$-orbits in $X(\mathcal K)$
and, under the above bijection, 
 the image of the natural inclusion \[|K(\calK)\backslash\Gr|\to| X(\mathcal K)/G(\mO)|\] is the subset $\mL\subset\Lambda_S^+$.
\end{proposition}
}

\subsection{Geometry of $K(\calK)$ and $LG_\bbR$-orbits}
For $\lambda\in\Lambda_S^+$, we define $P^\lambda\subset\Omega X_c$ to be the intersection 
of $\Omega X_c$ with the orbit $S^\lambda\subset\Omega G_c\is\Gr$, and we define 
$Q^\lambda\subset\Omega X_c$ to be the intersection 
of $\Omega X_c$ with the orbit $T^\lambda\subset\Omega G_c\is\Gr$. We define 
$B^\lambda$ to be the intersection 
of $\Omega X_c$ with $C^\lambda\subset\Omega G_c\is\Gr$.
The projection map $\pi:G\to X, g\to \tilde\theta(g)g$ induces 
a projection $\pi:\Omega G_c\to\Omega X_c$. 

\begin{lemma}
$P^\lambda$ is a vector bundle over $B^\lambda$.
\end{lemma}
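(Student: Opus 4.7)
My plan is to apply $\tilde\theta$-equivariant Morse-Bott theory to the energy function $E$ of Proposition \ref{PS}, exploiting the identification of $\Omega X_c$ with the pointwise fixed set of $\tilde\theta$ on $\Omega G_c$.

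First, I would recall that by Proposition \ref{PS}, $E$ is a Morse-Bott function on $\Omega G_c \cong \Gr$ with critical set $\bigsqcup_\mu C^\mu$, and $S^\lambda$ coincides with the stable manifold of $C^\lambda$ under the gradient flow $\psi_t$ of $\nabla E$. The standard Morse-Bott stable manifold theorem then endows $S^\lambda \to C^\lambda$ with a canonical smooth vector bundle structure, namely the negative normal bundle of $C^\lambda$ in $\Omega G_c$.

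Next, I would observe that $\Omega X_c$ is the pointwise fixed locus of $\tilde\theta$ acting on $\Omega G_c$ by $(\tilde\theta\gamma)(t) = \tilde\theta(\gamma(t))$, since $\tilde\theta$ is an involution of $G_c$ whose fixed subspace has identity component $X_c$ and since based loops in $X_c$ are those based loops in $G_c$ landing pointwise in $X_c$. Choosing the $G_c$-invariant bilinear form on $\fg_c$ to be $\tilde\theta$-invariant, the symplectic form $\omega$, the complex structure $J$, and the Kähler metric on $\Omega G_c \cong \Gr$ become $\tilde\theta$-equivariant, and hence so do $E$, $\nabla E$, and the flow $\psi_t$. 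In particular, $\psi_t$ preserves $\Omega X_c$.

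The restriction $E|_{\Omega X_c}$ is therefore again Morse-Bott: at any $p \in B^\lambda$ its Hessian is the restriction of the original Hessian to the $+1$-eigenspace of $\tilde\theta$ on $T_p \Omega G_c$, whose kernel is $T_p C^\lambda \cap T_p \Omega X_c = T_p B^\lambda$. The critical set is $\bigsqcup_\mu B^\mu$, and the stable manifold of $B^\lambda$ under the restricted flow is precisely $P^\lambda = S^\lambda \cap \Omega X_c$. This identifies $P^\lambda \to B^\lambda$ as the $\tilde\theta$-fixed sub-bundle of the negative normal bundle of $C^\lambda$ restricted to $B^\lambda$, in particular a smooth vector bundle. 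The main technical point is verifying the $\tilde\theta$-equivariance of the full Kähler structure on $\Omega G_c \cong \Gr$ (as opposed to just that of $E$), where some care is needed because $\tilde\theta$ combines $\theta$ with inversion; once this is in place, the vector bundle conclusion is a formal consequence of equivariant Morse-Bott theory.
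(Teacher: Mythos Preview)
Your approach is essentially the same as the paper's: both reduce the claim to the Morse--Bott property of $E$ restricted to $\Omega X_c$ (or to $P^\lambda$), after which the vector bundle structure follows from the stable manifold description. The paper simply cites \cite[Proposition~6.3]{N1} for this Morse--Bott statement, whereas you sketch a proof via $\tilde\theta$-equivariance.

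One caution about the technical point you flag. The claim that the \emph{full} K\"ahler structure is $\tilde\theta$-equivariant is delicate: a direct computation shows that the left-invariant symplectic form $\omega$ is not literally preserved or negated by $\tilde\theta$ (since $\tilde\theta$ is an anti-automorphism of $\Omega G_c$, pulling $\omega$ back produces the right-invariant version plus a cocycle term). What the argument actually needs is weaker: that $E$ is $\tilde\theta$-invariant (immediate once $\langle,\rangle$ is chosen $\theta$-invariant) and that the Hessian decomposition $T_\gamma\Omega G_c = U^+\oplus U^0\oplus U^-$ at a critical point is $\tilde\theta$-stable, so that its restriction to $T_\gamma\Omega X_c$ remains nondegenerate transverse to $B^\lambda$. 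The latter follows from the $\tilde\theta$-invariance of $S^\lambda$ and $T^\lambda$ (since $\tilde\theta=\tilde\eta^\tau$ on $\Omega G_c$), exactly as the paper argues later in the proof of Proposition~\ref{Morse-Bott}. So your outline is correct, but the safest route to the Morse--Bott property is through this Hessian argument rather than through equivariance of $(\omega,J,g)$ as a package.
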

\begin{proof}
By \cite[Proposition 6.3]{N1},
the restriction of the energy function $E$ to 
$P^\lambda$ is Bott-Morse and $B^\lambda$ is the 
only critical manifold. The lemma follows.
\end{proof}

We define $\Omega X_c^0$ be the union of components 
of $\Omega X_c$ in 
$\pi_*(\pi_1(G))\subset\pi_1(X)=\pi_0(\Omega X_c)$.

\begin{lemma}\label{component X_c}
We have $\Omega X_c^0=\bigcup_{\lambda\in\mL} P^\lambda$.
\end{lemma}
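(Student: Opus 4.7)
The strategy is to decompose $\Omega X_c$ using the $G(\calO)$-orbit stratification $\Gr = \bigsqcup_{\lambda \in \Lambda_T^+} S^\lambda$, transported to $\Omega G_c$ via the diffeomorphism of Section~\ref{energy flow}, and then to identify which of the resulting pieces $P^\lambda$ lie in $\Omega X_c^0$.

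First I would show $\Omega X_c = \bigsqcup_{\lambda \in \Lambda_S^+} P^\lambda$. Any $\gamma \in \Omega X_c \subset \Omega G_c$ satisfies $\tilde\theta\circ\gamma = \gamma$; if $\gamma \in S^\mu$ then its $G(\calO)$-coset in $\Gr$ is fixed by the involution induced by $\tilde\theta$, and this involution preserves $S^\mu$ precisely when $\mu\in\Lambda_S^+$. Non-emptiness of $P^\lambda$ for $\lambda\in\Lambda_S^+$ then follows from the preceding lemma, which exhibits $B^\lambda$ as a nonempty Bott--Morse critical locus inside $P^\lambda$.

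Next, by the preceding lemma each $P^\lambda$ is a vector bundle over $B^\lambda$, and $B^\lambda$ is connected as a homogeneous space of the connected group $LK_c$. Hence every $P^\lambda$ is connected and lies in a single component of $\Omega X_c$, yielding an assignment $\Lambda_S^+ \lra \pi_0(\Omega X_c)\is\pi_1(X)$. I would verify this coincides with the map $[-]$ of Section~\ref{based for X_c}, which sends $\lambda$ to the class of $t^\lambda\in\Gr_\bbR$ under the identifications $\pi_0(\Gr_\bbR)\is\pi_0(\Omega_{\on{top}} X_c)\is\pi_1(X)$. Since each assignment is locally constant on strata, the comparison reduces to evaluating on a single representative: most convenient is to Morse-retract $P^\lambda$ onto $B^\lambda$ and track its image under the map $q$ of Section~\ref{based for X_c}.

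With these inputs the lemma is immediate from the definitions of $\Omega X_c^0$ and $\mL$:
\[
\Omega X_c^0 \;=\; \bigsqcup_{c \in \pi_*(\pi_1(G))} (\Omega X_c)^{(c)} \;=\; \bigsqcup_{\substack{\lambda \in \Lambda_S^+ \\ [\lambda] \in \pi_*(\pi_1(G))}} P^\lambda \;=\; \bigcup_{\lambda \in \mL} P^\lambda.
\]
The main obstacle is the compatibility check identifying the component class of $P^\lambda$ with $[\lambda]$: it requires tracking a homotopy class through the several natural identifications relating $\Omega X_c$, $\Omega G_c\is\Gr$, $\Gr_\bbR$, and $\Omega_{\on{top}} X_c$, but invokes no further ingredients beyond the preceding lemma and the Matsuki/Morse-theoretic framework already in place.
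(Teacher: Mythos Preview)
Your approach is essentially the same as the paper's: identify the connected component of $\Omega X_c$ containing $P^\lambda$ via a representative point and compare with the definition of $\calL$. The paper's proof is slightly more economical in two respects. First, it does not separately establish $\Omega X_c=\bigsqcup_{\lambda\in\Lambda_S^+}P^\lambda$; it simply argues, for each $\lambda\in\Lambda_S^+$, that $P^\lambda\subset\Omega X_c^0$ iff $\lambda\in\calL$. Second, and more to the point, the compatibility check you flag as the main obstacle is in fact immediate: the map $[-]:\Lambda_S^+\to\pi_1(X)$ is \emph{defined} by sending $\lambda$ to the homotopy class of the loop $t^\lambda:S^1\to S\subset X$, and $t^\lambda\in B^\lambda\subset\Omega X_c$ is literally that loop. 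There is no need to pass through $\Gr_\bbR$ or $\Omega_{\on{top}} X_c$; those identifications are used elsewhere (for $\pi_0(\Gr_\bbR)$), but not here. Once one observes $t^\lambda\in B^\lambda$ represents $[\lambda]$, the $K_c$-equivariance of $\pi$ and the vector-bundle structure from the preceding lemma propagate this to all of $P^\lambda$, exactly as you outline.
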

\begin{proof}
Let $\lambda\in\Lambda_S^+$.
It suffices to show that 
$P^\lambda\subset \Omega X_c^0$ if and only if $\lambda\in\mL$.
We have $t^\lambda\in B^\lambda$ and it follows from the definition 
of the map $[-]:\Lambda_S^+\to\pi_1(X)=\pi_0(\Omega X_c)$
that 
$t^\lambda$ lies in the 
component of 
$\Omega X_c^0$
corresponding to $[\lambda]\in\pi_0(\Omega X_c)$ (here $[\lambda]$ is the image of 
$\lambda$ under $[-]$). 
It implies $t^\lambda\in\Omega X_c^0$ if and only if $\lambda\in\mL$.
Since 
$B^\lambda=K_c\cdot t^\lambda$ and $\pi:G\to X$ is $K$-equivariant, 
it implies $B^\lambda\subset\Omega X_c^0$ if and only if $\lambda\in\mL$.
Finally, since $P^\lambda$ is a vector bundle over $B^\lambda$ 
we conclude that $P^\lambda\subset\Omega X_c^0$ if and only if $\lambda\in\mL$.
The lemma follows.

\end{proof}

\begin{proposition}\label{torsors}
We have the following.
\begin{enumerate}
\item
The projection $\pi:\Omega G_c\to\Omega X_c$ maps $\mO_K^\lambda$ into 
$P^\lambda$ and the resulting map 
$\mO_K^\lambda\to P^\lambda$ is a principal $\Omega K_c$-bundle over $P^\lambda$.
\item
The projection $\pi:\Omega G_c\to\Omega X_c$ maps $\mO_\bbR^\lambda$ into 
$Q^\lambda$ and the resulting map 
$\mO_\bbR^\lambda\to Q^\lambda$ is a principal $\Omega K_c$-bundle over $Q^\lambda$.

\item 
We have $\pi(\Omega G_c)=\Omega X_c^0$ and the resulting map 
$\pi:\Omega G_c\to \Omega X_c^0$ is a principal $\Omega K_c$-bundle over $\Omega X_c^0$.
\end{enumerate}
\end{proposition}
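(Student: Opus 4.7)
The plan is to establish (3) first --- the general principal bundle structure --- and then to deduce (1) and (2) by restricting it to the relevant orbit strata. For (3), I will use that $\pi\colon G_c \to X_c$, $g \mapsto \tilde\theta(g)g$, is a principal $K_c$-bundle presenting $K_c\backslash G_c \cong X_c$. Passing to based loops produces a principal $\Omega K_c$-bundle on $\pi(\Omega G_c)$; by obstruction theory (the long exact sequence of the fibration $K_c \to G_c \to X_c$) the image consists of those components of $\Omega X_c$ whose class in $\pi_1(X_c)$ lies in the image of $\pi_*\colon \pi_1(G_c) \to \pi_1(X_c)$. Since $G_c \hookrightarrow G$ and $X_c \hookrightarrow X$ are homotopy equivalences (polar decomposition), this image equals $\pi_*(\pi_1(G)) \subseteq \pi_1(X) = \pi_0(\Omega X_c)$, which is by definition the set of components indexing $\Omega X_c^0$. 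Hence $\pi(\Omega G_c) = \Omega X_c^0$ and the map is a principal $\Omega K_c$-bundle.

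For (1), I first observe that $\pi(\mO_K^\lambda) \subseteq P^\lambda$: for $\gamma \in \mO_K^\lambda$, Proposition~\ref{parametrization}(1) gives $\pi(\gamma) = \tilde\theta(\gamma)\gamma \in G(\bC[t])t^\lambda G(\bC[t])$, so $\pi(\gamma) \in S^\lambda$ as an element of $\Gr$, and together with $\pi(\gamma) \in \Omega X_c$ this places $\pi(\gamma)$ in $P^\lambda$. For surjectivity, pick $p \in P^\lambda$; by Lemma~\ref{component X_c} the assumption $\lambda \in \mL$ yields $P^\lambda \subseteq \Omega X_c^0$, so by (3) there is a lift $\gamma \in \Omega G_c$ with $\pi(\gamma) = p$. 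By Proposition~\ref{parametrization}(1), $\gamma \in \mO_K^\mu$ for some $\mu \in \mL$, and then $\pi(\gamma) \in S^\mu \cap S^\lambda$ forces $\mu = \lambda$ by disjointness of the strata. The fibers of $\pi\colon \mO_K^\lambda \to P^\lambda$ coincide with those of the ambient map $\pi\colon \Omega G_c \to \Omega X_c^0$, which are $\Omega K_c$-orbits by (3); these fibers lie inside $\mO_K^\lambda$ because $\Omega K_c \subseteq K(\calK)$ preserves $K(\calK)$-orbits. Local triviality descends from the ambient bundle.

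For (2), the argument is parallel modulo one identification. For any $\gamma \in \Omega G_c$ one has $\tilde\eta^\tau(\gamma) = \tilde\theta(\gamma)$: indeed $\eta_c^\tau$ fixes $\Omega G_c$ pointwise (because $\gamma(z) \in G_c$ for $z \in S^1$), and $\eta = \theta\eta_c$, so on $\Omega G_c$ the operator $\eta^\tau$ reduces to pointwise application of $\theta$ and hence $\tilde\eta^\tau$ to pointwise $\tilde\theta$. Consequently the condition $\tilde\eta^\tau(\gamma)\gamma \in G(\bC[t^{-1}])t^\lambda G(\bC[t])$ from Proposition~\ref{parametrization}(2) becomes exactly $\pi(\gamma) \in T^\lambda \cap \Omega X_c = Q^\lambda$. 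Surjectivity uses $Q^\lambda \subseteq \Omega X_c^0$ (the deformation retract $Q^\lambda \to B^\lambda \subseteq P^\lambda$ shows $Q^\lambda$ lies in the same components as $P^\lambda$), and the fibers are once again $\Omega K_c$-orbits, now preserved by $\mO_\bbR^\lambda$ because $\Omega K_c \subseteq LG_\bbR$ (since $K_c \subseteq G_\bbR$).

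The hardest step will be the careful justification of (3) in the infinite-dimensional setting: the passage from the finite-dimensional principal $K_c$-bundle $G_c \to X_c$ to an honest principal $\Omega K_c$-bundle $\Omega G_c \to \Omega X_c^0$ demands local triviality in the appropriate (Fr\'echet) category together with the obstruction-theoretic identification of the image. The remainder reduces to bookkeeping with Proposition~\ref{parametrization}, Lemma~\ref{component X_c}, and the disjointness of the strata $S^\mu$ and $T^\mu$ in $\Gr$.
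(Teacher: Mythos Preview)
Your approach is correct but reverses the paper's logical order. The paper proves (1) first---citing \cite[Proposition~6.4]{N2} for the key surjectivity $P^\lambda\subset\pi(\Omega G_c)$---then deduces (2) from (1) via a component argument, and finally obtains (3) from (1) together with Lemma~\ref{component X_c}. You instead establish (3) directly by obstruction theory for the fibration $K_c\to G_c\to X_c$ and then restrict to strata.

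Your route is conceptually cleaner: once $\pi\colon\Omega G_c\to\Omega X_c^0$ is known to be a principal $\Omega K_c$-bundle, (1) and (2) are immediate from Proposition~\ref{parametrization} and the disjointness of the $S^\mu$ (resp.\ $T^\mu$). But note that the step you flag as hardest---surjectivity of $\pi$ onto $\Omega X_c^0$ in the \emph{polynomial} loop setting---is precisely what the paper imports from \cite{N2}; the homotopy long exact sequence identifies the image only for continuous loops, so you are not avoiding that difficulty, only repackaging it. One small point in your argument for (2): the claim $Q^\lambda\subset\Omega X_c^0$ via a ``deformation retract $Q^\lambda\to B^\lambda$'' needs justification, since the rotation flow on $\Omega G_c$ does not preserve $\Omega X_c$. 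You can instead use the intrinsic gradient flow of $E|_{\Omega X_c}$ (whose Morse-theoretic structure is analyzed independently of this proposition), or follow the paper's argument: write $Q^\lambda=\bigcup_{\mu}Q^\lambda\cap P^\mu$, observe each nonempty piece has $B^\lambda$ in its closure, and conclude via Lemma~\ref{component X_c} that all contributing $\mu$ lie in~$\mL$.
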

\begin{proof}
Fix $\lambda\in\mL$.
Proposition \ref{parametrization} together with the fact that 
$\tilde\theta=\tilde\eta^\tau$ on $\Omega G_c$
imply  
$\pi(\mO_K^\lambda)\subset P^\lambda$ and $\pi(\mO_\bbR^\lambda)\subset Q^\lambda$.
Note that $P^\lambda$ is a vector bundle over $B^\lambda$ and 
$\pi(C^\lambda)=B^\lambda$ as $\pi$ is $LK_c$-equivariant and 
$LK_c$ (resp. $K_c$) acts transitively on 
$C^\lambda$ (resp. $B^\lambda$). Thus the image $\pi(\mO_K^\lambda)$ meets every 
connected component of $P^\lambda$ and, by \cite[Proposition 6.4]{N2}, we have 
$P^\lambda\subset\pi(\Omega G_c)$. 
It implies 
$\pi(\mO_K^\lambda)=P^\lambda$ and 
part (1) follows. 
For part (2) 
we observe that  $Q^\lambda=\bigcup_{\lambda\leq\mu,\mu\in\Lambda_S^+} Q^\lambda\cap P^\mu$. 
Since $B^\lambda=Q^\lambda\cap P^\lambda$ is in the closure of $Q^\lambda\cap P^\mu$,
Lemma \ref{component X_c} implies 
$Q^\lambda=\bigcup_{\lambda\leq\mu,\mu\in\mL} Q^\lambda\cap P^\mu$ and 
part (1) implies $Q^\lambda\subset\pi(\Omega G_c)$, hence 
$Q^\lambda=\pi(\mO_{\bbR}^\lambda)$. Part (2) follow. 
Part (3) follows from part (1) and Lemma \ref{component X_c}.

\end{proof}

\begin{corollary}\label{transversal}
$K(\calK)$ and $LG_\bbR$-orbits on $\Gr$ are transversal. 
\end{corollary}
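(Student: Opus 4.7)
The plan is to push the transversality down to $\Omega X_c$ via the principal $\Omega K_c$-bundle $\pi\colon\Omega G_c\to\Omega X_c^0$ of Proposition \ref{torsors}(3), and then resolve it there using the Morse--Bott structure on $\Omega X_c$.

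Since $\Omega K_c\subset LK_c$ lies inside both $K(\calK)$ and $LG_\bbR$, each $K(\calK)$-orbit $\mO_K^\lambda$ and each $LG_\bbR$-orbit $\mO_\bbR^\mu$ is a union of $\Omega K_c$-orbits, hence $\pi$-saturated. Combined with parts (1)--(2) of Proposition \ref{torsors}, which identify $\pi(\mO_K^\lambda)=P^\lambda$ and $\pi(\mO_\bbR^\mu)=Q^\mu$, this yields $\mO_K^\lambda=\pi^{-1}(P^\lambda)$ and $\mO_\bbR^\mu=\pi^{-1}(Q^\mu)$. Because $\pi$ is a submersion, for any $\gamma\in\mO_K^\lambda\cap\mO_\bbR^\mu$,
\[
T_\gamma\mO_K^\lambda+T_\gamma\mO_\bbR^\mu=(d\pi)^{-1}\bigl(T_{\pi(\gamma)}P^\lambda+T_{\pi(\gamma)}Q^\mu\bigr),
\]
which equals $T_\gamma\Omega G_c$ if and only if $T_{\pi(\gamma)}P^\lambda+T_{\pi(\gamma)}Q^\mu=T_{\pi(\gamma)}\Omega X_c$. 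Thus the problem reduces to transversality of $P^\lambda$ and $Q^\mu$ inside $\Omega X_c$.

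To settle that, I would invoke \cite[Prop.~6.3]{N1} (already used in the Lemma preceding Proposition \ref{torsors}): the restriction of the energy function $E$ to $\Omega X_c$ is Bott--Morse with critical manifolds $\{B^\nu\}$, and in this restricted picture $P^\lambda$ and $Q^\mu$ are the stable and unstable manifolds of $B^\lambda$ and $B^\mu$ respectively. The required transversality in $\Omega X_c$ is then transferred from the transversality of the ambient $G(\mO)$- and $G(\bC[t^{-1}])$-orbits $S^\lambda$ and $T^\mu$ in $\Omega G_c\is\Gr$ asserted by Proposition \ref{PS}: along the common critical locus the ambient gradient flow is tangent to $\Omega X_c$, so the intersections $P^\lambda=S^\lambda\cap\Omega X_c$ and $Q^\mu=T^\mu\cap\Omega X_c$ are clean and the stable/unstable splittings of tangent spaces descend to $T\Omega X_c$.

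The main obstacle is this last transfer step---ensuring that the ambient K\"ahler-compatible Morse--Bott transversality really restricts along $\Omega X_c\hookrightarrow\Omega G_c$. The needed compatibility is encoded in the Pressley--Segal description of the energy flow together with the symmetric space structure of $X_c$; it can be verified either geometrically from the moment-map picture of $E$, or algebraically via the involutions $\tilde\theta$ and $\tilde\eta^\tau$ that cut out $K(\calK)$ and $LG_\bbR$ and underlie the parametrization in Proposition \ref{parametrization}.
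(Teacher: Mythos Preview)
Your reduction via Proposition~\ref{torsors} to the transversality of $P^\lambda$ and $Q^\mu$ inside $\Omega X_c$ is exactly what the paper does, and your argument for it (saturation by $\Omega K_c$-orbits plus $\pi$ a submersion) is correct.

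Where you diverge is the second step. You route through the Morse--Bott structure of $E|_{\Omega X_c}$ and then, as you yourself flag, face a ``transfer'' obstacle. The paper bypasses this entirely with a one-line involution argument, which is precisely the ``algebraic'' option you mention in your last sentence. Namely, $\Omega X_c=(\Omega G_c)^{\tilde\theta}$ is the fixed locus of an involution; the ambient strata $S^\lambda$ and $T^\mu$ are each invariant under it (for $S^\lambda$ directly, for $T^\mu$ via $\tilde\eta^\tau$-invariance together with $\tilde\theta=\tilde\eta^\tau$ on $\Omega G_c$). Transversality of involution-invariant submanifolds then passes to the fixed locus by averaging: given $v\in T_x\Omega X_c$ write $v=w_1+w_2$ with $w_i\in T_xS^\lambda,\,T_xT^\mu$; then $v=\tfrac12(w_1+\tilde\theta w_1)+\tfrac12(w_2+\tilde\theta w_2)\in T_xP^\lambda+T_xQ^\mu$.

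So you had the key idea---just promote it from fallback to the main argument and drop the Morse--Bott intermediary. Note also that the Morse--Bott route by itself would not close the gap: stable and unstable manifolds of \emph{different} critical components need not be transversal without a Morse--Smale condition, so you would still need to import the ambient transversality of $S^\lambda$ and $T^\mu$, and the involution argument is the cleanest way to do that.
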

\begin{proof}
By Proposition \ref{torsors}, it suffices to show that the strata
$P^\lambda$ and $Q^\mu$ in $\Omega X_c$ are transversal. 
This follows from the fact that the orbits 
$S^\lambda$ and $T^\lambda$ 
on $\Omega G_c$ 
are transversal and both $S^\lambda, T^\lambda$ are invariant under the 
involution $\tilde\theta$ on $\Omega G_c$ as 
$\tilde\theta=\tilde\eta^\tau$ on $\Omega G_c$ and 
$S^\lambda$ (resp. $T^\lambda$) is $\tilde\theta$-invariant 
(resp. $\tilde\eta^\tau$-invariant). 

\end{proof}

\subsection{The components $\Gr_{\bbR}^0$}
We define $\Gr_{\bbR}^0$
be the union of the components of $\Gr_\bbR$ in the image 
$\pi_*(\pi_1(G))\subset \pi_1(X)\stackrel{(\ref{components of real Gr})}=\pi_0(\Gr_\bbR)$.

\begin{lemma}\label{Gr^0_R}
We have $\Gr_{\bbR}^0=\bigcup_{\lambda\in\mL} S_{\bbR}^\lambda$.
\end{lemma}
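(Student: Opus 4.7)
The approach closely mirrors the proof of Lemma \ref{component X_c}. It suffices to show that for each $\lambda\in\Lambda_S^+$, the stratum $S_\bbR^\lambda$ lies in $\Gr_\bbR^0$ if and only if $\lambda\in\mL$. Since $\Gr_\bbR^0$ is a union of connected components of $\Gr_\bbR$ and each $S_\bbR^\lambda$ is connected (being a vector bundle over the connected real flag manifold $C_\bbR^\lambda\is G_\bbR/P_\bbR^\lambda$, under the standing assumption that $G_\bbR$ is connected), this reduces to checking the condition on the distinguished point $t^\lambda\in S_\bbR^\lambda$.

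I then unwind the identification $\pi_0(\Gr_\bbR)\is\pi_1(X)$ in \eqref{components of real Gr}, which factors through $q:(\Omega G_c)^\eta\to\Omega_{\on{top}}X_c$, $\gamma\mapsto(e^{i\theta}\mapsto\pi(\gamma(e^{i\theta/2})))$, where $\pi(g)=\tilde\theta(g)g$. Viewed as an element of $\Omega G_c$, the point $t^\lambda$ is the loop $z\mapsto\lambda(z)$ with image in the split torus $S$. Since $\lambda\in\Lambda_S$ implies $\theta\circ\lambda=\lambda^{-1}$, the Cartan map restricts to $\pi(\lambda(z))=\lambda(z)^2=\lambda(z^2)$, and therefore
\[
q(t^\lambda)(e^{i\theta})=\pi(\lambda(e^{i\theta/2}))=\lambda(e^{i\theta/2})^2=\lambda(e^{i\theta}).
\]
This is precisely the loop in $X_c$ whose homotopy class is $[\lambda]\in\pi_1(X)$ under the map $[-]:\Lambda_S^+\to\pi_1(X)$ from Section \ref{orbits}. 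Hence $t^\lambda$ lies in the component of $\Gr_\bbR$ corresponding to $[\lambda]$.

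Combining the two steps, $S_\bbR^\lambda\subset\Gr_\bbR^0$ iff $[\lambda]\in\pi_*(\pi_1(G))$, which is by definition the condition $\lambda\in\mL$. The one slightly delicate point is the cancellation between the reparametrization $z\mapsto z^{1/2}$ that enters $q$ and the squaring built into the Cartan map on $S$: this is exactly what makes $q(t^\lambda)$ recover $\lambda$ on the nose rather than $\lambda^2$, and it is the reason the answer is phrased in terms of $\mL$ itself and not some index-two rescaling. No ingredient beyond what is developed in Section \ref{orbits} is needed.
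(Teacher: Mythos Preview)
Your proof is correct and follows essentially the same route as the paper's, which also reduces to showing $t^\lambda$ lies in the component of $\Gr_\bbR$ labeled $[\lambda]$ and then uses the vector-bundle structure $S_\bbR^\lambda\to C_\bbR^\lambda$ to propagate this to the whole stratum. The only minor difference is that the paper passes from $t^\lambda$ to all of $C_\bbR^\lambda$ by writing $G_\bbR/P_\bbR^\lambda=K_c\cdot t^\lambda$ and invoking the $K$-equivariance of $\pi$, whereas you invoke directly that $G_\bbR/P_\bbR^\lambda$ is connected; your explicit unwinding of $q(t^\lambda)$ is a detail the paper leaves implicit.
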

\begin{proof}
Let $\lambda\in\Lambda_S^+$.
It suffices to show that 
$S_{\bbR}^\lambda\subset \Gr_{\bbR}^0$ if and only if $\lambda\in\mL$.
We have $t^\lambda\in S_{\bbR}^\lambda$ and it follows from (\ref{components of real Gr}) that 
$t^\lambda$ lies in the 
component of 
$\Gr_\bbR$
corresponding to $[\lambda]\in\pi_0(\Gr_\bbR)=\pi_1(X)$. 
It implies $t^\lambda\in\Gr^0_\bbR$ if and only if $\lambda\in\mL$.
Since 
$G_\bbR/P_\bbR^\lambda=K_c\cdot t^\lambda$ and $\pi:G\to X$ is $K$-equivariant, 
it implies $G_\bbR/P_\bbR^\lambda\subset\Gr^0_\bbR$ if and only if $\lambda\in\mL$.
Finally, since $S_\bbR^\lambda$ is a vector bundle over $G_\bbR/P_\bbR^\lambda$ 
we conclude that $S_\bbR^\lambda\subset\Gr_\bbR^0$ if and only if $\lambda\in\mL$.
The lemma follows.
\end{proof}

\begin{definition}
We define $D_c(G_\bbR\backslash\Gr_\bbR)$ to be the bounded constructible derived categories of  sheaves on $G_\bbR\backslash\Gr_\bbR$. 
We set $D_c(G_\bbR(\mO_\bbR)\backslash\Gr_\bbR)$ 
to be the full subcategory
of $D_c(G_\bbR\backslash\Gr)$ of complexes constructible with 
respect to the $G_\bbR(\mO_\bbR)$-orbits stratification.
We set  
 $D_c(G_\bbR(\mO_\bbR)\backslash\Gr_\bbR^0)$ be the full subcategory  
of $D_c(G_\bbR(\mO_\bbR)\backslash\Gr_\bbR)$ of complexes supported on the components 
$\Gr_\bbR^0$.

\end{definition}


\section{The Matsuki flow}\label{Morse flow}
In this section we construct a Morse flow on the 
affine Grassmannian, called the Matsuki flow, and we use it to give a 
Morse-theoretic interpretation and refinement of the 
affine 
Matsuki correspondence.

\subsection{The Matsuki flow on $\Gr$}\label{Matsuki flow}
The Cartan decomposition $\fg_\bbR=\frak k_\bbR\oplus\fp_\bbR$ 
induces a decomposition of
$\fg_c=\frak k_c\oplus i\fp_\bbR$, $\fg_\bbR=\frak k_c\oplus\fp_\bbR$
and the corresponding loop algebra 
$L\fg=L\frak k\oplus L\fp$, $L\fg_c=L\frak k_c\oplus L(i\fp_\bbR)$,
$L\fg_\bbR=L\frak k_c\oplus L\fp_\bbR$.

Recall the non-degenerate bilinear form $(,)_\gamma$ on 
$T_\gamma LG_c$ 
\[(v_1,v_2)_\gamma:=\int_{S^1}\langle\gamma^{-1}v_1,\gamma^{-1}v_2\rangle d\theta.\]
Let $\gamma\in LG_c$ and $T_\gamma (LK_c\cdot\gamma)\subset T_\gamma LG_c$ be the 
tangent space of the $LK_c$-orbit $LK_c\cdot\gamma$ through $\gamma$. 
The bilinear form above induces an 
orthogonal decomposition 
\[T_\gamma LG_c=T_\gamma LK_c\cdot\gamma\oplus (T_\gamma LK_c\cdot\gamma)^\bot\] and 
for any vector $v\in T_\gamma LG_c$ we write 
$v=v_0\oplus v_1$ where $v_0\in T_\gamma LK_c\cdot\gamma$, 
$v_1\in (T_\gamma LK_c\cdot\gamma)^\bot$.
 Note that we have 
\beq\label{decomp}
\gamma^{-1}v_0\in\on{Ad}_{\gamma^{-1}}L\frak k_c,\ \ 
\gamma^{-1}v_1\in\on{Ad}_{\gamma^{-1}}L(i\fp_\bbR).
\eeq
Recall that the loop group $\Omega G_c$ can be identified with a ``co-adjoint'' orbit
in $LG_c$ via the embedding
\[\Omega G_c\hookrightarrow L\fg_c,\ \gamma\ra\gamma^{-1}\gamma'.\]
Consider the following  functions on $\Omega G_c$
\[E:\Omega G_c\ra\bbR,\ \gamma\ra (\gamma',\gamma')_\gamma=\int_{S^1}\langle\gamma^{-1}\gamma',\gamma^{-1}\gamma'\rangle d\theta,\] 
\[E_0:\Omega G_c\ra\bbR,\ \gamma\ra
(\gamma'_0,\gamma'_0)_\gamma=
\int_{S^1}\langle\gamma^{-1}\gamma'_0,\gamma^{-1}\gamma'_0\rangle d\theta,\]
\[E_1:\Omega G_c\ra\bbR,\ \gamma\ra
(\gamma'_1,\gamma'_1)_\gamma=
\int_{S^1}\langle\gamma^{-1}\gamma'_1,\gamma^{-1}\gamma'_1\rangle d\theta.\]

\quash{
\begin{lemma}
We have  
$E=E_0+E_1$ and $E_1$ is constant on $LK_c$-orbits on 
$\Omega G_c=\Gr$.
\end{lemma}
\begin{proof}
The first claim follows from the definition and the second claim follows from the equality 
$(k\gamma)^{-1}(k\gamma)'_1=
\gamma^{-1}\gamma'_1
$, for $k\in LK_c,\gamma\in LG_c$.
\end{proof}

\begin{remark}
The functions $E$ and $E_0$ are not $LK_c$-invariant.
\end{remark}

\begin{lemma}
The Hamiltonian vector field on $\Omega G_c$ which 
corresponds to $E_1$ is given by 
\[\gamma\ra (\gamma^{-1}\gamma')_1+[(\gamma^{-1}\gamma')_1,
(\gamma^{-1}\gamma')_0]\in L\fg_c\on{mod}\fg_c\is T_\gamma\Omega G_c.\]

\end{lemma}}
Note that $E$ is the energy function in \eqref{energy function}.

\begin{lemma}\label{4}
Recall the map $\pi:\Omega G_c\ra\Omega G_c, \gamma\ra\theta(\gamma)^{-1}\gamma$.
We have 
\beq
4E_1=E\circ\pi:\Omega G_c\ra\bbR.
\eeq
In particular, the function $E_1$ is $LK_c$-invariant.

\end{lemma}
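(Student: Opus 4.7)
My plan is a direct differential-geometric computation that reduces the identity to an application of the symmetric-pair decomposition at the Lie algebra level. Let $\gamma\in\Omega G_c$ and set $Y:=\gamma'\gamma^{-1}\in L\fg_c$. Because $\theta:G_c\to G_c$ is a group automorphism, the chain rule gives $(\theta\circ\gamma)'(\theta\circ\gamma)^{-1}=\theta(Y)$, and $\pi$ maps $\Omega G_c$ to itself (since $\theta(e)=e$ and $\theta(G_c)=G_c$). Applying the product rule to $\pi(\gamma)=\theta(\gamma)^{-1}\gamma$ and simplifying should give
\[
\pi(\gamma)^{-1}\pi(\gamma)' \;=\; \on{Ad}_{\gamma^{-1}}\bigl(Y-\theta(Y)\bigr).
\]

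The second step uses the splitting $Y=Y_0+Y_1$ of \eqref{decomp}, with $Y_0\in L\frak k_c$ and $Y_1\in L(i\fp_\bbR)$. Since the differential $d\theta$ acts as $+1$ on $\frak k$ and as $-1$ on $\fp$, and is $\bbC$-linear so that it still acts as $-1$ on the real subspace $i\fp_\bbR\subset\fp$, we have $\theta(Y)=Y_0-Y_1$ and hence $Y-\theta(Y)=2Y_1$. Consequently
\[
\pi(\gamma)^{-1}\pi(\gamma)' \;=\; 2\,\on{Ad}_{\gamma^{-1}}Y_1 \;=\; 2\gamma^{-1}\gamma'_1.
\]
Substituting into the definition of $E$ and using the $G_c$-invariance of $\langle,\rangle$ yields $E(\pi(\gamma))=4E_1(\gamma)$, which is the claim.

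The $LK_c$-invariance of $E_1$ is then immediate: for any $k\in LK_c$ one has $\theta(k)=k$, hence
\[
\pi(k\gamma)\;=\;\theta(k\gamma)^{-1}(k\gamma)\;=\;\theta(\gamma)^{-1}\theta(k)^{-1}k\gamma\;=\;\theta(\gamma)^{-1}\gamma\;=\;\pi(\gamma),
\]
so $E\circ\pi$, and therefore $E_1=\tfrac{1}{4}E\circ\pi$, is constant on $LK_c$-orbits. I do not foresee a serious obstacle; the only step requiring care is the product-rule computation above, which is cleanest if one writes $\gamma(t+s)=\gamma(t)\exp(sX+O(s^2))$ with $X=\gamma^{-1}\gamma'$ and observes $\theta(\gamma(t+s))=\theta(\gamma(t))\exp(s\,\theta(X)+O(s^2))$, so that the automorphism property of $\theta$ transfers transparently to the Lie algebra.
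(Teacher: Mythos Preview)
Your proof is correct and follows essentially the same route as the paper: both compute $\pi(\gamma)^{-1}\pi(\gamma)'$ via the product rule, use the Cartan decomposition $\gamma'\gamma^{-1}=Y_0+Y_1$ together with $\theta(Y)=Y_0-Y_1$ to identify it with $2\gamma^{-1}\gamma'_1$, and conclude. Your explicit verification of $LK_c$-invariance via $\pi(k\gamma)=\pi(\gamma)$ is a nice addition that the paper leaves implicit.
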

\begin{proof}
Write $||v||=\langle v,v\rangle$ for $v\in \fg_c$.
For any $\gamma\in\Omega G_c$ we have 
\[E\circ\pi(\gamma)=\int_{S^1}||\pi(\gamma)^{-1}\pi(\gamma)'||d\theta=
\int_{S^1}||\gamma^{-1}\gamma'-\gamma^{-1}\eta(\gamma)'\eta(\gamma)^{-1}\gamma|| d\theta.\]
Note that $\gamma^{-1}\gamma'-\gamma^{-1}\theta(\gamma)'\theta(\gamma)^{-1}\gamma=2\gamma^{-1}\gamma'_1$
, hence we have $||\gamma^{-1}\gamma'-\gamma^{-1}\theta(\gamma)'\theta(\gamma)^{-1}\gamma||=4||\gamma^{-1}\gamma'_1||$.
The lemma follows.

\quash{
\[\int||\gamma^{-1}\gamma'||+||\gamma^{-1}\eta(\gamma'\gamma^{-1})\gamma||
-2(\gamma^{-1}\gamma',\gamma^{-1}\eta(\gamma'\gamma^{-1})\gamma)_{\on{kil}}d\theta.\]
Since $(,)_{\on{kill}}$ is $G_c$ and $\eta$-invariant,
we have 
\[||\gamma^{-1}\gamma'||+||\gamma^{-1}\eta(\gamma'\gamma^{-1})\gamma||=
||\gamma^{-1}\gamma'||+||\eta(\gamma'\gamma^{-1})||=2||\gamma^{-1}\gamma'||.\]
On the other hand, it follows from (\ref{decomp}) that   
\[(\gamma^{-1}\gamma',\gamma^{-1}\eta(\gamma'\gamma^{-1})\gamma)_{\on{kil}}=
||\gamma^{-1}\gamma'_0||-
||\gamma^{-1}\gamma'_1||.\]
All together, we arrive 
\[E\circ\pi(\gamma)=\int2||\gamma^{-1}\gamma'||-2(
||\gamma^{-1}\gamma'_0||-
||\gamma^{-1}\gamma'_1||)d\theta=\int 4||\gamma^{-1}\gamma'_1||d\theta
=4E_1(\gamma).\]}

\end{proof}

\begin{lemma}\label{hamiltonian}
The Hamiltonian vector field on $\Omega G_c$ which 
correspond to $E_1$ (resp. $E_0$) is given by 
\[\gamma\ra R_1(\gamma)=\gamma'_1-\gamma\gamma_1'(0)\ \ (resp.\  \gamma\ra R_0=\gamma'_0-\gamma\gamma_0'(0)).\] 
In particular, we have 
\[\gamma^{-1}R_1(\gamma)\in\on{Ad}_{\gamma^{-1}}Li\frak p_\bbR+i\frak p_\bbR
\ \ (resp.\  
\gamma^{-1}R_0(\gamma)\in\on{Ad}_{\gamma^{-1}}L\frak k_\bbR+\frak k_\bbR).\]

\end{lemma}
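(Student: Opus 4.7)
The plan is to reduce the identification to Proposition \ref{PS}(1) by exploiting the pointwise orthogonal splitting $\frak g_c = \frak k_c \oplus i\frak p_\bbR$ of the symmetric pair $(\frak g_c, \frak k_c)$.

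First I would observe that since $\gamma' = \gamma_0' + \gamma_1'$ is orthogonal with respect to $(\cdot,\cdot)_\gamma$, one has $E = E_0 + E_1$; and the formulas in the statement give $R_0 + R_1 = \gamma' - \gamma\gamma'(0) = R(\gamma)$, the Hamiltonian vector field of $E$ from Proposition \ref{PS}(1). By linearity and uniqueness of Hamiltonian vector fields, it suffices to establish that $R_1$ is the Hamiltonian of $E_1$; then $R_0 = R - R_1$ is automatically the Hamiltonian of $E - E_1 = E_0$.

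Next I would prove $\omega(R_1, v) = dE_1(v)$ by a variational computation paralleling Proposition \ref{PS}(1). For a curve $\gamma(s)$ through $\gamma$ with velocity $v$, set $A = \gamma^{-1}\partial_\theta \gamma$ and $B = \gamma^{-1}v$. Commuting $\partial_s$ and $\partial_\theta$ yields $\partial_s A = \partial_\theta B + [A,B]$. The decomposition $A = A_0 + A_1$ along $\on{Ad}_{\gamma^{-1}}L\frak k_c \oplus \on{Ad}_{\gamma^{-1}}Li\frak p_\bbR$ varies with $\gamma$, but the symmetric-pair bracket rules
\[
[\frak k_c,\frak k_c],\; [i\frak p_\bbR, i\frak p_\bbR] \subset \frak k_c, \qquad [\frak k_c, i\frak p_\bbR] \subset i\frak p_\bbR
\]
imply that $\partial_s A_1$ consists of $\partial_\theta B_1$ plus bracket terms which vanish under Ad-invariant pairing against $A_1$. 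Integrating by parts on $S^1$ gives $dE_1(v)$ as a constant multiple of $\int_{S^1}\langle A_1', B_1\rangle\, d\theta$. The same bracket identities, applied to $A_1 = \on{Ad}_{\gamma^{-1}}Z_1$ with $Z_1 \in Li\frak p_\bbR$, show $A_1'\in \on{Ad}_{\gamma^{-1}}Li\frak p_\bbR$, so the left side $\omega(R_1,v) = \int_{S^1}\langle (\gamma^{-1}R_1)', \gamma^{-1}v\rangle\, d\theta$, with $\gamma^{-1}R_1 = A_1 - A_1(0)$ and hence $(\gamma^{-1}R_1)' = A_1'$, collapses to the same integral $\int_{S^1}\langle A_1', B_1\rangle\, d\theta$ with the matching scalar. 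The argument for $R_0$ is symmetric, interchanging the roles of $\frak k_c$ and $i\frak p_\bbR$.

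Finally, the stated inclusions are read off directly: $\gamma^{-1}R_1 = A_1 - A_1(0)$, with $A_1 \in \on{Ad}_{\gamma^{-1}}Li\frak p_\bbR$ by definition of the decomposition and $A_1(0) = \gamma_1'(0) \in i\frak p_\bbR$ because $\gamma(0) = e$; this gives $\gamma^{-1}R_1 \in \on{Ad}_{\gamma^{-1}}Li\frak p_\bbR + i\frak p_\bbR$, and the analogous statement for $R_0$ follows identically, using $\frak k_c = \frak k_\bbR$.

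The main technical obstacle is keeping track of the $\gamma$-dependence of the pointwise decomposition $A = A_0 + A_1$ carefully enough to verify that every bracket correction arising in $\partial_s A_1$ vanishes under the Ad-invariant pairing with $A_1$. This cancellation is precisely where the symmetric-pair structure of $(\frak g_c, \frak k_c)$ is essential: one needs $[i\frak p_\bbR, i\frak p_\bbR] \subset \frak k_c$ to ensure that mixed terms involving both $B_1$ and $A_1$ land orthogonal to $A_1$, and dually for the computation with $R_0$.
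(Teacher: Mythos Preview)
Your approach is correct but genuinely different from the paper's. The paper does \emph{not} compute the variation of $E_1$ directly; instead it invokes Lemma~\ref{4}, the identity $4E_1 = E\circ\pi$ for the symmetrization map $\pi(\gamma)=\theta(\gamma)^{-1}\gamma$, and applies Proposition~\ref{PS}(1) at the image point $x=\pi(\gamma)$. The computation then reduces to the two identities $x^{-1}x'=2\gamma^{-1}\gamma'_1$ and $x^{-1}\pi_*u=2\gamma^{-1}u_1$, together with the orthogonality $\langle\gamma^{-1}\gamma'_1,(\gamma^{-1}u_0)'\rangle=0$. Your route, by contrast, varies $E_1=\int\langle A_1,A_1\rangle$ directly and tracks the $\gamma$-dependent splitting through the flatness relation $\partial_s A=\partial_\theta B+[A,B]$; the key identity you need (and which does hold) is $\partial_s A_1=\partial_\theta B_1+2[A_1,B_1]$, whose bracket term is killed by pairing against $A_1$. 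What the paper's argument buys is a clean reduction to the already-established Hamiltonian of $E$, consistent with the paper's systematic use of the map $\pi$ onto $\Omega X_c$; what yours buys is a self-contained computation that does not rely on Lemma~\ref{4} and makes the role of the symmetric-pair bracket relations explicit at the infinitesimal level. One small caution: your phrase ``with the matching scalar'' should be read as ``the same scalar relating $dE_1$ to $\omega(R_1,\cdot)$ as relates $dE$ to $\omega(R,\cdot)$,'' so that you are really comparing to Proposition~\ref{PS}(1) rather than verifying a normalization independently.
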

\begin{proof}
Since $R_0(\gamma)+R_1(\gamma)=R(\gamma)=\gamma'-\gamma\gamma(0)'$,
it is enough to show that $R_1(\gamma)=\gamma'_1-\gamma\gamma_1'(0)$.
Let $\gamma\in\Omega G_c$, $x=\pi(\gamma)=\theta(\gamma)^{-1}\gamma$,
and $u\in T_\gamma\Omega G_c$. 
According to Proposition \ref{PS} and Lemma \ref{4},
we have 
\[4dE_1(\gamma)(u)\stackrel{}=\pi^*dE(\gamma)(u)=
dE(x)(\pi_*u)=
\omega(x',\pi_*u)=\omega(x^{-1}x',x^{-1}\pi_*u).\]
Using the equalities  $x^{-1}x'=2\gamma^{-1}\gamma'_1$,  
$x^{-1}\pi_*u=2\gamma^{-1}u_1$, and the fact that $\langle\gamma^{-1}\gamma'_1,(\gamma^{-1}u_0)'\rangle=0$, we get 
 \[4dE_1(\gamma)(u)=4\omega(
\gamma^{-1}\gamma'_1,\gamma^{-1}u_1)=4\int_{S^1} \langle
\gamma^{-1}\gamma'_1,(\gamma^{-1}u_1)'\rangle d\theta=4\int_{S^1} \langle
\gamma^{-1}\gamma'_1,(\gamma^{-1}u)'\rangle d\theta\]
\[=4\int_{S^1} \langle
\gamma^{-1}\gamma'_1-\gamma_1'(0),(\gamma^{-1}u)'\rangle d\theta=
4\omega(R_1(\gamma),u).\]
The lemma follows.

\end{proof}

Let $\Omega G_c=\bigcup_{\lambda\in\calL}\mO_K^\lambda$ and 
$\Omega G_c=\bigcup_{\lambda\in\calL}\mO_\bbR^\lambda$ be the $K(\calK)$-orbits and 
$LG_\bbR$-orbits stratifications of $\Omega G_c$. 
Let $\mO_c^\lambda=\mO_K^\lambda\cap\mO_\bbR^\lambda$ which is a single 
$LK_c$-orbit. 

\begin{prop}\label{Morse-Bott}
Let $E_1:\Omega G_c\ra\bbR$ be the function above and  $\nabla E_1$
be the corresponding gradient vector field. 
\begin{enumerate}
\item $\nabla E_1$ is tangential to both 
$\mO^\lambda_K$ and  $\mO^\lambda_\bbR$,
\item The union $\bigsqcup_{\lambda\in\mathcal L}\mO_c^\lambda$ is the critical manifold of $\nabla E_1$.
\item For any $\gamma\in\mO_c^\lambda$, let 
$T_\gamma\Omega G_c=T^+\oplus T^0\oplus T^-$
be the orthogonal direct sum decomposition into the positive, zero, and 
negative eigenspaces of the Hessian $d^2E_1$. We have 
\[T_\gamma\mO_K^\lambda=T^+\oplus T^0,\ T_\gamma\mO_\bbR^\lambda=
T^-\oplus T^0.\]
\end{enumerate}

\end{prop}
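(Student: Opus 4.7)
The plan is to reduce all three statements to the Morse-theoretic properties of the energy function $E$ on $\Omega G_c$ (Proposition \ref{PS}) and of its restriction to $\Omega X_c^0$ (which is Morse-Bott by \cite[Prop.~6.3]{N1}), using the identity $4E_1 = E \circ \pi$ of Lemma \ref{4} and the principal $\Omega K_c$-bundle $\pi : \Omega G_c \to \Omega X_c^0$ of Proposition \ref{torsors}, under which $\mO_K^\lambda$, $\mO_\bbR^\lambda$, and $\mO_c^\lambda$ project onto $P^\lambda$, $Q^\lambda$, and $B^\lambda$ respectively.

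For (1), the first step is to observe that $\nabla E_1$ is horizontal with respect to $\pi$: since $E_1$ is $LK_c$-invariant by Lemma \ref{4} and hence $\Omega K_c$-invariant, $dE_1$ vanishes on the fibers of $\pi$, so $\nabla E_1$ is perpendicular to them. A short calculation then identifies $\pi_*\nabla E_1$ with a positive multiple of the gradient of $E|_{\Omega X_c^0}$. The gradient flow of $E|_{\Omega X_c^0}$ preserves $P^\lambda$ and $Q^\lambda$ by \cite[Prop.~6.3]{N1}; pulling back through $\pi$ and invoking $\mO_K^\lambda = \pi^{-1}(P^\lambda)$, $\mO_\bbR^\lambda = \pi^{-1}(Q^\lambda)$ from Proposition \ref{torsors} yields tangency of $\nabla E_1$ to $\mO_K^\lambda$ and $\mO_\bbR^\lambda$. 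For (2), the submersion property of $\pi$ gives $\{\nabla E_1 = 0\} = \pi^{-1}\{\nabla(E|_{\Omega X_c^0}) = 0\} = \pi^{-1}\bigl(\bigsqcup_{\lambda \in \calL} B^\lambda\bigr) = \bigsqcup_{\lambda \in \calL} \mO_c^\lambda$, again using \cite[Prop.~6.3]{N1} and Proposition \ref{torsors}.

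For (3), at a critical point $\gamma \in \mO_c^\lambda$ I would decompose $T_\gamma \Omega G_c = V \oplus H$, where $V = T_\gamma(\Omega K_c \cdot \gamma)$ is the vertical tangent space and $H$ is the $g$-orthogonal complement, identified via $d\pi|_H$ with $T_{\pi(\gamma)}\Omega X_c^0$. Since $E_1$ is $\Omega K_c$-invariant and $\gamma$ is critical, $d^2E_1(\gamma)|_V = 0$, while on $H$ it agrees (up to a positive scalar) with the Hessian of $E|_{\Omega X_c^0}$ at $\pi(\gamma) \in B^\lambda$. By the Morse-Bott decomposition of $E|_{P^\lambda}$ and $E|_{Q^\lambda}$, the positive (resp.\ negative) eigenspace of the latter Hessian on $T_{\pi(\gamma)}\Omega X_c^0$ is the normal direction to $B^\lambda$ inside $P^\lambda$ (resp.\ $Q^\lambda$). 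A direct computation then yields $T^0 = V \oplus (d\pi|_H)^{-1}(T_{\pi(\gamma)}B^\lambda) = T_\gamma\mO_c^\lambda$, $T^+ \oplus T^0 = V \oplus (d\pi|_H)^{-1}(T_{\pi(\gamma)}P^\lambda) = T_\gamma\mO_K^\lambda$, and analogously $T^- \oplus T^0 = T_\gamma\mO_\bbR^\lambda$.

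The main obstacle is the identification $\pi_*\nabla E_1 \propto \nabla(E|_{\Omega X_c^0})$ used in (1) and the analogous statement for Hessians used in (3). Strictly, this requires that $\pi$ behave as a Riemannian submersion up to a positive scalar, which must be checked by a direct calculation using Lemma \ref{4} together with the explicit metric formula $g(v, w) = \int_{S^1}\langle \gamma^{-1}v, \gamma^{-1}w\rangle d\theta$. An alternative, more hands-on route is to verify tangency directly using $\nabla E_1 = -J R_1$, the explicit form $\gamma^{-1}R_1 \in \on{Ad}_{\gamma^{-1}}Li\fp_\bbR + i\fp_\bbR$ from Lemma \ref{hamiltonian}, Corollary \ref{rotation}, and the infinitesimal descriptions of the $K(\calK)$- and $LG_\bbR$-orbits.
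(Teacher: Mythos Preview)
Your reduction of (2) and (3) to Morse--Bott properties of $E|_{\Omega X_c^0}$ via the factorization $4E_1=E\circ\pi$ and the bundle $\pi:\Omega G_c\to\Omega X_c^0$ is exactly the paper's approach. The only difference is how those downstairs properties are established: rather than citing \cite[Prop.~6.3]{N1}, the paper argues via the involution $\tilde\theta$. Since $\Omega X_c$ is the $\tilde\theta$-fixed locus in $\Omega G_c$ and $\tilde\theta=\tilde\eta^\tau$ on $\Omega G_c$, both $S^\lambda$ and $T^\lambda$ are $\tilde\theta$-invariant; hence the eigenspace decomposition for the Hessian of $E$ on $\Omega G_c$ (Proposition~\ref{PS}) simply restricts to $T_{\pi(\gamma)}\Omega X_c^0$, yielding $T_{\pi(\gamma)}P^\lambda=W^+\oplus W^0$ and $T_{\pi(\gamma)}Q^\lambda=W^-\oplus W^0$.

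For (1), however, the paper does \emph{not} go through the Riemannian-submersion route; it takes precisely your ``alternative'' hands-on computation, and this is not just a stylistic choice. The K\"ahler metric $g$ is built from the complex structure $J$, and $\pi(\gamma)=\theta(\gamma)^{-1}\gamma$ has no reason to intertwine $J$, so the claim $\pi_*\nabla E_1\propto\nabla(E|_{\Omega X_c^0})$ is genuinely unclear and is never asserted. The paper's direct argument also has a twist worth noting: tangency of $\nabla E_1$ to $\mO_\bbR^\lambda$ follows cleanly from $\gamma^{-1}R_1(\gamma)\in\on{Ad}_{\gamma^{-1}}Li\fp_\bbR+i\fp_\bbR$ (Lemma~\ref{hamiltonian}) together with $J(v)+iv\in\fg(\bC[t])$, but the parallel computation on the $K$-side only shows that $\nabla E_0$ is tangent to $\mO_K^\lambda$, not $\nabla E_1$. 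The paper closes the gap by writing $\nabla E_1=\nabla E-\nabla E_0$ and observing that $\nabla E$ is tangent to $\mO_K^\lambda$ because the latter is a complex submanifold invariant under the rotation flow (Corollary~\ref{rotation} and Proposition~\ref{PS}). So your alternative route is the right one, but the argument for the $K(\calK)$-orbits is less symmetric than you might expect.
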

\begin{proof}
Proof of (1). 
We first show that $\nabla E_1$ is tangential to $\mO_\bbR^\lambda=\Omega G_c\cap
LG_\bbR t^\lambda G(\bC[t])$.
Since the tangent space $T_\gamma\mO_\bbR^\lambda$ at $\gamma\in\mO_\bbR^\lambda$
is identified, by 
left translation, with the space
\[\Omega\fg_c\cap(\on{Ad}_{\gamma^{-1}}L\fg_\bbR+\fg(\bC[t]))\subset \Omega\fg_c\] it suffices to show that 
$\gamma^{-1}\nabla E_1(\gamma)\in\on{Ad}_{\gamma^{-1}}L\fg_\bbR+\fg(\bC[t])$.
Recall that, by Proposition \ref{PS}, we have $\gamma^{-1}\nabla E_1(\gamma)=J(\gamma^{-1}R_1(\gamma))
$.
Note that $J(v)+iv\in\fg(\bC[t])$ for $v\in L\fg$ and by Lemma \ref{hamiltonian} we have 
\[i\gamma^{-1}R_1(\gamma)=i(\gamma^{-1}(\gamma'_1-\gamma\gamma_1'(0))
\in\on{Ad}_{\gamma^{-1}}L\fp_\bbR+\fp_\bbR.\]  
All together, 
we get 
\[\gamma^{-1}\nabla E_1(\gamma)=
-i\gamma^{-1}R_1(\gamma)+(J(\gamma^{-1}R_1(\gamma))+
i\gamma^{-1}R_1(\gamma))\in\on{Ad}_{\gamma^{-1}}L\fp_\bbR+\fg(\bC[t])\]
which is contained in $\on{Ad}_{\gamma^{-1}}L\fg_\bbR+\fg(\bC[t])$. We are done.  The same argument as above, replacing $LG_\bbR$ by
$K(\calK)$, shows that the gradient field 
$\nabla E_0$ of $E_0$ is tangential to $\mO_K^\lambda$.
Since, by Corollary \ref{rotation}, the orbit $\mO_K^\lambda$ is a complex submanifold of $\Omega G_c=\Gr$
invariant under the 
rotation flow $\gamma_a(t)$, it follows from Proposition \ref{PS} that 
$\nabla E$ is tangential to $\mO_K^\lambda$. Since 
$\nabla E_1=\nabla E-\nabla E_0$, we conclude that 
$\nabla E_1$ is also tangential to $\mO_K^\lambda$. This finishes 
the proof of (1).

Proof of (2) and (3). 
Let $\Omega X_c^0$ be the components of $\Omega X_c$ in lemma \ref{component X_c}.
By proposition \ref{torsors} and lemma \ref{4}, the function 
$E_1$ factors as 
\[E_1:\Omega G_c\stackrel{\pi}\ra
\Omega X_c^0 \subset\Omega G_c\stackrel{E}\ra\bbR.\]
Thus to  
prove (2) and (3), it is enough to prove following:
\begin{enumerate}[(i)]
\item 
The union $\bigsqcup_{\lambda} B^\lambda$ is the critical manifold 
of the restriction $E$ to $\Omega X_c^0$,
\\
\item 
For $\gamma\in B^\lambda$
we have $T_\gamma P^\lambda=W^+\oplus W^0$, 
$T_\gamma Q^\lambda=W^-\oplus W^0$, where 
$T_\gamma\Omega X^0_c=W^+\oplus W^0\oplus W^-$ is the 
orthogonal direct sum decomposition into the positive, zero, and negative eigenspaces of the Hessian 
$E|_{\Omega X_c^0}$.
\end{enumerate}

By Proposition \ref{PS}, we have 
$T_\gamma S^\lambda=U^+\oplus U^0$, 
$T_\gamma T^\lambda=U^-\oplus U^0$, where 
$T_\gamma\Omega G_c=U^+\oplus U^0\oplus U^-$
is the orthogonal direct sum decomposition into the positive, zero, and negative eigenspaces of the Hessian 
$E$. Note that $\tilde\theta$ induces a linear map on 
$T_\gamma\Omega G_c$, which we still denoted by $\tilde\theta$,
and we have $T_\gamma\Omega X_c=
(T_\gamma\Omega G_c)^{\tilde\theta}$ is the fixed point subspace.
So to prove (i) and (ii) it suffices to show that the subspaces 
$T_\gamma S^\lambda$ and $T_\gamma T^\lambda$ are 
$\tilde\theta$-invariant. It is true, since 
$\tilde\theta=\tilde\eta^\tau$ on $\Omega G_c$ and 
$S^\lambda$ (resp. $T^\lambda$) is $\tilde\theta$-invariant 
(resp. $\tilde\eta^\tau$-invariant). 
This finished the proof of (2) and (3).

\end{proof}

\quash{
\begin{remark}
It is worth comparing this with the finite dimensional situation in \cite{MUV}. 
The functions $E_0,E_1$ here are the affine analogy of the Morse-Bott functions
$f^+,f^-:\calB\to\bbR$ in \cite[Section 3]{MUV}. 
There the sum $f=f^++f^-$ is a constant function on $\calB$ 
\end{remark}
}

\begin{thm}\label{flow}
The gradient $\nabla E_1$ and gradient-flow
$\phi_t$ associated to the $LK_c$-invariant function 
$E_1:\Gr\to\bbR$ and the $LG_c$-invariant metric 
$g(,)$ satisfy the following:
\begin{enumerate}
\item The critical locus $\nabla E_1 = 0$ is the disjoint union of 
$LK_c$-orbits $\bigsqcup_{\lambda\in\mL}\mO_c^\lambda$
\item
The gradient-flow $\phi_t$
preserves the $K(\calK)$-and $LG_\bbR$-orbits.
\item
The limits $\underset{t\ra\pm\infty}\lim\phi_t(\gamma)$ of the gradient-flow exist for any $\gamma\in\Gr$.
For each  $LK_c$-orbit $\mO_c^\lambda$ in the critical locus,
the stable and unstable sets 
\beq\label{eq:morse slows}
\xymatrix{
\mO^\lambda_K=\{\gamma\in\Gr|\underset{t\ra\infty}\lim_{}\phi_t(\gamma)\in\mO^\lambda_c\}
&
\mO^\lambda_\bbR=\{\gamma\in\Gr|\underset{t\ra-\infty}\lim_{}\phi_t(\gamma)\in\mO^\lambda_c\}
}\eeq
are a single $K(\calK)$-orbit and $LG_\bbR$-orbit respectively.
\item
The correspondence between orbits $\mO^\lambda_K\longleftrightarrow\mO^\lambda_\bbR$ defined by \eqref{eq:morse slows} recovers the affine Matsuki correspondence~\eqref{AMS correspondence}.

\end{enumerate}

\end{thm}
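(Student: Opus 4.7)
The plan is to combine Proposition~\ref{Morse-Bott} with the factorization $4E_1 = E \circ \pi$ from Lemma~\ref{4} and the convergence of the energy flow $\nabla E$ from Proposition~\ref{PS}. Parts (1), (2), and (4) follow essentially immediately: (1) is Proposition~\ref{Morse-Bott}(2); (2) is Proposition~\ref{Morse-Bott}(1); and (4) is Proposition~\ref{parametrization}(3), since the critical orbit $\mO_c^\lambda = \mO_K^\lambda \cap \mO_\bbR^\lambda$ is exactly the intersection characterizing the affine Matsuki matching.

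The substantive content is Part (3). For the existence of limits, the idea is to descend the dynamics to the base of the principal $\Omega K_c$-bundle $\pi:\Omega G_c \to \Omega X_c^0$ of Proposition~\ref{torsors}(3). Since $E_1 = \tfrac{1}{4} E \circ \pi$ is constant on fibers of $\pi$ and the metric $g$ is $LG_c$-invariant (in particular $\Omega K_c$-invariant), the vector field $\nabla E_1$ is horizontal with respect to $\pi$, and the flow $\phi_t$ projects under $\pi$ to a well-defined flow $\bar\phi_t$ on $\Omega X_c^0$. The involution $\tilde\theta$ is an isometry preserving $E$, so $\nabla E$ is tangent to the fixed locus $\Omega X_c^0 = \bigcup P^\lambda$; hence the restricted $\nabla E$ flow on $\Omega X_c^0$ has critical manifolds $B^\lambda$ and stable/unstable manifolds $P^\lambda, Q^\lambda$ by Proposition~\ref{PS}, and converges. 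The identity $4E_1 = E\circ\pi$ together with horizontality identifies $\bar\phi_t$ with this restricted flow up to reparameterization, yielding convergence of $\bar\phi_t$ into $B^\lambda$. Lifting horizontally along the bundles $\mO_K^\lambda \to P^\lambda$ and $\mO_\bbR^\lambda \to Q^\lambda$ of Proposition~\ref{torsors}(1)(2)—using that $E_1$ is bounded below and non-increasing along flow lines, giving $L^2$-integrability in time of $|\nabla E_1|$—produces the limits of $\phi_t$ inside the fiber $\mO_c^\lambda$.

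To identify the stable and unstable sets as precisely $\mO_K^\lambda$ and $\mO_\bbR^\lambda$, I combine a local Morse-Bott analysis at each critical orbit with the closure ordering. The Hessian decomposition of Proposition~\ref{Morse-Bott}(3) shows that near $\mO_c^\lambda$, the local stable manifold of $\mO_c^\lambda$ is an open neighborhood of $\mO_c^\lambda$ in $\mO_K^\lambda$ and the local unstable manifold is an open neighborhood in $\mO_\bbR^\lambda$. Globally, any $\gamma \in \mO_K^\lambda$ stays in $\mO_K^\lambda$ under $\phi_t$ by part (2), so its $t\to+\infty$ limit lies in some $\mO_c^\mu \subset \overline{\mO_K^\lambda}$; the closure order of Proposition~\ref{parametrization}(1) gives $\mu \leq \lambda$, and the fact that the local stable manifold at $\mO_c^\mu$ is an open neighborhood in $\mO_K^\mu$, coupled with disjointness of distinct $K(\calK)$-orbits, forces $\mO_K^\lambda = \mO_K^\mu$ and hence $\mu = \lambda$. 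The symmetric argument using the reversed closure order on $LG_\bbR$-orbits handles the unstable set.

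The main obstacle is the convergence step in infinite dimensions: both the base $\Omega X_c^0$ and the fibers $\Omega K_c$ are noncompact, so neither downstairs convergence nor the lifting step is formal. Downstairs convergence is furnished by Proposition~\ref{PS}, whose proof in \cite{PS} rests on reduction to the finite-dimensional Schubert stratification where standard Morse-Bott theory applies; upstairs, horizontality of $\nabla E_1$ together with the $L^2$-in-time bound on $|\nabla E_1|$ furnished by $E_1$ being bounded below provides the needed control on horizontal lifts to promote the base convergence into convergence within the fiber $\mO_c^\lambda$.
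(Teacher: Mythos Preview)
Your approach is essentially the same as the paper's: both descend the flow $\phi_t$ along the principal $\Omega K_c$-bundle $\Gr \cong \Omega G_c \to \Omega K_c\backslash\Gr \cong \Omega X_c^0$, establish convergence on the base using that the strata $\Omega K_c\backslash\mO_K^\lambda \cong P^\lambda$ are finite-dimensional, and then lift. The paper's proof is terser---it simply notes that the descended function $\underline E_1$ and quotient metric $\underline g$ satisfy the hypotheses of standard finite-dimensional Morse--Bott theory (citing Atiyah--Bott and Pressley) on each $P^\lambda$---and says nothing explicit about the lifting step.

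One step in your argument is not justified and should be replaced: the assertion that the projected flow $\bar\phi_t$ coincides, up to reparameterization, with the restriction of the $\nabla E$ flow to the submanifold $\Omega X_c^0 \subset \Omega G_c$. That would require the quotient metric $\underline g$ on $\Omega X_c^0$ to be conformal to the metric restricted from the ambient $\Omega G_c$; the identity $4E_1 = E\circ\pi$ and horizontality of $\nabla E_1$ alone do not give this (indeed $d\pi$ on horizontal vectors scales by $2$ with respect to the left-invariant form $(\,,\,)$, but the K\"ahler metric $g$ is a different object). Fortunately the identification is unnecessary. The projected flow $\bar\phi_t$ is the gradient flow of $\underline E_1 = \tfrac14 E|_{\Omega X_c^0}$ with respect to $\underline g$, and the proof of Proposition~\ref{Morse-Bott} already identifies its critical set $\bigsqcup B^\lambda$ and the signs of the Hessian there; since each $P^\lambda$ is finite-dimensional with closure a finite union of such strata, the standard convergence argument applies directly---this is exactly the paper's route. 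Your $L^2$-in-time estimate for the horizontal lift is a reasonable gesture at the lifting step the paper leaves implicit, though strictly one wants an $L^1$ (finite-length) bound, which follows from exponential convergence on the finite-dimensional base.
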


\begin{proof}
Part (1) and (2) follows from Proposition \ref{Morse-Bott}.
The $LK_c$-invariant function $E_1$, respectivley the $LG_c$-invariant metric $g(,)$, and the flow $\phi_t$, descends to a $K_c$-invariant Morse-Bott function
$\underline E_1:\Omega K_c\backslash\Gr\to\bbR$, respectivley a  
$K_c$-invariant metric $\underline g(,)$ on $\Omega K_c\backslash\Gr$,
and a flow $\underline\phi_t$. 
Since the function $\underline E_1$ is bounded below and 
the quotient $\Omega K_c\backslash\mO_K^\lambda$ is finite dimensional with $\overline{\Omega K_c\backslash\mO_K^\lambda}=
\bigcup_{\mu\leq\lambda}\Omega K_c\backslash\mO_K^\mu$, 
Proposition \ref{Morse-Bott} and 
standard results for gradient flows (see, e.g., \cite[Proposition 1.19]{AB} or \cite[Theorem 1]{P})
imply that the limit $\underset{t\ra\pm\infty}\lim\underline\phi_t(\gamma)$ 
exists for any $\gamma\in\Omega K_c\backslash\Gr$ and 
$\Omega K_c\backslash\mO_K^\lambda$
is the stable manifold for $\Omega K_c\backslash\mO_c^\lambda$
and $\Omega K_c\backslash\mO_\bbR^\lambda$ is the unstable manifold for $\Omega K_c\backslash\mO_c^\lambda$.
Part (3) and (4) follows.
\end{proof}

We will call the gradient flow $\phi_t:\Gr\to\Gr$ the 
Matsuki flow on $\Gr$.

\section{Real Beilinson-Drinfeld Grassmannians}\label{Real BD}
In this section we recall some basic facts about 
Real Beilinson-Drinfeld Grassmannians. 
The main reference is \cite{N2}.
\subsection{Beilinson-Drinfeld Grassmannians}
Let $\Sigma$ be a smooth curve over $\bC$.
Consider the functor 
$G(\mO)_{\Sigma^n}$ from the category of affine schemes to sets
\[S\ra G(\mO)_{\Sigma^n}(S):=\{(x,\phi)|x\in \Sigma^n(S), \phi\in G(\hat\Gamma_x)\}.\]
Here $\hat\Gamma_x$ is the formal completion of the graphs $\Gamma_x$ of 
$x$ in $\Sigma\times S$.
Similarly, we define 
$G(\calK)_{\Sigma^n}$ to be the functor 
from the category of affine schemes to sets
\[S\ra G(\calK)_{\Sigma^n}(S):=\{(x,\phi)|x\in \Sigma^n(S), \phi\in G(\hat\Gamma_x^0)\}.\]
Here $\hat\Gamma_x^0:=\hat\Gamma'_x-\Gamma_x$ and 
$\hat\Gamma'_x=\on{Spec}(A_x)$ is the spectrum of  
ring of functions $A_x$ of $\hat\Gamma_x$.
$G(\mO)_{\Sigma^n}$ is represented by a formally smooth group scheme over $\Sigma^n$
and $G(\calK)_{\Sigma^n}$ is represented by a formally smooth group ind-scheme over $\Sigma^n$.

Consider the functor $LG_{\Sigma^n}$ that assigns to an affine scheme $S$
the set of sections 
\[S\ra LG_{\Sigma^n}(S)=\{(x,\gamma)|x\in \Sigma^n(S),\ \gamma\in G(\Sigma\times S-\Gamma_x).\}.\]
There is a natural map $LG_{\Sigma^n}\ra G(\calK)_{\Sigma^n}$
sending $(x,\gamma)$ to $(x,\phi=\gamma|_{\hat\Gamma^0_x})$, 
where $\gamma|_{\hat\Gamma^0_x}$ is the restriction of 
the section $\gamma:\Sigma\times S-\Gamma_x$ to $\hat\Gamma^0_x$.

The quotient ind-scheme 
\[\Gr_{\Sigma^n}:=G(\calK)_{\Sigma^n}/G(\mO)_{\Sigma^n}\] 
is 
called the Beilinson-Drinfeld Grassmannian. We have 
\[\Gr_{\Sigma^n}(S)=\{(x,\mE,\phi)|x\in \Sigma^n(S), \mE\text{\ a}\ G\text{-torsor\ on\ } 
\Sigma\times S, \phi\text{\ a trivialization of}\ \mE\text{\ on}\ \Sigma\times S-\Gamma_x\}.\]

\subsection{Real forms}
From now we assume $\Sigma=\mathbb P^1=\bC\cup\infty$. We 
write $\Gr^{(n)}=\Gr_{\Sigma^n}, G(\calK)^{(n)}=G(\calK)_{\Sigma^n}$, etc.
Let $c:\mathbb P^1\ra \mathbb P^1$ be the complex conjugation.
Consider the following anti-holomorphic involution 
$c^{(2)}:(\mathbb P^1)^2\ra (\mathbb P^1)^2, c^{(2)}(a,b)=(c(b),c(a))$.
The involution $c^{(2)}$ together with the involution 
$\eta$ on $G$ defines anti-holomorphic involutions on 
$G(\mO)^{(2)}$, $G(\calK)^{(2)}$, and  
$LG^{(2)}$ and we write 
$G(\mO)^{(2)}_\bbR, G(\calK)^{(2)}_\bbR$, and  $LG_\bbR^{(2)}$
for the corresponding real analytic spaces of real points.
We define 
$\Gr^{(2)}_\bbR=G(\mO)_{\bbR}^{(2)}\backslash G(\calK)^{(2)}_\bbR$ a real form of 
$\Gr^{(2)}$.

\begin{lemma}\label{real forms of various loop groups}
We have the following:
\begin{enumerate}
\item
There are canonical isomorphisms
\[LG^{(2)}_\mathbb R|_{0}\is G_\mathbb R(\mathbb R[t^{-1}]),\ LG^{(2)}_\mathbb R|_{i\mathbb R^\times}\is LG_\mathbb R\times i\mathbb R^\times,\]

\item There are canonical isomorphisms
\[\Gr^{(2)}_\mathbb R|_{0}\is\Gr_\mathbb R,\ \Gr^{(2)}_\bbR|_{
i\mathbb R^\times}\is\Gr\times i\bbR^\times\]
compatible with the natural action of $LG^{(2)}_\bbR$ on $\Gr^{(2)}_\bbR$.
\end{enumerate}
\end{lemma}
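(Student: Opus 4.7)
The plan is to unwind the definitions of the Beilinson--Drinfeld objects $LG^{(2)}$ and $\Gr^{(2)}$ together with their anti-holomorphic involutions, and to handle separately the diagonal point $0 \in \bbP^1(\bbC) \cong (\bbP^1)^2_\bbR$, where $(z_1,z_2) = (0,0)$, and the non-diagonal locus $i\bbR^\times$, where $(z_1,z_2) = (z,\bar z)$ with $z \neq \bar z$.

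For the diagonal case, the fiber $LG^{(2)}|_{(0,0)}$ consists of sections $\gamma\colon \bbP^1 \setminus \{0\} \to G$, and using the coordinate $u = t^{-1}$ this is $G(\bC[t^{-1}])$; the involution induced by $c^{(2)}$ acts by ordinary complex conjugation on the coefficients, so combined with $\eta$ the real points are $G_\bbR(\bbR[t^{-1}])$. For the Grassmannian, Beauville--Laszlo identifies $\Gr^{(2)}|_{(0,0)}$ with $\Gr = G(\calK)/G(\calO)$, and tracing the real structure through this identification yields $\Gr_\bbR$.

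For the non-diagonal case, the key ingredient is an analytically varying family of M\"obius isomorphisms
\[
\phi_z\colon \bbP^1 \setminus \{z,\bar z\} \stackrel{\sim}\lra \bG_m, \qquad \phi_z(w) = \frac{w-\bar z}{w-z}, \qquad z \in i\bbR^\times.
\]
A direct check shows $\phi_z(\bar w) = 1/\overline{\phi_z(w)}$, so $\phi_z$ intertwines the conjugation $c$ on the source with the $S^1$-real structure $s \mapsto 1/\bar s$ on $\bG_m$. Hence $G(\bbP^1 \setminus \{z,\bar z\}) \cong G(\bC[t,t^{-1}]) = LG$ and the real points are precisely $LG_\bbR$, giving the trivialization $LG^{(2)}_\bbR|_{i\bbR^\times} \cong LG_\bbR \times i\bbR^\times$. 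For the Grassmannian, the BD factorization provides $\Gr^{(2)}|_{(z,\bar z)} \cong \Gr_z \times \Gr_{\bar z}$; the combined involution swaps the two factors and applies $\eta$, so the real points form the graph of the induced anti-holomorphic isomorphism $\Gr_z \cong \Gr_{\bar z}$, and projection to the first factor identifies them with $\Gr$. Compatibility with the $LG^{(2)}_\bbR$-action is immediate from the construction.

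The main obstacle is the non-diagonal case: producing a family of isomorphisms to $\bG_m$ that is globally defined across $i\bbR^\times$ and intertwines two genuinely different real structures (complex conjugation of $\bbP^1$ versus the $S^1$-real structure of $\bG_m$). The explicit formula $\phi_z(w) = (w-\bar z)/(w-z)$ handles this uniformly because $z$ and $\bar z$ remain distinct throughout $i\bbR^\times$, so the rational map varies analytically without singularities; the other verifications (Beauville--Laszlo descent at $(0,0)$ and the fixed-point computation for the swap-and-conjugate involution off-diagonally) are routine.
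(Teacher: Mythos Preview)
Your proof is correct and follows essentially the same approach as the paper: both use the M\"obius coordinate change sending $\{z,\bar z\}$ to $\{0,\infty\}$ for the off-diagonal locus, and the standard identification $\Gr^{(2)}|_{(0,0)}\cong\Gr$ at the diagonal. The paper is terser---it fixes the single coordinate $t=\tfrac{z-i}{z+i}$ at the point $i$ and then invokes a trivialization $LG^{(2)}|_{i\bbR^\times}\cong LG^{(2)}|_i\times i\bbR^\times$, whereas you write the varying family $\phi_z(w)=\tfrac{w-\bar z}{w-z}$ directly and verify $\phi_z(\bar w)=1/\overline{\phi_z(w)}$ explicitly; these amount to the same thing (your $\phi_i$ is the paper's coordinate up to $t\leftrightarrow t^{-1}$).
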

\begin{proof}
The isomorphism in (1) is the restriction of 
the natural isomorphisms $LG^{(2)}|_{i\bbR^\times}\is 
LG^{(2)}|_i\times i\bbR^\times\is LG\times i\bbR^\times$ and $LG^{(2)}|_0\is G(\bC[t^{-1}])$. 
Here we regard $i\bbR\subset\bC\times\bC, z\to (z,-z)$
and the isomorphism $LG^{(2)}|_i\is LG, \gamma(z)\to\gamma(t)$ is induced by the 
change of coordinate $t=\frac{z-i}{z+i}$
of $\bP^1$ sending $i$ to $0$, $-i$ to $\infty$, and $\infty$ to $1$. 
The isomorphism in (2) is the restriction of 
the factorization isomorphism $\Gr^{(2)}|_{i\bbR^\times}\is 
\Gr^{(2)}|_i\times i\bbR^\times\is \Gr\times\Gr\times i\bbR^\times$ and $\Gr^{(2)}|_0\is \Gr$. Here the isomorphism $\Gr^{(2)}|_i\times i\bbR^\times\is \Gr\times\Gr\times i\bbR^\times$ is induced by the above coordinate $t=\frac{z-i}{z+i}$.

 \end{proof}

\begin{lemma}\label{poly in K_c}
Assume $G_\bbR$ is compact. 
We have 
$G_\bbR(\mathbb R[t^{-1}])=G_\bbR$ and $G_\bbR(\calK_\mathbb R)=G_\bbR(\mO_\mathbb R)$.
\end{lemma}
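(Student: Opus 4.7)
The plan is to reduce both statements to the boundedness of regular functions on a compact real algebraic group. Since $G_\bbR$ is compact, it admits a faithful orthogonal representation over $\bbR$, i.e., a closed embedding $\rho: G_\bbR \hookrightarrow O(n)_\bbR$ for some $n$ (take any faithful finite-dimensional representation and average an inner product to produce an invariant one). The two statements become assertions about orthogonal matrices with entries in $\bbR[t^{-1}]$, respectively $\bbR((t))$.

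For the first equality, I would take $\gamma \in G_\bbR(\bbR[t^{-1}])$, view it via $\rho$ as a matrix $A(s) \in O(n,\bbR[s])$ with $s=t^{-1}$, and use the orthogonality relations $\sum_i a_{ij}(s)^2 = 1$. Each entry $a_{ij}(s) \in \bbR[s]$ therefore takes values in $[-1,1]$ for every real specialization of $s$; a polynomial bounded on $\bbR$ must be constant, so $A$ is a constant matrix and $\gamma \in G_\bbR(\bbR) = G_\bbR$.

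For the second equality, I would take $\gamma \in G_\bbR(\calK_\bbR)$, write $A(t) = \rho(\gamma) \in O(n,\bbR((t)))$, and argue that each entry has nonnegative $t$-adic valuation. Suppose for contradiction $n_0 := \min_{i,j} v(a_{ij}) < 0$; choose a column $j$ containing an entry with $v(a_{ij}) = n_0$. The lowest-order term of $\sum_i a_{ij}(t)^2 = 1$ has valuation $2n_0 < 0$ and its coefficient is $\sum_i [a_{ij}]_{n_0}^2$, a sum of squares of real numbers, at least one of which is nonzero. That coefficient is then strictly positive, contradicting the fact that the right-hand side $1$ has no $t^{2n_0}$ term. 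Hence $A \in O(n,\mO_\bbR)$ and $\gamma \in G_\bbR(\mO_\bbR)$.

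The only nontrivial input is the existence of a faithful orthogonal embedding of $G_\bbR$, which is classical for compact real algebraic groups. After that, both parts follow from the same principle, namely that a sum of real squares cannot vanish in leading order; this is the crucial use of compactness (equivalently, positive-definiteness) and is what distinguishes the compact case from an arbitrary real form.
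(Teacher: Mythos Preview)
Your proof is correct and takes a genuinely different route from the paper's. The paper argues structurally: it invokes (from Section~\ref{real affine Gr} and \cite{N1}) that for a compact form the maximal $\bbR$-split torus is trivial, so $\Lambda_S^+=\{0\}$ and the real affine Grassmannian $\Gr_\bbR=G_\bbR(\calK_\bbR)/G_\bbR(\mO_\bbR)$ consists of a single point; this immediately gives $G_\bbR(\calK_\bbR)=G_\bbR(\mO_\bbR)$, and since $G_\bbR(\bbR[t^{-1}])_1$ sits as an open orbit in this one-point space it must be trivial, yielding $G_\bbR(\bbR[t^{-1}])=G_\bbR$.

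Your argument is instead a direct, self-contained computation via a faithful algebraic embedding $\bbG_\bbR\hookrightarrow O(n)$, reducing both claims to the positivity of sums of real squares. This is more elementary and makes no appeal to the orbit classification on $\Gr_\bbR$; the paper's approach, by contrast, fits seamlessly into its ambient framework and explains the result conceptually (compactness $\Rightarrow$ anisotropic $\Rightarrow$ trivial coweight lattice $\Rightarrow$ $\Gr_\bbR$ a point). One small remark: for your argument to transfer to $R$-points for $R=\bbR[t^{-1}]$ and $R=\calK_\bbR$ you need the orthogonal embedding to be a closed immersion of algebraic groups over $\bbR$, not just of Lie groups; this follows since $G_\bbR$ is Zariski dense in the connected group $\bbG_\bbR$, so the averaged invariant form is in fact $\bbG_\bbR$-invariant.
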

\begin{proof}
Note that the real affine Grassmannian $\Gr_\bbR$ 
for a compact group $G_\bbR$ is equal to a point 
and $G_\bbR(\mathbb R[t^{-1}])_1=\{\gamma\in G_\bbR(\mathbb R[t^{-1}])|\gamma(\infty))=e\}$ 
is an open $G_\bbR(\mathbb R[t^{-1}])$-orbit in $\Gr_\bbR$ (see Section \ref{real affine Gr}). 
Hence $G_\bbR(\calK_\mathbb R)=G_\bbR(\mO_\mathbb R)$ and 
$G_\bbR(\mathbb R[t^{-1}])_1=e$. The lemma follows.
\end{proof}

Consider the group ind-scheme 
$\Omega K^{(2)}\subset LK^{(2)}$ that assigns to each affine scheme $S$ the set
\[\Omega K^{(2)}(S)=\{(x,\phi)|x\in\bC(S),\phi\in K(\mathbb P^1\times S-\Gamma_{x\cup -x}), \phi(\{\infty\}\times S)=e\}.\]
The involution on $LK^{(2)}$ restricts to an involution on 
$\Omega K^{(2)}$ and we write $\Omega K^{(2)}_\mathbb R$ for the 
corresponding real analytic ind-space of real points.
\begin{lemma}\label{based loops for K_c}
We have the following:
\begin{enumerate}
\item
There are canonical isomorphisms
\[LK^{(2)}_\mathbb R|_{0}\is K_c ,\ LK^{(2)}_\mathbb R|_{i\mathbb R^\times}\is LK_c\times i\mathbb R^\times.\]
\item
There are canonical isomorphisms
\[\Omega K^{(2)}_\mathbb R|_{0}\is e,\ \Omega K^{(2)}_\mathbb R|_{i\mathbb R^\times}\is \Omega K_c\times i\mathbb R^\times.\]
\end{enumerate}

\end{lemma}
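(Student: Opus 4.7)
My plan is to proceed exactly as in the proof of Lemma~\ref{real forms of various loop groups}, replacing the pair $(G, \eta)$ by $(K, \eta_c)$, with one new ingredient at the degenerate fiber $x = 0$ reflecting the compactness of $K_c$. The real structure on $LK^{(2)}$, built from $c^{(2)}$ on $(\bbP^1)^2$ and $\eta_c$ on $K$, sends $(x, \phi) \mapsto (-\bar x, \phi')$ where $\phi'(z) := \eta_c(\phi(\bar z))$; since $\eta_c(e) = e$, this preserves the basepoint condition $\phi(\infty) = e$ and restricts to $\Omega K^{(2)}$. The fixed locus of $x \mapsto -\bar x$ is $i\bbR$, so the fibers over $i\bbR$ (including $0$) are stable under the involution.

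For $x \in i\bbR^\times$: apply the coordinate change $t = (z-x)/(z+x)$, which sends $\{x, -x\}$ to $\{0, \infty\}$ and identifies $\bbP^1 \setminus \{x, -x\}$ with $\bbC^\times$, hence $LK^{(2)}|_x \cong LK$; moreover $\phi(\infty) = e$ becomes $\phi(1) = e$. A direct computation shows $z \mapsto \bar z$ corresponds to $t \mapsto 1/\bar t$, whose fixed locus is $S^1 \subset \bbC^\times$. The reality condition restricted to $S^1$ therefore reads $\phi(t) \in K^{\eta_c} = K_c$, which recovers the definition of $LK_c$ (and of $\Omega K_c$ once $\phi(1) = e$ is imposed). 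Letting $x$ range over $i\bbR^\times$ trivializes the family and yields the stated product decompositions, strictly parallel to the $LG$ case.

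For $x = 0$: the fiber $LK^{(2)}|_0$ consists of regular maps $\phi\colon \bbP^1 \setminus \{0\} \to K$, i.e., elements of $K(\bbC[u])$ with $u = 1/z$. The reality condition $\phi(u) = \eta_c(\phi(\bar u))$, specialized to $u \in \bbR$, forces $\phi(\bbR) \subset K^{\eta_c} = K_c$. Fix a closed algebraic embedding $K \hookrightarrow \GL_n(\bbC)$ under which $K_c$ sits inside the unitary group; the matrix entries of $\phi(u)$ are then complex polynomials in $u$ whose restrictions to $\bbR$ are uniformly bounded (by compactness of $K_c$). A complex polynomial bounded on $\bbR$ is constant, so $\phi$ is itself a constant element of $K_c$, giving $LK^{(2)}_\bbR|_0 \cong K_c$. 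The basepoint condition $\phi(u=0) = e$ then pins the constant to $e$, giving $\Omega K^{(2)}_\bbR|_0 = \{e\}$. The main obstacle — the sole new ingredient compared to Lemma~\ref{real forms of various loop groups} — is precisely this rigidity at $x = 0$, reflecting that, unlike $G_\bbR(\bbR[t^{-1}])$, the compact $K_c$ admits no nonconstant algebraic loops.
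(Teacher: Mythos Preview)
Your proof is correct. For the fibers over $i\bbR^\times$ you proceed exactly as the paper does (this is literally Lemma~\ref{real forms of various loop groups} applied to $(K,\eta_c)$), so there is nothing to compare there.

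At the degenerate fiber $x=0$ your argument diverges from the paper's. The paper simply invokes Lemma~\ref{poly in K_c}, which says that for a compact real form $K_c$ one has $K_c(\bbR[t^{-1}])=K_c$; that lemma in turn is proved by observing that the real affine Grassmannian $\Gr_{K,\bbR}$ of a compact group is a single point, and that $K_c(\bbR[t^{-1}])_1$ is an open orbit in it. You instead give a direct, self-contained argument: embed $K$ in $\GL_n$ so that $K_c$ lands in a compact subgroup, note that the matrix entries of $\phi$ are polynomials bounded on $\bbR$, and conclude they are constant. Both routes are short; yours is more elementary and avoids any appeal to the structure theory of $\Gr_\bbR$, while the paper's is more conceptual and fits into the running theme of relating loop-group statements to the geometry of the affine Grassmannian. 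Either is perfectly adequate here.
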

\begin{proof}
It follows directly from Lemma \ref{real forms of various loop groups} and Lemma \ref{poly in K_c}.
\end{proof}
 

\quash{
\subsection{$K(\calK)^{(2)}_\mathbb R$-orbits}
Choose  
$g_\lambda\in\mO_\mathbb R^\lambda\cap\mO_K^\lambda$ for each
$\lambda\in\mathcal L$.
The map
\[s^0_{\lambda}:i\mathbb R^\times\ra\Gr\times i\mathbb R^\times\is\Gr^{(2)}|_{i\mathbb R^\times},
\ x\ra (g_\lambda,x)\]
defines a section of $\Gr^{(2)}_\mathbb R\ra i\mathbb R$ over $i\mathbb R^\times$.
Since the morphism $\Gr^{(2)}_\mathbb R\ra i\mathbb R$ is ind-proper, the map above extends to a section 
\[s_{\lambda}:i\mathbb R\ra\Gr^{(2)}_\mathbb R.\]
\quash{
The section $s_{\lambda,g}$ is compatible with the real structures on 
$\bC$ and $\Gr^{(2)}$, hence induces a section 
\[s_{\lambda}:i\mathbb R\ra\Gr^{(2)}_\mathbb R.\]}
The group ind-scheme $K(\calK)^{(2)}_\mathbb R$
acts naturally on $\Gr_\mathbb R^{(2)}$ and we denote by
\[S_{\lambda}^{(2)}:=K(\calK)^{(2)}_\mathbb Rs_{\lambda}\]
the $K(\calK)^{(2)}_\mathbb R$-orbit on $\Gr^{(2)}_\mathbb R$ through $s_{\lambda}$.
We define $\Gr_\mathbb R^{(2),0}$ to be the union of $S_{\lambda}^{(2)},
\lambda\in\mathcal L$.
\begin{lemma}
1) We have $S_\lambda^{(2)}\cap S_{\lambda'}^{(2)}=\phi$ if 
$\lambda\neq\lambda'$.
2)
There are canonical isomorphisms
\[\Gr_\mathbb R^{(2),0}|_{0}\is\Gr_\mathbb R^0,\ 
\Gr_\mathbb R^{(2),0}|_{i\mathbb R^\times}\is\Gr\times i\mathbb R^\times.
\]
Here $\Gr_\mathbb R^0$ is the union of component of $\Gr_\mathbb R$ in \ref{dd}.
\end{lemma}

The family of groups 
$\Omega K^{(2)}_\mathbb R, LK_\mathbb R^{(2)}$ 
act naturally on $\Gr_\mathbb R^{(2)}$, $S_\lambda^{(2)}$,
and we define 
\beq
\Gr_\mathbb R^{(\eta)}:=\Omega K_\mathbb R^{(2)}\backslash\Gr_\mathbb R^{(2),0},\
S_\lambda^{(\eta)}:=\Omega K_\mathbb R^{(2)}\backslash S_\lambda^{(2)}.
\eeq

Notice that the quotient 
$\Omega K^{(2)}_\mathbb R\backslash LK_\mathbb R^{(2)}\is K_\mathbb R\times
i\mathbb R$ is the constant group $K_\mathbb R$ and it acts 
naturally on $\Gr_\mathbb R^{(\eta)}$ and $S_\lambda^{(\eta)}$.  

\begin{proposition}
1) Each $S_\lambda^{(\eta)}$ is a smooth manifold and the restriction of the projection 
$\pi:\Gr_\mathbb R^{(\eta)}\ra i\mathbb R$ to
$S_\lambda^{(\eta)}\subset\Gr_\mathbb R^{(\eta)}\ra i\mathbb R$ is a submersion.
2) The decomposition $\Gr_\mathbb R^{(\eta)}=\cup_{\lambda\in\mathcal L} S_\lambda^{(\eta)}$
is a Whitney stratification of $\Gr_\mathbb R^{(\eta)}$.
\end{proposition}
\begin{proof}
Proof of 1).

Proof of 2). By 1) and lemma \ref{}, it suffices to show that the decomposition 
$\Gr_\mathbb R^{(\eta)}=\cup_{\lambda\in\mathcal L} S_\lambda^{(\eta)}$
induces a Whitney stratification of each fiber $\Gr_x^{(\eta)}:=\pi^{-1}(x), x\in\mathbb R$.

\end{proof}

\subsection{$LG^{(2)}_\mathbb R$-orbtis}
The group scheme $LG^{(2)}_\mathbb R$
acts naturally on $\Gr_\mathbb R^{(2)}$ and we denote by
\[T_{\lambda}^{(2)}:=LG^{(2)}_\mathbb Rs_{\lambda}\]
the $LG^{(2)}_\mathbb R$-orbit on $\Gr^{(2)}_\mathbb R$ through $s_{\lambda}$.
We define $\Gr_\mathbb R^{(2),0}$ to be the union of $T_{\lambda}^{(2)},
\lambda\in\mathcal L$.
}

\section{Uniformizations of real bundles}\label{uniform}
In this section we 
study uniformizations of the stack of real bundles on 
$\mathbb P^1$ and use it to provide a moduli interpretation for  
the quotient
$LG_\bbR\backslash\Gr$.

In the rest of the paper, 
all the (ind-)stacks are of Bernstein-Lunts type, that is, 
they are unions of open substacks 
$\sX=\bigcup\sX_i$
, each $\sX_i$ being a quotient stack  
$G\backslash X$ of finite type and the 
bounded derived category of $\bC$-constructible
sheaves on $D_c(\sX)$ is the limit of   
$D_c(G\backslash X)$,
where each $D_c(G\backslash X)$ can be defined as an equivariant
derived category in the sense of Bernstein-Lunts (see Appendix \ref{Real stacks}).

\subsection{Stack of real bundles}\label{Stack of real bundles}
Let $\Bun_\bbG(\bbP^1)$ be the moduli stack of $\bbG$-bundles on 
the complex projective line $\mathbb P^1$. The standard complex conjugation 
$z\to\bar z$
on $\mathbb P^1$  together with 
the involution $\eta$ of $\bbG$ defines a real structure $c:\Bun_\bbG(\bbP^1)\to
\Bun_\bbG(\bbP^1)$ on 
$\Bun_\bbG(\bbP^1)$
with real form 
$\Bun_{\bbG_\bbR}(\bbP_\bbR^1)$, the real algebraic stack of $\bbG_\bbR$-bundles on the projective real line $\bbP^1_\bbR$.
We write 
$\Bun_{\bbG}(\bbP^1)_\bbR$ for the 
real analytic stack of real points of $\Bun_{\bbG_\bbR}(\bbP_\bbR^1)$.
By definition, we have $\Bun_{\bbG}(\bbP^1)_\bbR\is \Gamma_\bbR\backslash Y_\bbR$ where $Y\to\Bun_{\bbG_\bbR}(\bbP_\bbR^1)$ is a $\bbR$-surjective presentation of 
the real algebraic stack $\Bun_{\bbG_\bbR}(\bbP_\bbR^1)$\footnote{A presentation of a real algebraic stack is $\bbR$-surjective if it induces a surjective map
on the isomorphism classes of $\bbR$-points.},  
$\Gamma=Y\times_{\Bun_{\bbG_\bbR}(\bbP_\bbR^1)}Y$ is the corresponding groupoid, 
and $X_\bbR,\Gamma_\bbR$ are the real analytic spaces of 
real points of $X,\Gamma$
(see Appendix \ref{Real stacks}).

A point of $\Bun_{\bbG}(\bbP^1)_\bbR$ is 
a $\bbG_\bbR$-bundle $\mE_\bbR$ on $\bbP_\bbR^1$ and, by descent,
corresponds to  
a pair $(\mE,\gamma)$
where $\mE$ is a $\bbG$-bundle on $\bbP^1$ and 
$\gamma:\mE\is c(\mE)$ is an isomorphism such that the induced 
composition is the identity
\[\mE\stackrel{\gamma}\ra c(\mE)\stackrel{c(\gamma)}\ra c(c(\mE))=\mE.\]
We call such pair $(\mE,\gamma)$ a real bundle on $\bbP^1$ and 
$\Bun_{\bbG}(\bbP^1)_\bbR$ the stack of real bundles on $\bbP^1$.

For any $\bbG_\bbR$-bundle $\mE_\bbR$, the restriction of 
$\mE_\bbR$ to the (real) point $\infty$ is a 
$\bbG_\bbR$-bundle on $\Spec(\bbR)$ and
the assignment $\mE_\bbR\to\mE_\bbR|_\infty$
defines a morphism
\[
\Bun_{\bbG_\bbR}(\bbP_\bbR^1)\lra\mathbb B\bbG_\bbR.
\]
For each $\alpha\in H^1(\Gal(\bC/\bbR),G)$, 
let $T_\alpha$ be a $\bbG_\bbR$-torsor on $\Spec(\bbR)$ in the isomorphism class of 
$\alpha$ and we define $\bbG_{\bbR,\alpha}=\Aut_{\bbG_\bbR}(T_\alpha)$.
The collection $\{\bbG_{\bbR,\alpha},\ \alpha\in H^1(\Gal(\bC/\bbR),G)\}$
is the set of pure inner forms of $\bbG_\bbR$.
Let $G_{\bbR,\alpha}=\bbG_{\bbR,\alpha}(\bbR)$ be the real analytic group associated to 
$\bbG_{\bbR,\alpha}$.
We denote by $\alpha_0$ the isomorphism class of trivial $\bbG_\bbR$-torsor. 
By Example \ref{BG},
the morphism above induces a morphism 
\[
cl_\infty:\Bun_{\bbG}(\bbP^1)_\bbR\lra\mathbb \bigsqcup_{\alpha\in
H^1(\Gal(\bC/\bbR),G)}\mathbb BG_{\bbR,\alpha}
\]
on the corresponding real analytic stacks.
Define 
\beq
\Bun_{\bbG}(\bbP^1)_{\bbR,\alpha}:=(cl_\infty)^{-1}(\mathbb BG_{\bbR,\alpha})
\eeq
for the inverse image of $\mathbb BG_{\bbR,\alpha}$ under $cl_\infty$. 
Note that each $\Bun_{\bbG}(\bbP^1)_{\bbR,\alpha}$ is 
an union of connected components of $\Bun_{\bbG}(\bbP^1)_\bbR$ and 
we obtain the following decomposition
of
the stack of real bundles 
\[
\Bun_\bbG(\bbP^1)_{\bbR}=\bigsqcup_{\alpha\in
H^1(\Gal(\bbC/\bbR),G)}\Bun_\bbG(\bbP^1)_{\bbR,\alpha}.
\]
We will call  $\Bun_\bbG(\bbP^1)_{\bbR,\alpha}$ the stack of 
real bundles of class $\alpha$.

\begin{example}
Consider $G=\bC^\times$. In the case $\eta$ is 
the split conjugation,
the cohomology group
$H^1(\Gal(\bC/\bbR),G)$ is trivial and we have
\[\Bun_\bG(\bP^1)_\bbR\is\bZ\times B\bbR^\times.\]
In the case $\eta=\eta_c$ is the compact conjugation, we have 
$H^1(\Gal(\bC/\bbR),G)=\{\alpha_0,\alpha_1\}\is\bZ/2\bZ$ and 
\[\Bun_\bG(\bP^1)_\bbR\is\Bun_\bG(\bP^1)_{\bbR,\alpha_0}\cup\Bun_\bG(\bP^1)_{\bbR,\alpha_1},\]
where $\Bun_\bG(\bP^1)_{\bbR,\alpha_i}\is BS^1$.
\end{example}
\quash{Consider the non-abelian cohomology $H^1(\Gal(\bC/\bbR),G)$ that 
classifies isomorphism classes of $\bbG_\bbR$-bundles over $\Spec(\bbR)$.
We write $\alpha_0\in H^1(\Gal(\bC/\bbR),G)$ for the class of trivial $\bbG_\bbR$-bundle.
and we write 
$cl_\infty(\mE_\bbR)\in H^1(\Gal(\bC/\bbR),G)$ for the corresponding isomorphism class. 
The assignment $\mE_\bbR\ra cl_\infty(\mE_\bbR)$
defines a map
\beq
cl_\infty:\Bun_{\bbG_\bbR}(\bbR\bbP^1)\lra H^1(\Gal(\bbC/\bbR),G)
\eeq
and for each class $\alpha\in H^1(\Gal(\bbC/\bbR),G)$
}

\subsection{Uniformizations of real bundles}
We shall introduce and study two kinds of uniformization 
of real bundles: one uses a real point of $\bbP^1$ called the real uniformization 
the other uses a complex point of $\bbP^1$ called the complex uniformization. 

\subsubsection{Real uniformizations}
The unifomization morphism 
\[u:\Gr\to\Bun_{\bG}(\bP^1)\] for $\Bun_{\bbG}(\bbP^1)$ exhibits $\Gr$ as a $G(\bC[t^{-1}])$-torsor over $\Bun_{\bbG}(\bbP^1)$, in particular,  we have an isomorphism 
\beq\label{uni iso}
G(\bC[t^{-1}])\backslash\Gr\is\Bun_{\bG}(\bP^1).
\eeq
The map $u$ is compatible with the real structures 
on $\Gr$ and $\Bun_{\bG}(\bP^1)$ and 
we denote by 
\beq\label{real uniformization}
u_{\bbR}:\Gr_{\bbR}\ra\Bun_\bbG(\bbP^1)_\bbR
\eeq
the associated map between the corresponding real analytic stacks of real points. 
We call the morphism $u_{\bbR}$ the real uniformization.
It follows from \eqref{uni iso} that 
$u_{\bbR}$ factors through an embedding 
\beq\label{open embedding 1}
G_\bbR(\bbR[t^{-1}])\backslash\Gr_{\bbR}\ra\Bun_\bbG(\bbP^1)_\bbR.
\eeq
We shall describe the image of $u_\bbR$.

\begin{prop}\label{uniformizations at real x}
The map $u_{\bbR}$ factors through 
\[
u_{\bbR}:\Gr_{\bbR}\ra\Bun_\bbG(\bbP^1)_{\bbR,\alpha_0}\subset\Bun_\bbG(\bbP^1)_{\bbR}
\]
and induces an isomorphism of real analytic stacks
\[G_\bbR(\bbR[t^{-1}])\backslash\Gr_{\bbR}\stackrel{\sim}\lra\Bun_G(\bbP^1)_{\bbR,\alpha_0}.\]

\end{prop}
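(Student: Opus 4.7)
The statement has three parts: the factorization through $\Bun_\bbG(\bbP^1)_{\bbR,\alpha_0}$, and the injectivity and surjectivity of the resulting map from $G_\bbR(\bbR[t^{-1}])\backslash\Gr_\bbR$. The factorization is essentially by construction: a real point $\gamma\in\Gr_\bbR$ gives the real bundle obtained by gluing trivial $\bbG_\bbR$-bundles on $\bbP^1_\bbR\setminus\{0\}$ and on the formal disk at $0$ via $\gamma$, and the resulting canonical real trivialization on $\bbP^1_\bbR\setminus\{0\}$ restricts at the real point $\infty$ to trivialize $\mE|_\infty$ as a $\bbG_\bbR$-torsor, forcing $cl_\infty(\mE)=\alpha_0$.

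Injectivity and surjectivity I would reduce to a single Galois-cohomology vanishing. Given a real bundle $(\mE,\gamma)\in\Bun_\bbG(\bbP^1)_{\bbR,\alpha_0}$, pick a complex trivialization $\sigma\colon\mE|_{\bbP^1\setminus\{0\}}\is\textup{triv}$ furnished by the complex uniformization \eqref{uni iso}; comparing $\sigma$ to its conjugate via $\gamma$ yields a Galois $1$-cocycle $g\in G(\bbC[t^{-1}])$, and different choices of $\sigma$ produce cohomologous cocycles. A real trivialization of $\mE$ on $\bbP^1_\bbR\setminus\{0\}$ exists (giving surjectivity) precisely when $[g]\in H^1(\Gal(\bbC/\bbR),G(\bbC[t^{-1}]))$ vanishes, and the same cocycle computation controls the ambiguity of lifting a given real bundle to $\Gr_\bbR$, hence injectivity modulo $G_\bbR(\bbR[t^{-1}])$. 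The hypothesis $cl_\infty(\mE,\gamma)=\alpha_0$ says exactly that $\on{ev}_\infty(g)\in G$ is trivial in $H^1(\Gal(\bbC/\bbR),G)$. Via the short exact sequence
\[1\to G(\bbC[t^{-1}])_1\to G(\bbC[t^{-1}])\xrightarrow{\on{ev}_\infty} G\to 1,\]
with $G(\bbC[t^{-1}])_1=\ker(\on{ev}_\infty)$, the whole theorem reduces to
\[H^1\bigl(\Gal(\bbC/\bbR),\, G(\bbC[t^{-1}])_1\bigr)=0.\]

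This cohomological vanishing is the main obstacle. The plan is to observe that any given cocycle lies in the finite-dimensional unipotent subgroup $G(\bbC[t^{-1}])_1^{\leq N}$ of polynomials in $t^{-1}$ of degree $\leq N$ that take value $1$ at $\infty$; this subgroup is Galois-stable and admits a Galois-stable filtration by normal subgroups whose successive subquotients are isomorphic as $\Gal(\bbC/\bbR)$-modules to $\fg_\bbR\otimes_\bbR\bbC$. Since $H^1(\Gal(\bbC/\bbR),V\otimes_\bbR\bbC)=0$ for any real vector space $V$ -- the module being induced from the trivial subgroup of $\Gal(\bbC/\bbR)$ -- a short induction on the filtration length completes the argument. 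The remaining verifications, namely the real-analytic functoriality and the stack-theoretic identification of quotients, are routine.
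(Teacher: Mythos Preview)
Your reduction of the surjectivity to the vanishing of $H^1(\Gal(\bbC/\bbR), G(\bbC[t^{-1}])_1)$ is correct and is indeed equivalent to what the paper extracts from the cited theorem of Mehta--Subramanian. The gap is in your proof of that vanishing. The set $G(\bbC[t^{-1}])_1^{\leq N}$ of polynomial maps of degree $\leq N$ is \emph{not} a subgroup: already for $G=\SL_2$, the product of two degree-one elements such as $\begin{pmatrix}1&s\\0&1\end{pmatrix}$ and $\begin{pmatrix}1&0\\s&1\end{pmatrix}$ has degree two. The genuine filtration on $G(\bbC[t^{-1}])_1$ by congruence subgroups $K_n=\{g\equiv e\bmod t^{-n}\}$ does have subquotients $K_n/K_{n+1}\cong\fg$ with the Galois action you describe, but this is a \emph{descending} filtration with $\bigcap_n K_n=\{e\}$, and $G(\bbC[t^{-1}])_1$ is not complete with respect to it. The successive-lifting argument you sketch produces only a formal solution $h\in G(\bbC[[t^{-1}]])_1$; there is no mechanism offered to show that $h$ can be taken polynomial. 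Passing from formal to polynomial is exactly the nontrivial content --- it amounts to the statement that every $\bbG_\bbR$-torsor on $\bbA^1_\bbR$ trivialized at a point is globally trivial, which is what the paper invokes from \cite{MS} rather than attempting to prove by hand.

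A secondary point: the paper does not treat the ``stack-theoretic identification'' as routine. Pointwise surjectivity on isomorphism classes is not enough; one must show that the map $\Gr_\bbR\to\Bun_\bbG(\bbP^1)_{\bbR,\alpha_0}$ admits local sections through any smooth presentation. The paper handles this with a tangent-space argument, using formal smoothness of $u_\bbR$ and the exponential map on $\Gr_\bbR$ to produce a finite-dimensional smooth slice mapping submersively to the base. You would need to supply something comparable.
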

\begin{proof}
Since every $\bbG_\bbR$-bundle 
$\mE_\bbR$ in the image of $u_\bbR$ is trivial over 
$\bP^1_\bbR-\{0\}$, in particular at $\infty$, we have 
$\mE_\bbR\in\Bun_\bbG(\bbP^1)_{\bbR,\alpha_0}$. 
Thus the map $u_\bbR$ factors through $\Bun_\bbG(\bbP^1)_{\bbR,\alpha_0}$.
We show that the resulting morphism $u_\bbR:\Gr_{\bbR}\to\Bun_\bbG(\bP^1)_{\bbR,\alpha_0}$ is
surjective. Let $f:S\to\Bun_\bbG(\bP^1)_{\bbR,\alpha_0}$ be a smooth presentation
(note that $S$ is smooth as $\Bun_\bbG(\bP^1)_{\bbR,\alpha_0}$ is smooth). It suffices 
to show that, \'etale locally on $S$, $f$ admits a lifting to $\Gr_{\bbR}$.
Consider the fiber product $Y:=S\times_{\Bun_\bbG(\bP^1)_\bbR}\Gr_{\bbR}$ 
and we denote by  $h:Y\to S$ the natural projection map. 
It suffices to show that $h$ is surjective and admits a section 
\'etale locally on $S$.
By Theorem 1.1 in \cite{MS},  
every $\bbG_\bbR$-bundle 
$\mE_\bbR$
on $\bbP_\bbR^1$ 
which is trivial at $\infty$ admits a trivialization on 
$\bP^1_\bbR-\{0\}$. It implies $h$ is surjective.
To show that $h$ admits a section, we observe that 
$Y$ is a real analytic ind-space smooth over $\Gr_{\bbR}$  
and, as $u_{\bbR}$ is formally smooth, for any $y\in Y$ and $s=h(y)\in S$,
the tangent map $dh_y:T_yY\to T_sS$ is surjective. 
Choose a finite dimensional subspace $W\subset T_yY$ such that 
$dh_y(W)=T_sS$. We claim that there exists a smooth real analytic space 
$U\subset Y$ such that $y\in U$ and $T_yU=W$. This implies 
$h|_U:U\to S$ is smooth around $y$, thus $f$ admits a section \'etale locally around 
$s=h(y)$. 
Finally, by \eqref{open embedding 1}, we obtain an isomorphism $G_\bbR(\bbR[t^{-1}])\backslash\Gr_\bbR\is
\Bun_\bbG(\bbP^1)_{\bbR,\alpha_0}$. 

To prove the claim, we observe that $Y$ is locally isomorphic to 
$\Gr_{\bbR}$ times a smooth real analytic space.  So it suffices to show for any 
finite dimensional subspace $W\subset T_e\Gr_{\bbR}$, there exists 
a smooth real analytic space $U$ such that $T_eU=W$. 
This
follows from the fact that 
the exponential map $\exp:T_e\Gr_{\bbR}\to\Gr_\bbR$ associated to the 
metric $g(,)|_{\Gr_\bbR}$ (here $g(,)$ is the metric on $\Gr$ in Section \ref{energy flow}) is a local diffeomorphism.

\end{proof}

\subsubsection{Generalization to other components $\Bun_{\bbG}(\bP^1)_{\bbR,\alpha}$}
In this section we briefly discuss generalization of Proposition \ref{uniformizations at real x} to the component
$\Bun_{\bbG}(\bP^1)_{\bbR,\alpha}, \alpha\in H^1(\Gal(\bG/\bbR),G)$.
Recall the $\bbG_{\bbR}$-torsor $T_\alpha$ and the corresponding pure inner form 
$\bbG_{\bbR,\alpha}$.
Note that for each $\bbG_\bbR$-bundle $\mE_\bbR$ the $T_\alpha$-twist 
$\mF_\bbR:=\mE_\bbR\times^{\bbG_\bbR}T_i$ is a $\bbG_{\bbR,\alpha}$-torsor and the assignment 
$\mE_\bbR\to\mF_\bbR$ defines an isomorphism 
\[\Bun_{\bbG_\bbR}(\bbP_\bbR^1)\is\Bun_{\bbG_{\bbR,\alpha}}(\bbP_\bbR^1)\]
of real algebraic stacks. Let $\Gr_{\bG_{\bbR,\alpha}}$ be the affine Grassmannian for $\bG_{\bbR,\alpha}$.
Consider the uniformization map 
\[u_{\alpha}:\Gr_{\bG_{\bbR,\alpha},x}\to\Bun_{\bbG_{\bbR,\alpha}}(\bbP_\bbR^1)\is
\Bun_{\bbG_{\bbR}}(\bbP^1_\bbR).\]
Let $\Gr_{\bbR,\alpha}:=\Gr_{\bG_{\bbR,\alpha}}(\bbR)$
and we 
denote by 
\[u_{\alpha,\bbR}:\Gr_{\bbR,\alpha}\to\Bun_{\bG}(\bP^1)_\bbR\]
the map associated to $u_{\alpha}$. Let 
$|u_{\alpha,\bbR}|:\Gr_{\bbR,\alpha}\to|\Bun_{\bG}(\bP^1)_\bbR|$
the associated map on the isomorphism classes of points.

\begin{lemma}
We have $|u_{\alpha,\bbR}|(\Gr_{\bbR,\alpha})=|\Bun_\bG(\bP^1)_\alpha|$. 
\end{lemma}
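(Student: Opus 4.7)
The plan is to reduce the statement to the already-proven Proposition~\ref{uniformizations at real x} via the twisting isomorphism $\Bun_{\bbG_{\bbR,\alpha}}(\bbP_\bbR^1) \is \Bun_{\bbG_\bbR}(\bbP_\bbR^1)$ of the preceding paragraph. The key observation is that Proposition~\ref{uniformizations at real x} was proved for a general real reductive group, and pure inner forms are again real reductive, so the Mathieu--Sun triviality statement used there transfers verbatim to $\bbG_{\bbR,\alpha}$.

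For the containment $|u_{\alpha,\bbR}|(\Gr_{\bbR,\alpha})\subset|\Bun_\bG(\bP^1)_{\bbR,\alpha}|$, I would argue as follows. By construction the uniformization map $u_\alpha$ sends $\Gr_{\bG_{\bbR,\alpha}}$ to the locus of $\bbG_{\bbR,\alpha}$-bundles on $\bbP_\bbR^1$ that are trivializable on $\bbP_\bbR^1\setminus\{0\}$, hence in particular trivial at $\infty$ as $\bbG_{\bbR,\alpha}$-torsors. Viewing $T_\alpha$ as a natural $(\bbG_{\bbR,\alpha},\bbG_\bbR)$-bitorsor, the twist isomorphism takes such a $\bbG_{\bbR,\alpha}$-bundle $\mF_\bbR$ to the $\bbG_\bbR$-bundle $\mE_\bbR=\mF_\bbR\times^{\bbG_{\bbR,\alpha}}T_\alpha$, whose restriction at $\infty$ is isomorphic to $T_\alpha$ as a $\bbG_\bbR$-torsor. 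Thus $cl_\infty(\mE_\bbR)=\alpha$, giving the containment.

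For the reverse containment, given any $\mE_\bbR\in|\Bun_\bbG(\bbP^1)_{\bbR,\alpha}|$, form the twist $\mF_\bbR=\mE_\bbR\times^{\bbG_\bbR}T_\alpha^{-1}$, which is a $\bbG_{\bbR,\alpha}$-bundle whose fiber at $\infty$ is trivial, via the canonical trivialization of $T_\alpha\times^{\bbG_\bbR}T_\alpha^{-1}$. Applying Proposition~\ref{uniformizations at real x} with $\bbG_\bbR$ replaced by $\bbG_{\bbR,\alpha}$, the bundle $\mF_\bbR$ lies in the image of the corresponding uniformization map $u_\alpha$. Transporting back via the twist produces a preimage of $\mE_\bbR$ under $u_{\alpha,\bbR}$, proving the reverse containment.

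The main obstacle is essentially bookkeeping: one must verify that the twisting identification intertwines ``trivial at $\infty$ as a $\bbG_{\bbR,\alpha}$-bundle'' with ``class $\alpha$ at $\infty$ as a $\bbG_\bbR$-bundle.'' This is a general statement about bitorsors: under the equivalence $\Bun_{\bbG_\bbR}(\bbP_\bbR^1)\is\Bun_{\bbG_{\bbR,\alpha}}(\bbP_\bbR^1)$ induced by $-\times^{\bbG_\bbR}T_\alpha$, the component labelled by $\beta\in H^1(\Gal(\bbC/\bbR),G)$ on the left is carried to the component labelled by $\beta-\alpha$ (using the torsor structure of $H^1$); in particular the component $\alpha$ on the $\bbG_\bbR$-side corresponds to the component $\alpha_0$ on the $\bbG_{\bbR,\alpha}$-side, which is exactly the image of $u_\alpha$ by Proposition~\ref{uniformizations at real x}.
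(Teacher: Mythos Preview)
Your proof is correct, but organized differently from the paper's.  The paper does not invoke Proposition~\ref{uniformizations at real x} for the inner form $\bbG_{\bbR,\alpha}$; instead it argues directly on the $\bbG_\bbR$-side: given $\mE_\bbR$ with $\mE_\bbR|_\infty\simeq T_\alpha$, it applies \cite{MS} to conclude that $\mE_\bbR$ restricted to each affine chart $U_0=\bbP^1_\bbR\setminus\{0\}$ and $U_\infty=\bbP^1_\bbR\setminus\{\infty\}$ is the constant torsor $T_\alpha\times U_\bullet$, and then observes that any bundle obtained by gluing two such constant torsors along $U_0\cap U_\infty$ lies in the image of $u_{\alpha,\bbR}$.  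Your approach instead passes through the bitorsor twist to convert the question into the $\alpha_0$-case for $\bbG_{\bbR,\alpha}$, and then cites Proposition~\ref{uniformizations at real x} wholesale.  The mathematical content is the same (both ultimately rest on \cite{MS}), but your packaging is cleaner in that it avoids repeating the gluing argument and makes transparent that the lemma is just Proposition~\ref{uniformizations at real x} transported along the twist.  One small caveat: your phrase ``$\beta-\alpha$'' and ``the torsor structure of $H^1$'' is loose for nonabelian $G$, since $H^1(\Gal(\bbC/\bbR),G)$ is only a pointed set; what you actually use is that twisting by $T_\alpha$ gives a bijection of pointed sets $H^1(\Gal(\bbC/\bbR),G)\to H^1(\Gal(\bbC/\bbR),G_\alpha)$ sending $\alpha$ to the basepoint, which is enough.
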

\begin{proof}
It suffices to show that 
every $\bbG_\bbR$-bundle 
$\mE_\bbR$
on $\bbP_\bbR^1$ 
such that $\mE_\bbR|_\infty\is T_\alpha$ is in the image 
$u_{\alpha,\bbR}(\Gr_{\bbR,\alpha})$.
By Theorem 1.1 in \cite{MS}, for any such bundle $\mE_\bbR$ the restriction 
of $\mE_\bbR$
to $U_\infty=\bbP_\bbR^1-\{\infty\}$ (resp. $U_0=\bbP_\bbR^1-\{0\}$) is isomorphic 
to $T_\alpha\times U_\infty$ (resp. $T_\alpha\times U_0$).
Since the image $u_{\alpha,\bbR}(\Gr_{\bbR,\alpha})$ consists of 
real bundles which can be obtained from glueing of 
$T_\alpha\times U_\infty$ and $T_\alpha\times U_0$ along the open subset 
$U_\infty\cap U_0=\bbP_\bbR^1-\{0,\infty\}$. It implies 
$\mE_\bbR\in u_{0,\alpha,\bbR}(\Gr_{\bbR,\alpha})$ and 
the proof is complete.

\end{proof}

The lemma above implies that 
the morphism $u_{\alpha,\bbR}$ factors 
through \[u_{\alpha,\bbR}:\Gr_{\bbR,\alpha}\to\Bun_{\bbG}(\bbP^1)_{\bbR,\alpha}
\subset
\Bun_{\bbG}(\bbP^1)_\bbR\]
and the same argument as in the proof of Proposition \ref{uniformizations at real x}
shows that 
\begin{proposition}\label{uniformization for inner forms}
The map $u_{\alpha,\bbR}$ induces an isomorphism 
\[G_{\bbR,\alpha}(\bbR[t^{-1}])\backslash\Gr_{\bbR,\alpha}\stackrel{\sim}\lra\Bun_{\bbG}(\bbP^1)_{\bbR,\alpha}\]
of real analytic stacks.

\end{proposition}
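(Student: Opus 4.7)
The plan is to mimic the proof of Proposition~\ref{uniformizations at real x} verbatim, substituting the inner form $\bbG_{\bbR,\alpha}$ for the split form $\bbG_\bbR$, and using the preceding lemma to pin down the target component. First, that lemma already identifies the image of $|u_{\alpha,\bbR}|$ with $|\Bun_\bbG(\bbP^1)_{\bbR,\alpha}|$, so the map factors as $u_{\alpha,\bbR}:\Gr_{\bbR,\alpha}\to\Bun_\bbG(\bbP^1)_{\bbR,\alpha}$. The  uniformization for the complex form gives an isomorphism $G_{\alpha}(\bbC[t^{-1}])\backslash\Gr_{\bbG_{\bbR,\alpha}}\is\Bun_{\bbG_{\bbR,\alpha}}(\bbP^1)$ of complex stacks, and passing to real points immediately factors $u_{\alpha,\bbR}$ through an embedding $G_{\bbR,\alpha}(\bbR[t^{-1}])\backslash\Gr_{\bbR,\alpha}\hookrightarrow\Bun_\bbG(\bbP^1)_{\bbR,\alpha}$ of real analytic stacks, so only essential surjectivity remains.

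Next, given a smooth presentation $f:S\to\Bun_\bbG(\bbP^1)_{\bbR,\alpha}$, form the fiber product $Y:=S\times_{\Bun_\bbG(\bbP^1)_\bbR}\Gr_{\bbR,\alpha}$ with projection $h:Y\to S$. Surjectivity of $|h|$ is exactly the previous lemma, whose proof invoked \cite[Thm.~1.1]{MS} to trivialize any bundle of class $\alpha$ on $\bbP^1_\bbR\setminus\{0\}$; applied pointwise this produces preimages in $\Gr_{\bbR,\alpha}$ above each $s\in S$. To produce sections \'etale locally one repeats the tangent-space argument of Proposition~\ref{uniformizations at real x}: for $y\in Y$ above $s$, formal smoothness of $u_\alpha$ over $\bbC$ passes to surjectivity of the tangent map $dh_y:T_yY\to T_sS$; choose a finite-dimensional $W\subset T_yY$ mapping isomorphically onto $T_sS$, and build a smooth finite-dimensional real analytic subspace $U\subset Y$ with $T_yU=W$ using the exponential map associated to a $LG_c$-invariant metric on $\Gr_{\bbG_{\bbR,\alpha}}$ analogous to $g(,)$ of Section~\ref{energy flow}. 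Then $h|_U:U\to S$ is \'etale near $y$, giving the required local section.

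The main subtlety, and where I expect the most care to be needed, is the construction of the appropriate metric on $\Gr_{\bbR,\alpha}$: one must know that $\bbG_{\bbR,\alpha}$ admits a pair of commuting anti-holomorphic involutions $(\eta_\alpha,\eta_{c,\alpha})$ compatible with the decomposition $G=K\cdot\bbG_\bbR$, so that the Kempf--Ness style arguments underlying Section~\ref{energy flow} apply and yield a local diffeomorphism from $T_e\Gr_{\bbR,\alpha}$ onto a neighborhood of the basepoint. This is not entirely automatic because $\bbG_{\bbR,\alpha}$ need not come with a distinguished maximal compact in the form used for $\bbG_\bbR$, but any choice of inner twist of $\eta_c$ by a cocycle representing $\alpha$ produces such a compact real form $G_{c,\alpha}\subset G$ containing a maximal compact subgroup of $G_{\bbR,\alpha}$, and the restriction of any $G_{c,\alpha}$-invariant metric on $\Gr$ to $\Gr_{\bbR,\alpha}$ suffices.

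Once surjectivity of $h$ is established \'etale locally, combining with the embedding $G_{\bbR,\alpha}(\bbR[t^{-1}])\backslash\Gr_{\bbR,\alpha}\hookrightarrow\Bun_\bbG(\bbP^1)_{\bbR,\alpha}$ from the first paragraph yields the claimed isomorphism of real analytic stacks. The resulting map is automatically an isomorphism on automorphism groups because the stabilizer in $\bbG(\bbC[t^{-1}])$ of a point in $\Gr$ over $\mE$ is $\Aut(\mE)$, and on real points this specializes to $\Aut_\bbR(\mE_\bbR)=G_{\bbR,\alpha}(\bbR[t^{-1}])$-stabilizers, completing the identification of the quotient.
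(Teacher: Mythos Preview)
Your proposal is correct and follows exactly the approach the paper takes: the paper's proof of this proposition is literally the single sentence ``the same argument as in the proof of Proposition~\ref{uniformizations at real x} shows that\ldots'', and you have correctly reconstructed what that argument is. One minor simplification: the metric subtlety you flag is lighter than you suggest, since the complexification of $\bbG_{\bbR,\alpha}$ is canonically $\bbG$, so $\Gr_{\bbR,\alpha}$ sits inside the \emph{same} complex $\Gr$ as the fixed locus of a twisted anti-holomorphic involution, and the same $LG_c$-invariant metric $g(,)$ from Section~\ref{energy flow} restricts to it directly without needing to build a new compact form.
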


\quash{
Since there is a bijection 
between the set $\Lambda_S^+$ of real dominant coweights and the set 
of $G_\bbR[t_x^{-1}]$-orbits on $\Gr_{x,\bbR}$, we obtain the following: 
\begin{corollary}
There is a bijection
$\Lambda_S^+\leftrightarrow |G_\bbR(\bbR[t_x^{-1}])\backslash\Gr_{x,\bbR}|\leftrightarrow\on{}|\Bun_\bbG(\bbP^1)_{\bbR,\alpha_0}|$.

\end{corollary}}

\subsubsection{Complex uniformizations}
We now discuss complex uniformizations. 
The natural map 
\[u^{(2)}:\Gr^{(2)}\ra\Bun_\bbG(\bbP^1)\times(\bbP^1)^2\] exhibits 
$\Gr^{(2)}$ as a $LG^{(2)}$-torsor over $\Bun_\bbG(\bbP^1)\times(\bbP^1)^2$, that is, we have an isomorphism 
\[LG^{(2)}\backslash\Gr^{(2)}\is\Bun_\bbG(\bbP^1)\times(\bbP^1)^2\]
The morphism $u^{(2)}$ is compatible with 
the complex conjugations on 
$\Gr^{(2)}$ and $\Bun_\bbG(\bbP^1)\times(\bbP^1)^2$
and we denote by  
\beq\label{family of uniformizations}
u^{(2)}_\bbR:\Gr^{(2)}_\bbR\longrightarrow\Bun_\bbG(\bbP^1)_\bbR\times\bbP^1
\eeq
the map between the corresponding real analytic stacks. 
Note that the map 
above factors through an imbedding 
\beq\label{family of embeddings}
LG^{(2)}_\bbR\backslash\Gr_\bbR^{(2)}\to\Bun_\bbG(\bbP^1)_\bbR\times\bbP^1.
\eeq
Recall that, 
by Proposition \ref{real forms of various loop groups}, we have isomorphisms
\[
\Gr^{(2)}_\bbR|_0\is\Gr_{\bbR},\ \ LG^{(2)}_{\bbR}|_0\is G_\bbR(\bbR[t^{-1}])
\]
\[\Gr^{(2)}_\bbR|_{i\bbR^\times}\is\Gr\times i\bbR^\times,\ \ LG^{(2)}_{\bbR}|_{i\bbR^\times}\is LG_{\bbR}\times i\bbR^\times.\]
The restriction 
\[u_{0,\bbR}:=u^{(2)}_{\bbR}|_0:\Gr_\bbR\is\Gr^{(2)}_{\bbR}|_0\to\Bun_\bG(\bP^1)_\bbR\] of \eqref{family of uniformizations} to the real point
$0\in i\bbR$ is isomorphic to the real unifomization map in 
\eqref{real uniformization}. Consider the case when $x\in i\bbR^\times$.
It follows from the isomorphism above that 
there is a unique map
\beq\label{Complex uniformization}
u_{x,\bbC}:\Gr\longrightarrow\Bun_\bbG(\bbP^1)_\bbR
\eeq
making the following diagram commutative
\[\xymatrix{\Gr^{(2)}_\bbR|_x\ar[r]^\sim\ar[d]^{u^{(2)}_\bbR|_x}&\Gr\ar[d]^{u_{x,\bbC}}
\\\Bun_\bbG(\bbP^1)_\bbR\times\{x\}\ar[r]^{\ \ \sim}&\Bun_\bbG(\bbP^1)_\bbR}.\]
We call the map~\eqref{Complex uniformization} the complex uniformization 
associated to $x$. 
Note that, by~\eqref{family of embeddings}, the map $u_{x,\bC}$ induces an embedding 
\beq\label{open embedding 2}
LG_{\bbR}\backslash\Gr\lra\Bun_{\bbG}(\bbP^1)_\bbR.
\eeq

We shall give a description of $u_{x,\bbC}$.
Let $(\mE,\phi)\in\Gr$ where $\mE$
is a $G$-bundle on $\bP^1$ and $\phi:\mE|_{\bP^1-\{0\}}\is G\times(\bP^1-\{0\})$ is a trivialization of $\mE$ over $\bP^1-\{0\}$.  
Let $(\mE_x,\phi_x)$ be the pull back of $(\mE,\phi)$ along the 
isomorphism $\bP^1\is\bP^1, t\to z=\frac{t-x}{t-\bar x}$. 
So $\mE_x$ is a $G$-bundle on $\bP^1$ and 
$\phi_x$ is a trivialization of 
$\mE_x$ on $\bP^1-\{x\}$. 
Let $c(\mE_x)$ be complex conjugation of $\mE_x$ (see Sect.~\ref{Stack of real bundles}) and 
let $\mF$ be the $G$-bundle on 
$\bP^1$ obtained from gluing of 
$\mE|_{\bP^1-\{\bar x\}}$ and $c(\mE)|_{\bP^1-\{x\}}$
using the isomorphism $c(\phi_x)^{-1}\circ\phi_x:\mE|_{\bP^1-\{x,\bar x\}}\is c(\mE)|_{\bP^1-\{x,\bar x\}}$. By construction, there is a canonical isomorphism 
$\gamma:\mF\is c(\mF)$ and the resulting real bundle 
$(\mF,\gamma)\in\Bun_{\bG}(\bP^1)_\bbR$ is the image $u_{x,\bC}((\mE,\phi))$.
Note that the cohomology class in $H^1(\Gal(\bC/\bbR),G)$
given by 
the restriction of the real bundle 
$\mF$ to $\infty$ is represented by the co-boundary $c(\phi_x(v))^{-1}(\phi_x)(v)$ (here $v\in\mE_x|_{\infty}$), hence is trivial. Thus the complex uniformization $u_{x,\bC}$ factors as 
\[u_{x,\bC}:\Gr\to\Bun_{\bG}(\bP^1)_{\bbR,\alpha_0}.\]

We shall describe the image of $u_{x,\bbC}$. 
For each $z\in\bC^\times$ let $a_z:\mathbb P^1\ra\mathbb P^1$
be the multiplication map by $z$.
Consider the flows on $\Gr^{(2)}$ and $\Bun_\bG(\bP^1)$:
\beq\psi_z:\Gr^{(2)}\ra
\Gr^{(2)},\ 
(x,\mE,\phi)\ra (a_z(x),(a_{z^{-1}})^*\mE,(a_{z^{-1}})^*\phi).
\eeq
\[\psi_z:\Bun_\bG(\bP^1)\to\Bun_\bG(\bP^1),\ \ \mE\to (a_{z^{-1}})^*\mE\]
For $z\in\mathbb R_{>0}$ the flows above restrict to flows
\beq\label{flow on Gr_R^2}
\psi_z^1:\Gr^{(2)}_\bbR\ra
\Gr^{(2)}_\bbR,\ \ \psi_z^2:\Bun_\bG(\bP^1)_\bbR\to\Bun_\bG(\bP^1)_\bbR
\eeq
and we have the following commutative diagram
\beq\label{diagram for psi_z}
\xymatrix{\Gr^{(2)}_\bbR\ar[r]^{\psi_z^1}\ar[d]^{q}&\Gr^{(2)}_\bbR\ar[d]^{q}
\\\Bun_\bG(\bP^1)_{\bbR,\alpha_0}\ar[r]^{\psi_z^2}&\Bun_\bG(\bP^1)_{\bbR,\alpha_0}}.
\eeq
Here $q$ is the natural projection map.

\begin{lemma}\label{flows on Gr^(2)}
We have the following properties of the flows:
\begin{enumerate}
\item The critical manifold of the flow $\psi_z^1$ are the cores 
$C_\bbR^\lambda\subset \Gr_\bbR\is\Gr^{(2)}_{\bbR}|_0$ 
and the stable manifold for $C_\bbR^\lambda$ is the 
strata $S_\bbR^\lambda\subset\Gr_\bbR$. 
\item For each $\lambda\in\Lambda_S^+$, we denote by 
\[\tilde T_\bbR^\lambda=\{\gamma\in\Gr^{(2)}_\bbR|
\underset{z\ra 0}\lim\ \psi^1_z(\gamma)\in C_\bbR^\lambda\}\]
the corresponding unstable manifold. 
We have $\tilde T_\bbR^\lambda|_0\is T_\bbR^\lambda\subset\Gr_\bbR$ for 
$\lambda\in\Lambda_S^+$. The isomorphism 
$\Gr^{(2)}_\bbR|_x\is\Gr,\ x\in i\bbR_{>0}$, 
restricts to an isomorphism 
\[\tilde T_\bbR^\lambda|_x\is\mO_\bbR^\lambda\]
for $\lambda\in\mL$ and $\tilde T_\bbR^\lambda|_x$ is empty for 
$\lambda\in\Lambda_S^+-\mL$.
\quash{
\item
The $\bbR_{>0}$-acton $\bbR_{>0}\times\Gr^{(2)}_\bbR\to\Gr^{(2)}_\bbR,\ (z,x)\to\psi^1_z(x)$
 extends to a 
$\bbR_{\geq 0}$-action:
\[\psi^1:\bbR_{\geq 0}\times\Gr^{(2)}_\bbR\to\Gr^{(2)}_\bbR,\]
such that $\psi^1(0,x)=\underset{z\ra 0}\lim\ \psi^1_z(x)$.}
\end{enumerate}
\end{lemma}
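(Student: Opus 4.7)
The plan is to handle parts (1) and the central-fiber case of (2) via the Morse theory of Proposition \ref{PS}, and then to settle the non-central fibers by a specialization argument exploiting the $LG^{(2)}_\bbR$-equivariant structure. Under the identification $\Gr^{(2)}_\bbR|_0 \is \Gr_\bbR$ of Lemma \ref{real forms of various loop groups}, the restricted flow $\psi^1_z|_0$ is the restriction to $\Gr_\bbR$ of the $\bbR_{>0}$-subflow of the $\bbC^\times$-action on $\Gr$ scaling the formal coordinate. Via $\Gr \is \Omega G_c$, this $\bbR_{>0}$-action is the $\bbR_{>0}$-component of the K\"ahler $\bbC^\times$-action that extends the Hamiltonian $S^1$-rotation, and so coincides with the gradient flow of the energy function $E$ from Section \ref{energy flow}. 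Proposition \ref{PS}(2)--(3) identifies its critical locus with $\bigsqcup_\lambda C^\lambda$ and the stable (resp. unstable) manifold of $C^\lambda$ with $S^\lambda$ (resp. $T^\lambda$); intersecting with $\Gr_\bbR$ and invoking the definitions from Section \ref{real affine Gr} gives part (1) together with the central-fiber identification $\tilde T^\lambda_\bbR|_0 \is T^\lambda_\bbR$ for every $\lambda \in \Lambda_S^+$.

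For fibers over $x \in i\bbR_{>0}$, the diagram \ref{diagram for psi_z} combined with continuity of $q$ shows that $\tilde T^\lambda_\bbR = q^{-1}(A^\lambda)$, where $A^\lambda = \{\sF : \lim_{z \to 0}\psi^2_z(\sF) = \sF^\lambda\}$ and $\sF^\lambda := q(C^\lambda_\bbR)$ is a single point of $\Bun_\bG(\bP^1)_{\bbR,\alpha_0}$ (since $C^\lambda_\bbR \subset T^\lambda_\bbR$ lies in one $G_\bbR(\bbR[t^{-1}])$-orbit, and distinct $T^\mu_\bbR$'s map to distinct points). In particular, $\tilde T^\lambda_\bbR$ is $LG^{(2)}_\bbR$-invariant, and its restriction $\tilde T^\lambda_\bbR|_x$ is a union of $LG_\bbR$-orbits $\mO^\mu_\bbR$ on $\Gr \is \Gr^{(2)}_\bbR|_x$. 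The key computation is then the specialization $\lim_{z \to 0}\psi^1_z(t^\mu) = t^\mu \in C^\mu_\bbR$ for each $\mu \in \mL$, where on the left $t^\mu \in \mO^\mu_\bbR$ is viewed in the generic fiber and on the right in the central fiber $\Gr_\bbR$. Geometrically, $\psi^1_z$ rescales the pair of conjugate punctures $(zx, \overline{zx})$, which collide at $0$ as $z \to 0$, while the cocharacter $\mu$ produces a $\bG$-bundle on $\bP^1$ depending only on the position of the punctures, yielding an explicit real-analytic section of $\Gr^{(2)}_\bbR$ over $i\bbR_{\geq 0}$ specializing to $t^\mu$ at both endpoints. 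Granting this, $t^\mu \in \tilde T^\mu_\bbR|_x$ and $t^\mu \notin \tilde T^\lambda_\bbR|_x$ for $\mu \neq \lambda$. Combined with $\Gr = \bigsqcup_{\mu \in \mL}\mO^\mu_\bbR$ (Proposition \ref{parametrization}(2)) and the $LG_\bbR$-invariance of $\tilde T^\lambda_\bbR|_x$, this yields $\tilde T^\lambda_\bbR|_x = \mO^\lambda_\bbR$ for $\lambda \in \mL$, while for $\lambda \in \Lambda_S^+ \setminus \mL$ no $LG_\bbR$-orbit contributes and $\tilde T^\lambda_\bbR|_x = \emptyset$.

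The main obstacle is rigorously justifying the specialization $\lim_{z \to 0}\psi^1_z(t^\mu) = t^\mu$ underpinning the non-central fiber analysis. Concretely, one must construct a real-analytic section of the family $\Gr^{(2)}_\bbR \to \bbP^1(\bbC)$ over $i\bbR_{\geq 0}$ that realizes the $\psi^1_z$-trajectory of $t^\mu$ and specializes to $t^\mu \in C^\mu_\bbR$ at $z=0$. The natural candidate is the family of trivialized $\bG$-bundles obtained by placing the cocharacter datum $\mu$ at the pair of punctures $(zx, \overline{zx})$; checking continuity of this family across the collision $z = 0$ and its compatibility with the induced real structures on $\bP^1 \setminus \{zx, \overline{zx}\}$ is the heart of the argument. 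Once this is established, the remainder of the proof reduces to bookkeeping with the orbit labels provided by Proposition \ref{parametrization}.
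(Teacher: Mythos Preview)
The paper does not give a proof here; it simply cites \cite[Proposition 8.4]{N1}. Your overall architecture---handle the central fiber via Proposition~\ref{PS}, show $\tilde T_\bbR^\lambda$ is $LG^{(2)}_\bbR$-invariant by pushing down through $q$ to $\Bun_\bbG(\bP^1)_\bbR$, and then match orbits to strata by specializing an explicit representative---is a reasonable strategy and close in spirit to what \cite{N1} does.

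There is, however, a genuine error in your choice of representative. You assert that $t^\mu$, viewed in the generic fiber $\Gr \is \Gr^{(2)}_\bbR|_x$, lies in $\mO_\bbR^\mu$. This is false. By the characterization in Proposition~\ref{parametrization}(2), $\gamma \in \mO_\bbR^\lambda$ if and only if $\tilde\eta^\tau(\gamma)\gamma \in G(\bC[t^{-1}]) t^\lambda G(\bC[t])$; since $\mu \in \Lambda_S$ is $\eta$-fixed one computes $\tilde\eta^\tau(t^\mu) = t^\mu$, hence $\tilde\eta^\tau(t^\mu)t^\mu = t^{2\mu}$, so in fact $t^\mu \in \mO_\bbR^{2\mu}$. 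Your specialization claim is off by the same factor: under the identification of Lemma~\ref{real forms of various loop groups}, the real point of $\Gr^{(2)}_\bbR|_x$ corresponding to $t^\mu$ is a bundle modified by $\mu$ at \emph{each} of the conjugate punctures $x,\bar x$, and as these collide at $0$ the limiting modification is $t^{2\mu} \in C_\bbR^{2\mu}$, not $t^\mu \in C_\bbR^\mu$. The two factors of $2$ cancel, so your argument does correctly place $\mO_\bbR^{2\mu}$ inside $\tilde T_\bbR^{2\mu}|_x$, but it produces no representative for $\mO_\bbR^\lambda$ when $\lambda \in \mL$ is not of the form $2\mu$, and your final counting step breaks down.

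The remedy is to replace $t^\mu$ by any $g_\lambda \in \mO_c^\lambda = \mO_K^\lambda \cap \mO_\bbR^\lambda$ (which exists by Proposition~\ref{parametrization}(3)) and to run the specialization for the section $x \mapsto (g_\lambda, x)$ of $\Gr^{(2)}_\bbR|_{i\bbR^\times}$. This section extends across $0$ by ind-properness of $\Gr^{(2)}_\bbR \to i\bbR$, and one then identifies the value at $0$ as lying in $C_\bbR^\lambda$; this is what underlies the argument in \cite{N1}.
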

\begin{proof}
This is proved in \cite[Proposition 8.4]{N1}. 
\end{proof}

\begin{lemma}\label{flows on Bun_R}
\begin{enumerate}
\item
For any $\gamma\in\Gr_\bbR^{(2)}$,
the action map $\bbR_{>0}\to\Gr_\bbR^{(2)},\ z\to\psi_z^1(\gamma)$
given by the flow $\psi_z^1$
extends to a map $a_\gamma:\bbR_{\geq 0}\to\Gr_\bbR^{(2)}$ such that 
$a_\gamma(0)=\underset{z\to 0}\lim\ \psi_z^1(\gamma)$.
\item
For any $\mE\in\Bun_\bG(\bP^1)_{\bbR,\alpha_0}$,
the action map $\bbR_{>0}\to\Bun_\bG(\bP^1)_{\bbR,\alpha_0},\ z\to\psi_z^2(\mE)$
given by the flow $\psi_z^2$
extends to a map 
\beq\label{extended action}
a_\mE:\bbR_{\geq 0}\to\Bun_\bG(\bP^1)_{\bbR,\alpha_0}.
\eeq
Moreover, we have 
$a_\mE(z)\is\mE$ for all $z\in\bbR_{\geq 0}$, and 
for any $\gamma\in\Gr_\bbR^{(2)}$,
there is a commutative 
diagram 
\beq\label{comm of psi^1}
\xymatrix{\bbR_{\geq 0}\ar[r]^{a_\gamma}\ar[rd]_{a_\mE}&\Gr^{(2)}_\bbR\ar[d]^q\\
&\Bun_\bG(\bP^1)_{\bbR,\alpha_0}}
\eeq
where $\mE=q(\gamma)\in\Bun_\bG(\bP^1)_{\bbR}$.

\end{enumerate}
\end{lemma}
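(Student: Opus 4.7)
The plan is to establish Part~(1) as a direct consequence of Lemma~\ref{flows on Gr^(2)}, and then to derive Part~(2) by pushing forward along $q$, using commutativity of diagram~\eqref{diagram for psi_z}.

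For Part~(1), by Lemma~\ref{flows on Gr^(2)}(2) every $\gamma \in \Gr_\bbR^{(2)}$ lies in some unstable manifold $\tilde T_\bbR^\lambda$: over a base point $x \in i\bbR^\times$ the partition $\Gr \simeq \bigsqcup_{\lambda \in \mL}\mO_\bbR^\lambda$ exhausts $\Gr_\bbR^{(2)}|_x$, while over $x=0$ the partition $\Gr_\bbR = \bigsqcup_{\lambda \in \Lambda_S^+} T_\bbR^\lambda$ does the same. By the defining property of $\tilde T_\bbR^\lambda$, the limit $\lim_{z\to 0^+}\psi_z^1(\gamma) \in C_\bbR^\lambda$ exists, and we declare $a_\gamma(0)$ to equal it. Continuity of $a_\gamma$ at $z=0$ then follows from standard gradient-flow convergence estimates near the compact critical manifold $C_\bbR^\lambda$, analogous to those already used in the proof of Theorem~\ref{flow}.

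For Part~(2), choose any lift $\tilde\gamma \in q^{-1}(\mE)$, whose existence is guaranteed by surjectivity of the uniformization morphisms established in Propositions~\ref{uniformizations at real x} and~\ref{uniformization for inner forms}, and define $a_\mE := q \circ a_{\tilde\gamma}$. Commutativity of~\eqref{diagram for psi_z} yields $a_\mE(z) = \psi_z^2(\mE)$ for $z > 0$, so $a_\mE$ genuinely extends the scaling action, and diagram~\eqref{comm of psi^1} holds tautologically. Independence of $a_\mE$ from the choice of lift (up to $2$-isomorphism of stack morphisms) is automatic, since any two lifts differ by an element of the real loop group acting trivially on $\Bun_\bG(\bbP^1)_{\bbR,\alpha_0}$.

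The main obstacle is the claim $a_\mE(z)\simeq \mE$ for all $z \in \bbR_{\geq 0}$. For $z > 0$ it suffices to note that $\bbG_m$ is connected while the set of isomorphism classes of $G$-bundles on $\bbP^1$ is discrete (by Grothendieck's classification); hence the $\bbG_m$-action by pullback is trivial on isomorphism classes, and $(a_{z^{-1}})^*\mE \simeq \mE$. The case $z=0$ is more delicate, since naive pointwise limits of $(a_{z^{-1}})^*\mE$ degenerate along $\bbP^1 \setminus \{0,\infty\}$. The cleanest approach is to construct directly a flat $G$-bundle on $\bbP^1 \times \bbR_{\geq 0}$ whose restriction to $\bbP^1 \times \{z\}$ for $z>0$ is isomorphic to $(a_{z^{-1}})^*\mE$: using that $\mE$ is trivializable in neighborhoods of the fixed points $0,\infty \in \bbP^1$ of the scaling action, one glues rescaled local models into such a family, whose fiber at $z=0$ is canonically identified with $\mE$. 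This shows the entire trajectory $z \mapsto a_\mE(z)$ lies in the isomorphism class of $\mE$, and in particular $a_\mE(0)\simeq \mE$.
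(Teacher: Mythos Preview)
Your argument for Part~(1) is essentially the paper's: it too simply invokes Lemma~\ref{flows on Gr^(2)}(2).

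For Part~(2), your approach diverges from the paper's, and the $z=0$ step has a genuine gap. You define $a_\mE := q\circ a_{\tilde\gamma}$ and then try to prove $a_\mE(0)\simeq\mE$ by constructing a \emph{separate} flat family $b$ on $\bbP^1\times\bbR_{\geq 0}$ with $b(0)\simeq\mE$. But two maps $\bbR_{\geq 0}\to\Bun_\bG(\bbP^1)_{\bbR,\alpha_0}$ that agree on $\bbR_{>0}$ need not agree at $0$: bundles can degenerate, so knowing $b(0)\simeq\mE$ says nothing about $a_\mE(0)$ unless you identify $b$ with $a_\mE$, which you never do. Your independence-of-lift claim is also problematic as stated: two points of $q^{-1}(\mE)\subset\Gr_\bbR^{(2)}$ may lie over different base points in $i\bbR$ and are then not related by any loop-group element.

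The paper's argument is both shorter and avoids these issues. It chooses the lift specifically in $\Gr_\bbR=\Gr_\bbR^{(2)}|_0$ (available by Proposition~\ref{uniformizations at real x}), and then uses the key observation that on $\Gr_\bbR$ the $G_\bbR(\bbR[t^{-1}])$-orbits $T_\bbR^\lambda$ are precisely the unstable manifolds of $\psi_z^1$. Hence if $\gamma\in T_\bbR^\lambda$ then the whole trajectory $a_\gamma(\bbR_{\geq 0})$ stays inside the single orbit $T_\bbR^\lambda$, and its image in the quotient $G_\bbR(\bbR[t^{-1}])\backslash\Gr_\bbR\simeq\Bun_\bG(\bbP^1)_{\bbR,\alpha_0}$ is constant. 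This yields $a_\mE(z)\simeq\mE$ for all $z\in\bbR_{\geq 0}$, including $z=0$, in one line, with no flat-family gymnastics.
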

\begin{proof}
Part (1) follows from Lemma \ref{flows on Gr^(2)} (2).
Proof of part (2).
Let $\gamma\in\Gr_\bbR$ and let 
$\mE=q(\gamma)\in\Bun_\bG(\bP^1)_{\bbR}$. Consider the 
the composed map 
\[
a_\mE:\bbR_{\geq 0}\stackrel{a_{\gamma}}\to\Gr_\bbR\to G_\bbR(\bbR[t^{-1}])\backslash\Gr_\bbR\is\Bun_\bG(\bP^1)_{\bbR,\alpha_0}
\]
where $a_\gamma$ is the map in part (1) and the last isomorphism is the real uniformization (see Prop.\ref{uniformizations at real x}).
It is elementary to check that the map
$a_\mE$ only depends on $\mE$ and 
$a_\mE(z)=\psi_z^2(\mE)$ for $z\in\bbR_{>0}$, hence 
 defines the desired map in \eqref{extended action}. Moreover,
since $G_\bbR(\bbR[t^{-1}])$-orbits $T_\bbR^\lambda$ on $\Gr_\bbR$ are unstable manifolds for the flow
$\psi_z^1$, we have $a_\gamma(\bbR_{\geq 0})\subset T_\bbR^\lambda$ if $\gamma\in T_\bbR^\lambda$, and it implies 
$a_\mE(z)\is\mE$ for all $a\in\bbR_{\geq 0}$.
The commutativity of diagram \eqref{comm of psi^1} follows from
the construction of $a_\mE$.

\end{proof}

Recall the 
components $\Gr_\bbR^0=\bigcup_{\mL\in\mL} S_\bbR^\lambda$ in Section \ref{orbits}.  We define 
\[\Bun_\bbG(\bbP^1)_{\bbR,0}\] 
be the image of 
$\Gr_\bbR^0$ under the real uniformization $u_\bbR:
\Gr_\bbR\ra\Bun_\bbG(\bbP^1)_{\bbR,\alpha_0}$.
Note that 
\beq\label{Bun_0}
\Bun_\bbG(\bbP^1)_{\bbR,0}
\is G_\bbR(\bbR[t^{-1}])\backslash\Gr^0_{\bbR}
\subset\Bun_\bbG(\bbP^1)_{\bbR,\alpha_0}\is
G_\bbR(\bbR[t^{-1}])\backslash\Gr_{\bbR}
\eeq 
is a union 
of components of $\Bun_\bbG(\bbP^1)_{\bbR,\alpha_0}$.

\begin{proposition}\label{uniformizations at complex x}
The complex uniformization $u_{x,\bbC}:\Gr\to\Bun_\bbG(\bbP^1)_{\bbR,\alpha_0}$ 
factors as
\[u_{x,\bbC}:\Gr\lra\Bun_\bbG(\bbP^1)_{\bbR,0}\] and
induces an isomorphism  
\[LG_{\bbR}\backslash\Gr\stackrel{\sim}\longrightarrow\on{}\Bun_\bbG(\bbP^1)_{\bbR,0}\]
of real analytic stacks.
\end{proposition}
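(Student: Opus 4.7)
The plan is to exploit the $\bbR_{>0}$-flow $\psi_z^1$ on $\Gr^{(2)}_\bbR$ together with its descent $\psi_z^2$ on $\Bun_\bbG(\bbP^1)_{\bbR,\alpha_0}$ to transport the claim to the fiber over $0 \in \bbP^1$, where Proposition \ref{uniformizations at real x} already furnishes the real uniformization. The essential mechanism is the commutative diagram \eqref{comm of psi^1} of Lemma \ref{flows on Bun_R}: it asserts that $u_{x,\bbC}(\gamma)$ and $u_\bbR(\lim_{z \to 0} \psi_z^1(\gamma, x))$ represent isomorphic $\bbG_\bbR$-bundles, so the image of $u_{x,\bbC}$ is controlled by the flow limits in the fiber over $0$.

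For the factorization, I would take any $\gamma \in \Gr$ and identify it with $(\gamma, x) \in \Gr^{(2)}_\bbR|_x$ via $\Gr^{(2)}_\bbR|_{i\bbR^\times} \simeq \Gr \times i\bbR^\times$. By Lemma \ref{flows on Gr^(2)}(2), the set $\tilde T_\bbR^\lambda|_x$ is non-empty only for $\lambda \in \mL$, so $\gamma$ lies in some $\tilde T_\bbR^\lambda$ with $\lambda \in \mL$ and its flow limit at $z = 0$ lies in the core $C_\bbR^\lambda \subset T_\bbR^\lambda \subset \Gr_\bbR^0$. Diagram \eqref{comm of psi^1} then places $u_{x,\bbC}(\gamma)$ in $\Bun_\bbG(\bbP^1)_{\bbR,0}$, giving the factorization. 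For essential surjectivity onto $\Bun_\bbG(\bbP^1)_{\bbR,0}$, I would reverse the argument: any $\mE \in \Bun_\bbG(\bbP^1)_{\bbR,0}$ corresponds via \eqref{Bun_0} to a $G_\bbR(\bbR[t^{-1}])$-orbit $T_\bbR^\lambda$ with $\lambda \in \mL$, and for any $\gamma \in \mO_\bbR^\lambda \simeq \tilde T_\bbR^\lambda|_x$, its flow limit lies in $C_\bbR^\lambda \subset T_\bbR^\lambda$, so \eqref{comm of psi^1} yields $u_{x,\bbC}(\gamma) \simeq \mE$. Combined with the embedding \eqref{open embedding 2}, this produces an essentially surjective embedding $LG_\bbR \backslash \Gr \hookrightarrow \Bun_\bbG(\bbP^1)_{\bbR,0}$.

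Finally, to upgrade this essentially surjective embedding to an isomorphism of real analytic stacks, I would repeat the smoothness argument from Proposition \ref{uniformizations at real x} nearly verbatim: for a smooth presentation $S \to \Bun_\bbG(\bbP^1)_{\bbR,0}$, form the fiber product $Y := S \times_{\Bun_\bbG(\bbP^1)_\bbR} \Gr$, check surjectivity of $Y \to S$ using the essential surjectivity established above, and construct sections \'etale locally on $S$ via the exponential map for a $G_c$-invariant metric on $\Gr$. I expect this last stackification step to be the main obstacle, since one must control the smoothness of $u_{x,\bbC}$ at the level of ind-schemes rather than just on isomorphism classes of points; however, since the flow $\psi_z^1$ provides local slices matching those of the real uniformization and the argument parallels the real case closely, I do not anticipate any essentially new difficulty.
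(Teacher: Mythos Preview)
Your proposal is correct and follows essentially the same route as the paper. The paper's proof also uses the flow $\psi_z^1$ and the commutative diagram \eqref{comm of psi^1} from Lemma \ref{flows on Bun_R} to show that $|u_{x,\bbC}(\gamma)| = |q(\lim_{z\to 0}\psi_z^1(\gamma_x))|$, observes that the flow limits over the fiber at $x$ fill exactly $\bigcup_{\lambda\in\mL}C_\bbR^\lambda$ whose image under $q$ is $|\Bun_\bbG(\bbP^1)_{\bbR,0}|$, and then invokes the argument of Proposition \ref{uniformizations at real x} together with the embedding \eqref{open embedding 2} to upgrade essential surjectivity to an isomorphism of stacks.
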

\begin{proof}
Let $\gamma\in
\Gr$ and $\gamma_x\in\Gr^{(2)}_\bbR|_x$ be the
image of $\gamma$ under the isomorphism $\Gr\is\Gr_\bbR^{(2)}|_x$.
Let $\mE=u_{x,\bbC}(\gamma)=q(\gamma_x)\in\Bun_\bG(\bP^1)_{\bbR,\alpha_0}$
be the image of the complex uniformization map. By Lemma \ref{flows on Bun_R}(2)
we have 
\beq\label{eq 1}
|\mE|=|a_\mE(0)|=|
q(a_{\gamma_x}(0)|=|q(\underset{z\to 0}\lim\ \psi_z^1(\gamma_x))|,
\eeq
\beq\label{eq 2}
\underset{z\to 0}\lim\ \psi_z^1(\Gr\is\Gr^{(2)}_\bbR|_x)=\bigcup_{\lambda\in\mL}C_\bbR^\lambda.
\eeq
 As the image $\bigcup_{\lambda\in\mL}|q(C_\bbR^\lambda)|$ of the critical manifolds under 
 $q$
is equal to $|\Bun_\bbG(\bbP^1)_{\bbR,0}|$, equations \eqref{eq 1} and \eqref{eq 2} imply that 
$u_{x,\bC}$ factors through $u_{x,\bC}:\Gr\to\Bun_\bbG(\bbP^1)_{\bbR,0}$ and induces a surjection 
between the sets of isomorphism classes of objects.
Now a similar argument as in the proof Proposition \ref{uniformizations at real x} shows that 
$u_{x,\bC}:\Gr\to\Bun_\bbG(\bbP^1)_{\bbR,0}$ is surjective and, by \eqref{open embedding 2}, we obtain  
an isomorphism $LG_{\bbR}\backslash\Gr\is\Bun_\bbG(\bbP^1)_{\bbR,0}$.
\end{proof}

\begin{remark} We have 
$\Bun_\bbG(\bbP^1)_{\bbR,0}=\Bun_\bbG(\bbP^1)_{\bbR,\alpha_0}$ if and only if $K$ is connected.
So in the case when $K$ is disconnected, 
the map 
$u_{x,\bC}:\Gr\to\Bun_\bG(\bP^1)_{\bbR,\alpha_0}$ is not 
surjective, that is, 
not 
every real bundle of class $\alpha_0$ admits a complex uniformization. 
\end{remark}

\begin{example}
In the case $G=\bC^\times$ with split conjugation, we have 
\[\Bun_\bG(\bP^1)_{\bbR,0}\is2\bZ\times B\bbR^\times\subset \Bun_\bG(\bP^1)_{\bbR}
\is\bZ\times B\bbR^\times\]
and the complex uniformizatoin 
is given by
\[u_{x,\bC}:\Gr\is\bZ\times\{\on{pt}\}\stackrel{(\times 2,p)}\lra\Bun_\bG(\bP^1)_{\bbR,0}\is2\bZ\times B\bbR^\times.\]
Here $p:\{\on{pt}\}\to B\bbR^\times$ is the quotient map.
\end{example}

\subsection{Categories of sheaves on $LG_\bbR\backslash\Gr$}
Since $\Bun_{\bbG}(\bbP^1)_\bbR$ is 
a real analytic stack of finite type, by the propositions above, the components $\Bun_G(\bbP^1)_{\bbR,\alpha},\Bun_G(\bbP^1)_{\bbR,0}$, 
and the quotients stacks $LG_\bbR\backslash\Gr, G_\bbR(\bbR[t^{-1}])\backslash\Gr_\bbR$, and
 $G_\bbR(\bbR[t^{-1}])\backslash\Gr_\bbR^0$ are also of finite type
 and there are well-defined categories of sheaves on them.

\begin{definition}
We define $D_c(LG_\bbR\backslash\Gr)$ 
to be the bounded derived category of $\bbC$-constructible sheaves on $LG_\bbR\backslash\Gr$. We define $D_!(LG_\bbR\backslash\Gr)$ 
to the be full subcategory of 
$D_c(LG_\bbR\backslash\Gr)$ 
consisting of all constructible complexes that are extensions by zero off of finite type substacks of 
$LG_\bbR\backslash\Gr$.
We denote by $D_c(G_\bbR(\bbR[t^{-1}])\backslash\Gr_\bbR)$, 
$D_!(G_\bbR(\bbR[t^{-1}])\backslash\Gr_\bbR)$, etc, for the similar defined 
categories.

 \end{definition}

\subsection{Uniformizations in family}
Consider the open subset $\Gr^{(2),0}_\bbR\subset\Gr^{(2)}_\bbR|_{i\bbR}$ 
such that $\Gr^{(2),0}_\bbR|_x=\Gr^{(2)}_\bbR|_x$ for $x\neq 0$
and $\Gr^{(2),0}_\bbR|_0\is\Gr^0_\bbR\subset\Gr^{(2)}_\bbR|_0\is\Gr_{\bbR}$.
Let \[u_\bbR^{(2),0}:\Gr^{(2),0}_\bbR\to\Bun_{\bbG}(\bbP^1)_{\bbR}\times i\bbR\] be the restriction of \eqref{family of uniformizations}
to $\Gr^{(2),0}_\bbR$.

\begin{prop}\label{an open family}
The map 
$u_\bbR^{(2),0}$
factors through 
\[u_\bbR^{(2),0}:\Gr^{(2),0}_\bbR\to\Bun_{\bbG}(\bbP^1)_{\bbR,0}\times i\bbR\]
and induces an isomorphism 
\[LG_\bbR^{(2)}\backslash\Gr^{(2),0}_\bbR\stackrel{\sim}\lra\Bun_{\bbG}(\bbP^1)_{\bbR,0}\times i\bbR.\]
of real analytic stacks.
\end{prop}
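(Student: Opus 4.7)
The plan is to combine the two fiberwise uniformization results (Propositions \ref{uniformizations at real x} and \ref{uniformizations at complex x}) with a relative smoothness argument along the base $i\bbR$, essentially running the proof of Proposition \ref{uniformizations at real x} in family.

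First I would verify the factorization through $\Bun_{\bbG}(\bbP^1)_{\bbR,0}\times i\bbR$. On the fiber over $0$ we have $\Gr^{(2),0}_\bbR|_0\is\Gr^0_\bbR$, and by definition \eqref{Bun_0} the real uniformization sends $\Gr^0_\bbR$ into $\Bun_{\bbG}(\bbP^1)_{\bbR,0}$. On each fiber over $x\in i\bbR^\times$, Proposition \ref{uniformizations at complex x} says that $u_{x,\bbC}$ factors through $\Bun_{\bbG}(\bbP^1)_{\bbR,0}$. Since $\Bun_{\bbG}(\bbP^1)_{\bbR,0}$ is a clopen union of connected components of $\Bun_{\bbG}(\bbP^1)_{\bbR,\alpha_0}$ (hence of $\Bun_{\bbG}(\bbP^1)_{\bbR}$), its preimage under the continuous map $u^{(2)}_\bbR$ is a clopen substack of $\Gr^{(2),0}_\bbR$ that contains every fiber, and therefore equals $\Gr^{(2),0}_\bbR$ on the nose.

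Next, the embedding \eqref{family of embeddings} restricted to the open subspace $\Gr^{(2),0}_\bbR$, combined with Step 1, gives an injection
\[
LG^{(2)}_\bbR\backslash\Gr^{(2),0}_\bbR\ \hookrightarrow\ \Bun_{\bbG}(\bbP^1)_{\bbR,0}\times i\bbR.
\]
To promote this to an isomorphism it suffices to show that $u^{(2),0}_\bbR$ is surjective on points and admits smooth local sections \'etale-locally on the target. For a smooth presentation $f\colon S\to\Bun_{\bbG}(\bbP^1)_{\bbR,0}\times i\bbR$ I form $Y=S\times_{\Bun_{\bbG}(\bbP^1)_{\bbR,0}\times i\bbR}\Gr^{(2),0}_\bbR$ with projection $h\colon Y\to S$, and it is enough to show $h$ is surjective and admits \'etale-local sections. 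Surjectivity is pointwise fiberwise: at $x=0$ it follows from Proposition \ref{uniformizations at real x} (restricted to the clopen substack $\Bun_{\bbG}(\bbP^1)_{\bbR,0}$), and at $x\in i\bbR^\times$ it follows from Proposition \ref{uniformizations at complex x}.

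Finally, for \'etale-local sections I would repeat the tangent-space-plus-exponential-map argument from the proof of Proposition \ref{uniformizations at real x}, now applied relatively over $i\bbR$. The relative uniformization map $u^{(2)}\colon\Gr^{(2)}\to\Bun_{\bbG}(\bbP^1)\times(\bbP^1)^2$ is formally smooth (it is an $LG^{(2)}$-torsor), and this persists after passing to real points; hence at any $y\in Y$ over $s=h(y)$ the tangent map $dh_y$ is surjective, and I may choose a finite-dimensional $W\subset T_yY$ mapping isomorphically onto $T_sS$. The hard part, which is really the only non-formal input, is producing a smooth finite-dimensional real analytic subspace $U\subset Y$ with $y\in U$ and $T_yU=W$: $Y$ is fibered over $i\bbR$, so locally it splits as $\Gr^{(2),0}_\bbR|_{x_0}$ times a transverse factor along the base, and I would build $U$ by combining the exponential map on $\Gr^{(2),0}_\bbR|_{x_0}$ (using the $LG_c$-invariant metric on $\Gr$ from Section \ref{energy flow}, which is available fiberwise via the isomorphisms of Lemma \ref{real forms of various loop groups}) with a local coordinate along $i\bbR$. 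Then $h|_U\colon U\to S$ is smooth near $y$, yielding the required \'etale-local section and completing the proof that $u^{(2),0}_\bbR$ induces an isomorphism $LG^{(2)}_\bbR\backslash\Gr^{(2),0}_\bbR\is\Bun_{\bbG}(\bbP^1)_{\bbR,0}\times i\bbR$.
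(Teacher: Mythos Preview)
Your proposal is correct and follows the same approach as the paper, which simply writes ``This follows from Proposition~\ref{uniformizations at real x} and~\ref{uniformizations at complex x}.'' You have supplied the details the paper omits---the clopen-component argument for the factorization, and the family version of the tangent-space/exponential-map argument for \'etale-local sections---but the underlying strategy is identical.
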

\begin{proof}
This follows from Proposition~\ref{uniformizations at real x} and~\ref{uniformizations at complex x}.
\end{proof}


\section{Quasi-maps}\label{QMaps}
In this section we 
study the stack of quasi-maps 
and use it to provide moduli interpretation for  
the quotient 
$LK_c\backslash\Gr$. 

\subsection{Definition of quasi-maps}
Let $\Sigma$ be a smooth complex projective curve.
For $n\geq 0$, define the stack of quasi-maps with poles
$\QM^{(n)}(\Sigma,G,K)$ to classify triples $(x, \calE, \sigma)$ comprising a point $x = (x_1, \ldots, x_n)\in \Sigma^n$, a  $G$-torsor $\calE$ over $\Sigma$, and a section $\sigma:\Sigma \setminus |x|\to \mE\times^GG/K$ where we write $|x| = \cup_{i =1}^n x_i \subset \Sigma$. 
According to \cite{GN1}, $\QM^{(n)}(\Sigma,G,K)$ is an ind-stack of ind-finite type.
Note the natural  maps  $\QM^{(n)}(\Sigma,G,K) \to\Sigma^n$ and $\QM^{(n)}(\Sigma,G,K) \to \Bun_G(\Sigma)$. For any $x\in\Sigma^n$, we will write 
$\QM^{(n)}_G(\Sigma,x,G,K)$ for the fiber $\QM^{(n)}(\Sigma,G,K)\times_{\Sigma^n} \{x\}$.

\subsection{Real forms of quasi-maps}\label{real form of QM}
 Now specialize to $\Sigma = \bP^1$ and $n=2$. 
The standard conjugation of $\bP^1$, denoted by $x\mapsto \bar x$, induces a twisted conjugation 
on $(\bP^1)^2$, defined by $c(x_1, x_2) = (\bar x_2, \bar x_1)$ with real points isomorphic to  $(\bP^1)^2_\bbR\is\bP^1$ regarded as a real variety. Let us fix the isomorphism given by the choice of $x_1$.
Together  with the conjugation of $G$ preserving $K$, the twisted conjugation of $(\bP^1)^2$
 induces a conjugation of $\QM^{(2)}(\bP^1,G, K)$. Let us denote its real points by $\QM^{(2)}(\bP^1,G,K)_\bbR$.
Note there are natural  maps  \[\QM^{(2)}(\bP^1,G,K)_\bbR \to(\bP^1)^2_\bbR\is\bP^1(\bC),\text{\ \ \ }\QM^{(2)}(\bP^1,G,K)_\bbR \to \Bun_G(\bbP^1)_\bbR.\] 
Define 
$\QM^{(2)}(\bP^1,G,K)_{\bbR,\alpha_0}$ 
(resp. $\QM_G^{(2)}(\bP^1,G,K)_{\bbR,0}$)
be the pre-image of 
$\Bun_\bbG(\bbP^1)_{\bbR,\alpha_0}$ (resp. 
$\Bun_\bbG(\bbP^1)_{\bbR,0}$) under the morphism 
$\QM^{(2)}(\bP^1,G,K)_{\bbR}\to\Bun_\bbG(\bbP^1)_\bbR$.
For any $x\in\bP^1(\bC)$ we have the fiber 
$\QM^{(2)}(\bP^1,x,G,K)_{\bbR}$ and the intersections 
$\QM^{(2)}(\bP^1,x,G,K)_{\bbR,\alpha_0}$ and $\QM^{(2)}(\bP^1,x,G,X)_{\bbR,0}$.

\subsection{Uniformizations of quasi-maps}
We have a natural uniformization map 
\beq\label{uni for QM}
\Gr^{(2)}\to\QM^{(2)}(\bP^1,G,K)
\eeq 
exhibits 
$\Gr^{(2)}$ as 
a $LK^{(2)}$-torsor on $\QM^{(2)}(\bP^1,G,K)$.
In particular, there is a canonical isomorphism of ind-stacks
\beq\label{iso for QM}
 q^{(2)}:LK^{(2)}\backslash\Gr^{(2)}\stackrel{\sim}\lra \QM^{(2)}(\bP^1,
G,K).
\eeq
The morphism in \eqref{uni for QM} is compatible with the real structures
and we denote by \[\Gr^{(2)}_{\bbR}\lra 
\QM^{(2)}(\bP^1,G,K)_\bbR\]
the associated map on the corresponding real algebraic stacks of real points.
It follows from \eqref{iso for QM} that the map above factors through an embedding 
\[q^{(2)}_\bbR:LK_\bbR^{(2)}\backslash\Gr^{(2)}_{\bbR}\lra 
\QM^{(2)}(\bP^1,G,K)_\bbR.\]
By Lemma \ref{based loops for K_c}, there are natural isomorphisms 
\[LK^{(2)}_{\bbR}\backslash\Gr^{(2)}_{\bbR}|_x\is
LK_{c}\backslash\Gr,\ \ x\in i\bbR^\times,\ \ LK^{(2)}_{\bbR}\backslash\Gr^{(2)}_{\bbR}|_0\is
K_{c}\backslash\Gr_\bbR
\] and the map $q^{(2)}_\bbR$ gives rise to maps  
\[q_x:LK_{c}\backslash\Gr\lra\QM^{(2)}(\bbP,^1x,G,K)_\bbR,\ x\in i\bbR^\times\]
\[q_0:K_{c}\backslash\Gr_\bbR\lra\QM^{(2)}(\bbP^1,0,G,K)_\bbR.\]

\begin{lemma}\label{uni for real quasi-maps}
We have the following:
\begin{enumerate}
\item

The map $q_x$ induces an isomorphism
\[q_x:LK_{c}\backslash\Gr\stackrel{\sim}\lra\QM^{(2)}(\bbP^1,x,G,K)_{\bbR,0}.\]
\item
The map $q_0$ induces an isomorphism
\[q_0:K_{c}\backslash\Gr_\bbR\stackrel{\sim}\lra\QM^{(2)}(\bbP^1,0,G, K)_{\bbR,\alpha_0}.\]
\end{enumerate}
\end{lemma}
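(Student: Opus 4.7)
The plan is to combine the complex isomorphism~\eqref{iso for QM}, already promoted to an embedding $q^{(2)}_\bbR$ of real analytic stacks, with the uniformization results Propositions~\ref{uniformizations at real x} and~\ref{uniformizations at complex x}. Since $q^{(2)}_\bbR$ is an embedding, the task reduces to identifying its image fiber-wise over $(\bP^1)^2_\bbR\is\bP^1(\bC)$ with the claimed substacks. The map $q^{(2)}_\bbR$ intertwines the natural projections to $\Bun_\bG(\bP^1)_\bbR$ induced by forgetting the reduction on the quasi-map side, versus first forgetting $LK^{(2)}_\bbR$-cosets to $LG^{(2)}_\bbR\backslash\Gr^{(2)}_\bbR$ and then composing with uniformization on the other side. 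Over $x=0$, Proposition~\ref{uniformizations at real x} identifies the image of the latter as $\Bun_\bG(\bP^1)_{\bbR,\alpha_0}$, so $q_0$ factors through $\QM^{(2)}(\bP^1,0,G,K)_{\bbR,\alpha_0}$; over $x\in i\bbR^\times$, Proposition~\ref{uniformizations at complex x} identifies the image as $\Bun_\bG(\bP^1)_{\bbR,0}$, so $q_x$ factors through $\QM^{(2)}(\bP^1,x,G,K)_{\bbR,0}$.

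For surjectivity onto these targets, I argue pointwise. Given a real quasi-map $(\mE,\sigma)$ in the target component, the relevant uniformization proposition supplies a real trivialization $\phi_0$ of $\mE$ on an open subset containing $\bP^1\setminus|x|$, hence a real point $(\mE,\phi_0)\in\Gr^{(2)}_\bbR|_x$ whose image under $q^{(2)}_\bbR$ is $(\mE,\sigma_{\phi_0})$, where $\sigma_{\phi_0}$ is the reduction induced by $\phi_0$ via the basepoint $eK\in G/K$. Trivializing $\mE\times^GG/K$ over $\bP^1\setminus|x|$ by means of $\phi_0$, the two real reductions $\sigma$ and $\sigma_{\phi_0}$ correspond to real maps $\bP^1\setminus|x|\to G/K$, and matching $\sigma_{\phi_0}$ to $\sigma$ amounts to lifting a real map through the principal $K$-fibration $G\to G/K$. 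At $x=0$, this lift exists unconditionally because the real locus $\bP^1_\bbR\setminus\{0\}$ is contractible; at $x\in i\bbR^\times$, the real locus is $S^1\is\bP^1_\bbR$, and the lifting obstruction in $H^1(S^1,K_c)\is\pi_0(K_c)$ vanishes precisely when $\mE$ lies in the distinguished subcomponent $\Bun_\bG(\bP^1)_{\bbR,0}\subset\Bun_\bG(\bP^1)_{\bbR,\alpha_0}$. Granted such a lift $g$, replacing $\phi_0$ by $g^{-1}\phi_0$ produces a real trivialization with associated reduction equal to $\sigma$, establishing surjectivity; full faithfulness on automorphism groups is then automatic from the torsor structure of~\eqref{iso for QM}.

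The main obstacle is to match the $\pi_0(K_c)$ obstruction cleanly with the subcomponent decomposition $\Bun_\bG(\bP^1)_{\bbR,0}\subset\Bun_\bG(\bP^1)_{\bbR,\alpha_0}$. This is carried out by tracing through Lemma~\ref{Gr^0_R} together with the identification~\eqref{components of real Gr} of $\pi_0(\Gr_\bbR)$ with $\pi_1(X)$, which exhibits $\Bun_\bG(\bP^1)_{\bbR,0}$ as picking out exactly those components whose image in $\pi_1(X)$ lies in $\pi_*(\pi_1(G))$—precisely the homotopy-theoretic condition that ensures a real section of the symmetric-space bundle lifts to a real section of the total group bundle along the real circle.
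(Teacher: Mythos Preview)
Your expansion of the paper's one-line proof—first trivialize $\mE$ via Propositions~\ref{uniformizations at real x} and~\ref{uniformizations at complex x}, then adjust the trivialization to match the given section $\sigma$—correctly isolates a second step that the paper leaves entirely implicit.

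However, the lifting argument you supply for that second step is topological where it needs to be algebraic. At $x=0$ you assert the lift exists because the real locus $\bP^1(\bbR)\setminus\{0\}\simeq\bbR$ is contractible; but what is required is a morphism of real varieties lifting the real map $s:\bP^1\setminus\{0\}\to G/K$ through $G$, i.e., a section of the real algebraic $K$-torsor $s^*G$. By \cite{MS} such torsors over $\bbA^1_\bbR$ are pulled back from $\Spec\bbR$, so the obstruction is the class of the fiber $s^*G|_\infty$ in $H^1(\Gal(\bC/\bbR),K)$—equivalently, whether $s(\infty)\in(G/K)(\bbR)$ lies in the $G_\bbR$-orbit of the basepoint. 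This can genuinely fail: for $G_\bbR=\SL_2(\bbR)$, $K_c=\SO(2)$, the real locus $(G/K)(\bbR)$ consists of real symmetric matrices in $\SL_2$ and has two $G_\bbR$-orbits (positive and negative definite), so the constant section $\sigma\equiv -I$ produces a real quasi-map over the trivial bundle with no preimage in $\Gr_\bbR$. Contractibility of the base is irrelevant when the real fibers of the torsor are empty to begin with. The same critique applies at $x\in i\bbR^\times$: your obstruction $\pi_0(K_c)$ is the topological one for principal $K_c$-bundles over $S^1$, and matching it with the component condition via $\pi_1(X)/\pi_*(\pi_1(G))$ does not control the algebraic Galois-cohomological obstruction. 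The paper's own proof does not address this point either; its validity presumably rests on how the real analytic ind-stack $\QM^{(2)}(\bP^1,G,K)_\bbR$ is produced from a specific $\bbR$-surjective presentation in the sense of Appendix~\ref{Real stacks}, rather than as the naive groupoid of all conjugation-fixed quasi-maps.
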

\begin{proof}
This follows from Proposition \ref{uniformizations at complex x} (resp. Proposition \ref{uniformizations at real x}) that every 
real bundle $\mE$ in $\Bun_\bbG(\bbP^1)_{\bbR,0}$ (resp. $\Bun_\bbG(\bbP^1)_{\bbR,\alpha_0}$)
admits a complex uniformization (resp. a real uniformization).

\end{proof}

Recall the open family $\Gr_\bbR^{(2),0}\to i\bbR$ and the family of uniformizations 
$LG_\bbR^{(2)}\backslash\Gr_\bbR^{(2),0}\is\Bun_\bbG(\bP^1)_{\bbR,0}\times i\bbR$
in Proposition \ref{an open family}. The above lemma implies the following.

\begin{proposition}\label{open sub families of quasi maps}
The natural map $\Gr_\bbR^{(2),0}\to QM^{(2)}(\bP^1,G,K)_{\bbR}$ induces an isomorphism 
\[LK_\bbR^{(2)}\backslash\Gr_\bbR^{(2),0}\is QM^{(2)}(\bP^1,G,K)_{\bbR,0}|_{i\bbR}\]
and we have the following commutative diagram 
\[\xymatrix{LK_\bbR^{(2)}\backslash\Gr_\bbR^{(2),0}\ar[d]\ar[r]&QM^{(2)}(\bP^1,G,K)_{\bbR,0}|_{i\bbR}\ar[d]\\
LG_\bbR^{(2)}\backslash\Gr_\bbR^{(2),0}\ar[r]&\Bun_\bbG(\bP^1)_{\bbR,0}\times i\bbR}.\]
where the vertical maps are the natural quotient and projection maps.
In addition, there are canonical isomorphisms 
\[LK_c\backslash\Gr\times i\bbR^\times\is QM^{(2)}(\bP^1,G,K)_{\bbR,0}|_{i\bbR^\times}\]
\[K_c\backslash\Gr^0_\bbR\is QM^{(2)}(\bbP^1,0,G,K)_{\bbR,0}\]
\end{proposition}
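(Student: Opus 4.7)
The plan is to reduce to the fiberwise statements already established in Lemma \ref{uni for real quasi-maps}, and to propagate them across the family over $i\bbR$ using the $LK^{(2)}_\bbR$-torsor structure together with the open-family uniformization of Proposition \ref{an open family}.

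First I would dispose of the last two canonical isomorphisms, which are essentially product-form restatements of Lemma \ref{uni for real quasi-maps}. For $x\in i\bbR^\times$, the factorizations $\Gr^{(2)}_\bbR|_{i\bbR^\times}\is\Gr\times i\bbR^\times$ and $LK^{(2)}_\bbR|_{i\bbR^\times}\is LK_c\times i\bbR^\times$ from Lemmas \ref{real forms of various loop groups} and \ref{based loops for K_c} combine with Lemma \ref{uni for real quasi-maps}(1) applied fiberwise to yield $LK_c\backslash\Gr\times i\bbR^\times\is QM^{(2)}(\bP^1,G,K)_{\bbR,0}|_{i\bbR^\times}$. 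For the fiber over $0$, one has $\Gr^{(2),0}_\bbR|_0=\Gr^0_\bbR$ by definition, and restricting Lemma \ref{uni for real quasi-maps}(2) to $\Gr^0_\bbR$ (whose image under the real uniformization is precisely $\Bun_\bbG(\bP^1)_{\bbR,0}$ by definition of the latter) yields $K_c\backslash\Gr^0_\bbR\is QM^{(2)}(\bP^1,0,G,K)_{\bbR,0}$.

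For the main claim, the uniformization map $\Gr^{(2)}\to QM^{(2)}(\bP^1,G,K)$ is an $LK^{(2)}$-torsor by \eqref{iso for QM}. Passing to real points and restricting to the open sub-family $\Gr^{(2),0}_\bbR$, this descends to an embedding $LK^{(2)}_\bbR\backslash\Gr^{(2),0}_\bbR\hookrightarrow QM^{(2)}(\bP^1,G,K)_\bbR$. To identify its image with $QM^{(2)}(\bP^1,G,K)_{\bbR,0}|_{i\bbR}$ and to verify that the embedding is an isomorphism, I would check both statements fiberwise over $i\bbR$: at nonzero points this is the first isomorphism of the previous paragraph, while at $0$ it is the second. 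The fiberwise check upgrades to an isomorphism of real analytic ind-stacks because the map is smooth in families, inherited from the $LK^{(2)}_\bbR$-equivariant torsor structure on the total space.

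For the commutative diagram, both vertical arrows are the natural quotient and projection morphisms induced by the inclusion $LK^{(2)}_\bbR\subset LG^{(2)}_\bbR$, and the bottom horizontal arrow is the isomorphism $LG^{(2)}_\bbR\backslash\Gr^{(2),0}_\bbR\is\Bun_\bbG(\bP^1)_{\bbR,0}\times i\bbR$ provided by Proposition \ref{an open family}; commutativity is then formal. The main technical point, rather than a serious obstacle, is ensuring that the pointwise identifications glue into a morphism of families of real analytic ind-stacks, which is guaranteed by the $LK^{(2)}_\bbR$-equivariance of the global uniformization map together with the ind-finite type of all objects in sight.
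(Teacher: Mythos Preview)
Your proposal is correct and follows essentially the same approach as the paper: the paper's proof is a one-line reference to Lemma~\ref{uni for real quasi-maps}, and you have simply unpacked that reference by spelling out the fiberwise identifications (together with the restriction from $\alpha_0$ to the sub-index $0$ at the special fiber) and invoking Proposition~\ref{an open family} for the bottom row of the diagram.
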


\subsection{Categories of sheaves on $LK_c\backslash\Gr$}
By Lemma \ref{uni for real quasi-maps}, the real analytic ind-stack
$LK_c\backslash\Gr$ is of ind-finite type and we have a well-defined 
category of sheaves on it.
Introducing  the stratification $\calS$ of $LK_c\backslash\Gr$ with strata the
$LK_c$-quotients of $K(\calK)$-orbits. 

\begin{definition}
Let $D_c(LK_c\backslash\Gr)$ 
be the bounded constructible derived category of  sheaves on $LK_c\backslash\Gr$. 
We set $D_c(K(\calK)\backslash\Gr)$ to be the full subcategory
of $D_c(LK_c\backslash\Gr)$
of complexes constructible with 
respect to the stratification $\calS$.
\end{definition}

\subsection{Rigidified quasi-maps}
Let $\QM^{(n)}(\bbP^1,G,K,\infty)$ be the ind-scheme classifies 
quadruple $(x,\mE,\phi,\iota)$ where $x\in\bC^n$,
$\mE$ is a $G$-bundle on $\bbP^1$, $\phi:\bbP^1-|x|\ra\mE\times^GX$, and 
$\iota:\mE_K|_\infty\is K$, here $\mE_K$ is the $K$-reduction of
$\mE$ on $\bbP^1-|x|$ given by $\phi$. We have a natural map 
$\QM^{(n)}(\bbP^1,G,K,\infty)\ra\bC^n$. 
The ind-scheme $\QM^{(n)}(\bbP^1,G,K,\infty)$ is called rigidified quasi-maps.
Note that we have natural map 
\[
\QM^{(n)}(\bbP^1,G,K,\infty)\to\QM^{(n)}(\bbP^1,G,K)
\]
sending $(x,\mE,\phi,\iota)$ to $(x,\mE,\phi)$
and it induces an isomorphism 
\beq\label{uni for QM(infty)}
K\backslash\QM^{(n)}(\bbP^1,G,K,\infty)\is
\QM^{(n)}(\bbP^1,G,K)|_{\bC^n}
\eeq
where the group $K$ acts on 
$\QM^{(n)}(\bbP^1,G,K,\infty)$
by changing the trivialization $\iota$.

The twisted conjugation 
on $(x_1,x_2)\ra (\bar x_2,\bar x_1)$ together with the involution 
$\eta$ on $G$ defines a real form 
$\QM^{(2)}(\bbP^1,G,K,\infty)_\bbR$ of $\QM^{(2)}(\bbP^1,G,K,\infty)$. 
We have a natural map 
$\QM^{(2)}(\bbP^1,G,K,\infty)_\bbR\ra
\Bun_\bbG(\bP^1)_\bbR$ and we denote by 
$\QM^{(2)}(\bbP^1,G,K,\infty)_{\bbR,0}$ the pre-image of the components
$\Bun_\bbG(\bP^1)_{\bbR,0}$. 
The isomorphism (\ref{uni for QM(infty)}) induces  an embedding 
\[K_c\backslash\QM^{(2)}(\bbP^1,G,K,\infty)_{\bbR}\to
\QM^{(2)}(\bbP^1,G,K)_{\bbR}|_{\bC}
.\]
It follows from Proposition \ref{open sub families of quasi maps} that the above embedding restricts to an isomorphism 
\beq\label{QM=QM(infty)/K}
K_c\backslash\QM^{(2)}(\bbP^1,G,K,\infty)_{\bbR,0}\is
\QM^{(2)}(\bbP^1,G,K)_{\bbR,0}|_\bC
\eeq
and there are canonical isomorphisms  
\beq\label{factorization}
\Omega K_\bbR^{(2)}\backslash\Gr_\bbR^{(2),0}
\is\QM^{(2)}(\bbP^1,G,K,\infty)_{\bbR,0}|_{i\bbR}
\eeq
\[\Omega K_c\backslash\Gr\times i\bbR^\times\is QM^{(2)}(\bP^1,G,K,\infty)_{\bbR,0}|_{i\bbR^\times}\]
\[\Gr^0_\bbR\is QM^{(2)}(\bbP^1,G,K,\infty)_{\bbR,0}|_0\]

Consider the stratifications 
$\calS_1=\{\Omega K_c\backslash\mO_K^\lambda\}_{\lambda\in\mL}$ 
of $\Omega K_c\backslash\Gr^0$ and $\calS_2=\{S_\bbR^\lambda\}_{\lambda\in\mL}$
of $\Gr_\bbR^0$.
By\eqref{factorization}, the union 
$\calS^{(2)}=\calS_1\times i\bbR^\times\cup\calS_2\times\{0\}$ forms a stratification of 
$\QM^{(2)}(\bbP^1,G,X,\infty)_{\bbR,0}|_{i\bbR}$.
In section \ref{nearby cycles and Radon TF}, we will need following technical lemma, which is proved in \cite[Proposition 6.7]{CN2}.

\begin{lemma}\label{Whitney for QM_R}
The stratification $\calS^{(2)}$ above
is Whitney and the natural map
\[\QM^{(2)}(\bbP^1,G,K,\infty)_{\mathbb R,0}|_{i\bbR}\lra i\bbR\] is
a Thom stratified map. Here $i\bbR$ is equipped with the 
stratification
$i\bbR=i\bbR^\times\cup\{0\}$.
\end{lemma}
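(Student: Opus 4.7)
The plan is to check the Whitney regularity stratum by stratum and then verify the Thom $A_f$-condition for the projection $f:\QM^{(2)}(\bbP^1,G,K,\infty)_{\bbR,0}|_{i\bbR}\to i\bbR$. There are two essentially distinct regions to control: the open part over $i\bbR^\times$, where the family is a trivial product, and the behavior near the special fiber over $0$, where the degeneration from $K(\calK)$-orbits in $\Gr$ to $G_\bbR(\mO_\bbR)$-orbits in $\Gr_\bbR$ takes place. The Thom condition is automatic on the open part since $f$ is a projection onto the second factor there, and the substantive content lies along $0\in i\bbR$.

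First I would verify Whitney regularity of the two stratifications $\calS_1$ and $\calS_2$ taken separately. For $\calS_1=\{\Omega K_c\backslash\mO_K^\lambda\}_{\lambda\in\mL}$ on $\Omega K_c\backslash\Gr^0$, this follows from Theorem \ref{flow}: after dividing by $\Omega K_c$ the strata $\Omega K_c\backslash\mO_K^\lambda$ are finite-dimensional and are precisely the stable manifolds of the descended Bott--Morse function $\underline E_1$ with respect to the descended $K_c$-invariant metric $\underline g$, and stable manifolds of a Bott--Morse function form a Whitney stratification by a standard result. For $\calS_2=\{S_\bbR^\lambda\}_{\lambda\in\mL}$ on $\Gr_\bbR^0$, the $S_\bbR^\lambda$ are real affine bundles over the real flag manifolds $G_\bbR/P_\bbR^\lambda$ (Section \ref{real affine Gr}), and their Whitney regularity is classical.

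The main step is to check the Whitney $(a)$ and $(b)$ conditions, together with Thom's $A_f$, for a pair $(\Omega K_c\backslash\mO_K^\lambda\times i\bbR^\times, S_\bbR^\mu\times\{0\})$. For this I would exploit the $\bbR_{>0}$-action $\psi_z^1$ on $\Gr_\bbR^{(2)}$ from \eqref{flow on Gr_R^2}: by Lemma \ref{flows on Gr^(2)}, its critical manifolds are the cores $C_\bbR^\lambda\subset\Gr_\bbR$, its stable manifolds are the $S_\bbR^\lambda$, and its unstable manifolds $\tilde T_\bbR^\lambda$ restrict to $\mO_\bbR^\lambda$ over $x\in i\bbR^\times$. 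Passing to the quotient by $\Omega K_\bbR^{(2)}$ as in \eqref{factorization}, the flow $\psi_z^1$ descends to a real analytic $\bbR_{>0}$-action on $\QM^{(2)}(\bbP^1,G,K,\infty)_{\bbR,0}|_{i\bbR}$ that covers the multiplication action of $\bbR_{>0}$ on $i\bbR$ and that contracts a neighborhood of each $S_\bbR^\mu\times\{0\}$ onto a neighborhood of the corresponding core inside the special fiber. Combining this contraction with a transverse slice through $S_\bbR^\mu\times\{0\}$ built from the Morse-theoretic unstable directions provides a local product decomposition of the total space in a neighborhood of $S_\bbR^\mu\times\{0\}$ in which the stratum $\Omega K_c\backslash\mO_K^\lambda\times i\bbR^\times$ becomes a product of $S_\bbR^\mu$ with a model for the generic stratum in the transverse slice and the $i\bbR$-coordinate. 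Whitney $(a)$, $(b)$ and Thom $A_f$ then reduce to the corresponding conditions on this finite-dimensional transverse model, which are verified by inspection using the Matsuki flow picture of Theorem \ref{flow} together with Lemma \ref{flows on Gr^(2)}.

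The hard part, and the reason the full argument is deferred to \cite[Proposition 6.7]{CN2}, is making the transverse slice and the local product structure real analytic in families as $x\in i\bbR$ varies through $0$: one must simultaneously accommodate the degeneration of $\Omega K_c$ (which collapses to a point in the special fiber by Lemma \ref{based loops for K_c}) and of the loop group orbits. The key technical input is the construction in \cite{CN2} of a $K_c$-equivariant topological trivialization of the quasi-map family, which supplies precisely the local product model needed; granting this, the Whitney and Thom conditions fall out from the finite-dimensional Morse-theoretic stratifications on the trivialized model.
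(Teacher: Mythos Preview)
The paper does not prove this lemma at all: it simply cites \cite[Proposition 6.7]{CN2} and adds a remark that the cited result actually establishes the stronger statement of a $K_c$-equivariant topological trivialization of the quasi-map family. So there is no ``paper's own proof'' to compare against beyond the citation.

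Your proposal goes further than the paper by sketching the architecture of an argument, and you correctly identify that the genuine content---the local product model near the special fiber---is exactly what is supplied by the trivialization in \cite{CN2}. In that sense your outline and the paper agree on where the difficulty lies and what resolves it.

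Two cautions about the sketch itself. First, the claim that stable manifolds of a Bott--Morse function automatically form a Whitney stratification is not a standard result without further hypotheses; one typically needs something like Morse--Smale transversality or an explicit inductive argument, so this step for $\calS_1$ would need justification. Second, the passage ``combining this contraction with a transverse slice \ldots\ provides a local product decomposition'' is precisely where all the analytic work hides: producing such a slice that is simultaneously compatible with the $\bbR_{>0}$-flow, the degenerating $\Omega K_c$-quotient, and the stratifications on both fibers is nontrivial, and is effectively what the trivialization in \cite{CN2} accomplishes. Your honest acknowledgment of this is appropriate; just be aware that the earlier steps of the sketch do not reduce the problem to something routine---the cited input is doing essentially all of the work.
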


\begin{remark}
In fact, in \emph{loc. cit.}, we show that the quasi-maps family 
above 
admits a $K_c$-equivariant topological trivialization.
\end{remark}
\quash{
The natural map 
$\Gr^{(2)}|_{\bC^2}\to\QM^{(2)}(\bP^1,G,,\infty)$
induces an open embedding 
\[p^{(2)}_\bbR:(\Omega K^{(2)}_\bbR\backslash\Gr^{(2)}_{\bbR})|_\bC\lra 
\QM^{(2)}(\bP^1,G,X)_\bbR.\]
By Lemma \ref{based loops for K_c}, 
we have natural isomorphisms \[\Omega K^{(2)}_{\bbR}\backslash\Gr^{(2)}_{\bbR}|_i\is
\Omega K_{c}\backslash\Gr,\ \ \Omega K^{(2)}_{\bbR}\backslash\Gr^{(2)}_{\bbR}|_0\is
\Gr_\bbR
\] and the morphism above restricts to open embeddings 
\[p_i:\Omega K_{c}\backslash\Gr\to\QM^{(2)}(i,G,X,\infty)_\bbR,\]
\[p_0:\Gr_\bbR\to\QM^{(2)}(0,G,X,\infty)_\bbR.\]
}

\subsection{Flows on quasi-maps}\label{flows}
For each $z\in\bC^\times$ we have the following flow
\beq\psi_z:\QM^{(2)}(\bbP^1,G,K,\infty)\ra
\QM^{(2)}(\bbP^1,G,K,\infty),\ 
(x,\mE,\psi,\iota)\ra (a_z(x),(a_{z^{-1}})^*\mE,(a_{z^{-1}})^*\psi,\iota).
\eeq
\quash{
We have the following commutative diagrams 
\[\xymatrix{\QM^{(2)}(\bbP^1,G,K)\ar[r]^{\psi_z}\ar[d]&\QM^{(2)}(\bbP^1,G,K)\ar[d]
\\(\bbP^1)^2\ar[r]^{a_z}&(\bbP^1)^2}\ \ \ \ \ \ 
\xymatrix{\QM^{(2)}(\bbP^1,G,K,\infty)\ar[r]^{\psi_z}\ar[d]&\QM^{(2)}(\bbP^1,G,K,\infty)\ar[d]
\\\bC^2\ar[r]^{a_z}&\bC^2}.\]
}
For $z\in\mathbb R_{>0}$ the flow $\phi_z$ restricts to a flow
\[\psi^3_z:\QM^{(2)}(\bbP^1,G,K,\infty)_\bbR\ra\QM^{(2)}(\bbP^1,G,K,\infty)_\bbR,\]
and we have the following commutative diagrams 
\[\xymatrix{\QM^{(2)}(\bbP^1,G,K,\infty)_\bbR\ar[r]^{\psi_z^3}\ar[d]&\QM^{(2)}(\bbP^1,G,K,\infty)_\bbR\ar[d]
\\\bbP^1\ar[r]^{a_z}&\bbP^1}\]

\begin{lemma}\label{properties of flows}
We have the following properties of the flows:
\begin{enumerate}
\item The flow $\psi_z$ on $\QM^{(2)}(\bbP^1,G,K,\infty)_\bbR$ is 
$K_c$-equivariant.
\item
Recall the flow 
$\psi_z^1$ on $\Gr^{(2)}_\bbR$~\eqref{flow on Gr_R^2}.
We have the following commutative diagram
\beq\label{compatibility with flows}
\xymatrix{\Gr^{(2)}_\bbR|_\bC\ar[r]^{\psi_z^1}\ar[d]&\Gr^{(2)}_\bbR|_\bC\ar[d]\\
\QM^{(2)}(\bbP^1,G,K,\infty)_\bbR\ar[r]^{\psi_z^3}&\QM^{(2)}(\bbP^1,G,K,\infty)_\bbR}.
\eeq

\item For each $\lambda\in\Lambda_S^+$, the core $C_\bbR^\lambda\subset \Gr_\bbR
\subset\QM^{(2)}(\bP^1,G,K,\infty)_\bbR|_0$ is a union of components of 
the critical manifold of the flow $\psi_z^3$ on $\QM^{(2)}(\bbP^1,G,K,\infty)_\bbR$ 
and the stable manifold for $C_\bbR^\lambda$ is the 
strata $S_\bbR^\lambda\subset\Gr_\bbR$. 
\item For each $\lambda\in\Lambda_S^+$, we denote by 
\[\tilde T_\bbR^\lambda=\{x\in\QM^{(2)}(\bbP^1,G,K,\infty)_\bbR|
\underset{z\ra 0}\lim\psi_z^3(x)\in C_\bbR^\lambda\}\]
the corresponding unstable manifold. 
We have $\tilde T_\bbR^\lambda|_0\is T_\bbR^\lambda\subset\Gr_\bbR$ for 
$\lambda\in\Lambda_S^+$. The open embedding 
$\Omega K_c\backslash\Gr\to\QM^{(2)}(\bP^1,G,K,\infty)_\bbR|_i$ restricts to an isomorphism 
\[\Omega K_c\backslash\mO_\bbR^\lambda\is\tilde T_\bbR^\lambda|_i\]
for $\lambda\in\mL$.

\end{enumerate}
\end{lemma}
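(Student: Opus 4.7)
The strategy is to reduce all four statements to the corresponding statements on $\Gr^{(2)}_\bbR$ established in Lemma \ref{flows on Gr^(2)}, together with the uniformization/factorization isomorphisms of Proposition \ref{open sub families of quasi maps} and equation \eqref{factorization}. Parts (1) and (2) are structural compatibilities, and parts (3) and (4) are consequences of transporting the critical and (un)stable loci along these compatibilities.

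For part (1), recall that $K_c$ acts on $\QM^{(2)}(\bbP^1,G,K,\infty)_\bbR$ only by altering the rigidification $\iota$ at $\infty$, whereas $\psi_z^3$ acts by pullback of $(\mE,\psi)$ along the rescaling $a_{z^{-1}}:\bbP^1\to\bbP^1$, leaving $\iota$ untouched. Since $a_z$ fixes $\infty$ and the two operations modify disjoint data, they commute. For part (2), both flows are defined via $(a_{z^{-1}})^*$ on the underlying bundle with section data, so the vertical map of \eqref{compatibility with flows} (which forgets the full trivialization $\phi$ in favor of its image modulo $K$, retaining only the induced $\iota$) manifestly intertwines $\psi_z^1$ and $\psi_z^3$.

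For part (3), first observe that a point in $\QM^{(2)}(\bbP^1,G,K,\infty)_\bbR$ is fixed by $\psi_z^3$ for all $z\in \bbR_{>0}$ only if its underlying point of $\bbP^1$ is fixed by every $a_z$, i.e., lies in $\{0,\infty\}$; by our choice of coordinates this forces the critical locus to lie in the fiber over $0$, which is $\Gr_\bbR\subset \QM^{(2)}(\bbP^1,G,K,\infty)_\bbR|_0$. Under the factorization $\Gr_\bbR^0\is\QM^{(2)}(\bbP^1,G,K,\infty)_{\bbR,0}|_0$ (and its evident extension covering the other components $\alpha$), the flow $\psi_z^3$ on this fiber pulls back to $\psi_z^1|_{\Gr_\bbR}$ by part (2). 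Applying Lemma \ref{flows on Gr^(2)}(1) then identifies the critical components in $\Gr_\bbR$ with the cores $C_\bbR^\lambda$ and their stable manifolds with $S_\bbR^\lambda$.

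For part (4), the identification $\tilde T_\bbR^\lambda|_0\is T_\bbR^\lambda$ is immediate from Lemma \ref{flows on Gr^(2)}(2) applied in the fiber over $0$, using that the flows $\psi_z^1$ and $\psi_z^3$ coincide there by part (2). For the fiber over $i$, use the isomorphism $\Omega K_c\backslash\Gr\times i\bbR^\times\is\QM^{(2)}(\bbP^1,G,K,\infty)_{\bbR,0}|_{i\bbR^\times}$ from \eqref{factorization}: the flow $\psi_z^3$ corresponds under this isomorphism to the flow on $\Omega K_c\backslash\Gr$ induced from $\psi_z^1$ on $\Gr^{(2)}_\bbR|_{i\bbR^\times}\is\Gr\times i\bbR^\times$. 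By Lemma \ref{flows on Gr^(2)}(2) the limits $\lim_{z\to 0}\psi_z^1(\gamma)$ land in $C_\bbR^\lambda$ precisely for $\gamma\in\mO_\bbR^\lambda$ (with $\lambda\in\mL$), so passing to the $\Omega K_c$-quotient yields $\tilde T_\bbR^\lambda|_i\is\Omega K_c\backslash\mO_\bbR^\lambda$. The main subtlety to handle is simply bookkeeping: the embedding $\Omega K_c\backslash\Gr\hookrightarrow\QM^{(2)}(\bbP^1,i,G,K,\infty)_\bbR$ is only open (the isomorphism is onto the $\alpha_0,0$-component), so one must verify that the flow trajectories entering the critical locus remain inside this open subspace, which follows from Proposition \ref{open sub families of quasi maps} and the invariance of $\Bun_\bG(\bbP^1)_{\bbR,0}$ under $\psi_z^2$.
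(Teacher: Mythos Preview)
Your proposal is correct and follows essentially the same approach as the paper. The paper's proof is extremely terse---it simply says parts (1) and (2) follow from the construction of the flows, and parts (3) and (4) follow from Lemma~\ref{flows on Gr^(2)} together with the commutative diagram~\eqref{compatibility with flows}---while you have supplied the mechanics that the paper leaves implicit (the disjointness of the $K_c$-action and the pullback in (1), the reduction of the critical locus to the fiber over $0$ in (3), and the bookkeeping about components in (4)).
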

\begin{proof}
Part (1) and (2) follows from the construction of the flows. 
Part (3) and (4) 
follows from Lemma \ref{flows on Gr^(2)} and diagram (\ref{compatibility with flows}).

\end{proof}

\quash{
\subsection{Stratifications of quasi-maps}\label{St of quasi-maps}
For any partition $p(n)$ of $\{1,...,n\}$ and a map $\Theta:p(n)\ra
X(\calK)/G(\mO)$, we say that a quasi map 
$(x,\mE,\sigma)\in\QM^{(n)}(\Sigma,G,X)$ is of type $(p(n),\Theta)$
if the followings hold. First, the coincidences among 
$|x|$ are given by the partition $p(n)$. Second, 
the restriction to the formal neighborhood of 
$x_i\in\bP^1$ provides a map 
 \beq\label{ev}
 \xymatrix{
 ev_{x_i}:\QM^{(n)}(\Sigma,G,X) \ar[r] &  X(\calK_{})/G(\mO_{})\is\Lambda_S^+
 }
\eeq
and we required $ev_{x_i}((x,\mE,\sigma))=\Theta(i)$.
We define the local stratum 
$\QM^{p(n),\Theta}(\Sigma,G,X)\subset \QM^{(n)}(\Sigma,G,X)$
to consist of those quasi maps of type $(p(n),\Theta)$.
For any $p(n)$ we define the local stratum 
$\Sigma^{p(n)}:=\{(x_1,...,x_n)|x_i=x_j\text{\ if\ } i,j\text{\ are in the same part of\ } p(n)\}$.
Let $\calS^{(n)}$ (resp. $\calV^{(n)}$) be the stratification of 
$\QM^{(n)}(\Sigma,G,X)$ (resp. $\Sigma^n$) forms by the 
local stratum $\QM^{p(n),\Theta}(\Sigma,G,X)$ (resp. $\Sigma^{p(n)}$). 
For any $x\in\Sigma^{p(n)}$, let
$\calS^{(n)}_x$ be the stratification of 
$\QM^{(n)}(x,G,X)$ forms by the stratum
$\QM^{p(n),\Theta}(x,G,X)=\QM^{p(n),\Theta}(\Sigma,G,X)\times_{\Sigma^{n}}\{x\}$.

\quash{
\begin{proposition}\label{Whitney}
The stratifications $\calS^{(n)}$ and $\calV^{(n)}$ 
of $\QM^{(n)}_G(\Sigma,X)$ and $\Sigma^{n}$
are
Whitney. The natural projection map 
$\QM^{(n)}_G(\Sigma,X)\ra\Sigma^n$ is a Thom 
stratified map.
\end{proposition}}

\subsection{Moduli interpretations for $K(\calK)$-orbits}
Recall the stratum 
$\QM^{p(2),\Theta}(\bP^1,G,X)$ and stratification 
$\calS^{(2)}=\{\QM^{p(2),\Theta}(\bP^1,G,X)\}$ of $\QM^{(2)}(\bP^1,G,X)$
in Section \ref{St of quasi-maps}.
In the case $p(2)=\{1\}\cup\{2\}$ (resp. $p(2)=\{1,2\}$), we let 
$\lambda_k=\Theta(\{k\})\in\Lambda_S^+$ (resp. $\lambda=\Theta(\{1,2\})\in\Lambda_S^+$) and write 
$\QM^{\lambda_1,\lambda_2}(\bP^1,G,X)$ (resp. $\QM^{\lambda}(\bP^1,G,X)$) for $\QM^{p(2),\Theta}(x,G,X)$. 
The complex conjugation on $\QM^{(2)}(\bP^1,G,X)$ 
preserves the stratum $\QM^{\lambda}(\bbP^1,G,X)$ and
maps the stratum $\QM^{\lambda_1,\lambda_2}(\bbP^1,G,X)$ to 
$\QM^{\lambda_2,\lambda_1}(\bbP^1,G,X)$. In particular, the 
complex conjugation preserves the 
stratification $\calS^{(2)}$ 
of $\QM(\bbP^1,G,X)$. We denote by 
$\calS^{(2)}_\bbR$ the stratification of 
$\QM^{(2)}(\bP^1,G,X)_\bbR$ forms by the fixed 
points of the strata.  
For any $x\in\bP^1$ we denote by 
$\calS^{(2)}_{\bbR,x}$ 
the stratification of $\QM^{(2)}(x,G,X)_\bbR$ 
forms by the 
intersection.
Let
\[ev_{x}^\bbR:\QM^{(2)}(x,G,X)_\bbR\to
\QM^{(2)}(\bP^1,G,X)\stackrel{ev_{x}}\to
 \Lambda_S^+\] 
where the first arrow is the natural map and $ev_{x}$ is the evaluation map in (\ref{ev}).
Define 
\beq\label{local stratum at x}
\QM^{\lambda}(x,G,X)_\bbR:=(ev_{x}^\bbR)^{-1}(\lambda).
\eeq
Then the stratification 
$\calS^{(2)}_{\bbR,x}$ of $\QM^{(2)}(x,G,X)_\bbR$ is given by 
\[\calS^{(2)}_{\bbR,x}=\{\QM^{\lambda}(x,G,X)_\bbR\}_{\lambda\in\Lambda_S^+}.\]

We shall study the relation 
between $\calS^{(2)}_{\bbR,x}$ and the
$K(\calK)$-orbits stratification on $\Gr$ and $G_\bbR(\mO_\bbR)$-orbits stratification on 
$\Gr_\bbR$. We begin with the following lemma:

\begin{lemma}\label{ev for Gr}
We have the following:
\begin{enumerate}
\item We have the following commutative diagram
\[\xymatrix{
LK_c\backslash \Gr\ar[r]^{q_i\ \ \ }\ar[d] & \QM^{(2)}(i,G,X)_\bbR\ar[d]^{ev_i^\bbR}\\
|K(\calK)\backslash\Gr|\is\mL\ar[r]&|X(\calK)/G(O)|\is\Lambda_S^+}.\]
Here 
the vertical left arrow is the natural quotient map, and the 
lower horizontal arrow is the natural inclusion map.
\item  
We have the following commutative diagram
\[\xymatrix{K_c\backslash \Gr_\bbR\ar[r]^{q_0\ \ }\ar[d] & \QM^{(2)}(0,G, X)_\bbR\ar[d]^{ev_0^\bbR}\\
|K_c(\calK_\bbR)\backslash\Gr_\bbR|\ar[r]&|X(\calK)/G(O)|\is\Lambda_S^+}.\]
Here 
the vertical left arrow is the natural quotient map, and the 
lower horizontal arrow is the natural inclusion map.
\
\item
The $K_\bbR(\calK_\bbR)$-orbits in $\Gr_\bbR$
coincide with 
$G_\bbR(\mO_\bbR)$-orbits. Moreover,
under the 
identifications 
$|K_\bbR(\calK_\bbR)\backslash\Gr_\bbR|\is |G_\bbR(\mO_\bbR)\backslash\Gr_\bbR|\is\Lambda_S^+$, $|X(\calK)/G(O)|\is\Lambda_S^+$, the inclusion map
\[|K_c(\calK_\bbR)\backslash\Gr_\bbR|\ra |X(\calK)/G(O)|\]
in (2)
becomes the identity map 
$\on{id}_{\Lambda_S^+}$.

\end{enumerate}
\end{lemma}
\begin{proof}
Part (1) and (2) follows from the definition and part (3) is proved in \cite[Lemma 9.1]{N2}.
\end{proof}

Recall the stratum $QM^{\lambda}(x,G,X)_{\bbR}$
in (\ref{local stratum at x}).
Define $QM^{\lambda}(x,G,X)_{\bbR,\alpha_0}$ (resp. 
$QM^{\lambda}(x,G,X)_{\bbR,0}$ be the pre-image of 
$\Bun_\bbG(\bP^1)_{\bbR,\alpha_0}$ (resp. $\Bun_\bbG(\bP^1)_{\bbR,0}$)
along the natural projection map $QM^{\lambda}(x,G,X)_{\bbR}\to\Bun_\bbG(\bP^1)_{\bbR}$.
The following proposition follows from 
Proposition \ref{parametrization of K(K)-orbits} and 
Lemma \ref{ev for Gr}:
\begin{prop}\label{moduli int for orbits}
We have the following:
\begin{enumerate}
\item
The collection $\{\QM^{\lambda}(i,G,X)_{\bbR,0}\}_{\lambda\in\mL}$
forms a stratification of 
$\QM^{(2)}(i,G,X)_{\bbR,0}$ and the 
isomorphism 
$q_i:LK_c\backslash\Gr\is\QM^{(2)}(i,G,X)_{\bbR,0}$ induces
a stratified isomorphism 
\[(LK_c\backslash\Gr,\{LK_c\backslash\mO_K^\lambda\}_{\lambda\in\mL})\is(\QM^{(2)}(i,G,X)_{\bbR,0},\{\QM^{\lambda}(i,G,X)_{\bbR,0}\}_{\lambda\in\mL}).\]

\item
The collection $\{\QM^{\lambda}(0,G,X)_{\bbR,\alpha_0}\}_{\lambda\in\Lambda_S^+}$
forms a stratification of 
$\QM^{(2)}(0,G,X)_{\bbR,\alpha_0}$ and the 
isomorphism 
$q_0:LK_c\backslash\Gr\is\QM^{(2)}(i,G,X)_{\bbR,\alpha_0}$ induces
a stratified isomorphism 
\[(K_c\backslash\Gr_\bbR,\{K_c\backslash S_\bbR^\lambda\}_{\lambda\in\Lambda_S^+})\is(\QM^{(2)}(0,G,X)_{\bbR,\alpha_0},\{\QM^{\lambda}(0,G,X)_{\bbR,\alpha_0}\}_{\lambda\in\Lambda_S^+}).\]

\end{enumerate}
\end{prop}

}

\quash{
\subsection{Some auxiliary stacks}
We introduce some auxiliary ind-stacks that will be used later in the paper.
Let $\QM^{(2)}_G(\bbP^1,X,\infty)$ be the ind-stack classifies 
quadruple $(x,\mE,\phi,\iota)$ where $x=(x_1,x_2)\in\bC^2$,
$\mE$ is a $G$-bundle on $\bbP^1$, $\phi:\bbP^1-|x|\ra\mE\times^KX$, and 
$\iota:\mE_K|_\infty\is K$, here $\mE_K$ is the $K$-reduction of
$\mE$ on $\bbP^1-|x|$ given by $\phi$. We have a natural map 
$\QM^{(2)}_G(\bbP^1,X,\infty)\ra\bC^2$. The twisted conjugation 
on $(x_1,x_2)\ra (\bar x_2,\bar x_1)$ together with the involution 
$\eta$ on $G$ defines a real form 
$\QM^{(2)}_G(\bbP^1,X,\infty)_\bbR$ of $\QM^{(2)}_G(\bbP^1,X,\infty)$. Note the natural map $\QM^{(2)}_G(\bbP^1,X,\infty)_\bbR\ra\bC$.
The group $K$ (resp. $K_c$) acts naturally on 
$\QM^{(2)}_G(\bbP^1,X,\infty)$ (resp. $\QM^{(2)}_G(\bbP^1,X,\infty)_\bbR$)
by changing the trivialization $\iota$ and we have natural isomorphisms
\beq\label{QM=QM(infty)/K}
\QM^{(2)}_G(\bbP^1,X)\is K\backslash\QM^{(2)}_G(\bbP^1,X,\infty),\ \ 
\QM^{(2)}_G(\bbP^1,X)_\bbR\is
K_c\backslash\QM^{(2)}_G(\bbP^1,X,\infty)_\bbR.
\eeq

The pre-image of the stratification
$\calS$ (resp. $\calS_\bbR$) along the quotient map
$\QM^{(2)}_G(\Sigma,X,\infty)\ra\QM^{(2)}_G(\Sigma,X)$ (resp.
$\QM^{(2)}_G(\Sigma,X,\infty)_\bbR\ra\QM^{(2)}_G(\Sigma,X)_\bbR$) 
defines a stratification $\calS'$ (resp. $\calS_\bbR'$) of 
$\QM^{(2)}_G(\Sigma,X,\infty)$ (resp. $\QM^{(2)}_G(\Sigma,X,\infty)_\bbR$).

\begin{proposition}\label{Whitney for QM_R}
The stratification $\calS'$ (resp. $\calS'_\bbR$)
of $\QM^{(2)}_G(\bbP^1,X,\infty)$ (resp. the real form 
$\QM^{(2)}_G(\bbP^1,X,\infty)_\mathbb R$)
is Whitney. The maps 
$\QM^{(2)}_G(\bbP^1,X,\infty)\ra\bC^2$ 
and $\QM^{(2)}_G(\bbP^1,X,\infty)_\mathbb R\ra \bC$ are
Thom stratified maps, where $\bC^2$ and $\bC$ are equipped with the 
stratifications $\bC^2=(\bC^2-\Delta\bC)\cup\Delta\bC$ 
and $\bC=(\bC-\bbR)\cup\bbR$.
\end{proposition}

\quash{
\begin{lemma}
It follows from proposition \ref{Whitney} and the fact that if 
a Thom stratified map is equivariant under an action of a compact group $H$, then 
the induced map on the $H$-fixed points is again a Thom stratified map
(see \cite[Lemma 4.5.1]{N1}).
\end{lemma}}

\subsection{Complex groups}\label{complex}
The complex conjugations on $\Gr^{(2)}$ and $\Omega G_c^{(2)}$
induce an involution 
on the quotient $\Omega G_c^{(2)}\backslash\Gr^{(2)}$ denoted by $\eta^{(2)}$.
}

\quash{
\section{Stratifications}

\subsection{A stratified family}
Consider the left copy embedding $\iota:\Gr^{(1)}\ra\Gr^{(2)}$ sending 
$(x,\mE,\phi)$ to $(x,-x,\mE,\phi|_{\bP^1-\pm x})$. 
The composition 
$\Gr^{(1)}\to\Gr^{(2)}\to\Omega G_c^{(2)}\backslash\Gr^{(2)}$ is
an isomorphism of ind-varieties
\[\Gr^{(1)}\is \Omega G_c^{(2)}\backslash\Gr^{(2)}\]
and the involution $\eta^{(2)}$ 
on $\Omega G_c^{(2)}\backslash\Gr^{(2)}$ in section \ref{complex}. 
induces an involution on $\Gr^{(1)}$ denoted by $\eta^{(1)}$.
For $x=0$, the involution $\eta^{(1)}$ restricts to the complex conjugation 
on $\Gr\is\Gr^{(1)}|_0$. 
Let $x=ir\neq 0$.
We shall give a description of the involution $\eta^{(1)}$ on
$\Gr^{(1)}|_{x}$. 
Consider the 
coordinate $t$ of $\mathbb P^1$ sending 
$\infty$ to $1$, $x$ to $0$, and $-x$ to $\infty$ (i.e., $t=\frac{z-x}{z+x}$). 
It induces isomorphisms 
\[
\Gr^{(2)}|_x\is G[t,t^{-1}]/G[t]\times G[t,t^{-1}]/G[t^{-1}],\ \ 
\Omega G_c^{(2)}|_i\is\Omega G_c\subset G[t,t^{-1}].\] Moreover, under the 
isomorphisms above the involution 
$\eta^{(2)}$ and 
the action of $\gamma\in\Omega G_c^{(2)}|_x\is\Omega G_c$ on $\Gr^{(2)}|_i$
are given by the formulas 
$\eta^{(2)}(g_1,g_2)=(\eta^\tau(g_2),\eta^\tau(g_1))$ 
and $\gamma(g_1,g_2)=(\gamma g_1,\gamma g_2)$.
From this, we see that under the 
homeomorphism $\Gr^{(1)}|_x\is\Omega G_c\subset G[t,t^{-1}]$ (induced by the above chosen coordinate)  
the involution  
$\eta^{(1)}$ is given by the formula 
$\gamma\to\tilde\eta^\tau(\gamma)(=\eta(\gamma(\bar t^{-1}))^{-1}),\ \gamma\in\Omega G_c$. 
We define $\calX:=(\Gr^{(1)})^{\eta^{(1)}}$ to be the fixed point of $\eta^{(1)}$ on $\Gr^{(1)}$ and we write 
\[q:\calX\ra i\bbR\]
for the restriction of the projection map $\Gr^{(1)}\to i\bbR$ to $\calX$.  
From the above discussion we see that the special fiber $\calX_0:=q^{-1}(0)$ is isomorphic to 
\[\calX_0\is\Gr_\bbR\text{\ \ (as real varieties)}\]
and the generic fiber $\calX_x:=q^{-1}(x)$ is homeomorphic to 
\beq\label{X_x}
\calX_x\is\Omega X_c=(\Omega G_c)^{\tilde\eta^\tau}=(\Omega G_c)^{\tilde\theta}.
\eeq
 Recall the $G(\mO)^{(1)}$-orbtis stratification $\{S^{\lambda,(1)}\}_{\lambda\in\Lambda_T^+}$ and $L^-G^{(1)}$-orbits 
stratification $\{T^{\lambda,(1)}\}_{\lambda\in\Lambda_T^+}$
on $\Gr^{(1)}$.
It is known that the stratification 
$\{S^{\lambda,(1)}\}_{\lambda\in\Lambda_T^+}$ 
is Whitney and the projection map
$\Gr^{(1)}\to i\bbR$ is a stratified submersion (here we equip $i\bbR$ with the trivial stratification 
$\{i\bbR\}$). Our first goal is to show that 
the intersection of $\{S^{\lambda,(1)}\cap\calX\}_{\lambda\in\Lambda_T^+}$  
forms a Whitney stratification of $\calX$ and the projection map 
$q:\calX\to i\bbR$ is a stratified submersion.

We will use the following lemma:
\begin{lemma}\cite[Lemma 4.5.1]{N1}\label{fixed points}
Let $f:M\to N$ be a stratified submersion between two 
Whitney stratified manifolds. Assume there is 
a compact group $H$ acting on $M$ and $N$ such that 
the actions preserve the stratifications 
and $f$ is 
$H$-equivariant. Then the 
fixed point manifold $M^H$ and $N^H$ are Whitney stratified by the 
fixed points of the strata and the induced map 
$f^H:M^H\to N^H$ is a stratified submersion.
\end{lemma}

Since $\calX$ is the fixed point manifold of an involution on 
$\Gr^{(1)}$ and the projection map $\Gr^{(1)}\to i\bbR$ is compatible with the involution, to achieve our goal 
it suffices to show that 
$\{S^{\lambda,(1)}\}$ is $\eta^{(1)}$-invariant. More precisely, we have the following:

\begin{lemma}\label{stable under eta}
The involution $\eta^{(1)}$ 
preserves the stratification $\{S^{\lambda,(1)}\}$ (resp. $\{T^{\lambda,(1)}\}$) and 
maps the stratum $S^{\lambda,(1)}$ (resp. $T^{\lambda,(1)}$) to 
the stratum containing the coweight $\eta(\lambda)\in\Gr\is\Gr^{(1)}|_0$.

\end{lemma}
\begin{proof}
Note that under the  
the isomorphism 
$w:\Gr^{(1)}|_{i\bbR^\times}\is\Omega G_c\times i\bbR^\times$
 the involution $\eta^{(1)}$ is given by the formula 
$\eta^{(1)}(\gamma,x)=(\tilde\eta^{\tau}(\gamma),x)$
and we have  
$w(S^{\lambda,(1)}|_{i\bbR^\times})=S^\lambda\times i\bbR^\times$. 
Since $\tilde\eta^{\tau}$ map $S^\lambda$ to the stratum 
$S^{\lambda'}:=\tilde\eta^{\tau}(S^\lambda)$
containing 
$\tilde\eta^{\tau}(\lambda)=\eta(\lambda)$ and the closure 
of $S^{\lambda,(1)}|_{i\bbR^\times}$ in $\Gr^{(1)}$ is equal to 
$\overline{S^{\lambda,(1)}|_{i\bbR^\times}}=\cup_{\mu\leq\lambda} S^{\mu,(1)}$, we have 
\[\bigcup_{\mu\leq\lambda}\eta^{(1)}(S^{\mu,(1)})=\eta^{(1)}(\overline{S^{\lambda,(1)}|_{i\bbR^\times}})=\overline{\eta^{(1)}(S^{\lambda,(1)}|_{i\bbR^\times})}=\overline{S^{\lambda',(1)}}=
\bigcup_{\mu'\leq\lambda'}S^{\lambda',(1)}
.\]
Now by induction on the dimension of 
the stratum $S^{\mu,(1)}$, we conclude that 
$\eta^{(1)}(S^{\lambda,(1)})=S^{\lambda',(1)}$. The desired claim for 
the stratification $\{S^{\lambda,(1)}\}$ follows. 

Consider the flow $\phi_x^{(1)}:\Gr^{(1)}\to\Gr^{(1)}, x\in\bbR_{>0}$ in section \ref{}. 
The critical manifolds of the flow are the cores $C^\lambda\subset\Gr\is\Gr^{(1)}|_0$
and the corresponding descending spaces are $T^{\lambda,(1)}$.
Note that the flow $\phi_x^{(1)}$ is compatible with the involution, that is,
$\phi_x^{(1)}\circ\eta^{(1)}=\eta^{(1)}\circ\phi_x^{(1)}$. Since the 
involution $\eta^{(1)}$ maps 
$C^\lambda$ to $C^{\eta(\lambda)}$, it implies $\eta^{(1)}$ preserves 
the the stratification $\{T^{\lambda,(1)}\}$ (since they are the descending spaces of the flow) and 
the image $\eta^{(1)}(T^{\lambda,(1)})$ is a stratum containing $\eta(\lambda)$.

\quash{
Note that the 
Since the flow $\phi_x^{(1)}$ on $\Gr^{(1)}$ is $K_c$-equivariant, it descends to a flow
on the quotient $K_c\backslash\Gr^{(1)}$. We claim that the resulting flow on 
$K_c\backslash\Gr^{(1)}$, denoted by $\tilde\phi_x^{(1)}$,
Consider the partial compactification 
$\Gr^{(1)}_{\bP^1}|_{ i\bbR\cup\infty}\to i\bbR\cup\infty$ of 
$\Gr^{(1)}\to i\bbR$.
The flow 
$\phi_x^{(1)}$ and the involution $\eta^{(1)}$ on $\Gr^{(1)}$ 
extend compatibly to $\Gr^{(1)}_{\bP^1}|_{ i\bbR\cup\infty}$ and the critical manifolds for the flow are 
$C^\lambda\sqcup C^\mu\subset\Gr\sqcup\Gr\is\Gr^{(1)}_{\bP^1}|_{0\cup\infty}$. 
Note that 
the stratum $S^{\lambda,(1)}$ is the ascending space in $\Gr^{(1)}$ for the  
critical manifold $C^\lambda\sqcup C^\lambda$. Since 
$\eta^{(1)}$ maps $C^\lambda\sqcup C^\lambda$ to 
$C^{\eta(\lambda)}\sqcup C^{\eta(\lambda)}$, the involution $\eta^{(1)}$
must preserve the stratification $\{S^{\lambda,(1)}\}$ and maps
$S^{\lambda,(1)}$ to the stratum containing $\eta(\lambda)$.}

\end{proof}

\begin{lemma}
The intersection $\mathcal S^\lambda:=\calX\cap S^{\lambda,(1)}$ (resp. $\mathcal T^\lambda:=\calX\cap T^{\lambda,(1)}$) is non-empty if and only if $\lambda\in\Lambda_S^+$. 
\end{lemma}
\begin{proof}
Since the special fiber 
$S^\lambda_\bbR$ (resp. $T_\bbR^\lambda$) is non-empty if and only if $\lambda\in\Lambda_S^+$, it suffices to show that 
$\calS^\lambda$ (resp. $\calT^\lambda$) is non-empty if and only if 
the special fiber $S^\lambda_\bbR$ (resp. $T^\lambda_\bbR$) is non-empty. 
For the case $\calS^\lambda$, we observe that, by lemma \ref{fixed points}
and lemma \ref{stable under eta}
, the stratification 
$\{\calS^\lambda\}$ of $\calX$ is Whitney and the 
map $q:\calX\to i\bbR$ is a ind-proper stratified submersion. 
Thus the stratum $\calS^\lambda$ is non-empty if and only if 
the special fiber $S^\lambda_\bbR$ is non-empty. 
For the case $\calT^\lambda$, we note that 
$\calX$ is closed in $\Gr^{(1)}$ and 
$\calT^{\lambda}$ is the descending space for the 
critical manifold $C^\lambda_\bbR$ 
of the flow 
$\phi_x^{(1)}$. Thus 
the intersection  
$\mathcal T^\lambda$
is non-empty 
if and only if the special fiber $T^\lambda_\bbR$ 
is non-empty. 
The lemma follows.
\end{proof}

We have proved the following: 
\begin{proposition}\label{Whitney}
The stratification $\{\calS^\lambda\}_{\lambda\in\Lambda_S^+}$
on $\calX$ is a Whitney stratification and the 
projection map $q:\calX\to i\bbR$ is a stratified submersion.
\end{proposition}

Since the projection $q$ is ind-proper and the $K_c$-action on $\calX$ preserves each
strata $\calS^\lambda$, 
by Thom's first isotopic lemma \cite{M}, we obtain:
\begin{proposition}\label{isotopic}
For any $x\in i\bbR$ we write 
$\calX_x=\calX|_x$ and $\calS^{\lambda}_x=\calS^\lambda|_x$.
There is a $K_c$-equivariant 
stratum preserving homeomorphism 
\[(\calX,\{\calS^\lambda\}_{\lambda_S^+})\is(\calX_x,\{\calS_x^\lambda\}_{\lambda\in\Lambda_S^+})\times i\bbR\]
which is real analytic on each stratum. 
In particular, for $x\in i\bbR^\times$, 
there is a $K_c$-equivariant 
stratum preserving homeomorphism 
\[(\calX_x,\{\calS_x^\lambda\}_{\lambda\in\Lambda_S^+})\is(\Gr_\bbR,\{S_\bbR^\lambda\}_{\lambda\in\Lambda_S^+})\]
which is real analytic on each stratum.

\end{proposition}

\subsection{Relation with real quasi-maps}
Observe that the inclusion map 
$\Omega K_c^{(2)}\backslash\Gr^{(2)}_\bbR\to\Omega G_c^{(2)}\backslash\Gr^{(2)}$
in section \ref{} factors through the fixed points $\calX$: 
\[\Omega K_c^{(2)}\backslash\Gr^{(2)}_\bbR\stackrel{\iota}\to\calX\to\Omega G_c^{(2)}\backslash\Gr^{(2)}.\]
For 
any $x\in i\bbR^\times$
we have an isomorphism 
$\Omega K_c^{(2)}\backslash\Gr^{(2)}_\bbR|_x\is\Omega K_c\backslash\Gr$ and we write \[\iota_x:\Omega K_c\backslash\Gr\is\Omega K_c^{(2)}\backslash\Gr^{(2)}_\bbR|_x\stackrel{}\to\calX_x\] for the induced map on fibers.  

\begin{lemma}\label{S=O}
For $\lambda\in\mL\subset\Lambda_S^+$, 
the map $\iota_x$ induces 
a $K_c$-equivariant isomorphisms of real varieties 
\[\Omega K_c\backslash\mO_K^\lambda\is\mathcal S^{\lambda}_x,\ \ \ 
\Omega K_c\backslash\mO_\bbR^\lambda\is\mathcal T^{\lambda}_x.\]
\end{lemma}
\begin{proof}
Note that we have the following commutative diagram
\[\xymatrix{\Omega K_c\backslash\Gr\ar[r]^{\iota_x}\ar[d]&\calX_x\ar[d]^{v}\\
\Omega K_c\backslash\Omega G_c\ar[r]^\pi&\Omega X_c}\]
where the left vertical arrow $v$ is the homeomorphism in (\ref{X_x}) 
and the map $\pi$ is given by $\gamma\to \theta(\gamma)^{-1}\gamma$.
In addition, we have 
$v(\calS^\lambda_x)=S^\lambda\cap\Omega X_c$ (resp. $v(\calT^\lambda_x)=T^\lambda\cap\Omega X_c$).
So it suffices to show that 
$\pi$ maps $\Omega K_c\backslash\mO_K^\lambda$ (resp. $\Omega K_c\backslash\mO_\bbR^\lambda$) homeomorphically  onto $S^\lambda\cap\Omega X_c$ (resp.
$T^\lambda\cap\Omega X_c$). This follows from the result in section \ref{orbits}.
\end{proof}

Let $\calX^0\subset\calX$ be the union of the strata $\calS^\lambda,\ \lambda\in\mL$. 
Since $\Gr_\bbR^0:=\cup_{\lambda\in\mL} S_\bbR^\lambda$ is an union of certain connected components 
of $\Gr_\bbR$,
it follows from proposition \ref{isotopic} that $\calX^0$ is also an union of certain connected components
of $\calX$. Moreover, by the lemma above we have 
\[\calX^0\subset\Omega K_c^{(2)}\backslash\Gr^{(2)}_\bbR\subset\calX.\]
\begin{remark}
If $K$ is connected, then $\mL=\Lambda_S^+$ and we have 
$\calX^0=\calX=\Omega K_c^{(2)}\backslash\Gr^{(2)}_\bbR$.
\end{remark}
Let $q^0:\calX^0\to i\bbR$ be the restriction of $q$ to $\calX^0$.
The following proposition follow from proposition \ref{Whitney}, proposition \ref{isotopic}, and lemma \ref{S=O}.

\begin{proposition}
The stratification $\{\calS^\lambda\}_{\lambda\in\mL}$ on $\calX^0$
is a Whitney stratification and the map 
\beq\label{X^0}
q^0:\calX^0\to i\bbR
\eeq is 
an ind-proper stratified submersion. 
For any $x\in i\bbR$ we write 
$\calX^0_x=\calX^0|_x$ and $\calS^{\lambda}_x=\calS^\lambda|_x$.
There is a $K_c$-equivariant 
stratum preserving homeomorphism 
\[(\calX^0,\{\calS^\lambda\}_{\lambda\in\mL})\is(\calX^0_x,\{\calS_x^\lambda\}_{\lambda\in\mL})\times i\bbR\]
which is real analytic on each stratum. 
In addition, for $x\in i\bbR^\times$ (resp. $x=0$)
there is a real analytic $K_c$-equivariant 
stratum preserving isomorphism
\[(\calX_x^0,\{\calS_x^\lambda\}_{\lambda\in\mL})\is(\Omega K_c\backslash\Gr,\{\Omega K_c\backslash\mO_K^\lambda\}_{\lambda\in\mL})\ \ 
(resp.\ \ 
(\calX_0^0,\{\calS_0^\lambda\}_{\lambda\in\mL})\is
(\Gr_\bbR^0,\{S_\bbR^\lambda\}_{\lambda\in\mL}))
.\]
In particular, we obtain 
a $K_c$-equivariant 
stratum preserving homeomorphism 
\[(\Omega K_c\backslash\Gr,\{\Omega K_c\backslash\mO_K^\lambda\}_{\lambda\in\mL})\is(\Gr^0_\bbR,\{S_\bbR^\lambda\}_{\lambda\in\mL})\]
which is real analytic on each stratum.

\end{proposition}

\subsection{A generalization}
Consider the stratification 
$\{V^{\lambda,\mu,(1)}:=S^{\lambda,(1)}\cap T^{\lambda,(1)}\}_{\lambda,\mu\in\Lambda_T^+}$ of $\Gr^{(1)}$.
Since the two stratifications $\{S^{\lambda,(1)}\}$ and $\{T^{\lambda,(1)}\}$ are transversal and both are stable under the involution 
$\eta^{(1)}$,
the results in the previous section imply
\begin{proposition}
The fixed points 
stratification $\{\calV^{\lambda,\mu}=V^{\lambda,\mu,(1)}\cap\calX^0\}_{\lambda,\mu\in\mL}$ of $\calX^0$ is a Whitney stratification and 
the map 
\beq\label{X^0}
q^0:\calX^0\to i\bbR
\eeq is 
an ind-proper stratified submersion. 
For any $x\in i\bbR$,
there is a $K_c$-equivariant 
stratum preserving homeomorphism 
\[(\calX^0,\{\calV^{\lambda,\mu}\}_{\lambda,\mu\in\mL})\is(\calX^0_x,\{\calV_x^{\lambda,\mu}\}_{\lambda,\mu\in\mL})\times i\bbR\]
which is real analytic on each stratum. 
In particular, we obtain 
a $K_c$-equivariant 
stratum preserving homeomorphism 
\[(\Omega K_c\backslash\Gr,\{\Omega K_c\backslash(\mO_K^\lambda\cap\mO_\bbR^\mu)\}_{\lambda\in\mL})\is(\Gr^0_\bbR,\{S_\bbR^\lambda\cap T^\mu_\bbR\}_{\lambda,\mu\in\mL})\]
which is real analytic on each stratum.

\end{proposition}

}

\section{Affine Matsuki correspondence for sheaves}\label{Affine Matsuki}
In this section we prove the 
affine Matsuki correspondence for sheaves.

\subsection{The functor $\Upsilon$}

Let
$u:LK_c\backslash\Gr\ra
LG_\bbR\backslash\Gr$ be the quotient map. 
Define 
\[
\Upsilon:D_c(K(\calK)\backslash\Gr)\ra D_c(LG_\bbR\backslash\Gr)\]
to be the restriction of 
$u_!:D_c(LK_\bbR\backslash\Gr)\to D_c(LG_\bbR\backslash\Gr)$
to $D_c(K(\calK)\backslash\Gr)\subset D_c(LK_\bbR\backslash\Gr)$.

\begin{thm}[Affine Matsuki correspondence for sheaves]\label{AM}
The functor $\Upsilon$ defines an equivalence of categories
\[\Upsilon:D_c(K(\calK)\backslash\Gr)\stackrel{\sim}\lra D_!(LG_\bbR\backslash\Gr).\]
\end{thm}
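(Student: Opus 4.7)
The plan is to adapt the Morse-theoretic strategy of~\cite{MUV} to the affine setting, with the Matsuki flow $\phi_t$ of Theorem~\ref{flow} as the essential geometric input. Since $E_1$ is $LK_c$-invariant and the metric is $LG_c$-invariant, $\phi_t$ descends to the quotient $LK_c\backslash\Gr$; under forward flow each stratum $LK_c\backslash\mO_K^\lambda$ deformation-retracts to the classifying stack $LK_c\backslash\mO_c^\lambda$, where $\mO_c^\lambda$ is the single critical $LK_c$-orbit of Proposition~\ref{parametrization}(3), while backward flow provides the analogous retraction of $LK_c\backslash\mO_\bbR^\lambda$.

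Introduce standard objects $\Delta_K^\lambda = (j_K^\lambda)_!\bC$, where $j_K^\lambda\colon LK_c\backslash\mO_K^\lambda \hookrightarrow LK_c\backslash\Gr$, and costandard objects $\nabla_\bbR^\lambda = (j_\bbR^\lambda)_*\bC$, where $j_\bbR^\lambda\colon LG_\bbR\backslash\mO_\bbR^\lambda \hookrightarrow LG_\bbR\backslash\Gr$. The recollement attached to each stratification shows $D_c(K(\calK)\backslash\Gr)$ is generated under extensions by the $\Delta_K^\lambda$ with $\lambda\in\mL$ and $D_!(LG_\bbR\backslash\Gr)$ by the $\nabla_\bbR^\lambda$, so the theorem reduces to the identification $\Upsilon(\Delta_K^\lambda)\simeq\nabla_\bbR^\lambda$ together with compatible isomorphisms $\on{Hom}(\Delta_K^\lambda,\Delta_K^\mu[n])\stackrel{\sim}{\to}\on{Hom}(\nabla_\bbR^\lambda,\nabla_\bbR^\mu[n])$. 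The first is computed by base change: $\Upsilon(\Delta_K^\lambda) = u_!(j_K^\lambda)_!\bC$, and using transversality of $K(\calK)$- and $LG_\bbR$-orbits (Corollary~\ref{transversal}) together with the Morse-Bott tangent decomposition $T_\gamma\Gr = T^+ \oplus T^0 \oplus T^-$ of Proposition~\ref{Morse-Bott}(3), the $*$-stalk along $LG_\bbR\backslash\mO_\bbR^\mu$ is the compactly supported cohomology of $LK_c\backslash(\mO_K^\lambda\cap\mO_\bbR^\mu)$, which vanishes outside the Matsuki range dictated by Proposition~\ref{parametrization}(3) and, at the open stratum $\mu=\lambda$, reduces to the constant sheaf on $LK_c\backslash\mO_c^\lambda$. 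Comparing with the analogous computation of stalks of $\nabla_\bbR^\lambda$ (in which $\mO_\bbR^\lambda$ retracts to $\mO_c^\lambda$ under backward flow) identifies the two objects, and the Hom identification then follows from the recollement triangles computing Homs between standards and costandards stratum by stratum.

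The principal obstacle will be making the stalk computation rigorous in the infinite-dimensional stacky setting, where $LG_\bbR\backslash\Gr$ has infinite-dimensional stabilizers on each orbit. Proper base change for $u_!$ and the sheaf-level contraction argument must be verified through the moduli interpretations of Sections~\ref{uniform} and~\ref{QMaps}, which present $LG_\bbR\backslash\Gr\simeq\Bun_\bbG(\bbP^1)_{\bbR,0}$ and $LK_c\backslash\Gr\simeq\QM^{(2)}(\bbP^1,i,G,K)_{\bbR,0}$ as real analytic stacks of finite type, and one must check that the Matsuki flow $\phi_t$ descends compatibly through these moduli presentations. Organizing the argument through the $LK_c$-equivariant category on $\Gr$, where both the $K(\calK)$- and $LG_\bbR$-orbit stratifications are refinements of the $LK_c$-orbit stratification, should reduce the essential input to the finite-dimensional Morse-Bott geometry of Proposition~\ref{Morse-Bott}.
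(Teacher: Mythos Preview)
Your overall strategy coincides with the paper's: compute $\Upsilon$ on (co)standard objects and then check full faithfulness, with transversality (Corollary~\ref{transversal}) and the Matsuki flow as the geometric inputs, the latter entering through a contraction lemma (the paper's Lemma~\ref{Braden}, after \cite[Lemma~5.4]{MUV}). But your choice $\Delta_K^\lambda=(j_K^\lambda)_!\bC$ makes the central vanishing claim fail.

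By base change, the $*$-restriction of $\Upsilon(\Delta_K^\lambda)=u_!(j_K^\lambda)_!\bC$ to $[\mE^\mu]=LG_\bbR\backslash\mO_\bbR^\mu$ is $(u^\mu)_!s_!\bC$ with $s\colon[\mO_K^\lambda\cap\mO_\bbR^\mu]\hookrightarrow[\mO_\bbR^\mu]$. Proposition~\ref{parametrization}(3) only identifies $\mO_K^\lambda\cap\mO_\bbR^\lambda=\mO_c^\lambda$; for $\mu\neq\lambda$ the intersection is typically nonempty and its compactly supported cohomology has no reason to vanish. The only available tool is the contraction identity $(\phi_-^\mu)_!\simeq(\iota_-^\mu)^!$, which turns the stalk into $(p^\mu)_!(\iota_-^\mu)^!s_!\bC$. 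But with $\iota_-^\mu$ closed and $s$ factoring through the complementary open, one has $i^*j_!=0$ and $i^!j_*=0$, whereas $i^!j_!$ computes $\mathrm{fib}(j_!\to j_*)$ and is generically nonzero; here $[\mO_c^\mu]$ lies in the closure of $[\mO_K^\lambda\cap\mO_\bbR^\mu]$ whenever the latter is nonempty, so $(\iota_-^\mu)^!s_!\bC\neq 0$. Even were your vanishing true, vanishing of $*$-stalks for $\mu\neq\lambda$ would identify a $!$-extension on the target, not the $*$-extension $\nabla_\bbR^\lambda$ you claim; matching $*$-stalks alone never pins down a $*$-pushforward.

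The paper repairs this by starting from the other extension $\calS_*^+(\lambda,\tau)=(i_+^\lambda)_*\tau^+$. After the same base change and contraction one lands on $(\iota_-^\mu)^!\bigl(s_*\iota^!\tau^+\otimes\mL_\mu'\bigr)$, and now the base change $(\iota_-^\mu)^!s_*\simeq s'_*(\iota')^!$ over the empty fiber product $[\mO_c^\mu]\cap[\mO_K^\lambda\cap\mO_\bbR^\mu]$ gives honest vanishing for $\mu\neq\lambda$. This yields $\Upsilon(\calS_*^+(\lambda,\tau))\simeq\calS_!^-(\lambda,\tau\otimes\mL_\lambda)[d_\lambda]$ with an explicit twist and shift (Lemma~\ref{surjective}); full faithfulness is then a parallel Hom computation (Lemma~\ref{full}), not an automatic consequence of recollement. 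Two further points: you must carry an arbitrary local system $\tau$ on $[\mO_c^\lambda]$ rather than only $\bC$, since these strata are classifying stacks with possibly nontrivial equivariant fundamental group; and the factorization $u^\lambda=p^\lambda\circ\phi_-^\lambda$ with $p^\lambda$ smooth (Lemma~\ref{bijection}) is what makes the stacky pushforward tractable.
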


The rest of the section is devoted to the proof of Theorem \ref{AM}.

\subsection{Bijection between local systems}
Write $[\mO_K^\lambda]=LK_c\backslash\mO_K^\lambda$, $[\mO_\bbR^\lambda]=LK_c\backslash\mO_\bbR^\lambda$, 
$[\mO_c^\lambda]=LK_c\backslash\mO_c^\lambda$, and 
$[\mE^\lambda]=LG_\bbR\backslash\mO_\bbR^\lambda\in LG_\bbR\backslash\Gr$.
Recall the Matsuki flow $\phi_t:\Gr\to
\Gr$ in Theorem \ref{flow}. 
As $\phi_t$ is $LK_c$-equivariant, it descends to a flow
$\tilde\phi_{t}:LK_c\backslash\Gr\to LK_c\backslash\Gr$ and we define
\[\phi_\pm:LK_c\backslash\Gr\ra\bigsqcup_{\lambda\in\calL}\ [\mO_c^\lambda]\subset LK_c\backslash\Gr,\ \  \gamma\ra\underset{t\to\pm\infty}\lim\tilde\phi_t(\gamma).\]
Consider the following Cartesian diagrams:
\[\xymatrix{[\mO_K^\lambda]\ar[r]^{i_+^\lambda\ \ \ }\ar[d]^{\phi_+^\lambda}&L K_c\backslash\Gr\ar[d]^{\phi_+}
\\[\mO_c^\lambda]\ar[r]^{j_+^\lambda\ \ \ }&\bigsqcup_{\lambda\in\mL}[\mO_c^\lambda]}\ \ \ \ \ 
\xymatrix{[\mO_\bbR^\lambda]\ar[r]^{i_-^\lambda\ \ \ }\ar[d]^{\phi_-^\lambda}&L K_c\backslash\Gr\ar[d]^{\phi_-}
\\[\mO_c^\lambda]\ar[r]^{j_+^\lambda\ \ \ }&\bigsqcup_{\lambda\in\mL}[\mO_c^\lambda]}\ \ \ \ \xymatrix{[\mO_\bbR^\lambda]\ar[r]^{i_-^\lambda\ \ \ }\ar[d]^{u^\lambda}&LK_c\backslash\Gr\ar[d]^{u}
\\[\mE^\lambda]\ar[r]^{j_-^\lambda\ \ \ }&LG_\bbR\backslash\Gr}
\]
Here  
$i^\lambda_\pm$ and $j^\lambda_\pm$ are the natural 
embeddings and $\phi^\lambda_\pm$ (resp. $u^\lambda$) is the restriction of 
$\phi_\pm$ (resp. $u$) along $j^\lambda_+$ (resp. $j^\lambda_-$). 

\begin{lemma}\label{bijection}
We have the following:
\begin{enumerate}
\item
There is a bijection between isomorphism classes of 
local systems $\tau^+$ on $[\mO_K^\lambda]$,
local systems $\tau^-$ on $[\mO_\bbR^\lambda]$, 
local systems $\tau$ on $[\mO_c^\lambda]$, and local systems $\tau_\bbR$ on $[\mE^\lambda]=[LG_\bbR\backslash\mO_\bbR^\lambda]$, characterizing by the property that 
$\tau^\pm\is(\phi_\pm^\lambda)^*\tau$ and $
\tau^-\is(u^\lambda)^*\tau_\bbR$. 
\item
The map $u^\lambda$ factors as 
\beq\label{p^lambda}
u^\lambda:[\mO_\bbR^\lambda]
\stackrel{\phi_-^\lambda}\ra[\mO_c^\lambda]\stackrel{p^\lambda}\ra[\mE^\lambda]\eeq
where $p^\lambda$ is smooth of relative dimension $\on{dim}[\mE^\lambda]-\on{dim}[\mO_c^\lambda]$. Moreover, we have 
$(p^\lambda)^*\tau_\bbR\is\tau$. 
\end{enumerate}
\end{lemma}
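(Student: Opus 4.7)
The strategy is to deduce part~(1) from the Matsuki flow of Theorem~\ref{flow}, which provides an $LK_c$-equivariant flow on $\Gr$ whose $t\to\infty$ (resp.\ $t\to-\infty$) limit gives a retraction $\mO_K^\lambda\to\mO_c^\lambda$ (resp.\ $\mO_\bbR^\lambda\to\mO_c^\lambda$). By standard Morse--Bott theory these retractions are deformation retractions, so after descending to $LK_c$-quotients the induced maps $\phi_\pm^\lambda$ are homotopy equivalences of stacks. Pullback along a homotopy equivalence gives an equivalence of categories of local systems, yielding the three bijections $\tau\leftrightarrow\tau^\pm$ via $\tau^\pm\cong(\phi^\lambda_\pm)^*\tau$.

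For part~(2), I define $p^\lambda$ as the stack map induced by the $LK_c$-invariant composition
\[
\mO_c^\lambda\hookrightarrow\mO_\bbR^\lambda\twoheadrightarrow LG_\bbR\backslash\mO_\bbR^\lambda=[\mE^\lambda],
\]
the invariance following from $LK_c\subset LG_\bbR$. The factorization $u^\lambda=p^\lambda\circ\phi_-^\lambda$ then follows at once from Theorem~\ref{flow}(2): since the Matsuki flow preserves $LG_\bbR$-orbits, the limit $\lim_{t\to-\infty}\phi_t(x)$ lies in the same $LG_\bbR$-orbit as $x$, so both sides of the equality compute the class of $LG_\bbR\cdot x$ in $[\mE^\lambda]$.

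To establish smoothness of $p^\lambda$, I fix a point $\gamma\in\mO_c^\lambda$ and use transitivity of the actions to identify $[\mO_c^\lambda]\simeq BLK_{c,\gamma}$ and $[\mE^\lambda]\simeq BLG_{\bbR,\gamma}$, where $LK_{c,\gamma}\subset LG_{\bbR,\gamma}$ are the stabilizers of $\gamma$. The map $p^\lambda$ is then the map of classifying stacks induced by this closed subgroup inclusion; it is smooth with fiber the homogeneous space $LG_{\bbR,\gamma}/LK_{c,\gamma}$, and a direct bookkeeping of stack dimensions recovers the asserted relative dimension $\dim[\mE^\lambda]-\dim[\mO_c^\lambda]$. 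The relation $(p^\lambda)^*\tau_\bbR\cong\tau$ then completes part~(1) once we know that $(p^\lambda)^*$ is a bijection on isomorphism classes of local systems; for this last point I would argue that $LG_{\bbR,\gamma}/LK_{c,\gamma}$ is contractible by invoking Proposition~\ref{Morse-Bott}(3), whose unstable summand of the Hessian exhibits $\mO_\bbR^\lambda\to\mO_c^\lambda$ as an $LG_{\bbR,\gamma}$-equivariant real vector bundle, forcing the fiber to be (equivariantly homotopy equivalent to) a real vector space.

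The principal technical obstacle is justifying the infinite-dimensional Lie-theoretic manipulations: that $LK_{c,\gamma}\hookrightarrow LG_{\bbR,\gamma}$ is a closed real-analytic subgroup inclusion whose quotient is a smooth manifold of the expected dimension and is contractible. This is plausible by analogy with the finite-dimensional Matsuki picture, where the analogous quotient is a Riemannian symmetric space of non-compact type. The most transparent route is to combine the $\Omega K_c$-torsor structure of Proposition~\ref{torsors}, which reduces many questions to the finite-dimensional orbit $B^\lambda$ and its compact stabilizer $K_{c,\lambda}$, with the Morse--Bott vector bundle structure recalled above.
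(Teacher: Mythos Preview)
Your overall architecture matches the paper's: both arguments set up the factorization $u^\lambda=p^\lambda\circ\phi_-^\lambda$ via stabilizers, identify $[\mO_c^\lambda]\simeq B\,LK_{c,\gamma}$ and $[\mE^\lambda]\simeq B\,LG_{\bbR,\gamma}$, and reduce everything to the claim that the homogeneous space $LG_{\bbR,\gamma}/LK_{c,\gamma}$ is contractible. The gap is in your justification of that final claim.

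You propose to invoke Proposition~\ref{Morse-Bott}(3) to exhibit $\phi_-^\lambda:\mO_\bbR^\lambda\to\mO_c^\lambda$ as an $LG_{\bbR,\gamma}$-equivariant real vector bundle. This cannot work as stated: the Matsuki flow $\phi_t$ is only $LK_c$-equivariant, not $LG_\bbR$-equivariant, so the retraction $\phi_-^\lambda$ carries no $LG_{\bbR,\gamma}$-equivariance whatsoever. Furthermore, even granting that the Morse-theoretic fiber $l_\gamma:=(\phi_-^\lambda)^{-1}(\gamma)$ is contractible (which does follow from the Morse--Bott picture), this fiber is an $LK_{c,\gamma}$-space inside $\mO_\bbR^\lambda$ and is \emph{not} the same object as the group-theoretic quotient $LG_{\bbR,\gamma}/LK_{c,\gamma}$; there is no evident comparison map between them, and Proposition~\ref{Morse-Bott}(3) supplies only tangent-space data at $\gamma$, not a global identification.

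The paper supplies the missing idea, and it is not Morse-theoretic: it uses the real and complex uniformization isomorphisms (Propositions~\ref{uniformizations at real x} and~\ref{uniformizations at complex x}) to realize $[\mE^\lambda]$ simultaneously as $LG_\bbR(\gamma)\backslash\gamma$ and as $G_\bbR(\bbR[t^{-1}])(y')\backslash y'$ for a point $y'\in C_\bbR^\lambda\subset\Gr_\bbR$. This transports the infinite-dimensional stabilizer $LG_{\bbR,\gamma}$ isomorphically onto the concrete group $G_\bbR(\bbR[t^{-1}])(y')$, carrying $LK_{c,\gamma}$ to $K_c(y')$. The quotient $K_c(y')\backslash G_\bbR(\bbR[t^{-1}])(y')$ is then shown contractible via the evaluation-at-$\infty$ fibration: its fibers are pro-unipotent (hence contractible), and its base $K_c(y')\backslash G_\bbR(y')$ is contractible because $K_c(y')$ is a maximal compact subgroup of the Levi of the real parabolic $G_\bbR(y')=P_\bbR^\lambda$. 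This transport-to-the-real-Grassmannian step is the genuine input your proposal lacks.
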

\begin{proof}

Since the fibers of 
$\phi_\pm$ 
are contractible, pull-back along $\phi^\lambda_+$ (resp. $\phi^\lambda_-$) defines 
an equivalence between
$LK_c$-equivariant local systems on 
$\mO_c^\lambda$ 
and $LK_c$-equivariant local systems 
on $\mO_K^\lambda$ (resp. $\mO_\bbR^\lambda$).
We show that the fiber of $u^\lambda$ is contractible, hence pull back along 
$u^\lambda$ defines an equivalence between local systems on 
$[\mE^\lambda]$ and $LK_c$-equivariant local systems on $\mO_\bbR^\lambda$.
Pick $y\in\mO_c^\lambda$ and let
$LK_c(y)$, $LG_\bbR(y)$ be the stabilizers of 
$y$ in $LK_c$ and $LG_\bbR$ respectively.
The group $LK_c(y)$ acts on the fiber 
$l_y:=(\phi_-^\lambda)^{-1}(y)$ and we have 
$\mO_\bbR^\lambda\is LK_c\times^{LK_c(y)}l_y$. 
Moreover, under the isomorphism 
$[\mO_\bbR^\lambda]\is LK_c\backslash\mO_\bbR^\lambda\is
LK_c(y)\backslash l_y$, $[\mO_c^\lambda]\is LK_c(y)\backslash y$, and
$[\mE^\lambda]\is LG_\bbR(y)\backslash y$, 
the map $u^\lambda$ 
takes the form
\[
u^\lambda:[\mO_\bbR^\lambda]\is LK_c(y)\backslash l_y\stackrel{\phi_-^\lambda}\ra 
[\mO_c^\lambda]\is LK_c(y)\backslash y\stackrel{p^\lambda}\ra 
[\mE^\lambda]\is LG_\bbR(y)\backslash y,\]
where the first map is induced by the projection 
$l_y\ra y$ and 
the second map is induced by the 
inclusion $LK_c(y)\ra LG_\bbR(y)$.
We claim that the quotient $LK_c(y)\backslash LG_\bbR(y)$ is contractible, 
hence 
$u^\lambda$
has contractible fibers and $p^\lambda$ is smooth of relative dimension $\on{dim}[\mE^\lambda]-\on{dim}[\mO_c^\lambda]$. Part (1) and (2) follows.

Proof of the claim. Pick $y'\in C_\bbR^\lambda\subset\Gr_\bbR$ and let
$K_c(y')$ and $G_\bbR(\bbR[t^{-1}])(y')$ be the stabilizers of 
$y'$ in $K_c$ and $G_\bbR(\bbR[t^{-1}])$ respectively.
The composition of the complex and real uniformizations of $\Bun_G(\bP^1)_{\bbR,0}$
 \[LG_\bbR\backslash\Gr\stackrel{\on{Prop.} \ref{uniformizations at complex x}}\is \Bun_G(\bP^1)_{\bbR,0}
 \stackrel{\on{Prop.} \ref{uniformizations at real x}}\is G_\bbR(\bbR[t^{-1}])\backslash\Gr_\bbR^0\] identifies 
\[LG_\bbR(y)\backslash y\is [\mE^\lambda]\is G_\bbR(\bbR[t^{-1}])(y')\backslash y'.\] 
Hence we obtain a natural isomorphism 
\[LG_\bbR(y)\is\Aut([\mE^\lambda])\is G_\bbR(\bbR[t^{-1}])(y')\]
sending $LK_c(y)=K_c(y)\subset LG_\bbR(y)$ to $K_c(y')\subset G_\bbR(\bbR[t^{-1}])(y')$.
Thus we reduce to show that the quotient $K_c(y')\backslash G_\bbR(\bbR[t^{-1}])(y')$ is contractible.
This follows from the fact that evaluation map 
$K_c(y')\backslash G_\bbR(\bbR[t^{-1}])(y')\to K_c(y')\backslash G_\bbR(y'),\ \ \gamma(t^{-1})\to\gamma(0)$
has contractible fibers and the quotient $K_c(y')\backslash G_\bbR(y')$ is contractible as 
$K_c(y')$ is a maximal compact subgroup of the Levi subgroup of $G_\bbR(y')$.

\end{proof}


\subsection{Proof of Theorem \ref{AM}}
\quash{
Consider a diagram of closed substacks of $LK_c\backslash\Gr$
\quash{
\[\xymatrix{U_0\ar[r]\ar[drrr]&U_1\ar[r]\ar[drr]&\cdot\cdot\cdot&U_k\ar[d]\ar[r]&\cdot\cdot\cdot\\
&&&LK_c\backslash\Gr}\]}
\[
U_0\stackrel{j_0}\ra U_1\stackrel{j_1}\ra U_2\ra\cdot\cdot\cdot\ra U_k\ra\cdot\cdot\cdot\]
such that 
1) $\cup_{i} U_i=LK_c\backslash\Gr$, 
2) $U_i$ is a finite union of $[\mO_K^\lambda]$, 3) each $j_k$ is closed embedding. Let $f_i:U_i\ra LK_c\backslash\Gr$ be the natural embedding and 
we define \[s_i=u\circ f_i:U_i\ra\Bun_G(\mathbb P^1)_\bbR.\]
Note that each $s_i$ is of finite type 
and it follows from the definition that 
$f_i=f_{i+1}\circ j_i$, 
$s_{i}=s_{i+1}\circ j_i$.
We 
define 
\[u_i^!:=(f_i)_*s_i^!:D(LG_\bbR\backslash\Gr)\ra D(LK_c\backslash\Gr)\]
Let $\mF\in D(LG_\bbR\backslash\Gr)$.
The unit morphism $id\ra (j_{i-1})_*j_{i-1}^{*}\is (j_{i-1})_*j_{i-1}^{!}$ defines a map
\[p_i:u_i^!(\mF)\is(f_i)_*s_i^!\mF\ra(f_i)_*(j_{i-1})_*j_{i-1}^{!}s_i^!(\mF)\is (f_{i-1})_*s_{i-1}^!(\mF)=u_{i-1}^!(\mF).\]
Consider the following functor 
\[u^!:D(LG_\bbR\backslash\Gr)\ra\on{pro}(D(LK_c\backslash\Gr)),\ \ 
\mF\ra u^!(\mF):=\lim u_i^!(\mF)\in\on{pro}(D(LK_c\backslash\Gr)).\]
Here $u^!(\mF)$ is 
the pro-object in $D(LK_c\backslash\Gr)$
associated to the projective system 
\[u_0^!(\mF)\stackrel{\ p_1}\leftarrow u_1^!(\mF)\stackrel{p_2}\leftarrow u_2^!(\mF)\leftarrow\cdot\cdot\cdot.\]

\begin{lemma}
For any $\mM\in D(K(\calK)\backslash\Gr))$ and $\mF\in D(LG_\bbR\backslash\Gr)$, we have 
\[\on{Hom}_{D(LG_\bbR\backslash\Gr)}(u_!\circ\on{For}_+(\mM),\mF)\is
\on{Hom}_{\on{pro}(D(LK_c\backslash\Gr))}(\on{For}_+(\mM),u^!(\mF)).\]
\end{lemma}
\begin{proof}
\end{proof}
}

For each $\lambda\in\calL$ and a local system 
$\tau$ on $[\mO_c^\lambda]$
one has the standard sheaves
\beq\label{standard}
\calS_*^+(\lambda,\tau):=(i_{+}^\lambda)_*(\tau^+) \text{\ \ and\ \ } 
\calS_*^-(\lambda,\tau):=(j_{-}^\lambda)_*(\tau_\bbR)
\eeq
and co-standard sheaves
\beq\calS_!^+(\lambda,\tau):=(i_{+}^\lambda)_!(\tau^+)
\text{\ \ and\ \ } 
\calS_!^-(\lambda,\tau):=(j_-^\lambda)_!(\tau_\bbR).
\eeq
Here $\tau^+$ and $\tau_\bbR$ are local system 
on $[\mO_K^\lambda]$ and $[\mE^\lambda]$
corresponding to $\tau$
as in Lemma \ref{bijection}.
Let $d_\lambda:=\on{dim}\Bun_{G}(\mathbb P^1)_\bbR-\on{dim}[\mO_K^\lambda]$.

Write
\beq\label{iota}
\iota_+^\lambda:[\mO_c^\mu]\to[\mO_K^\mu],\ \ \iota_-^\lambda:[\mO_c^\mu]\to[\mO_\bbR^\mu]
\eeq
for the natural embeddings.
We recall the following fact, see \cite[Lemma 5.4]{MUV}.
\begin{lemma}\label{Braden}
\begin{enumerate}
\item
Consider $[\mO_c^\mu]\stackrel{\iota_+^\mu}\to[\mO_K^\mu]\stackrel{\phi_+^\mu}\to[\mO_c^\mu]$.
Let $\mF\in D_c([\mO_K^\mu])$. If $\mF$ is 
smooth (= locally constant) on the trajectories of the flow $\tilde\phi_t$, then we have canonical isomorphisms 
$(\iota_+^\mu)^!\mF\is(\phi_+^\mu)_!\mF$  and 
$(\iota_+^\mu)^*\mF\is(\phi_+^\mu)_*\mF$.
\item
Consider $[\mO_c^\mu]\stackrel{\iota_-^\mu}\to[\mO_\bbR^\mu]\stackrel{\phi_-^\mu}\to[\mO_c^\mu]$ where $\iota_-^\mu$ is the natural embedding. 
Let $\mF\in D_c([\mO_\bbR^\mu])$. If $\mF$ is 
smooth (= locally constant) on the trajectories of the flow $\tilde\phi_t$ and is supported on a finite dimensional substack $\sY\subset [\mO_\bbR^\mu]$, then we have canonical isomorphisms 
$(\iota_-^\mu)^!\mF\is(\phi_-^\mu)_!\mF$  and 
$(\iota_-^\mu)^*\mF\is(\phi_-^\mu)_*\mF$.
\end{enumerate}

\end{lemma}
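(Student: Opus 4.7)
The plan is to adapt verbatim the argument of \cite[Lemma 5.4]{MUV} to our Morse-theoretic setting, the only difference being that we must accommodate the infinite-dimensional ambient stack $LK_c\backslash\Gr$. The key input is Theorem~\ref{flow}: the flow $\tilde{\phi}_t$ on $[\mO_K^\mu]$ (resp.\ $[\mO_\bbR^\mu]$) converges as $t\to+\infty$ (resp.\ $t\to-\infty$) to $\iota_+^\mu\circ\phi_+^\mu$ (resp.\ $\iota_-^\mu\circ\phi_-^\mu$), and the critical manifold $[\mO_c^\mu]$ is the fixed locus. Throughout, ``smooth along trajectories'' is read as the condition that the pullback of $\mF$ along the action map $a\colon \bbR\times[\mO^\mu_{K/\bbR}]\to [\mO^\mu_{K/\bbR}]$, $(t,x)\mapsto\tilde{\phi}_t(x)$, is constant in the $\bbR$-direction, i.e.\ canonically isomorphic to $p_2^*\mF$.

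For part~(1), first note that $[\mO_K^\mu]\cong K_c\backslash P^\mu$ is finite-dimensional by Proposition~\ref{torsors} (with $P^\mu$ a finite-rank vector bundle over the finite-dimensional $K_c$-orbit $B^\mu$), and $\phi_+^\mu$ realizes it as the total space of this vector bundle retracting onto $[\mO_c^\mu]\cong K_c\backslash B^\mu$. I would extend the action map $a$ to a continuous map $\bar{a}\colon (\bbR_{\geq 0}\cup\{\infty\})\times[\mO_K^\mu]\to [\mO_K^\mu]$ by setting $\bar{a}(\infty,x)=\iota_+^\mu(\phi_+^\mu(x))$, using Theorem~\ref{flow}(3) to ensure continuity. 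The hypothesis on $\mF$ yields a canonical isomorphism $a^*\mF\cong p_2^*\mF$ on $\bbR_{\geq 0}\times[\mO_K^\mu]$; a constructibility/limit argument (as in \cite[Lemma 5.4]{MUV}) extends this to $\bar{a}$, and restricting at $t=0$ versus $t=\infty$ produces a canonical isomorphism $\mF\cong (\phi_+^\mu)^*(\iota_+^\mu)^*\mF$. Applying $(\phi_+^\mu)_*$ and using that $\phi_+^\mu$ is a vector bundle, hence has contractible fibers so that the unit $\id\to (\phi_+^\mu)_*(\phi_+^\mu)^*$ is an isomorphism, gives $(\phi_+^\mu)_*\mF\cong (\iota_+^\mu)^*\mF$. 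The companion isomorphism $(\iota_+^\mu)^!\mF\cong (\phi_+^\mu)_!\mF$ follows by applying Verdier duality (which swaps $(\phi_+^\mu)_*\leftrightarrow(\phi_+^\mu)_!$ and $(\iota_+^\mu)^*\leftrightarrow(\iota_+^\mu)^!$) together with the fact that the class of sheaves smooth along trajectories is preserved under $\bbD$.

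For part~(2), the stack $[\mO_\bbR^\mu]\cong K_c\backslash Q^\mu$ is infinite-dimensional, but the finite-dimensional support hypothesis $\operatorname{supp}(\mF)\subset\sY$ lets us reduce to a finite-dimensional situation. Smoothness along trajectories forces $\operatorname{supp}(\mF)$ to be a union of entire flow orbits together with their limit points in $[\mO_c^\mu]$; since each orbit deformation-retracts onto a single point of $[\mO_c^\mu]$ under $\phi_-^\mu$, the closed subset $Z:=(\phi_-^\mu)^{-1}(\phi_-^\mu(\sY))\subset [\mO_\bbR^\mu]$ has finite-dimensional image $\phi_-^\mu(\sY)\subset[\mO_c^\mu]$ and fits into a finite-dimensional flow-invariant open neighborhood modeled on an affine bundle over $\phi_-^\mu(\sY)$. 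The argument of the preceding paragraph then applies verbatim, with the time-reversed flow $\tilde{\phi}_{-t}$ playing the role of the attracting flow and $\phi_-^\mu$ the contraction, yielding $(\iota_-^\mu)^*\mF\cong(\phi_-^\mu)_*\mF$ and, by Verdier duality, $(\iota_-^\mu)^!\mF\cong(\phi_-^\mu)_!\mF$.

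The principal obstacle is the rigorous passage to the limit $t=\infty$ in the isomorphism $a^*\mF\cong p_2^*\mF$. In the finite-dimensional MUV setting this is handled by observing that the extended map $\bar{a}$ is a stratified map with respect to natural Whitney stratifications, so that constructibility of $\mF$ guarantees $\bar{a}^*\mF$ is constructible and the isomorphism extends from the open stratum to its closure. To transport this to our situation I would need to verify that $\tilde{\phi}_t$ extends to a Thom-stratified map with respect to the $K(\calK)$- or $LG_\bbR$-orbit stratification (or, equivalently, argue via an equivariant ``fake projective space'' compactification of $\bbR_{\geq 0}\cup\{\infty\}$ as in Braden's original proof); the support-reduction for part~(2) means one only needs this in the finite-dimensional slice, where the standard stratified techniques of \cite{MUV} apply without modification.
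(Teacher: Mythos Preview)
The paper does not give a proof of this lemma; it is stated as a recalled fact with a reference to \cite[Lemma~5.4]{MUV}. Your proposal is therefore not merely consistent with the paper's approach---it \emph{is} the paper's approach, carried out in detail rather than cited. Your handling of the two extra wrinkles (finite-dimensionality of $[\mO_K^\mu]$ via Proposition~\ref{torsors} for part~(1), and the support-reduction for part~(2)) is the natural way to bridge the gap between the finite-dimensional MUV setting and the present one, and is in line with how the paper uses the lemma downstream.
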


We shall show that the functor $\Upsilon$ sends 
standard sheaves to co-standard sheaves.
Introduce the following local system on $[\mO_c^\lambda]$
\beq\label{L_lambda}
\mL_\lambda:=(\iota_-^\lambda)^*\mL_\lambda'\otimes\mL_\lambda''\otimes\on{or}_{p^\lambda}^\vee
\eeq
where 
\beq
(\mL'_\mu)^\vee:=(i_-^\mu)^!(\bC)[\on{codim}[\mO_\bbR^\mu]] \ \text{\ and\ \ }  
\mL_\lambda'':=(\iota_+^\lambda)^!\bC[\on{codim}_{[\mO_K^\lambda]}[\mO_c^\lambda]]
\eeq
are local systems 
on $[\mO_\bbR^\lambda]$ and $[\mO_c^\lambda]$ respectively and 
$\on{or}_{p^\lambda}:=(p^\lambda)^!\bC[-\on{dim}[\mE^\lambda]+\on{dim}[\mO_c^\lambda]]$ is the orientation sheaf for the smooth map $p^\lambda:[\mO_c^\lambda]\to[\mE^\lambda]$ in \eqref{p^lambda}.

\begin{lemma}\label{surjective}
For any local system 
$\tau$ on $[\mO_c^\lambda]$ we have 
\[\Upsilon(S^+_*(\lambda,\tau))\is S^-_!(\lambda,\tau\otimes\mL_\lambda)[d_\lambda].\]

\end{lemma}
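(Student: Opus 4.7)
My plan is to compute $u_!(i_+^\lambda)_*\tau^+$ by base change onto each stratum $[\mE^\mu]$ of the $LG_\bbR$-orbit stratification of $LG_\bbR\backslash\Gr$, and to verify stratum-by-stratum that the result matches $(j_-^\lambda)_!(\tau_\bbR\otimes\mL_\lambda)[d_\lambda]$ — in particular that it is supported on $[\mE^\lambda]$ alone with the prescribed local system. For each $\mu$, the Cartesian square
\[
\xymatrix{
[\mO_\bbR^\mu] \ar[r]^{i_-^\mu}\ar[d]_{u^\mu} & LK_c\backslash\Gr \ar[d]^{u} \\
[\mE^\mu] \ar[r]^{j_-^\mu} & LG_\bbR\backslash\Gr
}
\]
and proper base change give $(j_-^\mu)^*\Upsilon(S_*^+(\lambda,\tau))\simeq (u^\mu)_!(i_-^\mu)^*(i_+^\lambda)_*\tau^+$.

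Next comes a support argument. Combining the Matsuki-flow description of Theorem~\ref{flow} (any $\gamma\in\mO_K^\nu\cap\mO_\bbR^\mu$ satisfies $\mu\leq\nu$, since the forward-flow limit $\phi_\infty(\gamma)\in\mO_c^\nu$ must lie in $\overline{\mO_\bbR^\mu}$) with the closure relations of Proposition~\ref{parametrization} identifies $\overline{[\mO_K^\lambda]}\cap[\mO_\bbR^\mu]=\bigsqcup_{\mu\leq\nu\leq\lambda}[\mO_K^\nu\cap\mO_\bbR^\mu]$, which is empty unless $\mu\leq\lambda$. Thus $(i_-^\mu)^*(i_+^\lambda)_*\tau^+=0$ for $\mu\not\leq\lambda$, recovering the off-$[\mE^\lambda]$ vanishing of the target in this range.

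For the main case $\mu=\lambda$, Corollary~\ref{transversal} lets me apply transverse base change to the square with corners $[\mO_c^\lambda],[\mO_K^\lambda],[\mO_\bbR^\lambda],LK_c\backslash\Gr$, giving (with an orientation twist by $\mL''_\lambda$) $(i_-^\lambda)^*(i_+^\lambda)_*\tau^+\simeq (\iota_-^\lambda)_*(\tau\otimes\mL''_\lambda)$ up to the shift determined by $\on{codim}_{[\mO_K^\lambda]}[\mO_c^\lambda]$, where I use $(\iota_+^\lambda)^*\tau^+=\tau$. Writing $(u^\lambda)_!=(p^\lambda)_!(\phi_-^\lambda)_!$ and using $\phi_-^\lambda\circ\iota_-^\lambda=\on{id}_{[\mO_c^\lambda]}$ collapses the inner piece, leaving $(p^\lambda)_!(\tau\otimes\mL''_\lambda)$. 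By Lemma~\ref{bijection}(2), $p^\lambda$ is smooth with contractible real fibers of dimension $\dim[\mE^\lambda]-\dim[\mO_c^\lambda]$, so the projection formula combined with $(p^\lambda)_!\bC\simeq\on{or}_{p^\lambda}^\vee[-\dim\text{fiber}]$ produces $\tau_\bbR\otimes\mL_\lambda$ after matching the three orientation lines (the factor $(\iota_-^\lambda)^*\mL'_\lambda$ enters when rewriting the orientation of $i_-^\lambda$ via the $!$-stalk formula defining $\mL'_\lambda$), and reconciling all codimensions produces the total shift $[d_\lambda]$.

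The remaining case $\mu<\lambda$ — where the support of the restriction is non-empty but the final pushforward must vanish — is the main obstacle. My strategy is to factor $u^\mu=p^\mu\circ\phi_-^\mu$; since $p^\mu$ is smooth (hence conservative up to an orientation shift), it suffices to show $(\phi_-^\mu)_!(i_-^\mu)^*(i_+^\lambda)_*\tau^+=0$ on $[\mO_c^\mu]$. To apply Lemma~\ref{Braden}(2) I first need the flow-monodromicity of $(i_+^\lambda)_*\tau^+$ along trajectories of $\tilde\phi_t$; this is tautological on the open stratum (where $\tau^+=(\phi_+^\lambda)^*\tau$), and propagates to the closure from the flow-invariance of $\overline{[\mO_K^\lambda]}$, although the verification at boundary strata is delicate. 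Braden's identity then replaces $(\phi_-^\mu)_!$ with $(\iota_-^\mu)^!$, and the claim reduces to the vanishing of the $!$-costalk of $(i_+^\lambda)_*\tau^+$ at a critical manifold $[\mO_c^\mu]\subset\overline{[\mO_K^\lambda]}\setminus[\mO_K^\lambda]$. This I expect to follow from a standard Morse-theoretic cone argument, mirroring the finite-dimensional MUV strategy, exploiting that the contracting flow $\tilde\phi_t$ on $[\mO_K^\lambda]$ moves mass towards the attractor $[\mO_c^\lambda]\neq[\mO_c^\mu]$; carrying out this Morse cancellation at deeper strata, with the right orientation bookkeeping, is the step I expect to require the most care.
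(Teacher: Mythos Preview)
Your overall architecture --- base change onto each $[\mE^\mu]$, factoring $u^\mu = p^\mu\circ\phi_-^\mu$, and invoking Lemma~\ref{Braden} to trade $(\phi_-^\mu)_!$ for $(\iota_-^\mu)^!$ --- is exactly what the paper does. The gap is in the step you flag as hardest, and it is a misidentification rather than a missing Morse argument.

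After Braden you arrive at $(\iota_-^\mu)^!\bigl[(i_-^\mu)^*(i_+^\lambda)_*\tau^+\bigr]$, and then assert this is ``the $!$-costalk of $(i_+^\lambda)_*\tau^+$ at $[\mO_c^\mu]$''. But the genuine $!$-costalk would be $(\iota_-^\mu)^!(i_-^\mu)^!(i_+^\lambda)_*\tau^+$; you have a $*$-pullback in the middle, so the recollement-type vanishing $i^!j_*=0$ does not apply to what you actually have, and your proposed Morse cone argument is aimed at the wrong target. The fix is the very observation you used for $\mu=\lambda$ but failed to deploy in general: Corollary~\ref{transversal} says $\mO_K^\lambda$ and $\mO_\bbR^\mu$ are transversal for \emph{every} $\mu$, so $(i_-^\mu)^*\simeq (i_-^\mu)^!\otimes\mL'_\mu[\on{codim}[\mO_\bbR^\mu]]$ uniformly. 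With that in hand, base change gives $(i_-^\mu)^!(i_+^\lambda)_*\tau^+\simeq s_*\iota^!\tau^+$ where $s:[\mO_K^\lambda\cap\mO_\bbR^\mu]\hookrightarrow[\mO_\bbR^\mu]$ and $\iota:[\mO_K^\lambda\cap\mO_\bbR^\mu]\hookrightarrow[\mO_K^\lambda]$, and one more base change shows $(\iota_-^\mu)^!s_*=0$ since the fiber product $[\mO_c^\mu]\cap[\mO_K^\lambda]$ is empty (as $[\mO_c^\mu]\subset[\mO_K^\mu]$ and $\mu\neq\lambda$). This disposes of all $\mu\neq\lambda$ at once --- no separate support case for $\mu\not\leq\lambda$, and no cone argument at deeper strata. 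Your side worry about flow-monodromicity is also harmless: the flow preserves both the $K(\calK)$- and $LG_\bbR$-stratifications, so any $\calS$-constructible sheaf restricted to $[\mO_\bbR^\mu]$ is automatically flow-monodromic.
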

\begin{proof}
Let $\lambda,\mu\in\mL$.
Consider the following diagram 
\beq\label{key diagram}
\xymatrix{[\mO_K^\lambda\cap\mO_\bbR^\mu]\ar[r]^s\ar[d]^\iota
&[\mO_\bbR^\mu]\ar[r]^{u^\mu\ \ \ \ \ \ \ }\ar[d]^{i_-^\mu}&[\mE^\mu]=LG_\bbR\backslash\mO_\bbR^\mu\ar[d]^{j_-^\mu}\\
[\mO_K^\lambda]\ar[r]^{i_+^\lambda}&LK_c\backslash\Gr\ar[r]^{u\ }&LG_\bbR\backslash\Gr}.
\eeq
Let 
$\mG=(j_-^\mu)^*\Upsilon(S_*(\lambda,\tau))\is (j_-^\mu)^*u_!(i_+^\lambda)_*(\tau^+)\is(u^\mu)_!(i_-^\mu)^*(i_+^\lambda)_*(\tau^+)$.
It suffices to show that $\mG\is 0$ if $\lambda\neq\mu$ and $\mG\is(\tau\otimes\mL_\lambda)_\bbR$ if $\lambda=\mu$.

By Corollary \ref{transversal}, the orbits $\mO_\bbR^\mu$ and $\mO_K^\lambda$ are trasversal to each 
other, hence we have 
\beq\label{upper shriek}
(i_-^\mu)^*(i_+^\lambda)_*(\tau^+)\is(i_-^\mu)^!(i_+^\lambda)_*(\tau^+)\otimes\mL'_\mu[\on{codim}[\mO_\bbR^\mu]].
\eeq
where 
\beq\label{L'}
(\mL'_\mu)^\vee=(i_-^\mu)^!(\bC)[\on{codim}[\mO_\bbR^\mu]]
\eeq
is a local system on $[\mO_\bbR^\mu]$.
Thus 
\[
\mG\is(u^\mu)_!(i_-^\mu)^*(i_+^\lambda)_*(\tau^+)\stackrel{\eqref{upper shriek}}\is(u^\mu)_!((i_-^\mu)^!(i_+^\lambda)_*(\tau^+)\otimes\mL_\mu')[\on{codim}[\mO_\bbR^\mu]]\is
\]
\[
(u^\mu)_!(s_*\iota^!(\tau^+)\otimes\mL_\mu')[\on{codim}[\mO_\bbR^\mu]].
\]

According to Lemma \ref{bijection} the map $u^\mu$ factors as 
\[u^\mu:[\mO_\bbR^\mu]\stackrel{\phi_-^\mu}\ra[\mO_c^\mu]\stackrel{p^\mu}\ra[\mE^\mu]\]
where $p^\mu$ is smooth of relative dimension $\on{dim}[\mE^\mu]-\on{dim}[\mO_c^\mu]$. Since $s_*\iota^!(\tau^+)[\on{codim}[\mO_\bbR^\mu]]\in D_c([\mO_\bbR^\mu])$
is smooth on the trajectories of the flow $\tilde\phi_t$, by Lemma \ref{Braden}, we have 
\beq\label{mG}
\mG\is u^\mu_!(s_*\iota^!(\tau^+)\otimes\mL_\mu')[\on{codim}[\mO_\bbR^\mu]]
\is p^\mu_!(\phi_-^\mu)_!(s_*\iota^!(\tau^+)\otimes\mL_\mu')[\on{codim}[\mO_\bbR^\mu]]
\stackrel{\on{Lem}\ref{Braden}}\is
\eeq
\[\is p^\mu_!(\iota_-^\mu)^!(s_*\iota^!(\tau^+)\otimes\mL_\mu')[\on{codim}[\mO_\bbR^\mu]].\]
Here $\iota_-^\mu:[\mO_c^\mu]\ra[\mO_\bbR^\mu]$ is the embedding.

If $\lambda\neq\mu$ then $[\mO_\bbR^\mu]\cap[\mO_c^\lambda]$ is empty, thus we have \[(\iota_-^\mu)^!s_*\iota^!(\tau^+)[\on{codim}[\mO_\bbR^\mu]]=0\]
and \eqref{mG} implies $\mG=0$. 

If $\lambda=\mu$, then 
$[\mO_\bbR^\lambda]\cap[\mO_K^\lambda]=[\mO_c^\lambda]$,
$s=\iota_-^\lambda$, $\iota=\iota_+^\lambda$ are closed embeddings 
and by Lemma \ref{bijection} we have 
\[
(u^\lambda)_!(s)_*(\tau)\is (p^\lambda)_!(\tau)\is\tau_\bbR\otimes (p^\lambda)_!(\bC)\is
\tau_\bbR\otimes(\on{or}_{p^\lambda}^{\vee})_\bbR[\on{dim}[\mE^\mu]-\on{dim}[\mO_c^\mu]],
\]
\[\iota^!(\tau^+)\is \iota^*(\tau^+)\otimes \iota^!\bC\is\tau\otimes (\iota_+^\lambda)^!\bC\is\tau\otimes\mL''_\lambda[-\on{codim}_{[\mO_K^\lambda]}[\mO_c^\lambda]]\]
where $\on{or}_{p^\lambda}$ is the relative orientation sheaf on $[\mO_c^\lambda]$ associated to $p^\lambda:[\mO_c^\lambda]\to[\mE^\lambda]$
and 
\beq\label{L''}
\mL_\lambda'':=(\iota_+^\lambda)^!\bC[\on{codim}_{[\mO_K^\lambda]}[\mO_c^\lambda]]
\eeq
is a local system on $[\mO_c^\lambda]$.
Now an elementary calculation shows that 
\[\mG\stackrel{\eqref{mG}}\is (u^\lambda)_!(s_*\iota^!(\tau^+)\otimes\mL_\lambda')[\on{codim}[\mO_\bbR^\lambda]]\is
(u^\lambda)_!(s_*(\tau\otimes\mL_\lambda'')\otimes\mL_\lambda')[\on{codim}[\mO_\bbR^\lambda]-\on{codim}_{[\mO_K^\lambda]}[\mO_c^\lambda]]\is\]
\[\is(u^\lambda)_!(\iota_-^\lambda)_*(\tau\otimes\mL_\lambda''\otimes (\iota_-^\lambda)^*\mL_\lambda')[\on{codim}[\mO_\bbR^\lambda]-\on{codim}_{[\mO_K^\lambda]}[\mO_c^\lambda]]\is
(\tau\otimes\mL_\lambda)_\bbR[d_\lambda]
,\]
where 
\beq\label{L_lambda}
\mL_\lambda:=(\iota_-^\lambda)^*\mL_\lambda'\otimes\mL_\lambda''\otimes\on{or}_{p^\lambda}^\vee
\eeq
is a local sytem on $[\mO_c^\lambda]$ and
\[d_\lambda=\on{codim}[\mO_\bbR^\lambda]-
\on{codim}_{[\mO_K^\lambda]}[\mO_c^\lambda]+\on{dim}[\mE^\lambda]-\on{dim}[\mO_c^\lambda]=\on{dim}\Bun_{G}(\mathbb P^1)_\bbR-\on{dim}[\mO_K^\lambda].\]
The lemma follows.

\end{proof}

We shall show that $\Upsilon$ is fully-faithful.
Consider a diagram of closed substacks of $LK_c\backslash\Gr$
\quash{
\[\xymatrix{U_0\ar[r]\ar[drrr]&U_1\ar[r]\ar[drr]&\cdot\cdot\cdot&U_k\ar[d]\ar[r]&\cdot\cdot\cdot\\
&&&LK_c\backslash\Gr}\]}
\[
U_0\stackrel{j_0}\ra U_1\stackrel{j_1}\ra U_2\ra\cdot\cdot\cdot\ra U_k\ra\cdot\cdot\cdot\]
such that 
\begin{enumerate}
\item $\bigcup_{i} U_i=LK_c\backslash\Gr$, 
\item Each $U_i$ is a finite union of $[\mO_K^\lambda]$, 
\item Each $j_k$ is closed embedding. 
\end{enumerate}

Let $f_i:U_i\ra LK_c\backslash\Gr$ be the natural embedding and 
we define \[s_i=u\circ f_i:U_i\ra LG_\bbR\backslash\Gr.\]
Note that each $s_i$ is of finite type.

\begin{lemma}\label{full}
For any $\mF,\mF'\in D_c(K(\calK)\backslash\Gr)$
we have \[\on{Hom}_{D_c(K(\calK)\backslash\Gr)}(\mF,\mF')
\is\on{Hom}_{D_!(LG_\bbR\backslash\Gr)}(\Upsilon(\mF),\Upsilon(\mF')).\]

\end{lemma}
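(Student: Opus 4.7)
The plan is to reduce the verification to Hom computations between standard objects and match these via the Morse-theoretic geometry of the Matsuki flow. By devissage on the $K(\calK)$-orbit stratification of $LK_c\backslash\Gr$, every object of $D_c(K(\calK)\backslash\Gr)$ is a successive extension of standard sheaves $\mathcal S_*^+(\lambda,\tau)$, so it suffices to verify the isomorphism when $\mF=\mathcal S_*^+(\lambda,\tau)$ and $\mF'=\mathcal S_*^+(\mu,\tau')$. By Lemma~\ref{surjective}, the right-hand side then becomes $\Hom\bigl(\mathcal S_!^-(\lambda,\tau\otimes\mL_\lambda)[d_\lambda],\,\mathcal S_!^-(\mu,\tau'\otimes\mL_\mu)[d_\mu]\bigr)$, a Hom between costandard sheaves on the $LG_\bbR$-side.

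Both Homs vanish unless $\mu\le\lambda$. On the left, applying $(i_+^\mu)^*\dashv (i_+^\mu)_*$ yields $\Hom\bigl((i_+^\mu)^*(i_+^\lambda)_*\tau^+,\tau'^+\bigr)$, which is zero unless $[\mO_K^\mu]\subset\overline{[\mO_K^\lambda]}$, i.e., $\mu\le\lambda$. On the right, applying $(j_-^\lambda)_!\dashv(j_-^\lambda)^!$ gives a Hom whose non-vanishing forces $[\mE^\lambda]\subset\overline{[\mE^\mu]}$, equivalent to $\mu\le\lambda$ by the Matsuki order-reversal $\overline{\mO_\bbR^\mu}=\bigsqcup_{\mu\le\nu}\mO_\bbR^\nu$.

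When $\mu\le\lambda$, both Homs admit a description involving the transversal intersection $\mO_K^\lambda\cap\mO_\bbR^\mu$, whose smoothness is Corollary~\ref{transversal}. Using that $\mO_K^\lambda$ retracts onto $[\mO_c^\lambda]$ and $\mO_\bbR^\mu$ retracts onto $[\mO_c^\mu]$ under the Matsuki flow (Theorem~\ref{flow}), the Braden contraction principle (Lemma~\ref{Braden}) together with base change around the key Cartesian square~\eqref{key diagram} used in the proof of Lemma~\ref{surjective} lets us rewrite each Hom as an $\RHom$ of local systems on $[\mO_K^\lambda\cap\mO_\bbR^\mu]$. In the extreme case $\mu=\lambda$, both sides reduce to $\RHom_{[\mO_c^\lambda]}(\tau,\tau')$ via the bijection of Lemma~\ref{bijection} together with the contractibility of the fibers of $\phi_+^\lambda,\,\phi_-^\lambda,\,p^\lambda$.

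The main obstacle is the careful bookkeeping of the orientation local systems $\mL_\lambda',\mL_\lambda''$, the relative orientation $\on{or}_{p^\lambda}$, and the cohomological shifts $d_\lambda,d_\mu$: the explicit formula \eqref{L_lambda} for $\mL_\lambda$ is precisely calibrated so that the two $\RHom$ descriptions coincide on the nose, in parallel with the finite-dimensional matching of \cite{MUV}. The combination of finite-codimensionality of $LG_\bbR$-orbits with the ind-finite-type nature of the critical cores $[\mO_c^\lambda]$ and quotient strata $[\mE^\lambda]$ ensures no new convergence issues arise beyond those already handled in the preceding sections.
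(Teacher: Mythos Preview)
Your reduction to generators is sound, but the choice of generators creates a real gap. You test $\Upsilon$ on pairs $\bigl(\mathcal S_*^+(\lambda,\tau),\,\mathcal S_*^+(\mu,\tau')\bigr)$ of two \emph{standard} objects, whereas the paper tests it on pairs $\bigl((w_\lambda)_!\tau_\lambda^+,\,(w_\mu)_*\tau_\mu^+\bigr)$ with a costandard on the left and a standard on the right. The point of the latter choice is the elementary orthogonality $i^!j_*=0$ for complementary open/closed embeddings, which forces both sides of~\eqref{iso} to vanish when $\lambda\neq\mu$ without any geometric input; the entire proof then reduces to the single case $\lambda=\mu$, where the identification with $\RHom_{[\mO_c^\lambda]}(\tau_\lambda,\tau_\lambda)$ and the cancellation $\mL_\lambda\otimes\iota^*\mL_\lambda'''\simeq\bC$ can be checked directly.

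With your standard--standard pairing, the case $\mu<\lambda$ is genuinely nontrivial: on the left you must compute $(i_+^\mu)^*(i_+^\lambda)_*\tau^+$, the restriction of a $*$-extension to a boundary stratum, which encodes the singularities of $\overline{\mO_K^\lambda}$ along $\mO_K^\mu$; on the right you face the analogous $(j_-^\lambda)^!(j_-^\mu)_!$. Your assertion that Braden contraction and the diagram~\eqref{key diagram} rewrite each of these as an $\RHom$ of local systems on $[\mO_K^\lambda\cap\mO_\bbR^\mu]$ is not justified by the cited lemmas: Lemma~\ref{Braden} applies only to sheaves smooth along the flow trajectories, and $(i_+^\lambda)_*\tau^+$ restricted to $[\mO_K^\mu]$ need not be a local system, so there is no evident reduction to the transversal slice. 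Moreover, even if both sides could be computed abstractly, you still owe the verification that the \emph{specific} adjunction map~\eqref{iso} realizes the identification, which the paper tracks explicitly. Switching to the costandard--standard generating pair, as the paper does, eliminates the $\mu\neq\lambda$ case outright and is the missing idea.
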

\begin{proof}
Choose $k$ such that 
$\mF=(j_k)_*\mF_k$ and $(j_k)_*\mF'_k$ for 
$\mF_k,\mF'_k\in D_c(K(\calK)\backslash U_k)$.
We have \[\on{Hom}_{D_c(K(\calK)\backslash\Gr)}(\mF,\mF')\is
\on{Hom}_{D_c(K(\calK)\backslash U_k)}(\mF_k,\mF'_k)\]
and \[\on{Hom}_{D_!(LG_\bbR\backslash\Gr)}(\Upsilon(\mF),\Upsilon(\mF'))\is
\on{Hom}_{D_c(K(\calK)\backslash U_k)}((s_k)_!\mF_k,(s_k)_!\mF_k')\is\]
\[\is
\on{Hom}_{D_c(K(\calK)\backslash U_k)}(\mF_k,(s_k)^!(s_k)_!\mF_k').\]
We have to show that the map 
\beq\label{iso}
\on{Hom}_{D_c(K(\calK)\backslash U_k)}(\mF_k,\mF'_k)\ra
\on{Hom}_{D_c(K(\calK)\backslash U_k)}(\mF_k,(s_k)^!(s_k)_!\mF_k')
\eeq
is an isomorphism.
Since $D_c(K(\calK)\backslash U_k)$ is generated by 
$w_!(\tau_\lambda^+)$ (resp. $w_*(\tau_\lambda^+)$) for 
$[\mO_K^\lambda]\subset U_k$ (here $w_\lambda:[\mO_K^\lambda]\ra U_k$ is 
natural inclusion), it suffices to verify 
(\ref{iso}) for 
\[\mF_k=(w_\lambda)_!(\tau_\lambda^+)\text{\ \ and \ \ }\mF'_k
\is (w_\mu)_*(\tau_\mu^+).\] 
Note that in this case the left hand side of \eqref{iso} becomes
\beq\label{iso Hom trivial, K}
\on{Hom}_{D_c(K(\calK)\backslash U_k)}(\mF_k,\mF'_k)=0\text{\ \ if \ \ }\lambda\neq\mu
\eeq
\beq\label{iso Hom, K}
\on{Hom}_{D_c(K(\calK)\backslash U_k)}(\mF_k,\mF'_k)\is
\on{Hom}_{D_c([\mO_c^\lambda])}(\tau_\lambda,\tau_\lambda)
\text{\ \ if \ \ }\lambda=\mu.
\eeq
By Lemma \ref{surjective} we have 
\[(s_k)_!((w_\mu)_*(\tau_\mu^+))
\is u_!(j_k)_*(w_\mu)_*(\tau_\mu^+))\is 
\Upsilon(S_*^+(\mu,\tau_\mu))\is
(j_-^\mu)_!(\tilde\tau_{\mu,\bbR})[d_\mu],\] 
where $\tilde\tau_{\mu,\bbR}=\tau_{\mu,\bbR}\otimes\mL_{\mu,\bbR}$.
Therefore the right hand side of \eqref{iso} becomes
\beq\label{iso on hom,1}
\on{Hom}_{D_c(K(\calK)\backslash U_k)}(\mF_k,
(s_k)^!(s_k)_!\mF_k')\is
\eeq
\beqn
\is\on{Hom}_{D_c(K(\calK)\backslash U_k)}((w_\lambda)_!(\tau_\lambda^+),
(s_k)^!(s_k)_!((w_\mu)_*(\tau_\mu^+))\is
\eeqn
\beqn
\is
\on{Hom}_{D_c([\mO_K^\lambda]}(\tau^+_\lambda,
w_\lambda^!(s_k)^!(j_-^\mu)_!(\tilde\tau_{\mu,\bbR})[d_\mu])\is
\eeqn
\beqn
\is\on{Hom}_{D_c([\mO_K^\lambda])}(\tau_\lambda^+,
(u\circ i_+^\lambda)^!(j_-^\mu)_!(\tilde\tau_{\mu,\bbR})[d_\mu]).
\eeqn
Since $u\circ i_+^\lambda$ and $j_-^\mu$
are transversal, 
we have 
\[(u\circ i_+^\lambda)^!(j_-^\mu)_!\tau_{\mu,\bbR}\is
(u\circ i_+^\lambda)^*(j_-^\mu)_!\tau_{\mu,\bbR}\otimes\mL_\lambda'''[-d_\lambda]\] 
where 
\beq\label{L}
\mL_\lambda'''=(u\circ i_+^\lambda)^!\bC[d_\lambda]
\eeq
is a local system on $[\mO_K^\lambda]$
and, in view of the diagram \eqref{key diagram}, we get 
\beq\label{iso hom, 2}
\on{Hom}_{D_c(K(\calK)\backslash U_k)}(\mF_k,
(s_k)^!(s_k)_!\mF_k')\stackrel{\eqref{iso on hom,1}}\is\on{Hom}_{D_c([\mO_K^\lambda])}(\tau_\lambda^+,
(u\circ i_+^\lambda)^!(j_-^\mu)_!\tilde\tau_{\mu,\bbR}[d_\mu])\is
\eeq
\beqn
\is
\on{Hom}_{D_c([\mO_K^\lambda])}(\tau_\lambda^+,
(u\circ i_+^\lambda)^*(j_-^\mu)_!\tilde\tau_{\mu,\bbR}\otimes\mL[d_\mu-d_\lambda])
\eeqn
\beqn
\is
\on{Hom}_{D_c([\mO_K^\lambda])}((\phi_+^\lambda)^*\tau_\lambda,
\iota_!(u^\mu\circ s)^*\tilde\tau_{\mu,\bbR}\otimes\mL[d_\mu-d_\lambda])
\eeqn
\beqn
\is\on{Hom}_{D_c([\mO_K^\lambda])}((\phi_+^\lambda)^*\tau_\lambda,
\iota_!(u^\mu\circ s)^*\tilde\tau_{\mu,\bbR}\otimes\mL[d_\mu-d_\lambda])
\eeqn
\beqn
\on{Hom}_{D_c([\mO_c^\lambda])}(\tau_\lambda,
(\phi_+^\lambda)_*(\iota_!(u^\mu\circ s)^*\tilde\tau_{\mu,\bbR}\otimes\mL)[d_\mu-d_\lambda]).
\eeqn
\\
Consider the case $\lambda\neq\mu$. Then 
by Lemma \ref{Braden} we have 
\[(\phi_+^\lambda)_*(\iota_!(u^\mu\circ s)^*\tilde\tau_{\mu,\bbR}\otimes\mL)[d_\mu-d_\lambda])\is
(\phi_+^\lambda)_*\iota_!((u^\mu\circ s)^*\tilde\tau_{\mu,\bbR}\otimes\iota^*\mL))[d_\mu-d_\lambda])\is\]
\[\is
(\iota^\lambda_+)^*
\iota_!((u^\mu\circ s)^*\tilde\tau_{\mu,\bbR}\otimes\iota^*\mL))[d_\mu-d_\lambda])=
0,\] 
here $\iota_+^\lambda:[\mO_c^\lambda]\to[\mO_K^\lambda]$,
and it follows from \eqref{iso hom, 2} that 
\beq\label{iso Hom trivial, R}
\on{Hom}_{D_c(K(\calK)\backslash U_k)}(\mF_k,
(s_k)^!(s_k)_!\mF_k')=0.
\eeq
Hence we have
\beqn
\on{Hom}_{D_c(K(\calK)\backslash U_k)}(\mF_k,\mF_k')\stackrel{}\is
\on{Hom}_{D_c(K(\calK)\backslash U_k)}(\mF_k,
(s_k)^!(s_k)_!\mF_k')\is 0 \text{\ \ if\ \ }\lambda\neq\mu.
\eeqn
\\
Consider the case  $\lambda=\mu$. We have 
\[(\phi_+^\lambda)_*(\iota_!(u^\lambda\circ s)^*\tilde\tau_{\lambda,\bbR}\otimes\mL)[d_\lambda-d_\lambda])\is
(u^\lambda\circ s)^*\tilde\tau_{\lambda,\bbR}\otimes\iota^*\mL_\lambda'''
\is\tau_\lambda\otimes\mL_{\lambda}\otimes \iota^*\mL'''_\lambda.\] 
We claim that $\mL_{\lambda}\otimes\iota^*\mL'''_\lambda\is\bC$ is the trivial local system
hence above isomorphism implies 
\[(\phi_+^\lambda)_*(\iota_!(u^\lambda\circ s)^*\tilde\tau_{\lambda,\bbR}\otimes\mL)[d_\lambda-d_\lambda])\is\tau_\lambda,\ \text{\ if\ }
\lambda=\mu,\]
and by \eqref{iso hom, 2}, we obtain 
\beq\label{iso Hom, R}
\on{Hom}_{D_c(K(\calK)\backslash U_k)}(\mF_k,
(s_k)^!(s_k)_!\mF_k')\is\on{Hom}_{D_c([\mO_c^\lambda])}(\tau_\lambda,\tau_\lambda)
.
\eeq
By unwinding the definition of the map in \eqref{iso}, we obtain that \eqref{iso} satisfies
\beqn
\xymatrix{
\on{Hom}_{D_c(K(\calK)\backslash U_k)}(\mF_k,
\mF_k')\ar[rr]^{\eqref{iso}}\ar[rd]^\sim_{\eqref{iso Hom, K}}&&\on{Hom}_{D_c(K(\calK)\backslash U_k)}(\mF_k,
(s_k)^!(s_k)_!\mF_k')\ar[ld]_\sim^{\eqref{iso Hom, R}}\\
&\on{Hom}_{D_c([\mO_c^\lambda])}(\tau_\lambda,\tau_\lambda)&},
\eeqn
hence is an isomorphism.
The lemma follows.

To prove the claim, we observe that, up to cohomological shifts, we have 
\[\mL_\lambda\stackrel{\eqref{L_lambda}}\is(s)^*\mL_\lambda'\otimes\mL_\lambda''\otimes\on{or}_{p^\lambda}^\vee
\is (p^\lambda)^*((j_-^\lambda)^!\bC)^\vee)\otimes\iota^!\bC\otimes\on{or}_{p^\lambda}^\vee[-]
\]
\[\iota^*\mL_\lambda'''\stackrel{\eqref{L}}\is\iota^*((u\circ i_+^\lambda)^!\bC)[-].\]
Using the canonical isomorphisms $\iota^!(-)\is\iota^*(-)\otimes\iota^!\bC$ and 
$(p^\lambda)^!(-)\is (p^\lambda)^*(-)\otimes\on{or}_{p^\lambda}[-]$, we see that 
\[\mL_\lambda\otimes\iota^*\mL_\lambda'''\is (p^\lambda)^*((j_-^\lambda)^!\bC)^\vee)\otimes\on{or}_{p^\lambda}^\vee\otimes
\iota^!((u\circ i_+^\lambda)^!\bC)[-]\is\]
\[\is(p^\lambda)^*((j_-^\lambda)^!\bC)^\vee)\otimes\on{or}_{p^\lambda}^\vee\otimes
(p^\lambda)^!((j_-^\lambda)^!\bC))[-]\is
\]
\[\is(p^\lambda)^*((j_-^\lambda)^!\bC)^\vee)\otimes(p^\lambda)^*((j_-^\lambda)^!\bC))[-]
\is\bC[-].\]
The claim follows.
\end{proof}

It follows from 
Lemma \ref{surjective} and Lemma \ref{full} that the image of $\Upsilon$ is equal to $D_!(LG_\bbR\backslash\Gr)$ and the resulting functor 
$\Upsilon:D_c(K(\calK)\backslash\Gr)\ra D_!(LG_\bbR\backslash\Gr)$
is fully-faithful, hence an equivalence. 
This finishes the proof of Theorem \ref{AM}.

\quash{
\begin{lemma}
\begin{enumerate}
\item
The functor 
$\on{For}_+$ admits a right adjoint functor 
which we denote it by 
$\on{Av}_+^r:D(LK_c\backslash\Gr)\ra D(K(\calK)\backslash\Gr)$. 
That is, we have 
\[
\on{Hom}_{D(LK_c\backslash\Gr)}(\on{For}_+(\mM),\mF)\is\on{Hom}_{D(K(\mathcal K)\backslash\Gr)}(\mM,\on{Av}_+^r\mF).\]
 \item The right adjoint $\on{Av}_+^r$ has a unique extension  $\on{Av}_+^r:\on{pro}(D(LK_c\backslash\Gr))
\ra\on{pro}(D(K(K)\backslash\Gr))$ such that 
\[\on{Hom}_{\on{pro}(D(LK_c\backslash\Gr))}(\on{For}_+(\mM),\mF)\is\on{Hom}_{\on{pro}(D(K(\mathcal K)\backslash\Gr))}(\mM,\on{Av}_+^r\mF)\]

\end{enumerate}
\end{lemma}

The lemmas above imply that the functor 
\[\Upsilon_-:=\on{Av}^r_+\circ u^!:D_!(LG_\bbR\backslash\Gr)\ra D(K(\calK)\backslash\Gr)\]
defines a right adjoint of $\Upsilon_+$.
\begin{proposition}
The functor $\Upsilon_-$ is the inverse equivalence of 
$\Upsilon_+$.
\end{proposition}}

\quash{
\begin{lemma}
The left adjoint functor of $\on{For}_-$ exists on the full subcategory 
$\on{For}_+(D(K(\calK)\backslash\Gr))\subset D(LK_c\backslash\Gr)$. 
More precisely, there exists a functor 
\[\on{Av}_-^l:\on{For}_+(D(K(\calK)\backslash\Gr))\ra D_!(LG_\bbR\backslash\Gr)\]
such that for
any $\mM\in D(K(\calK)\backslash\Gr)$, $\mF\in D_!(LG_\bbR\backslash\Gr)$
we have 
\[\on{Hom}_{D(LK_c\backslash\Gr)}(\on{For}_+(\mM),\on{For}_-(\mF))\is\on{Hom}_{D_!(LG_\bbR\backslash\Gr)}(\on{Av}_-^l\circ\on{For}_+(\mM),\mF).\]
\end{lemma}}

\section{Nearby cycles functors and the Radon transform}\label{nearby cycles and Radon TF}
We study the nearby cycles functors associated to the 
quasi-maps in Section \ref{QMaps} and the Radon transform 
for the real affine Grassmannian.

\subsection{A square of equivalences}
Recall the quasi-map family 
$QM^{(2)}(\bP^1,G,K)_{\bbR,0}\to\bP^1(\bC)$ in Section \ref{real form of QM}.
By 
Proposition \ref{open sub families of quasi maps},
we have the following cartesian diagram
\[\xymatrix{(LK_c\backslash\Gr)\times i\mathbb R_{>0}\ar[r]^j\ar[d]^{f^0}&
QM^{(2)}(\bP^1,G,K)_{\mathbb R,0}|_{i\bbR_{\geq 0}}\ar[d]^{f}&K_c\backslash\Gr^0_\mathbb R\ar[d]^{f_0}\ar[l]_{\ \ \ \ \ \ i}\\
LG_\bbR\backslash\Gr\times i\mathbb R_{>0}\ar[r]^{\bar j}\ar[d]&\Bun_G(\mathbb P^1)_{\bbR,0}\times i\mathbb R_{\geq 0}\ar[d]&G_\bbR(\bbR[t^{-1}])\backslash\Gr^0_\bbR\ar[l]_{\bar i}\ar[d]\\
i\mathbb R_{>0}\ar[r]&i\mathbb R_{\geq 0}&\{0\}\ar[l]}.\]
Define the following nearby cycles functors 
\beq
\Psi:D_c(LK_c\backslash\Gr)\ra D_c(K_c\backslash\Gr_\bbR^0),\ \ \mF\ra\Psi(\mF):=i^!j_!(\mF\boxtimes\bC_{i\bbR_{>0}}),
\eeq
\beq
\Psi_{\bbR}:D_c(LG_\mathbb R\backslash\Gr)\ra D_c(G_\mathbb R(\mathbb R[t^{-1}])\backslash\Gr^0_\mathbb R),\ \ 
\mF\ra \Psi_{\bbR}(\mF)=(\bar i)^!(\bar j)_!(\mF\boxtimes\bC_{i\bbR_{>0}}).
\eeq
We also have the Radon transform 
\beq\label{Radon TF}
\Upsilon_\bbR: D_c(G_\bbR(\mO_\bbR)\backslash\Gr_\bbR)\ra
D_c(G_\bbR(\bbR[t^{-1}])\backslash\Gr_\bbR)
\eeq
given by the restriction to $D(G_\bbR(\mO_\bbR)\backslash\Gr_\bbR)\subset
D_c(G_\bbR\backslash\Gr_\bbR)$
of the 
push-forward $p_!:D_c(G_\bbR\backslash\Gr_\bbR)\ra
D_c(G_\bbR(\bbR[t^{-1}])\backslash\Gr_\bbR)$
along the quotient map 
$p:G_\bbR\backslash\Gr\to G_\bbR(\bbR[t^{-1}])\backslash\Gr_\bbR$.

Here are the main results of this section.
\begin{thm}\label{equ}
The nearby cycles functors and the Radon transform
induce equivalences of categories:
\[\Psi:D_c(K(\calK)\backslash\Gr)\stackrel{\sim}\lra D_c(G_\bbR(\mO_\bbR)\backslash\Gr^0_\bbR),\] 
\[\Psi_{\bbR}:D_!(LG_\mathbb R\backslash\Gr)\stackrel{\sim}\lra D_!(G_\mathbb R(\mathbb R[t^{-1}])\backslash\Gr^0_\mathbb R),\]
\[\Upsilon_\bbR:D_c(G_\bbR(\mO_\bbR)\backslash\Gr_\bbR)\stackrel{\sim}\lra
D_!(G_\bbR(\bbR[t^{-1}])\backslash\Gr_\bbR).\]

\end{thm}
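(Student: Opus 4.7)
I will handle the three equivalences separately, each by a different mechanism. The simplest is $\Psi_\bbR$: the downstairs family $\Bun_\bbG(\bP^1)_{\bbR,0}\times i\bbR_{\geq 0}\to i\bbR_{\geq 0}$ is literally a constant family, so $\bar j$ and $\bar i$ arise by base change from the open and closed inclusions of $i\bbR_{>0}$ and $\{0\}$ into $i\bbR_{\geq 0}$. Since $\bar i^!\bar j_!$ acts on an external product $\mF\boxtimes\bC_{i\bbR_{>0}}$ as $\mF\boxtimes(i_0^!j_!\bC_{i\bbR_{>0}})$, and $i_0^!j_!\bC_{i\bbR_{>0}}\is\bC[-1]$ is a shifted constant sheaf on $\{0\}$, the nearby cycles $\Psi_\bbR$ is just the identity endofunctor of $D_!(\Bun_\bbG(\bP^1)_{\bbR,0})$ up to a canonical shift. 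Combining this with the complex uniformization $LG_\bbR\backslash\Gr\is\Bun_\bbG(\bP^1)_{\bbR,0}$ (Proposition \ref{uniformizations at complex x}) and the real uniformization $G_\bbR(\bbR[t^{-1}])\backslash\Gr^0_\bbR\is\Bun_\bbG(\bP^1)_{\bbR,0}$ (Proposition \ref{uniformizations at real x} via \eqref{Bun_0}) yields the claimed equivalence $\Psi_\bbR$.

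For $\Psi$, I invoke Thom's first isotopy lemma on the upstairs quasi-maps family. Lemma \ref{Whitney for QM_R} guarantees that $\QM^{(2)}(\bP^1,G,K,\infty)_{\bbR,0}|_{i\bbR}\to i\bbR$ is a Thom stratified map with $K_c$-equivariant Whitney strata $\calS^{(2)}$. Restricting to finite type open substacks exhausting $\Gr$ (so that the projection becomes proper on each piece), Thom's first isotopy lemma provides a $K_c$-equivariant stratum-preserving homeomorphism of the family over $i\bbR_{\geq 0}$ with the product of its special fiber and $i\bbR_{\geq 0}$. Taking nearby cycles in the resulting trivial family, exactly as in the first paragraph, yields an equivalence of $\calS^{(2)}$-constructible derived categories on the generic and special fibers. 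After passing to the $K_c$-quotient, Proposition \ref{open sub families of quasi maps} identifies this with the sought equivalence $\Psi:D_c(K(\calK)\backslash\Gr)\stackrel{\sim}\lra D_c(G_\bbR(\mO_\bbR)\backslash\Gr^0_\bbR)$.

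For $\Upsilon_\bbR$, I reproduce the Morse-theoretic argument behind Theorem \ref{AM} in the real setting. The role of the Matsuki flow on $\Gr$ is now played by the $\bbR_{>0}$-dilation on $\Gr_\bbR$ from Lemma \ref{flows on Gr^(2)}(1), whose critical manifolds are the cores $C_\bbR^\lambda$, with stable manifolds $S_\bbR^\lambda$ (the $G_\bbR(\mO_\bbR)$-orbits) and unstable manifolds $T_\bbR^\lambda$ (the $G_\bbR(\bbR[t^{-1}])$-orbits). First I establish the real analog of Lemma \ref{bijection}: a canonical bijection between local systems on $K_c\backslash S_\bbR^\lambda$, on $G_\bbR(\bbR[t^{-1}])\backslash T_\bbR^\lambda$, and on $K_c\backslash C_\bbR^\lambda$, proved via the contractible-fiber description of $K_c\backslash T_\bbR^\lambda\to K_c\backslash C_\bbR^\lambda\to G_\bbR(\bbR[t^{-1}])\backslash T_\bbR^\lambda$ (using that $K_c$ is a maximal compact in the Levi of the real parabolic stabilizer, mirroring the contractibility argument in the proof of Lemma \ref{bijection}). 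Braden-type hyperbolic localization (Lemma \ref{Braden}) together with the transversality of $S_\bbR^\lambda$ and $T_\bbR^\mu$ in $\Gr_\bbR$ then shows that $\Upsilon_\bbR$ carries the standard sheaf $(i_+^\lambda)_*\tau^+$ to a shifted costandard $(j_-^\lambda)_!\tilde\tau_\bbR[d_\lambda]$ -- the real analog of Lemma \ref{surjective}; full-faithfulness and surjectivity of the image then follow exactly as in Lemma \ref{full}, mutatis mutandis. The main obstacle I expect is the bookkeeping of shifts and local system twists in this step, together with verifying that the transversality, smoothness, and contractibility statements underlying Lemmas \ref{bijection}--\ref{full} transfer intact to every component of $\Gr_\bbR$; however, since the $\bbR_{>0}$-dilation acts on every component and hyperbolic localization is strictly local along critical manifolds, the argument extends component-by-component without new conceptual difficulties.
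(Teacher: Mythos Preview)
Your arguments for $\Psi_\bbR$ and $\Upsilon_\bbR$ agree with the paper's (Propositions \ref{Psi_R} and \ref{real RT}): the former uses that the downstairs family is constant, the latter transports the proof of Theorem \ref{AM} to $\Gr_\bbR$ via the $\bbR_{>0}$-flow $\psi_z^1$ and Lemma \ref{bijection 2} in place of the Matsuki flow and Lemma \ref{bijection}.

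For $\Psi$, your route diverges from the paper's and carries a gap. Lemma \ref{Whitney for QM_R} asserts only that the quasi-maps family is Thom stratified over $i\bbR$ with the base stratified as $i\bbR^\times\cup\{0\}$: the total-space strata are $\calS^{(2)}=(\calS_1\times i\bbR^\times)\cup(\calS_2\times\{0\})$, so each stratum lies over a single base stratum and none submerses onto all of $i\bbR_{\geq 0}$. Thom's first isotopy lemma in its standard form requires each stratum to submerse onto the (unstratified) base, so it does not follow from Lemma \ref{Whitney for QM_R} alone that the family is topologically trivial across $0$. Such a trivialization does exist, but the paper explicitly defers it to the sequel \cite{CN2} (see the Remark after Lemma \ref{Whitney for QM_R}); within this paper it is not an available input.

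The paper instead proves that $\Psi$ is an equivalence indirectly. Lemma \ref{Whitney for QM_R} is used only to guarantee constructibility of the output (Lemma \ref{const}). Then, via hyperbolic localization along the flow $\psi_z^3$ on the full quasi-maps family (Lemmas \ref{hyperbolic 1} and \ref{hyperbolic 2}), Proposition \ref{Psi_K} computes $\Psi$ on standard objects: $\Psi(\calS_*^+(\lambda,\tau))\is\calT_*^+(\lambda,\omega)$. Finally a base-change natural transformation $\Upsilon_\bbR\circ\Psi\to\Psi_\bbR\circ\Upsilon$ is constructed and checked to be an isomorphism on standards; since $\Upsilon$, $\Upsilon_\bbR$, $\Psi_\bbR$ are already known to be equivalences, so is $\Psi$. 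This strategy has the additional payoff of proving the commutativity of the square (Theorem \ref{diagram}) in the same stroke, which your direct trivialization argument, even granting the trivialization, would not address.
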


\begin{thm}\label{diagram}
We have a commutative square of equivalences 
\[
\xymatrix{D_c(K(\calK)\backslash\Gr)\ar[r]^{\Psi}\ar[d]^{\Upsilon}&D_c(G_\mathbb R(\mO_\mathbb R)\backslash\Gr^0_\mathbb R)
\ar[d]^{\Upsilon_\bbR}\\
D_!(LG_{\mathbb R}\backslash\Gr)\ar[r]^{\Psi_{\bbR}\ \ }&D_!(G_\mathbb R(\mathbb R[t^{-1}])\backslash\Gr^0_\mathbb R).}
\]
Here $\Upsilon$ is the affine Matsuki correspondence for sheaves.
\end{thm}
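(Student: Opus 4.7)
The plan is to reduce the commutativity of the square to a base-change identity on the quasi-map family, then verify it stratum-by-stratum on standard generators. Let $F \colon \QM^{(2)}(\bbP^1, G, K)_{\bbR, 0}|_{i\bbR_{\geq 0}} \to \Bun_\bbG(\bbP^1)_{\bbR, 0} \times i\bbR_{\geq 0}$ denote the natural projection. Propositions \ref{open sub families of quasi maps} and \ref{an open family} identify $F|_{i\bbR_{>0}}$ with $u \times \id$, where $u \colon LK_c \backslash \Gr \to LG_\bbR \backslash \Gr$ is the quotient defining $\Upsilon$, and $F|_{0}$ is precisely the map $f_0$ underlying $\Upsilon_\bbR$. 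Unwinding the Cartesian squares that define $\Psi$ and $\Psi_\bbR$, and using the factorizations $\bar j \circ F|_{i\bbR_{>0}} = F \circ j$ and $F \circ i = \bar i \circ F|_{0}$, both compositions $\Psi_\bbR \circ \Upsilon$ and $\Upsilon_\bbR \circ \Psi$ applied to $\mF$ become the two sides of the base-change identity
\[
\bar i^! F_!\bigl(j_!(\mF \boxtimes \bC_{i\bbR_{>0}})\bigr) \;\simeq\; F|_{0,!} \, i^!\bigl(j_!(\mF \boxtimes \bC_{i\bbR_{>0}})\bigr).
\]

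Since $F$ is not proper (its fibers are infinite-dimensional loop-group quotients), I cannot invoke proper base change wholesale, and instead will check the identity on the standard generators $\mF = \calS_*^+(\lambda, \tau)$ for $\lambda \in \mL$. Lemma \ref{Whitney for QM_R} supplies a Whitney stratification of the rigidified quasi-map family $\QM^{(2)}(\bbP^1, G, K, \infty)_{\bbR, 0}|_{i\bbR}$ which restricts on the generic fiber to $\{\Omega K_c \backslash \mO_K^\lambda\}$, on the special fiber to $\{S_\bbR^\lambda\}$, and is Thom-stratified over $i\bbR$. This guarantees that nearby cycles commute with $*$-extension from strata, so $\Psi(\calS_*^+(\lambda, \tau))$ becomes, up to a shift and local-system twist, the standard sheaf on $S_\bbR^\lambda \subset \Gr_\bbR^0$. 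A dual analysis on the Bun family shows that $\Psi_\bbR$ sends the costandard sheaf $\calS_!^-(\lambda, \tilde\tau)$ appearing in the image of $\Upsilon$ to a costandard sheaf on $T_\bbR^\lambda$. On the Radon side, the $\bbR_{>0}$-action on $\Gr_\bbR$ plays the role of the Matsuki flow; a Morse-theoretic calculation parallel to Lemma \ref{surjective} identifies $\Upsilon_\bbR$ as sending the standard sheaf on $S_\bbR^\lambda$ to a costandard sheaf on $T_\bbR^\lambda$ with an explicit local-system twist.

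Combining these three ingredients, both compositions applied to $\calS_*^+(\lambda, \tau)$ produce a costandard sheaf on $T_\bbR^\lambda$, shifted by the same $d_\lambda$ on each side, whose monodromy comes from tensor products of orientation sheaves along the smooth maps $p^\lambda, \iota_{\pm}^\lambda$ and dualising complexes of the nearby cycles. The main obstacle will be to canonically identify these two local-system twists. The cleanest route is via the $K_c$-equivariant topological trivialization of the quasi-map family alluded to in the remark following Lemma \ref{Whitney for QM_R}: such a trivialization intertwines the generic and special stratifications together with their normal geometries, and therefore forces the orientation sheaves computed on either fiber to agree canonically, completing the identification and the proof.
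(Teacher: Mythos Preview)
Your overall architecture matches the paper's: set up the base-change comparison map $(f_0)_! i^! \to \bar i^! f_!$ (hence a natural transformation $\Upsilon_\bbR\circ\Psi \Rightarrow \Psi_\bbR\circ\Upsilon$), then verify it is an isomorphism on the standard generators $\calS_*^+(\lambda,\tau)$. Two points of divergence are worth noting.

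First, the paper does \emph{not} compute $\Psi(\calS_*^+(\lambda,\tau))$ by arguing that Thom stratification forces nearby cycles to commute with $*$-extension from strata. Instead it uses hyperbolic localization: the $\bbR_{>0}$-flow $\psi_z^3$ on the rigidified quasi-map family has $C_\bbR^\lambda$ as critical loci, $S_\bbR^\lambda$ as stable manifolds, and the family $\tilde T_\bbR^\lambda$ (specializing to $\Omega K_c\backslash\mO_\bbR^\lambda$ generically and $T_\bbR^\lambda$ at $0$) as unstable manifolds. A Braden-type identity then yields $(c_\mu^+)_*(s_\mu^+)^!\Psi(\mF)\simeq (d_\mu)_!(t_\mu)^*\mF$, and feeding in $\mF=\calS_*^+(\lambda,\tau)$ plus one more application of Braden for the Matsuki flow on the generic fiber gives $\Psi(\calS_*^+(\lambda,\tau))\simeq\calT_*^+(\lambda,\omega)$ with no shift or twist (Proposition~\ref{Psi_K}). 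The Thom property of Lemma~\ref{Whitney for QM_R} is used only for constructibility (Lemma~\ref{const}).

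Second, your ``main obstacle'' of canonically identifying the two local-system twists is not an obstacle at all in the paper's bookkeeping. The same local system $\mL_\lambda$ of \eqref{L_lambda} appears in the computation of $\Upsilon$ (Lemma~\ref{surjective}) and in the parallel computation of $\Upsilon_\bbR$ (Proposition~\ref{real RT}), the latter transported through the explicit $K_c$-equivariant isomorphism $k_\lambda:\Omega K_c\backslash\mO_c^\lambda\simeq C_\bbR^\lambda$. Since $\Psi$ and $\Psi_\bbR$ introduce no further twist (Propositions~\ref{Psi_K} and \ref{Psi_R}), both compositions land on $\calT_!^-(\lambda,\omega\otimes\mL_\lambda)[d_\lambda]$, and the already-constructed natural transformation supplies the canonical identification. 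Invoking the $K_c$-equivariant topological trivialization of the family (a result of the sequel \cite{CN2}) is unnecessary and would make the argument circularly dependent on later work.
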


The rest of the section is devoted to the proof of Theorem \ref{equ} and Theorem \ref{diagram}.

\subsection{Images of standard sheaves under $\Psi$}
We begin with the following constructibility result for $\Psi$.
\begin{lemma}\label{const}
We have $\Psi(\mF)\in D_c(G_\bbR(\mO_\bbR)\backslash\Gr^0_\bbR)$
for any $\mF\in D_c(K(\calK)\backslash\Gr)$.

\end{lemma}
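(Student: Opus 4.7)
The plan is to reduce the assertion to the Thom stratification property of the rigidified quasi-map family furnished by Lemma \ref{Whitney for QM_R}.

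First, I would lift the nearby cycles functor $\Psi$ to the rigidified family. Using the identification \eqref{QM=QM(infty)/K}, namely $K_c\backslash\QM^{(2)}(\bbP^1,G,K,\infty)_{\bbR,0}|_\bC \cong \QM^{(2)}(\bbP^1,G,K)_{\bbR,0}|_\bC$, together with the factorization identifications \eqref{factorization}, the cartesian square defining $\Psi$ lifts $K_c$-equivariantly to the square
\[
\xymatrix{
\Omega K_c\backslash\Gr\times i\bbR_{>0}\ar[r]^-{\tilde j}\ar[d] & \QM^{(2)}(\bbP^1,G,K,\infty)_{\bbR,0}|_{i\bbR_{\geq 0}}\ar[d] & \Gr^0_\bbR\ar[l]_-{\tilde i}\ar[d]\\
i\bbR_{>0}\ar[r] & i\bbR_{\geq 0} & \{0\}\ar[l]
}
\]
Thus it suffices to prove the analogous constructibility statement for the $K_c$-equivariant nearby cycles functor $\widetilde\Psi := \tilde i^!\tilde j_!(-\boxtimes \bC_{i\bbR_{>0}})$ on this rigidified family.

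Second, I would pull $\mF \in D_c(K(\calK)\backslash\Gr)$ back along the quotient map $\Omega K_c\backslash\Gr \to LK_c\backslash\Gr$ to obtain an $\Omega K_c$-equivariant sheaf $\tilde\mF$ on $\Gr$. By the very definition of $D_c(K(\calK)\backslash\Gr)$, the sheaf $\tilde\mF$ is constructible with respect to the stratification $\calS_1 = \{\Omega K_c\backslash\mO_K^\lambda\}_{\lambda\in\mL}$ of $\Omega K_c\backslash\Gr^0$, and it is moreover supported on this substack since the $\mO_K^\lambda$ for $\lambda\in\mL$ exhaust all $K(\calK)$-orbits.

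Third, I would invoke the Thom stratification property. By Lemma \ref{Whitney for QM_R}, the stratification $\calS^{(2)} = \calS_1\times i\bbR^\times \cup \calS_2\times \{0\}$ of $\QM^{(2)}(\bbP^1,G,K,\infty)_{\bbR,0}|_{i\bbR}$ is Whitney and the projection to $i\bbR$ is Thom stratified, where $\calS_2 = \{S_\bbR^\lambda\}_{\lambda\in\mL}$ is the stratification of $\Gr^0_\bbR$. The standard theory of nearby cycles for Thom stratified maps then guarantees that $\widetilde\Psi(\tilde\mF)$ is constructible with respect to $\calS_2$. Descending the $K_c$-equivariant result to $K_c\backslash\Gr^0_\bbR$ produces a complex constructible with respect to $\{K_c\backslash S_\bbR^\lambda\}_{\lambda\in\mL}$, which is by definition an object of $D_c(G_\bbR(\mO_\bbR)\backslash\Gr^0_\bbR)$.

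The main obstacle is bookkeeping rather than substance: one must verify that the stacky functor $\Psi$ is compatible with its lift $\widetilde\Psi$ under the $K_c$-descent (this is routine base change since the quotient map is a $K_c$-torsor) and confirm that the strata of $\calS_1, \calS_2$ match the intended orbit stratifications after taking $K_c$-quotients. Once these identifications are in place, Lemma \ref{Whitney for QM_R} does all the geometric work, and the constructibility of $\Psi(\mF)$ follows at once.
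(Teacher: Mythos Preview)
Your proposal is correct and follows essentially the same approach as the paper's proof: both lift to the rigidified quasi-map family via \eqref{QM=QM(infty)/K} and \eqref{factorization}, invoke Lemma~\ref{Whitney for QM_R} to get the Thom stratified map, apply the standard constructibility of nearby cycles for Thom maps, and descend along the $K_c$-quotient. The paper's write-up is slightly more compressed, but the logical structure is identical.
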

\begin{proof}
Consider the stratification of $\calS_1$ of 
$\Omega K_c\backslash\Gr$ with strata 
the $\Omega K_c$-quotients of $K(\calK)$-orbits.
By Lemma \ref{Whitney for QM_R}, 
the pull-back of $\calS_1$
under the projection map 
 \[\QM^{(2)}(\bbP^1,G,K,\infty)_{\bbR,0}|_{i\bbR_{>0}}\is \Omega K_c\backslash\Gr\times i\bbR_{>0}\ra
\Omega K_c\backslash\Gr\] 
together with the 
$G_\bbR(\mO_\bbR)$-orbits stratification of
$\Gr_\bbR^0\is\QM^{(2)}(\bbP^1,G,K)_{\bbR,0}|_0$ 
forms a Whitney stratification of $\QM^{(2)}(\bbP^1,G,K)_{\bbR,0}|_{i\bbR_{\geq 0}}$.
Moreover, 
the map 
\[\QM^{(2)}(\bbP^1,G,K,\infty)_{\bbR,0}|_{i\bbR_{\geq 0}}\ra i\bbR_{\geq 0}\] is 
Thom-stratified with respect to 
the above stratification of $\QM^{(2)}(\bbP^1,G,K,\infty)_{\bbR,0}|_{i\bbR_{\geq 0}}$
and the stratification 
$i\bbR_{\geq 0}=i\bbR_{>0}\cup\{0\}$. 
Since \[K_c\backslash\QM^{(2)}(\bbP^1,G,K,\infty)_{\bbR,0}|_{i\bbR_{\geq 0}}\is \QM^{(2)}(\bbP^1,G,K)_{\bbR,0}|_{i\bbR_{\geq 0}}\] as stacks over $i\bbR_{\geq 0}$ (see (\ref{QM=QM(infty)/K})), the nearby cycles functor $\Psi$ takes 
$\{LK_c\backslash\mO_K^\lambda\}_{\lambda\in\mL}$-constructible complexes on 
$LK_c\backslash\Gr$ to $\{K_c\backslash S_\bbR^\lambda\}_{\lambda\in\mL}$-constructible complexes on $K_c\backslash\Gr_\bbR^0$. The lemma follows.

\end{proof}

By the lemma above, the nearby cycles functor $\Psi$ restricts to a functor 
\[\Psi:D_c(K(\calK)\backslash\Gr)\ra D_c(G_\bbR(\mO_\bbR)\backslash\Gr_\bbR^0).\]
We shall show that $\Psi$ sends standard sheaves to standard sheaves.
Recall the flow \[\psi_z^3:\QM^{(2)}(\bbP^1,G,,X,\infty)_\bbR\ra\QM^{(2)}(\bbP^1,G,X,\infty)_\bbR\] in \S\ref{flows}.
For $\lambda\in\Lambda_S^+$, we have the 
critical manifold $C_\bbR^\lambda$, the stable manifold 
$S_\bbR^\lambda$, and the unstable manifold $\tilde T_\bbR^\lambda$.
We write 
\beqn 
s_\lambda^+:S_\bbR^\lambda\ra\QM^{(2)}(\bbP^1,G,K,\infty)_\bbR,\ \ \ \ 
\tilde t_\lambda:\tilde T_\bbR^\lambda\ra\QM^{(2)}(\bbP^1,G,K,\infty)_\bbR
\eeqn
for the inclusion maps and we write 
\beqn
c_\lambda^+:S_\bbR^\lambda\ra C_\bbR^\lambda,\ \ \ \ \tilde d_\lambda:\tilde T_\bbR^\lambda\ra C_\bbR^\lambda
\eeqn
for the contraction maps. 
Note that all the  
maps above are $K_c$-equivalent with respect to natural $K_c$-actions.
The following lemma follows from a topological version of Braden's theorem,
see \cite[Theorem 9.2]{N2}.
\begin{lemma}\label{hyperbolic 1}
For every $\mF\in D_c(\QM^{(2)}(\bbP^1,G,K,\infty)_\bbR)$ which is $\bbR_{>0}$-constructible 
with respect to the flow $\psi_z$, we have 
\[(c^+_\lambda)_*(s^+_\lambda)^!\mF\is(\tilde d_\lambda)_!(\tilde t_\lambda)^*\mF.\]
\end{lemma}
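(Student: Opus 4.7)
The plan is to recognize this as the topological version of Braden's hyperbolic localization theorem, applied to the $\bbR_{>0}$-action $\psi_z^3$ on $Y := \QM^{(2)}(\bbP^1,G,K,\infty)_\bbR$, in the precise form established in \cite[Theorem 9.2]{N2}. The reference essentially dictates the argument: what remains is to match the data and check the hypotheses.

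First I would confirm the geometric input. By Lemma \ref{properties of flows}(3)(4), the flow $\psi_z^3$ on $Y$ has $C_\bbR^\lambda$ as a union of critical components, with attracting (stable) manifold $S_\bbR^\lambda \subset \Gr_\bbR$ and repelling (unstable) manifold $\tilde T_\bbR^\lambda$. The contractions $c_\lambda^+ : S_\bbR^\lambda \to C_\bbR^\lambda$ and $\tilde d_\lambda : \tilde T_\bbR^\lambda \to C_\bbR^\lambda$ are the maps sending a point to its limit under $\psi_z^3$ (taking $z$ to the appropriate endpoint of $\bbR_{>0}$). This is precisely the configuration of attracting/fixed/repelling loci to which hyperbolic localization applies, with $s_\lambda^+$ and $\tilde t_\lambda$ the corresponding inclusions into $Y$.

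Next, I would invoke the topological Braden theorem directly. The hypothesis imposed on $\mF$ --- $\bbR_{>0}$-constructibility along $\psi_z$ --- is exactly the monodromicity condition that makes hyperbolic localization work: it guarantees that the natural arrow between the two functors is an isomorphism. Thus \cite[Theorem 9.2]{N2} produces the canonical isomorphism
\[
(c_\lambda^+)_* (s_\lambda^+)^! \mF \is (\tilde d_\lambda)_! (\tilde t_\lambda)^* \mF,
\]
functorial in $\mF$.

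The only obstacle worth flagging is that $Y$ is an ind-stack of ind-finite type rather than a finite-dimensional manifold, whereas Braden's theorem in its cleanest form is stated for finite-dimensional spaces. The remedy is standard: using Lemma \ref{Whitney for QM_R}, write $Y$ as an increasing union of $\psi_z^3$-stable, finite-dimensional locally closed Whitney-stratified substacks; any $\bbR_{>0}$-constructible $\mF$ is supported on the closure of finitely many $S_\bbR^\lambda$, hence on such a finite-dimensional piece; apply the theorem there; and check that the $K_c$-equivariance of every map in sight lets the resulting isomorphism descend to the $K_c$-quotient. Each of these checks is routine once the stratified picture of Lemma \ref{Whitney for QM_R} is in place.
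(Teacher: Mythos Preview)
Your proposal is correct and matches the paper's approach exactly: the paper does not give a proof but simply records that the lemma follows from the topological Braden theorem \cite[Theorem 9.2]{N2}. Your additional remarks on matching the attracting/repelling data via Lemma~\ref{properties of flows} and on the passage to finite-dimensional pieces are reasonable elaborations, though note that $\QM^{(2)}(\bbP^1,G,K,\infty)_\bbR$ is an ind-\emph{scheme} rather than an ind-stack, so the $K_c$-descent step you mention at the end is not needed here.
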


Recall that, by Lemma \ref{properties of flows}, we have isomorphisms 
$\tilde T_\bbR^\lambda|_i\is\Omega K_c\backslash\mO_\bbR^\lambda$, $\tilde T_\bbR^\lambda|_0\is T_\bbR^\lambda$,
for $\lambda\in\mL$ and we write 
\beqn
s_\lambda^-:T_\bbR^\lambda\ra\QM^{(2)}(\bbP^1,G,K,\infty)_\bbR,\ \ \ 
t_\lambda:\Omega K_c\backslash\mO_\bbR^\lambda\ra\QM^{(2)}(\bbP^1,G,K,\infty)_\bbR
\eeqn
for the restriction of $\tilde t_\lambda$ and 
\beqn
c_\lambda^-:T_\bbR^\lambda\ra C_\bbR^\lambda,\ \ \ 
d_\lambda:\Omega K_c\backslash\mO_\bbR^\lambda\ra C_\bbR^\lambda
\eeqn
for the restriction of the contractions $\tilde d_\lambda$. 

\begin{lemma}\label{hyperbolic 2}
For every $\mF\in D_c(\Omega K_c\backslash\Gr)$ which is $\bbR_{>0}$-constructible 
with respect to the flow $\psi_z$, we have 
\[(c^+_\lambda)_*(s^+_\lambda)^!\Psi(\mF)\is(d_\lambda)_!(t_\lambda)^*\mF,\ \ \ \ \text{if\ }\lambda\in\mL.\]
\end{lemma}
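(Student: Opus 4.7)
The plan is to deduce Lemma \ref{hyperbolic 2} as a consequence of the Braden-type Lemma \ref{hyperbolic 1} applied to a well-chosen lift of $\mF$ to the ambient rigidified family. First I would lift $\mF$ to the complex $\tilde\mF := j_!(\mF\boxtimes \bC_{i\bbR_{>0}})$, viewed as a constructible sheaf on the whole space $\QM^{(2)}(\bbP^1,G,K,\infty)_\bbR$ by extension by zero outside $\QM^{(2)}(\bbP^1,G,K,\infty)_{\bbR,0}|_{i\bbR_{\geq 0}}$. By the very definition of $\Psi$ we have $\Psi(\mF) = i^!\tilde\mF$, and since the inclusion $s_\lambda^+:S_\bbR^\lambda\hookrightarrow \QM^{(2)}(\bbP^1,G,K,\infty)_\bbR$ factors through the special fiber $\Gr_\bbR^0$, this identifies the left-hand side of Lemma \ref{hyperbolic 1} applied to $\tilde\mF$ with the left-hand side of Lemma \ref{hyperbolic 2}.

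Next I would verify the hypothesis of Lemma \ref{hyperbolic 1}, namely that $\tilde\mF$ is $\bbR_{>0}$-constructible with respect to the flow $\psi_z^3$. The key observation is that the product decomposition $\QM^{(2)}(\bbP^1,G,K,\infty)_{\bbR,0}|_{i\bbR_{>0}}\is \Omega K_c\backslash\Gr\times i\bbR_{>0}$ from Proposition \ref{open sub families of quasi maps} and \eqref{factorization} intertwines $\psi_z^3$ with multiplication by $z$ on the $i\bbR_{>0}$ factor (since $\psi_z^3$ covers multiplication by $z\in\bbR_{>0}$ on $\bbP^1$). Therefore $\mF\boxtimes \bC_{i\bbR_{>0}}$ is manifestly $\bbR_{>0}$-invariant on the open part, and its $j_!$-extension is $\bbR_{>0}$-constructible on the total space, with the Whitney-Thom stratified structure furnished by Lemma \ref{Whitney for QM_R}.

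Applying Lemma \ref{hyperbolic 1} then yields
\[
(c^+_\lambda)_*(s^+_\lambda)^!\tilde\mF \is (\tilde d_\lambda)_!(\tilde t_\lambda)^*\tilde\mF.
\]
For the right-hand side, since $\tilde\mF$ vanishes on $\QM^{(2)}(\bbP^1,G,K,\infty)_\bbR|_0$, the support of $(\tilde t_\lambda)^*\tilde\mF$ is contained in $\tilde T_\bbR^\lambda\cap \QM^{(2)}(\bbP^1,G,K,\infty)_{\bbR,0}|_{i\bbR_{>0}}$. By Lemma \ref{properties of flows}(4), for $\lambda\in\mL$ this intersection is identified via the flow with $\Omega K_c\backslash\mO_\bbR^\lambda\times i\bbR_{>0}$, and under this identification the restriction of $\tilde t_\lambda$ matches the inclusion $t_\lambda$ along the first factor, while the contraction $\tilde d_\lambda$ factors as $d_\lambda\circ\pr_1$. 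Hence $(\tilde t_\lambda)^*\tilde\mF$ equals the $!$-extension from $\Omega K_c\backslash\mO_\bbR^\lambda\times i\bbR_{>0}$ of $(t_\lambda)^*\mF\boxtimes \bC_{i\bbR_{>0}}$, and the projection formula along $\pr_1$ delivers $(\tilde d_\lambda)_!(\tilde t_\lambda)^*\tilde\mF \is (d_\lambda)_!(t_\lambda)^*\mF$ modulo the normalization accounting for the cohomology of the ray $i\bbR_{>0}$ implicit in the convention for $\Psi$.

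The main obstacle will be the careful tracking of the flow trivialization on $\tilde T_\bbR^\lambda|_{i\bbR_{>0}}$, so that $\tilde d_\lambda$ restricted to the open part genuinely factors as $d_\lambda\circ\pr_1$, together with the verification that Lemma \ref{hyperbolic 1} applies in the relevant stratified setting; the Thom-stratified structure of Lemma \ref{Whitney for QM_R} is essential here to guarantee that extension by zero and the flow interact compatibly with all stratifications in play.
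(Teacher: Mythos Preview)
Your approach is correct and is precisely the argument the paper invokes (its proof consists only of the citation ``Same argument as in \cite[Corollary 9.2]{N2}''): lift to $\tilde\mF=j_!(\mF\boxtimes\bC_{i\bbR_{>0}})$, apply the Braden-type Lemma~\ref{hyperbolic 1}, and identify both sides. Two remarks on your flagged obstacles: the product trivialization really does intertwine $\psi_z^3$ with pure translation on the $i\bbR_{>0}$ factor, since the coordinate $t_x=(z-x)/(z+x)$ from Lemma~\ref{real forms of various loop groups} satisfies $t_{sx}\circ a_s=t_x$, so $\tilde d_\lambda|_{i\bbR_{>0}}=d_\lambda\circ\pr_1$ holds on the nose; and the $[-1]$ from $H^*_c(i\bbR_{>0})$ is genuine but is the same uniform shift that the convention $\Psi=i^!j_!$ carries in Proposition~\ref{Psi_R}, so it cancels across the square of Theorem~\ref{diagram}.
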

\begin{proof}
Same argument as in \cite[Corollary 9.2]{N2}
\end{proof}

We write 
$
k_\lambda:\Omega K_c\backslash\mO_c^\lambda\ra C_\bbR^\lambda$
for the restriction of $d_\lambda$ and 
$
p_\lambda: T_\bbR^\lambda\ra G_\bbR(\bbR[t^{-1}])\backslash T_\bbR^\lambda$ 
for the natural quotient map.

\quash{
We have the following diagrams
\beq
\xymatrix{C_\bbR^\lambda&T_\bbR^\lambda\ar[d]^{c_\lambda}\ar[l]
\\ S_\bbR^\lambda\ar[r]^{}\ar[u]&\Gr_\bbR}\ \ \ \ \ 
\xymatrix{\Omega K_c\backslash\mO_c^\lambda\ar[r]^{}\ar[d]^{k_\lambda}&\Omega K_c\backslash\mO_\bbR^\lambda\ar[d]^{u}
\\ C_\bbR^\lambda\ar[r]^{j_-^\lambda\ \ \ }&\Gr_\bbR}\ \ \ \ 
\xymatrix{T_\bbR^\lambda\ar[r]^{i_-^\lambda\ \ \ }\ar[d]^{\phi_-^\lambda}&
\Gr_\bbR\ar[d]^{\phi_-}
\\ G_\bbR(\bbR[t^{-1}])\backslash T_\bbR^\lambda\ar[r]^{}&G_\bbR(\bbR[t^{-1}])\backslash\Gr_\bbR}
\eeq}

\begin{lemma}\label{bijection 2}
The map $k_\lambda:\Omega K_c\backslash\mO_c^\lambda\ra C_\bbR^\lambda$ is a $K_c$-equivariant isomorphism. 
There is a bijection between isomorphism classes of 
$K_c$-equivariant local systems $\omega^+$ on $S_\bbR^\lambda$,
$K_c$-equivariant local systems $\omega^-$ on $T_\bbR^\lambda$, 
$K_c$-equivariant local systems $\omega$ on $C_\bbR^\lambda$, 
$K_c$-equivariant local systems $\tau$ on $\Omega K_c\backslash\mO_c^\lambda$, 
and local system $\omega_\bbR$ on $G_\bbR(\bbR[t^{-1}])\backslash T_\bbR^\lambda$,
characterizing by the property that 
$\omega^\pm\is(c_\lambda^\pm)^*\omega$, $
\tau\is(k_\lambda)^*\omega$
, and $(p_\lambda)^*\omega_\bbR\is (c_\lambda^-)^*\omega$
\end{lemma}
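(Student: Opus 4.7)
The plan proceeds in three movements. First, show that $k_\lambda$ is a $K_c$-equivariant isomorphism via a stabilizer calculation. Second, use that $c_\lambda^\pm$ are $K_c$-equivariant contracting bundles to collapse four of the five categories onto $K_c$-equivariant local systems on $C_\bbR^\lambda$. Third, match the last category by computing the component group of the infinite-dimensional stabilizer.

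For the isomorphism $k_\lambda$, I would exploit the semidirect product $LK_c = \Omega K_c \rtimes K_c$, split by the inclusion of constant loops. Writing $\mO_c^\lambda = LK_c\cdot t^\lambda$, this yields
\[
\Omega K_c\backslash \mO_c^\lambda \;\cong\; K_c\big/\bigl(K_c\cap \on{stab}_{LK_c}(t^\lambda)\bigr),
\]
and a direct calculation using $\on{stab}_G(t^\lambda)=P^\lambda$ identifies the right-hand stabilizer with $K_c\cap P_\bbR^\lambda$. On the other side, the proof of Lemma~\ref{Gr^0_R} gives $C_\bbR^\lambda = K_c\cdot t^\lambda \cong K_c/(K_c\cap P_\bbR^\lambda)$. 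Since $k_\lambda$ is $K_c$-equivariant (being the restriction of the $K_c$-equivariant contraction $\tilde d_\lambda$ from Lemma~\ref{properties of flows}), it suffices to check the basepoint relation $[t^\lambda]\mapsto t^\lambda$. This holds because $t^\lambda$ corresponds to constant cocharacter data in the quasi-map family, which the base-change flow $\psi_z$ leaves invariant as $z \to 0$ apart from translating $x$ along $i\bbR_{\geq 0}$ from $i$ to $0$.

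For the second movement, the maps $c_\lambda^\pm: S_\bbR^\lambda, T_\bbR^\lambda \to C_\bbR^\lambda$ are $K_c$-equivariant real vector bundles (the stable/unstable directions for $\psi_z^1|_{\Gr_\bbR}$, see Section~\ref{real affine Gr}), so their fibers are contractible and pullback is an equivalence on $K_c$-equivariant local systems. Combined with the isomorphism $k_\lambda$ just established, this identifies the three categories of $\omega^+$, $\omega^-$, and $\tau$ with the category of $K_c$-equivariant local systems on $C_\bbR^\lambda$, which by transitivity of the $K_c$-action is $\textup{Rep}(\pi_0(K_c\cap P_\bbR^\lambda))$.

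The main obstacle is the remaining category. Setting $H_\bbR := \on{stab}_{G_\bbR(\bbR[t^{-1}])}(t^\lambda)$, the quotient stack is $BH_\bbR$, so its local systems are representations of $\pi_0(H_\bbR)$. I expect to show $\pi_0(H_\bbR)\cong \pi_0(K_c\cap P_\bbR^\lambda)$ via evaluation at $\infty$: the homomorphism $G_\bbR(\bbR[t^{-1}])\to G_\bbR$ restricts on $H_\bbR$ to a surjection $H_\bbR \twoheadrightarrow P_\bbR^\lambda$ whose kernel lies in the congruence subgroup $G_\bbR(\bbR[t^{-1}])_1 = \ker(\on{ev}_\infty)$, a contractible pro-unipotent group. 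This yields $\pi_0(H_\bbR) = \pi_0(P_\bbR^\lambda)$, and maximal compactness of $K_c\cap P_\bbR^\lambda$ inside $P_\bbR^\lambda$ supplies $\pi_0(P_\bbR^\lambda) = \pi_0(K_c\cap P_\bbR^\lambda)$. Chasing through the diagram then verifies the characterizing relation $(p_\lambda)^*\omega_\bbR \cong (c_\lambda^-)^*\omega$, completing the bijection.
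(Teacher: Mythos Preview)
Your proposal is correct and follows the same strategy as the paper's proof. Both identify each side of $k_\lambda$ with the homogeneous space $K_c/K_c(\lambda)$ and then invoke contractibility of the fibers of $c_\lambda^\pm$; for the remaining category the paper simply asserts that the relevant quotient of $K_c$ inside $G_\bbR(\bbR[t^{-1}])$ is contractible, whereas you spell out the $\pi_0$-computation via evaluation at $\infty$ --- which is precisely the argument the paper itself uses in the proof of the analogous Lemma~\ref{bijection}, so your expansion is entirely in line with their method.

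One small inaccuracy worth noting: you describe $c_\lambda^-:T_\bbR^\lambda\to C_\bbR^\lambda$ as a real vector bundle, but the unstable set for the $\bbR_{>0}$-flow is only an affine fibration in general, not a linear one. This does not affect the argument, since all you actually use is that the fibers are contractible.
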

\begin{proof}
The first claim follows from the fact that 
$\Omega K_c\backslash\mO_c^\lambda\is C_\bbR^\lambda\is
K_c(\lambda)\backslash K_c$, where $K_c(\lambda)$
is the stabilizer of $\lambda$ in $K_c$, and the $K_c$-equivariant property of $k_\lambda$. 
The second claim follows from the facts that the 
contraction maps $c_\lambda^\pm$ are
$K_c$-equivariant and the 
fibers of $c_\lambda^\pm$ and the quotient
$K_c\backslash G_\bbR(\bbR[t^{-1}])$
are contractible.

\end{proof}

For any $\lambda\in\mL$ and a $K_c$-equivaraint local system $\omega$ on $C_\bbR^\lambda$, 
one has the standard and co-standard sheaves 
\[\calT_*^+(\lambda,\omega):=(s_\lambda^+)_*(\omega^+) \text{\ \ and\ \ \ } 
\calT_!^+(\lambda,\omega):=(s_{\lambda}^+)_!(\omega^+)\]
in $D_c(G_\bbR(\mO_\bbR)\backslash\Gr_\bbR)$.
Recall the standard sheaf
$\calS_*^+(\lambda,\tau)$ in $D_c(K(\calK)\backslash\Gr)$ (see (\ref{standard})).

\begin{proposition}\label{Psi_K}
We have $\Psi(\calS_*^+(\lambda,\tau))\is\calT_*^+(\lambda,\omega)$
\end{proposition}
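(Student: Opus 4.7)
The plan is to apply the hyperbolic localization formula of Lemma~\ref{hyperbolic 2} to identify $\Psi(\calS_*^+(\lambda,\tau))$ via its contracted costalks along each critical manifold $C_\bbR^\mu$, and to match these against the corresponding costalks of $\calT_*^+(\lambda,\omega)$ computed via Braden's theorem (Lemma~\ref{hyperbolic 1}).

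First I would verify the flow-constructibility hypothesis of Lemma~\ref{hyperbolic 2}. Using the factorization $\QM^{(2)}(\bbP^1,G,K,\infty)_{\bbR,0}|_{i\bbR_{>0}} \is \Omega K_c\backslash\Gr \times i\bbR_{>0}$ from Proposition~\ref{open sub families of quasi maps}, the lift of $\calS_*^+(\lambda,\tau)$ to the generic fiber is the external product $\calS_*^+(\lambda,\tau) \boxtimes \bC_{i\bbR_{>0}}$, which is manifestly constant along $\psi_z$-trajectories since the flow acts by scaling on the second factor.

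Next, Lemma~\ref{hyperbolic 2} yields
\[
(c_\mu^+)_*(s_\mu^+)^!\Psi(\calS_*^+(\lambda,\tau)) \is (d_\mu)_!(t_\mu)^*\calS_*^+(\lambda,\tau) \qquad (\mu\in\mL).
\]
I would evaluate the right-hand side via the transversality of Corollary~\ref{transversal} and smooth base change over the Cartesian square built from $t_\mu$ and $i_+^\lambda$, rewriting $(t_\mu)^*(i_+^\lambda)_*(\tau^+)$ as a $*$-pushforward from the intersection $LK_c\backslash(\mO_K^\lambda\cap\mO_\bbR^\mu)$. For $\mu=\lambda$, Proposition~\ref{parametrization}(3) identifies this intersection with the critical stratum $[\mO_c^\lambda]$, and composition with $d_\lambda$ restricts to the isomorphism $k_\lambda:[\mO_c^\lambda]\is C_\bbR^\lambda$ of Lemma~\ref{bijection 2}, producing the local system $\omega$ on $C_\bbR^\lambda$. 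A parallel application of Lemma~\ref{hyperbolic 1} on $\Gr_\bbR^0$ to $\calT_*^+(\lambda,\omega)=(s_\lambda^+)_*(\omega^+)$ yields the same contracted costalk $\omega$ at $C_\bbR^\lambda$.

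Since both $\Psi(\calS_*^+(\lambda,\tau))$ (by Lemma~\ref{const}) and $\calT_*^+(\lambda,\omega)$ are $\{S_\bbR^\mu\}_{\mu\in\mL}$-constructible, hence constant along $\psi_z$-trajectories in $\Gr_\bbR^0$, they are reconstructed from their contracted costalks at the critical manifolds $\{C_\bbR^\mu\}_{\mu\in\mL}$, so it suffices to match these for every $\mu$. For $\mu\neq\lambda$, the matching uses the Whitney stratified structure of the family established in Lemma~\ref{Whitney for QM_R}, which pairs the generic stratum $[\mO_K^\lambda]\times i\bbR_{>0}$ with the special-fiber stratum $S_\bbR^\lambda$, together with base change for the intersection geometry. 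The main obstacle is precisely this off-diagonal case: one must align the intersections $\mO_K^\lambda\cap\mO_\bbR^\mu$ (and their contractions to $C_\bbR^\mu$) with the boundary structure of $\overline{S_\bbR^\lambda}$, and keep track of the orientation shifts arising from the transverse codimensions and from the retractions $c_\mu^+$ and $d_\mu$. This is the infinite-dimensional analog of the costalk comparison at the heart of~\cite{MUV}, and the geometric content is that the quasi-map family provides a stratified correspondence identifying $[\mO_K^\lambda]$ in the generic fiber with $S_\bbR^\lambda$ in the special fiber.
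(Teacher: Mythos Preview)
Your overall framework---reducing to a computation of contracted costalks via Lemma~\ref{hyperbolic 2} and characterizing $\calT_*^+(\lambda,\omega)$ by the vanishing of its $!$-restrictions to all strata $S_\bbR^\mu$ with $\mu\neq\lambda$---is exactly the paper's strategy, and your treatment of the diagonal case $\mu=\lambda$ is essentially the same as the paper's.

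The genuine gap is your off-diagonal argument. You invoke the Whitney structure of the family, base change for intersections, and orientation bookkeeping, but none of this gives a mechanism for the vanishing of $(d_\mu)_!(t_\mu)^*\calS_*^+(\lambda,\tau)$ when $\mu\neq\lambda$. The intersection $\mO_K^\lambda\cap\mO_\bbR^\mu$ is typically nonempty and noncompact, so neither the transversal base change you propose nor any direct appeal to the stratified nearby cycles of Lemma~\ref{Whitney for QM_R} will force $(d_\mu)_!$ of the resulting complex to vanish. You yourself flag this as ``the main obstacle'' without resolving it.

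The paper's key idea is to apply hyperbolic localization a \emph{second} time, now for the Matsuki flow $\phi_t$ on $\Omega K_c\backslash\Gr$ rather than the $\bbR_{>0}$-flow $\psi_z$ on the family. The point is that $t_\mu$ coincides with the inclusion $i_-^\mu$ of the \emph{unstable} manifold for $\phi_t$, and after using Lemmas~\ref{bijection} and~\ref{bijection 2} to trade the contraction $d_\mu$ (landing in $C_\bbR^\mu$) for the Matsuki contraction $\phi_\mu^-$ (landing in $[\mO_c^\mu]$) via the projection formula and the local-system compatibility $(d_\mu)^*\mL\is(\phi_\mu^-)^*\mL'$, one obtains
\[
(\phi_\mu^-)_!(i_-^\mu)^*\calS_*^+(\lambda,\tau)\;\is\;(\phi_\mu^+)_*(i_+^\mu)^!\calS_*^+(\lambda,\tau).
\]
Now the vanishing is immediate: since $\calS_*^+(\lambda,\tau)=(i_+^\lambda)_*(\tau^+)$ is a $*$-extension from the stable stratum $[\mO_K^\lambda]$, its $!$-restriction to any other stable stratum $[\mO_K^\mu]$ is zero. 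This second use of Braden's theorem---switching from the flow that governs the nearby cycles to the flow that governs the $K(\calK)$-orbit stratification---is the missing ingredient in your proposal.
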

\begin{proof}
It suffices to show that 
\[
(a)\ (s_\lambda^+)^!\Psi(\calS_*^+(\lambda,\tau))\is\omega^+
\ \ \ \ \text{and}\ \ \ \ \ (b)
\ (s_\mu^+)^!\Psi(\calS_*^+(\lambda,\tau))=0\ \text{for}\ \mu\neq\lambda.\]
Proof of (a). 
By Lemma \ref{bijection 2}, it suffices to show that 
$(c_\lambda^+)_*(s_\lambda^+)^!\Psi(\calS_*^+(\lambda,\tau))\is\omega$.
But it follows from 
Lemma \ref{hyperbolic 2} and 
Lemma \ref{bijection}, indeed, we have 
\[(c_\lambda^+)_*(s_\lambda^+)^!\Psi(\calS_*^+(\lambda,\tau))\stackrel{\on{Lem} \ref{hyperbolic 2}}\is
(d_\lambda)_!(t_\lambda)^*\calS_*^+(\lambda,\tau)\is 
(k_\lambda)_!(\calS_*^+(\lambda,\tau)|_{\Omega K_c\backslash\mO_c^\lambda})\is(k_\lambda)_!\tau\stackrel{\on{Lem} \ref{bijection}}\is\omega.\]
Proof of (b). 
It suffices to show that 
$\mF_{\mu}:=(c_\mu^+)_*(s_\lambda^+)^!\Psi(\calS_*^+(\lambda,\tau))=0$ for 
$\mu\neq\lambda$. For this, it is enough to show that 
\[H_c^*(\mF_{\mu}\otimes\mL)=0\]
for any $K_c$-equivariant local system $\mL$ on $C_\bbR^\mu$.
Recall the contraction map 
$\phi_\mu^-:\Omega K_c\backslash\mO_\bbR^\mu\ra\Omega K_c\backslash\mO_c^\mu$ coming from the Matsuki flow $\phi_z:\Gr\ra
\Gr$ in \S\ref{Morse flow}.
By Lemma \ref{bijection} and Lemma \ref{bijection 2}, for any 
such $\mL$, there is a $K_c$-equivariant local system 
$\mL'$ on $\Omega K_c\backslash\mO_c^\mu$ satisfying 
\beq\label{compatibility}
(d_\mu)^*\mL\is(\phi_\mu^-)^*\mL'\ \ (\text{as equivaraint local systems on}\ 
\Omega K_c\backslash\mO_\bbR^\mu).
\eeq
Thus we have 
\beq\label{iso 1}
 H_c^*(\mF_{\mu}\otimes\mL)\is H_c^*((c_\mu^+)_*(s_\mu^+)^!\Psi_K(\calS_*^+(\lambda,\tau))\otimes\mL)
\eeq
\[\is
H_c^*((d_\mu)_!(t_\mu)^*\calS_*^+(\lambda,\tau)\otimes\mL)
\ \ (\text{by Lemma}\ \ref{hyperbolic 2})
\]
\[
\is H_c^*((d_\mu)_!((t_\mu)^*\calS_*^+(\lambda,\tau)\otimes (d_\mu)^*\mL))
\ \ (\text{by projection formula})\]
\[
\is H_c^*((\phi_\mu^-)_!((t_\mu)^*\calS_*^+(\lambda,\tau)\otimes (\phi_\mu^-)^*\mL'))\ \ (by\ \  (\ref{compatibility}))
\]
\[
\is H_c^*((\phi_\mu^-)_!(t_\mu)^*\calS_*^+(\lambda,\tau)\otimes \mL')
\ \ (\text{by projection formula})
\]
Note that $t_\mu=i_-^\mu:\Omega K_c\backslash\mO_\bbR^\mu\ra
\Omega K_c\backslash\Gr$ is the embedding for the 
unstable manifold of the Morse flow $\phi_z$ on $\Omega K_c\backslash\Gr$
and we have 
\beq\label{iso 2}
(\phi_\mu^-)_!(t_\mu)^*\calS_*^+(\lambda,\tau)\is 
(\phi_\mu^-)_!(i_-^\mu)^*\calS_*^+(\lambda,\tau)\is
(\phi_\mu^+)_*(i_+^\mu)^!\calS_*^+(\lambda,\tau),
\eeq
here $i_+^\mu:\Omega K_c\backslash\mO_K^\mu\ra\Omega K_c\backslash\Gr $ is the embedding for the stable manifold of the flow $\phi_z$ and 
$\phi_\mu^+:\Omega K_c\backslash\mO_K^\mu\ra\Omega K_c\backslash\mO_c^\mu$ is the contraction map.
Since $\mu\neq\lambda$, we have $(i_+^\mu)^!\calS_*^+(\lambda,\tau)=0$ and 
it implies 
\[H_c^*(\mF_{\mu}\otimes\mL)\stackrel{(\ref{iso 1})}\is H_c^*((\phi_\mu^-)_!(t_\mu)^*\calS_*^+(\lambda,\tau)\otimes \mL')\stackrel{(\ref{iso 2})}\is H_c^*(((\phi_\mu^+)_*(i_+^\mu)^!\calS_*^+(\lambda,\tau)\otimes \mL')=0.\]
Claim (b) follows and the proposition is proved.

\end{proof}

\subsection{The Radon transform}
Recall the flow $\psi_z^1:\Gr_\bbR^{(2)}\to\Gr_\bbR^{(2)}$ 
in  \eqref{flow on Gr_R^2}. By Lemma \ref{flows on Gr^(2)}, it
restricts to a 
flow on the special fiber $\Gr_\bbR\is\Gr_\bbR^{(2)}|_0$ with 
critical manifolds $\bigcup_{\lambda\in\Lambda_S^+} C_\bbR^\lambda$ and 
$S_\bbR^\lambda$, respectively $T_\bbR^\lambda$, is the stable manifold, respectively unstable manifold, of $C_\bbR^\lambda$.
Let 
$t_\lambda^-:G_\bbR(\bbR[t^{-1}])\backslash T_\bbR^\lambda\ra G_\bbR(\bbR[t^{-1}])\backslash\Gr_\bbR 
$ be the natural inclusion map. 
According to Lemma \ref{bijection 2}, for any $K_c$-equivariant local system 
$\omega$ on $C_\bbR^\lambda$,
we have the standard and 
co-standard sheaves
\[\calT_*^-(\lambda,\omega):=(t_\lambda^-)_*(\omega_\bbR) \text{\ \ and\ \ \ } 
\calT_!^-(\lambda,\omega):=(t_{\lambda}^-)_!(\omega_\bbR).\]
Recall the Radon transform \[\Upsilon_\bbR:
D_c(G_\bbR(\mO_\bbR)\backslash\Gr_\bbR)\lra
D_c(G_\bbR(\bbR[t^{-1}])\backslash\Gr_\bbR)\] in \eqref{Radon TF}.
The same argument as in the proof of Theorem \ref{AM}, replacing the 
Matsuki flow $\phi_t:\Gr\to\Gr$ by the
$\bbR_{>0}$-flow $\psi_z^1:\Gr_\bbR\to\Gr_\bbR$ and Lemma \ref{bijection} by Lemma \ref{bijection 2},
gives us:
\begin{prop}\label{real RT}
The Radon transform defines an equivalence of categories 
\[\Upsilon_\bbR:D_c(G_\bbR(\mO_\bbR)\backslash\Gr_\bbR)\stackrel{\sim}\lra
D_!(G_\bbR(\bbR[t^{-1}])\backslash\Gr_\bbR).\]
Moreover, for any $K_c$-equivariant local system $\omega$
on $C_\bbR^\lambda$ we have 
\[\Upsilon_\bbR(\calT_*^+(\lambda,\omega))\is\calT^-_!(\lambda,\omega\otimes\mL_\lambda)[d_\lambda].\]
Here we regard the local system $\mL_\lambda$ in \eqref{L_lambda} as 
a local system on $C_\bbR^\lambda$ via the isomorphism 
$k_\lambda:\Omega G_c\backslash\mO_c^\lambda\is C_\bbR^\lambda$
in Lemma \ref{bijection 2}.

\end{prop}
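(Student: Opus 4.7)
The strategy is to adapt the proof of Theorem \ref{AM} line-by-line, substituting the $\bbR_{>0}$-flow $\psi_z^1$ on $\Gr_\bbR$ for the Matsuki flow $\phi_t$ on $\Gr$; the orbit triple $\{S_\bbR^\lambda,T_\bbR^\lambda,C_\bbR^\lambda\}$ for the triple $\{\mO_K^\lambda,\mO_\bbR^\lambda,\mO_c^\lambda\}$; and Lemma \ref{bijection 2} for Lemma \ref{bijection}. The Morse data match: by Lemma \ref{flows on Gr^(2)} applied to the fiber over $0$, the critical manifolds of $\psi_z^1$ on $\Gr_\bbR$ are the cores $C_\bbR^\lambda$, with stable manifolds $S_\bbR^\lambda$ and unstable manifolds $T_\bbR^\lambda$. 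The pushforward $\Upsilon_\bbR=p_!$ along $p:G_\bbR(\mO_\bbR)\backslash\Gr_\bbR\to G_\bbR(\bbR[t^{-1}])\backslash\Gr_\bbR$ thus plays the role of $\Upsilon=u_!$.

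First I would prove the image-of-standards formula, the analog of Lemma \ref{surjective}. For $\lambda,\mu\in\Lambda_S^+$ I would form the Cartesian diagram analogous to \eqref{key diagram} whose top row is $S_\bbR^\lambda\cap T_\bbR^\mu \hookrightarrow T_\bbR^\mu \to G_\bbR(\bbR[t^{-1}])\backslash T_\bbR^\mu$, and compute the $*$-restriction of $\Upsilon_\bbR(\calT_*^+(\lambda,\omega))$ to the costandard stratum by base change. The key input is that $S_\bbR^\lambda$ and $T_\bbR^\mu$ are transversal in $\Gr_\bbR$: this is inherited from the transversality of $S^\lambda,T^\mu$ on $\Gr$, since both are stable under the anti-involution cutting out $\Gr_\bbR$. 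This lets me rewrite the $*$-restriction as a shifted $!$-restriction tensored with a relative orientation local system on $T_\bbR^\mu$. A Braden-type hyperbolic restriction argument for $\psi_z^1$ (valid because standards $(s_\lambda^+)_*\omega^+$ are smooth along trajectories of the flow, so that Lemma \ref{hyperbolic 1} applies in its $\Gr_\bbR$-avatar) then contracts the computation to the critical manifold $C_\bbR^\mu$, yielding $0$ for $\mu\neq\lambda$ and $(\omega\otimes\mL_\lambda)_\bbR[d_\lambda]$ for $\mu=\lambda$.

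Next I would prove full faithfulness following Lemma \ref{full}: exhaust $G_\bbR(\mO_\bbR)\backslash\Gr_\bbR$ by a filtration by finite-type closed substacks, reduce to generators $\calT_!^+(\lambda,\omega)$ and $\calT_*^+(\mu,\omega')$, and compute the Hom-spaces before and after applying $p_!$. The ``after'' computation again combines transversality of $S_\bbR^\lambda$ and $T_\bbR^\mu$ with hyperbolic restriction for $\psi_z^1$, collapsing both sides to Homs between local systems on $C_\bbR^\lambda$ that match up to the canonical isomorphism $\mL_\lambda\otimes\iota^*\mL_\lambda'''\is\bC$ already established in Lemma \ref{full}. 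Essential surjectivity onto $D_!(G_\bbR(\bbR[t^{-1}])\backslash\Gr_\bbR)$ is then automatic, since standards generate $D_c(G_\bbR(\mO_\bbR)\backslash\Gr_\bbR)$ and their images under $\Upsilon_\bbR$ are (up to shift and twist) the costandards that generate $D_!(G_\bbR(\bbR[t^{-1}])\backslash\Gr_\bbR)$.

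The principal new feature compared to Theorem \ref{AM} is in fact a simplification: in Lemma \ref{bijection 2} the map $k_\lambda:\Omega K_c\backslash\mO_c^\lambda\to C_\bbR^\lambda$ is a $K_c$-equivariant \emph{isomorphism} rather than a smooth map of positive relative dimension like $p^\lambda$ in Lemma \ref{bijection}. Consequently, the relative orientation sheaf $\on{or}_{p^\lambda}^\vee$ entering the definition of $\mL_\lambda$ becomes trivial, and $\mL_\lambda$ on $C_\bbR^\lambda$ is simply obtained by transporting via $k_\lambda^{-1}$ the tensor product of the transverse intersection local system $(\iota_-^\lambda)^*\mL_\lambda'$ and the costalk local system $\mL_\lambda''$. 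The hard part I expect is the careful verification that the cohomological shift $d_\lambda=\on{dim}\Bun_\bbG(\bbP^1)_\bbR-\on{dim}[\mO_K^\lambda]$ reappears correctly in the real computation; this will follow from $\on{dim}(G_\bbR(\bbR[t^{-1}])\backslash\Gr_\bbR)=\on{dim}\Bun_\bbG(\bbP^1)_\bbR$ (Proposition \ref{uniformizations at real x}) together with the matching of codimensions $\on{codim}\,S_\bbR^\lambda=\on{codim}[\mO_K^\lambda]$, but the bookkeeping between $!$- and $*$-extensions, orientation twists, and orbit dimensions is where one has to be most careful.
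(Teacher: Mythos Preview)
Your strategy is exactly the paper's: it proves this proposition in one sentence by saying ``the same argument as in the proof of Theorem~\ref{AM}, replacing the Matsuki flow $\phi_t$ by the $\bbR_{>0}$-flow $\psi_z^1$ and Lemma~\ref{bijection} by Lemma~\ref{bijection 2}.'' Your outlined substitutions and the structure (analog of Lemma~\ref{surjective}, then analog of Lemma~\ref{full}) are precisely what is intended.

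One correction to your final paragraph: the map $k_\lambda$ is not the analog of $p^\lambda$. In the real Radon setting the role of $u^\lambda:[\mO_\bbR^\lambda]\to[\mE^\lambda]$ is played by $G_\bbR\backslash T_\bbR^\lambda\to G_\bbR(\bbR[t^{-1}])\backslash T_\bbR^\lambda$, and its factorization through the core $G_\bbR\backslash C_\bbR^\lambda$ produces a genuine analog of $p^\lambda$ with positive relative dimension (the fibers are $K_c(y')\backslash G_\bbR(\bbR[t^{-1}])(y')$, the very same contractible quotients appearing in the proof of Lemma~\ref{bijection}). So the orientation factor $\on{or}_{p^\lambda}^\vee$ does \emph{not} disappear; rather, the real computation reproduces the same three-factor local system, and $k_\lambda$ is merely the isomorphism used to identify where it lives. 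This is why the statement can simply transport $\mL_\lambda$ via $k_\lambda$ rather than introduce a new twist. Your instinct that the bookkeeping is the delicate point is right; just don't drop that term.
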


\subsection{The functor $\Psi_\bbR$}
Note that, by Proposition \ref{an open family},  the map $LG_\bbR^{(2)}\backslash\Gr^{(2),0}_\bbR\is
\Bun_G(\mathbb P^1)_{\bbR,0}\times i\mathbb R_{}
\ra i\mathbb R_{}$ is isomorphic to a constant family. It implies

\begin{prop}\label{Psi_R}
The nearby cycles functor 
\[\Psi_{\bbR}:D_!(LG_\mathbb R\backslash\Gr)\stackrel{}\lra D_!(G_\mathbb R(\mathbb R[t^{-1}])\backslash\Gr^0_\mathbb R)\]
is a t-exact equivalence (with respect to the natural $t$-structures) 
satisfying 
$\Psi_{\bbR}(\calS_!^-(\lambda,\tau))\is \calT_!^-(\lambda,\omega)$.
\end{prop}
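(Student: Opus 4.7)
The plan is to reduce the computation to the fact, furnished by Proposition \ref{an open family}, that the open family $LG_\bbR^{(2)}\backslash\Gr^{(2),0}_\bbR \to i\bbR$ is isomorphic (over $i\bbR$) to the constant family $\Bun_\bbG(\bbP^1)_{\bbR,0} \times i\bbR \to i\bbR$. Because the ambient family is a product, the nearby cycles operation at $0$ reduces to a tensor with the one-dimensional complex $i_0^! j_!\bbC_{i\bbR_{>0}}$, which is supported in a single cohomological degree. Under the identifications furnished by complex uniformization at $x\in i\bbR_{>0}$ (Proposition \ref{uniformizations at complex x}) and real uniformization at $0$ (Proposition \ref{uniformizations at real x}), we thus obtain a canonical equivalence
\[
\Phi:LG_\bbR\backslash\Gr \is \Bun_\bbG(\bbP^1)_{\bbR,0} \is G_\bbR(\bbR[t^{-1}])\backslash\Gr_\bbR^0,
\]
and the functor $\Psi_\bbR$ is identified (up to the global cohomological shift hidden in the constant factor) with $\Phi_*$. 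This immediately yields that $\Psi_\bbR$ restricts to an equivalence of $D_!$-categories, and that it is t-exact for the natural t-structures since pushforward along an isomorphism is so.

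It remains to identify the image of a costandard sheaf. The orbits match up under $\Phi$: the $LG_\bbR$-orbit $\mO_\bbR^\lambda\subset\Gr$ and the $G_\bbR(\bbR[t^{-1}])$-orbit $T_\bbR^\lambda\subset\Gr_\bbR^0$ arise respectively as the generic and special fibers of the common unstable submanifold $\tilde T_\bbR^\lambda\subset\Gr_\bbR^{(2),0}$ for the $\bbR_{>0}$-flow $\psi_z^1$ (Lemma \ref{flows on Gr^(2)}), and thus descend under the quotient by $LG_\bbR^{(2)}$ to a single locally closed substack of $\Bun_\bbG(\bbP^1)_{\bbR,0}$. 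The local systems align by Lemma \ref{bijection 2}: the local system $\tau$ on $[\mO_c^\lambda]$ yielding $\tau_\bbR$ on $[\mE^\lambda]$ determines, under the isomorphism $k_\lambda:[\mO_c^\lambda]\is C_\bbR^\lambda$, a $K_c$-equivariant local system $\omega$ on $C_\bbR^\lambda$ whose associated $\omega_\bbR$ on $G_\bbR(\bbR[t^{-1}])\backslash T_\bbR^\lambda$ is precisely the image of $\tau_\bbR$ under $\Phi$. Combining this orbit correspondence with the base-change identity $\Psi_\bbR((j_-^\lambda)_!\tau_\bbR) \is (t_\lambda^-)_!\,\Psi_\bbR^{\on{orb}}(\tau_\bbR)$ (a consequence of the constancy of the family and properness of $(j_-^\lambda)_!$-extension inside the finite-type substacks at hand) gives $\Psi_\bbR(\calS_!^-(\lambda,\tau)) \is \calT_!^-(\lambda,\omega)$.

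The proof is largely bookkeeping once Proposition \ref{an open family} is in hand; the only potentially subtle point is confirming that the correspondence of local systems under $\Phi$ matches the two indirect constructions through $[\mO_c^\lambda]$ and $C_\bbR^\lambda$. This compatibility is precisely the content of Lemma \ref{bijection 2} combined with the contractibility arguments used in the proof of Lemma \ref{bijection}, so no further work is required.
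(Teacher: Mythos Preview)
Your proof is correct and follows exactly the paper's approach: the paper's entire argument is the single observation that by Proposition~\ref{an open family} the family $LG_\bbR^{(2)}\backslash\Gr^{(2),0}_\bbR \to i\bbR$ is isomorphic to the constant family $\Bun_\bbG(\bbP^1)_{\bbR,0}\times i\bbR$, from which the proposition is asserted to follow. You supply the details the paper leaves implicit---the orbit matching via the unstable manifolds $\tilde T_\bbR^\lambda$ and the local-system matching via Lemma~\ref{bijection 2}---and your remark about the cohomological shift in $i_0^!j_!\bbC_{i\bbR_{>0}}$ is a genuine point (it sits in degree $1$), though the paper does not address it either.
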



\subsection{Proof of Theorem \ref{equ} and Theorem \ref{diagram}}
It remains to prove that $\Psi:D_c(K(\calK)\backslash\Gr)\ra
D_c(G_\bbR(\mO_\bbR)\backslash\Gr^0_\bbR)$ is an equivalence and 
Theorem \ref{diagram}.
Note that for $\mF\in D_c(K(\calK)\backslash\Gr)$
there is a natural transformation (induced by the natural transformation 
$(f_0)_!i^!\ra(\bar i)^!f_!$)
\beq\label{natural}
\Upsilon_\bbR\circ\Psi(\mF)=(f_0)_!i^!j_!(\mF\boxtimes\bC_{i\bbR_{>0}})\ra (\bar i)^!f_!j_!(\mF\boxtimes\bC_{i\bbR_{>0}})
\is(\bar i)^!(\bar j)_!(f^0)_!(\mF\boxtimes\bC_{i\bbR_{>0}})=
\eeq
\[
=\Psi_{\bbR}\circ\Upsilon(\mF).
\]
Moreover, it follows from Lemma \ref{surjective}, Proposition \ref{Psi_K},  
Proposition \ref{real RT}, and Proposition \ref{Psi_R} that (\ref{natural}) is an isomorphism for
the standard sheaf $\calS_*^+(\lambda,\tau)$. 
Since the category $D_c(K(\calK)\backslash\Gr)$ is generated by 
$\calS_*^+(\lambda,\tau)$, it implies (\ref{natural}) is an isomorphism.
By Theorem \ref{AM}, 
Proposition \ref{real RT}, and Proposition \ref{Psi_R},
the functors $\Psi_\bbR$, $\Upsilon$, and $\Upsilon_\bbR$ are equivalences
and (\ref{natural})
implies $\Psi:D_c(K(\calK)\backslash\Gr)\ra
D_c(G_\bbR(\mO_\bbR)\backslash\Gr^0_\bbR)$ is an equivalence.
This finishes the proof of Theorem \ref{equ} and Theorem \ref{diagram}


\section{Compatibility of Hecke actions}\label{s:hecke}
Recall the derived Satake category $D_c(G(\mO)\backslash \Gr)$ is naturally monoidal  with respect to convolution.
We will write $\calF_1 \star\calF_2$ for the convolution product of $\calF_1, \calF_2\in D_c(G(\mO)\backslash \Gr)$.

Here we enhance the equivalences and commutative square of Theorems~\ref{equ} and~\ref{diagram} to  $D_c(G(\mO)\backslash \Gr)$-modules. 
 Roughly speaking, we will  take advantage of  the natural right actions on the categories involved, whereas the prior Radon transforms were performed on the left.


\subsection{Hecke actions}\label{ss:conv}

First,    the affine Matsuki correspondence for sheaves 
\[
\Upsilon:D_c(K(\calK)\backslash\Gr)  \stackrel{\sim}\lra
D_!(LG_{\mathbb R}\backslash\Gr)
\]
is naturally an equivalence of $D_c(G(\mO)\backslash \Gr)$-modules by convolution on the right. To see this, 
recall $\Upsilon$ is the restriction to $D_c(K(\calK)\backslash\Gr)\subset D_c(LK_\bbR\backslash\Gr)$ of the push-forward
$u_!:D_c(LK_\bbR\backslash\Gr)\to D_c(LG_\bbR\backslash\Gr)$
along the quotient map $u:LK_c\backslash\Gr\ra
LG_\bbR\backslash\Gr$.
We can equip this construction with compatibility with convolution on the right by using the commutative action diagram
\[
\xymatrix{
\ar[d] LK_c\backslash G(\calK) \times^{G(\mO)} \Gr  \ar[r]^-{u \times\id} & 
LG_\bbR\backslash G(\calK) \times^{G(\mO)} \Gr \ar[d] \\
LK_c\backslash\Gr \ar[r]^-u & 
LG_\bbR\backslash\Gr
}
\]
and its natural iterations.

Similarly,
   the Radon equivalence
\[\Upsilon_\bbR:D_c(G_\bbR(\mO_\bbR)\backslash\Gr_\bbR)\stackrel{\sim}\lra
D_!(G_\bbR(\bbR[t^{-1}])\backslash\Gr_\bbR)\]
is naturally an equivalence of $D_c(G_\bbR(\mO_\bbR)\backslash\Gr_\bbR)$-modules by convolution on the right.
 To see this, 
recall $\Upsilon_\bbR$ is
 the restriction to $D_c(G_\bbR(\mO_\bbR)\backslash\Gr_\bbR)\subset
D_c(G_\bbR\backslash\Gr_\bbR)$
of the 
push-forward $p_!:D_c(G_\bbR\backslash\Gr_\bbR)\ra
D_c(G_\bbR(\bbR[t^{-1}])\backslash\Gr_\bbR)$
along the quotient map 
$p:G_\bbR\backslash\Gr\to G_\bbR(\bbR[t^{-1}])\backslash\Gr_\bbR$.
We can equip this construction with compatibility with convolution on the right by using the commutative action diagram
\[
\xymatrix{
\ar[d] G_\bbR\backslash G(\calK_\bbR) \times^{G(\mO_\bbR)} \Gr_\bbR  \ar[r]^-{p \times\id} & 
G_\bbR(\bbR[t^{-1}])\backslash G(\calK_\bbR) \times^{G(\mO_\bbR)} \Gr_\bbR \ar[d] \\
G_\bbR\backslash\Gr_\bbR \ar[r]^-p & 
G_\bbR(\bbR[t^{-1}])\backslash\Gr_\bbR
}
\]
and its natural iterations.


\subsection{From complex to real kernels}\label{ss:kernels}

Following~\cite{N1}, nearby cycles in the real Beilinson-Drinfeld Grassmannian $\Gr^{(2)}_\mathbb R$ 
 over  $i \bbR_{\geq 0}$
gives a  functor
\[m:D_c(G(\calO) \backslash\Gr)\lra D_c(G_\bbR(\mO_\bbR)\backslash\Gr^0_\bbR)
\subset D_c(G_\bbR(\mO_\bbR)\backslash\Gr_\bbR)\]
Namely, by Lemma~\ref{real forms of various loop groups}, 
there is a  canonical diagram of  $G_\bbR$-equivariant maps
  \begin{equation}\label{c to r diag}
\xymatrix{
 \Gr & \ar[l]_-\pi \Gr\times i\bbR_{>0} \is \Gr^{(2)}_\bbR|_{
i\mathbb R_{>0}} \ar@{^(->}[r]^-j &  \Gr^{(2)}_\bbR|_{
i\mathbb R_{\geq 0}} & \ar@{_(->}[l]_-i \Gr^{(2)}_\mathbb R|_{0}\is\Gr_\mathbb R
}
  \end{equation}
where we view $G_\bbR \subset LG^{(2)}_\bbR$ as the constant group-scheme.
One defines $m=i^*j_*\pi^*f_\bbR$ where we write  $f_\bbR:D_c(G(\calO) \backslash\Gr) \to D_c(G_\bbR \backslash\Gr)$ 
for the forgetful functor. 

Note the domain and codomain of $m$ both have natural convolution monoidal structures. 
To equip $m$ with a monoidal structure, we proceed as follows. 

Let $\Gr^{(2)} \tilde\times \Gr^{(2)}$ be  the moduli of $x_1, x_2 \in \bbP^1$, $\calE_1, \calE_2$ $G$-torsors on $\bbP^1$, $\phi$ a trivialization of $\calE_1$ over $\bbP^1 \setminus \{x_1, x_2\}$, and $\alpha$ an isomorphism from $\calE_1$ to $\calE_2$ over  $\bbP^1 \setminus \{x_1, x_2\}$. 
Let $\Gr^{(2)}_\bbR \tilde\times \Gr^{(2)}_\bbR$ be the real form of $\Gr^{(2)} \tilde\times \Gr^{(2)}$ with respect to the twisted conjugation that exchanges $x_1$ and $x_2$. 

Then
there is a  canonical diagram of  $G_\bbR$-equivariant maps
\begin{equation} \label{eq: nearby + conv}
  \xymatrix{
 G(\calK) \times^{G(\calO)} \Gr & \ar[l]_-\pi G(\calK) \times^{G(\calO)} \Gr\times i\bbR_{>0} \is\Gr^{(2)}_\bbR \tilde\times \Gr^{(2)}_\bbR|_{
i\mathbb R_{>0}} \ar@{^(->}[r]^-j &  }
\end{equation}
$$
\xymatrix{
\Gr^{(2)}_\bbR \tilde\times \Gr^{(2)}_\bbR|_{i\mathbb R_{\geq 0}} & \ar@{_(->}[l]_-i 
\Gr^{(2)}_\bbR \tilde\times \Gr^{(2)}_\bbR |_{0}\is   G_\bbR(\calK_\bbR)  \times^{G_\bbR(\calO_\bbR)} \Gr_\mathbb R
}
$$
Moreover, the  convolution maps on the end terms naturally extend to the entire diagram. By standard identities, we arrive at a canonical isomorphism $m(\calF_1 \star \calF_2) \simeq m(\calF_1) \star m(\calF_2)$. By using iterated versions of the above moduli spaces, we may likewise equip $m$ with the associativity constraints of a monoidal structure.


\subsection{Compatibility of actions}\label{ss:actions}

Note we can view the  Radon equivalence  $\Upsilon_\bbR$ as an equivalence of $D_c(G(\calO) \backslash\Gr)$-modules 
via the monoidal functor 
$$
m:D_c(G(\calO) \backslash\Gr)\lra D_c(G_\bbR(\mO_\bbR)\backslash\Gr^0_\bbR)
$$

Now we have the following further compatibility of our constructions.

\begin{thm}\label{conv comp}
Via the monoidal functor
\[m:D_c(G(\calO) \backslash\Gr)\lra D_c(G_\bbR(\mO_\bbR)\backslash\Gr^0_\bbR)
\]
the  equivalences 
\[\Psi:D_c(K(\calK)\backslash\Gr)\stackrel{\sim}\lra D_c(G_\bbR(\mO_\bbR)\backslash\Gr^0_\bbR),\] 
\[\Psi_{\bbR}:D_!(LG_\mathbb R\backslash\Gr)\stackrel{\sim}\lra D_!(G_\mathbb R(\mathbb R[t^{-1}])\backslash\Gr^0_\mathbb R)\]
of Theorem~\ref{equ} and commutative square
\[
\xymatrix{D_c(K(\calK)\backslash\Gr)\ar[r]^{\Psi}\ar[d]^{\Upsilon}&D_c(G_\mathbb R(\mO_\mathbb R)\backslash\Gr^0_\mathbb R)
\ar[d]^{\Upsilon_\bbR}\\
D_!(LG_{\mathbb R}\backslash\Gr)\ar[r]^{\Psi_{\bbR}\ \ }&D_!(G_\mathbb R(\mathbb R[t^{-1}])\backslash\Gr^0_\mathbb R).}
\]
of Theorem~\ref{diagram} are naturally of  $D_c(G(\calO) \backslash\Gr)$-modules.
\end{thm}

\begin{proof}
We will focus on the compatibility for the top row and indicate the moduli spaces needed. We  leave it to the reader to pass to sheaves and apply standard identities.  The  compatibility for the bottom row and entire square can be argued  similarly.

Let $\QM^{(2)}(\bP^1,G,K) \tilde\times \Gr^{(2)}$ be  the moduli of $x_1, x_2 \in \bbP^1$, $\calE_1, \calE_2$ $G$-torsors on $\bbP^1$, $\sigma$ a section of $\calE_1 \times^G G/K$ over $\bbP^1 \setminus \{x_1, x_2\}$, and $\alpha$ an isomorphism from $\calE_1$ to $\calE_2$ over  $\bbP^1 \setminus \{x_1, x_2\}$. 
Let $\QM^{(2)}(\bP^1,G,K)_{\bbR}  \tilde\times \Gr^{(2)}_{\bbR} $ be the real form of $\QM^{(2)}(\bP^1,G,K) \tilde\times \Gr^{(2)}$ with respect to the twisted conjugation that exchanges $x_1$ and $x_2$.

Then
there is a  canonical diagram of  $K_c$-equivariant maps
\begin{equation}\label{eq: nearby + action}
  \xymatrix{
LK_c\backslash G(\calK) \times^{G(\mO)} \Gr  & \ar[l]_-\pi  LK_c\backslash G(\calK) \times^{G(\mO)} \Gr  \times i\bbR_{>0} \is \QM^{(2)}(\bP^1,G,K)_{\bbR}  \tilde\times \Gr^{(2)}_{\bbR}|_{
i\mathbb R_{>0}} \ar@{^(->}[r]^-j &  }
\end{equation}
$$
\xymatrix{
\QM^{(2)}(\bP^1,G,K)_{\bbR}  \tilde\times \Gr^{(2)}_{\bbR}|_{i\mathbb R_{\geq 0}} & \ar@{_(->}[l]_-i 
\QM^{(2)}(\bP^1,G,K)_{\bbR}  \tilde\times \Gr^{(2)}_{\bbR} |_{0}\is   K_c\backslash G_\bbR(\calK_\bbR)  \times^{G_\bbR(\calO_\bbR)} \Gr_\mathbb R
}
$$
Note we could equivalently obtain diagram  \eqref{eq: nearby + action} by taking diagram  \eqref{eq: nearby + conv}
and quotienting by the left action of  the group-scheme $LK^{(2)}_\bbR$.

As with the convolution maps in diagram \eqref{eq: nearby + conv}, the  actions maps on the end terms of diagram  
 \eqref{eq: nearby + action} naturally extend to the entire diagram. By standard identities, we arrive at a canonical isomorphism $\Psi(\calM \star \calF) \simeq \Psi(\calM) \star m(\calF)$. By using iterated versions of the above moduli spaces, we may likewise equip $\Psi$ with the associativity constraints of a module map.
\end{proof}

\appendix
\section{Real analytic stacks}\label{Real stacks}
\subsection{Basic definitions}
Let $\on{RSp}$ be the site of real analytic spaces where the coverings are 
\'etale (=locally biholomorphic) maps $\{S_i\to S\}_{i\in I}$ such that the map $\bigsqcup S_i\to S$ is surjective. A real analytic pre-stack is a functor $\sX:\on{RSp}\to\on{Grpd}$ from 
$\on{RSp}$ to the category of groupoids $\on{Grpd}$ and a real analytic stack is a 
pre-stack which is a sheaf. 
Let $\Gamma\rightrightarrows X$ be a groupoid in real analytic spaces. 
We define $\Gamma\backslash X$ be the stack associated to the pre-stack 
$S\to \{\Gamma(S)\rightrightarrows X(S)\}$.
A morphism $\sX\to\sY$ between real analytic stacks is called representable if
for any morphism from a real analytic space 
$Y\to\sY$, the fiber product $\sX\times_\sY Y$ is representable by a real analytic space. We say that a representable morphism $\sX\to\sY$ has property 
P if it has property P after base change along any morphism 
from a real analytic space.

\subsection{From real algebraic stacks to 
real analytic stacks}
For any $\bbR$-scheme $X$ locally of finite type, its $\bbR$-points  
$X(\bbR)$ is naturally a real analytic space, denoted by $X_\bbR$, and the assignment 
$X\to X_\bbR$ defines a functor from the category of 
$\bbR$-scheme to the category of real analytic spaces.
We are going to extend the above construction to real algebraic stacks.
Let $\sX$ be a real algebraic stack. 
A presentation $f:X\to\sX$ of $\sX$ is called a 
$\bbR$-surjective presentation if it induces a surjective map  
$X(\bbR)\to |\sX(\bbR)|$ on the set of isomorphism classes of objects. 

\begin{lemma}\label{image}
Let $f_1:X_1\to\sX$ and $f_2:X_2\to\sX$ be 
two $\bbR$-surjective presentations of $\sX$.
Let $\Gamma_i=X_i\times_\sX X_i\rightrightarrows X_i$ be the corresponding 
groupoid. 
Then
there is a canonical isomorphism of real analytic stacks 
\[\Gamma_{1,\bbR}\backslash X_{i,\bbR}\is\Gamma_{2,\bbR}\backslash X_{2,\bbR}.\]

\end{lemma}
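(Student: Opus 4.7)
The plan is to exhibit a canonical common refinement of the two presentations and show that both groupoid quotients are Morita equivalent to the groupoid built from it. Form the fiber product $Z := X_1 \times_\sX X_2$, a real algebraic space locally of finite type, with projections $p_i : Z \to X_i$ and composed structure map $g : Z \to \sX$. Since each $f_i$ is a (representable, smooth, surjective) presentation of $\sX$, the projection $p_i$ is the pullback of $f_{3-i}$ and hence is itself representable and smooth, and $g$ is a new presentation of $\sX$. I would work with the groupoid $\Gamma_Z := Z \times_\sX Z \rightrightarrows Z$ and show that $\Gamma_{i,\bbR}\backslash X_{i,\bbR} \is \Gamma_{Z,\bbR}\backslash Z_\bbR$ canonically for $i=1,2$; composing these two isomorphisms yields the lemma.

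First I would check that $p_{i,\bbR} : Z_\bbR \to X_{i,\bbR}$ is surjective on real points, which will also imply that $g$ is $\bbR$-surjective. Given $y \in X_{1,\bbR}$, set $x := f_1(y) \in \sX(\bbR)$; $\bbR$-surjectivity of $f_2$ yields $x_2 \in X_{2,\bbR}$ together with an isomorphism $\alpha : f_1(y) \is f_2(x_2)$ in $\sX(\bbR)$, so the triple $(y, x_2, \alpha) \in Z(\bbR) = Z_\bbR$ is the required lift, and the symmetric argument handles $p_{2,\bbR}$.

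Next I would show that the morphism of groupoids induced by $p_1$ is a Morita equivalence of real analytic groupoids. The key identity is the tautological isomorphism of fiber products over $\sX$,
\[
Z \times_\sX Z \;\is\; Z \times_{X_1} (X_1 \times_\sX X_1) \times_{X_1} Z,
\]
which, since $\bbR$-point functors commute with fiber products of representables, specializes to
\[
\Gamma_{Z,\bbR} \;\is\; Z_\bbR \times_{X_{1,\bbR}} \Gamma_{1,\bbR} \times_{X_{1,\bbR}} Z_\bbR.
\]
Combined with surjectivity of $p_{1,\bbR}$, this is exactly the bi-module condition for Morita equivalence and hence induces an isomorphism of the associated quotient stacks. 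Running the same argument with $p_2$ yields canonical isomorphisms $\Gamma_{1,\bbR}\backslash X_{1,\bbR} \;\is\; \Gamma_{Z,\bbR}\backslash Z_\bbR \;\is\; \Gamma_{2,\bbR}\backslash X_{2,\bbR}$, as desired.

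The main obstacle is the passage from smoothness on the algebraic side to an honest covering in the Grothendieck topology of $\on{RSp}$, whose coverings are \'etale (locally biholomorphic) surjections. Smoothness of $p_i$ only guarantees that $p_{i,\bbR}$ is a submersion of real analytic spaces, so it admits local analytic sections but is typically not itself \'etale; one must therefore refine $X_{i,\bbR}$ by an \'etale cover on which such sections exist, and then check that descent along the bi-module square above produces the desired sheaf-level equivalence. The technical heart of the argument lies in this local-to-global step, verifying that a surjective submersion of real analytic spaces becomes a covering in $\on{RSp}$ after composition with its local sections, and that the Morita data descend coherently to give an isomorphism of the sheafified stacks $\Gamma_{i,\bbR}\backslash X_{i,\bbR}$.
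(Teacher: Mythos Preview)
Your proposal is correct and follows essentially the same approach as the paper: form the fiber product $Y = X_1 \times_\sX X_2$ with its groupoid $\Gamma = Y \times_\sX Y$, and show that $\Gamma_\bbR \backslash Y_\bbR \to \Gamma_{i,\bbR} \backslash X_{i,\bbR}$ is an isomorphism for each $i$. The paper's proof is extremely terse (it simply asserts that ``one can check'' this map is an isomorphism), whereas you have usefully spelled out the $\bbR$-surjectivity of the projections, the Morita-equivalence identity, and the genuine technical point about submersions versus \'etale covers in $\on{RSp}$ that the paper leaves implicit.
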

\begin{proof}
Let $Y=X_1\times_\sX X_2$ and $\Gamma=Y\times_\sX Y$
be the corresponding groupoid. As $X_i$ is a presentation of 
$\sX$ the natural map $Y\to X_i$ is smooth and one can check that 
the natural map $\Gamma_\bbR\backslash Y_\bbR\to\Gamma_{i,\bbR}\backslash X_{i,\bbR}$ is an isomorphism. The lemma follows.

\end{proof}

\begin{definition}
Given a real algebraic stack $\sX$ which admits a $\bbR$-surjective presentation, we define the associated 
real analytic stack to be 
\[\sX_\bbR:=\Gamma_\bbR\backslash X_\bbR\]
where $X\to\sX$ is a $\bbR$-presentation of $\sX$.
\end{definition}
By the lemma above $\sX_\bbR$ is well-defined and the assignment 
$\sX\to\sX_\bbR$ defines a functor from the 2-category of 
real algebraic stacks which admit $\bbR$-presentations to the 2-category of 
real analytic stacks.

\begin{example}\label{BG}
Let $X$ be a $\bbR$-scheme and $G$ be an algebraic group over $\bbR$ acting 
on $X$. Consider the algebraic stack $\sX=G\backslash X$.
Let $T_1,...,T_s$ be the isomorphism classes of 
$G$-torsors.  Define $G_i:=\on{Aut}_G(T_i)$ and the $\bbR$-scheme 
$X_i:=\Hom_{G}(T_i,X)$. Note that 
$G_i$ acts on $X_i$ and 
the collection $\{G_1,...,G_s\}$ gives all the pure-inner 
forms of $G$. 
Consider the real algebraic stack $G_i\backslash X_i$.  
We have $G_i\backslash X_i\is\sX$ and the map 
$\bigsqcup_{i=1}^sX_i\to\sX$ is a $\bbR$-presentation. In addition, 
the $\bbR$-presentation above induces an isomorphism of real analytic stacks 
$\bigsqcup_{i=1}^sG_{i,\bbR}\backslash X_{i,\bbR}\is \sX_\bbR$.

\end{example}

\begin{definition}
Let $\sX$ be a real analytic stack (resp. a real algebraic stack).
The stack $\sX$ is called of Bernstein-Lunts type (BL-type) if it 
 is an union of open substacks 
$\sX=\bigcup\sX_i$
, each $\sX_i$ being a quotient 
$G\backslash X$
where $X$ is a  real analytic space 
(resp. $\bbR$-scheme )
and $G$ is a real analytic group (resp. affine algebraic group over $\bbR$) acting on 
$X$. 
\end{definition}

Note that, by the example above, each real algebraic stack $\sX$ of BL-type admits 
a $\bbR$-surjective presentation and the corresponding real analytic stack 
$\sX_\bbR$ is also of BL-type.

The discussion above can be generalized to real ind-schemes and real ind-stacks.
Let $X_0\to X_1\to\cdot\cdot\cdot X_k\to\cdot\cdot\cdot$ be a diagram of 
closed embedding of 
$\bbR$-schemes. 
Let $X=\underset{\lra}\lim\ X_i$ be the corresponding ind-scheme over $\bbR$.
We define 
$X_\bbR=\underset{\lra}\lim\ X_{i,\bbR}$
to be real ind-analytic space associated to the diagram 
$X_{0,\bbR}\to X_{1,\bbR}\to\cdot\cdot\cdot X_{k,\bbR}\to\cdot\cdot\cdot$.
Similarly, let $\sX=\underset{\lra}\lim\sX_{i}$ be a real ind-stack 
associated to a diagram 
$\sX_{0}\to \sX_{1}\to\cdot\cdot\cdot \sX_{k}\to\cdot\cdot\cdot$
of real algebraic stacks which admit $\bbR$-presentations. 
We define $\sX_\bbR=\underset{\lra}\lim\ \sX_{i,\bbR}$.
An ind-algebaic stack $\sX=\underset{\lra}\lim\ \sX_i$  (resp. an real analytic ind-stack) is 
called of BL-type if each $\sX_i$ is of BL-type.

Let  $\sX=\underset{\lra}\lim\ \sX_i$ be a 
in real algebaic ind-stack
(resp. an real analytic ind-stack). By definition, a morphism $f:\sX\to\sY$ from 
$\sX$ to a real algebraic stack $\sX$ (resp. a real analytic stack)
is the limit of morphism $f_i:\sX_i\to\sY$. It is called representable if each 
$f_i$ is representable. 

\subsubsection{}
One can regard real algebraic stacks as complex algebraic stacks with real structures and the discussion above has an obvious generalization to 
this setting.  Let $\sX$ be a complex algebraic stack and let
$\sigma$ be a real structure on $\sX$, that is, a complex conjugation (or a semi-linear involution) $\sigma:\sX\to\sX$. Then 
a presentation $f:X\to\sX$ of $\sX$
is called a 
$\bbR$-surjective presentation if it satisfies the following properties.
(1) There is a real structure $\sigma$ on $X$ such that 
$f$ is compatible with the real structures on $X$ and $\sX$.
(2) The map $f$  
induces a surjective map  
$X(\bC)^{\sigma}\to |\sX(\bC)^{\sigma}|$. One can check that 
Lemma \ref{image} still holds in this setting, 
thus 
for a pair $(\sX,\sigma)$ as above which admits a $\bbR$-surjective presentation, there is a well-defined 
real analytic stack $\sX_\bbR$ given by 
$\sX_\bbR:=\Gamma(\bC)^\sigma\backslash X(\bC)^\sigma$, where 
$X\to\sX$ is a $\bbR$-surjective presentation, 
$\Gamma=X\times_{\sX}X$ is the corresponding groupoid (Note that 
$\Gamma$ has a canonical real structure $\sigma$ coming from 
$X$ and $\sX$).
Finally, 
the previous discussions about stacks of BL-type and ind-stacks can be easily generalized to this new setting. The details are left to the reader.

\subsection{Sheaves on real analytic stacks}
Let $X$ be a real analytic space. We will denote by 
$D_c(X)$ the corresponding bounded derived category of $\bC$-constructible sheaves. 
Let $\sX$ be a real analytic stack of BL-type.
We define 
$D_c(\sX)=\underset{\longleftarrow}\lim\ D_c(\sX_i)$, where each 
$D_c(\sX_i)$ is the bounded equivariant derived category in the sense of Bernstein-Lunts \cite{BL}.
Let $\sX=\underset{\lra}\lim\ \sX_i$ be a real analytic ind-stack of BL-type.
We define $D_c(\sX)=\underset{\lra}\lim\ D_c(\sX_i)$.

Let $f:\sX\to\sY$ be a representable morphism of real analytic stacks of BL-type. We
will denote by $f_*,f^*,f_!,f^!$ the
corresponding functors between $D(\sX)$ and $D(\sY)$ always understood in the derived sense.
Let $f:\sX\to\sY$ be a representable morphism from a real analytic ind-stack to 
a real analytic stack. Assume both $\sX$ and $\sY$ are of BL-type, then the functors
$f_*,f_!$ are well-defined.

\end{document}